\newcommand{\nc}{\newcommand}
\nc{\nt}{\newtheorem}
\nc{\ip}[2]{\mbox{$\langle #1,#2 \rangle$}}
\nc{\finpf}{\hfill{$\Box$}\linespace}
\nc{\linespace}{\vspace{\baselineskip} \noindent}
\nc{\R}{{\bf R}}
\nc{\cl}{\mbox{\rm cl}\,}
\nc{\cls}{ \mbox{{\scriptsize {\rm cl}}}\,}
\nc{\conv}{\mbox{\rm conv}\,}
\nc{\rb}{\mbox{\rm rb}\,}
\nc{\ri}{\mbox{\rm ri}\,}
\nc{\inter}{\mbox{\rm int}\,}
\nc{\kernel}{\mbox{\rm ker}\,}
\nc{\bd}{\mbox{\rm bd}\,}
\nc{\spann}{\mbox{\rm span}\,}
\nc{\rint}{\mbox{\rm rint}\,}
\nc{\epi}{\mbox{\rm epi}\,}
\nc{\gph}{\mbox{\rm gph}\,}
\nc{\tr}{\mbox{\rm tr}\,}
\nc{\rge}{\mbox{\rm rge}\,}
\nc{\rgel}{\mbox{\rm {\scriptsize rge}}\,}
\nc{\sepi}{\mbox{\rm {\scriptsize epi}}\,}
\nc{\sbd}{\mbox{\rm {\scriptsize bd}}\,}
\nc{\dom}{\mbox{\rm dom}\,}
\nc{\supp}{\mbox{\rm supp}\,}
\nc{\lin}{\mbox{\rm lin}\,}
\nc{\detr}{\mbox{\rm det}\,}
\nc{\para}{\mbox{\rm par}\,}
\nc{\crit}{\mbox{\rm crit}\,}
\nc{\cone}{\mbox{\rm cone}\,}
\nc{\convv}{\mbox{\rm conv}\,}
\nc{\rank}{\mbox{\rm rank}\,}
\nc{\fix}{\mbox{\rm Fix}}
\nc{\mx}{\mbox{\rm mx}}
\nc{\mult}{\mbox{\rm mult}\,}
\nc{\clco}{\overline{\mbox{\rm co}}\,}
\newcommand{\lf}{\operatornamewithlimits{liminf}}
\newcommand{\argmin}{\operatornamewithlimits{argmin}}
\newcommand{\argmax}{\operatornamewithlimits{argmax}}
\newenvironment{myequation}{\begin{equation}}{\end{equation}}
\newenvironment{myeqnarray*}{\begin{eqnarray*}}{\end{eqnarray*}}
\nc{\bmye}{\begin{myequation}} \nc{\emye}{\end{myequation}}
\def\tto{\;{\lower 1pt \hbox{$\rightarrow$}}\kern -12pt
           \hbox{\raise 2.8pt \hbox{$\rightarrow$}}\;}
\begin{document}

\title{Optimality, identifiability, and sensitivity\thanks{Work of D. Drusvyatskiy on this paper has been partially supported by the NDSEG grant from the Department of Defence. Work of A. S. Lewis has been supported in part by National Science Foundation Grant DMS-0806057 and by the US-Israel Binational Scientific Foundation Grant 2008261.}
}


\author{D. Drusvyatskiy         \and
        A. S. Lewis 
}


\institute{D. Drusvyatskiy \at
              School of Operations Research and Information Engineering,
    Cornell University,
    Ithaca, New York, USA; \\
              Tel.: (607) 255-4856\\
              Fax: (607) 255-9129\\
              \email{dd379@cornell.edu}\\
              {\tt http://people.orie.cornell.edu/dd379/}.           
           \and
           A. S. Lewis \at
              School of Operations Research and Information Engineering,
    Cornell University,
    Ithaca, New York, USA;\\
    Tel.: (607) 255-9147 \\
    Fax:  (607) 255-9129\\
    \email{aslewis@orie.cornell.edu}\\
    {\tt http://people.orie.cornell.edu/aslewis/}.
}

\date{Received: date / Accepted: date}

\maketitle

\begin{abstract}
Around a solution of an optimization problem, an ``identifiable'' subset of the feasible region is one containing all nearby solutions after small perturbations to the problem.  A quest for only the most essential ingredients of sensitivity analysis leads us to consider identifiable sets that are ``minimal''.  This new notion lays a broad and intuitive variational-analytic foundation for optimality conditions, sensitivity, and active set methods.
\keywords{active set \and sensitivity analysis \and optimality conditions \and normal cone \and subgradient \and identifiability \and critical cone \and prox-regularity \and partly smooth \and fast track}
\subclass{49Q12\and 49J53 \and 49J52 \and 	90C30 \and 	90C31 \and 	90C46}
\end{abstract}

\section{Introduction}
Active set ideas permeate traditional nonlinear optimization.  Classical problems involve a list of smooth nonlinear constraints:  the active set for a particular feasible solution --- the collection of binding constraints at that point --- is crucial in first and second order optimality conditions, in sensitivity analysis, and for certain algorithms.
Contemporary interest in more general constraints (such as semidefiniteness) suggests a reappraisal. A very thorough modern study of sensitivity analysis in its full generality appears in \cite{Bon_Shap}.  Approaches more variational-analytic in flavor appear in texts such as \cite{imp}. Our aim here is rather different: to present a simple fresh approach, combining wide generality with mathematical elegance.

Our approach has its roots in the notion of an ``identifiable surface'' \cite{Wright}, and its precursors 
\cite{Dunn87,Calamai-More87,Burke-More88,Burke90,Ferris91,Al-Khayyal-Kyparisis91,Flam92}.  In essence, the idea is extremely simple:  given a critical point  $x$  for a function  $f$,  a set  $M$  is {\em identifiable} if any sequence of points approaching  $x$  that is approximately critical (meaning corresponding subgradients approach zero) must eventually lie in  $M$.  The terminology comes from the idea that an iterative algorithm that approximates  $x$  along with an approximate criticality certificate must ``identify''  $M$.  To take the classical example where  $f$  is a pointwise maximum of smooth functions, around any critical point  $x$,  assuming a natural constraint qualification, we can define  $M$  as those points with the same corresponding ``active set'' of functions attaining the maximum.

Identifiable sets  $M$  are useful computationally because the problem of minimizing the function  $f$  near the critical point  $x$  is equivalent to minimizing the restriction of  $f$ to  $M$,  which may be an easier problem, and because the identifiability property allows convergent algorithms to find  $M$ --- the motivation for active set methods.  We show moreover how $M$  is a natural tool for optimality conditions:  under reasonable conditions, quadratic growth of  $f$  around  $x$  is equivalent to quadratic growth on  $M$ --- a potentially easier condition to check.

Clearly the smaller the identifiable set  $M$, the more informative it is.  Ideal would be a ``locally minimal identifiable set''.  We note that such sets may fail to exist, even for finite convex functions  $f$.  However, when a minimal identifiable set  $M$  does exist, we show that it is both unique (locally), and central to sensitivity analysis:  it consists locally of all critical points of small linear perturbations to $f$.  We show furthermore that, under reasonable conditions, variational analysis of  $f$  simplifies because, locally, the graph of its subdifferential mapping is influenced only by the restriction of  $f$  to  $M$.  One appealing consequence is a close relationship between minimal identifiable sets and critical cones appearing in the study of variational inequalities.

The case when an identifiable set $M$  is in fact a manifold around the point  $x$  (as in the classical example above) is particularly interesting.  Remarkably, this case is equivalent to a powerful but seemingly stringent list of properties known as ``partial smoothness'' \cite{Lewis-active}, nondegeneracy and prox-regularity --- related work on ``$\cal{VU}$ algorithms'' and ``the fast track'' appears in \cite{MC03,MC04,MC05} and \cite{HS06,HS09}. By contrast, our approach here is to offer a concise mathematical development emphasizing how this important scenario is in fact very natural indeed.

The outline of the paper is as follows. In Section~\ref{sec:ident}, we introduce the notion of identifiability for arbitrary set-valued mappings. Then in Section~\ref{sec:spec_app}, we specialize this idea to subdifferential mappings, laying the foundation for the rest of the paper. Section~\ref{sec:examples} contains basic examples of identifiable sets, while Section~\ref{sec:calculus} establishes a calculus of identifiability, which could be skipped at first reading. Arriving at our main results in Section~\ref{sec:geo}, we study variational geometry of identifiable sets; this in particular allows us to establish a strong relationship between identifiable sets and critical cones in Section~\ref{sec:crit}. Finally in Section~\ref{sec:opt}, we consider optimality conditions in the context of identifiable sets, while in Section~\ref{sec:man} we establish a relationship between identifiable manifolds and partial smoothness --- one of our central original goals. In Section~\ref{sec:func}, results of Section~\ref{sec:geo} are generalized to a function setting.

\section{Identifiability in set-valued analysis}\label{sec:ident}
A {\em set-valued mapping} $G$ from $\R^n$ to $\R^m$, denoted by $G\colon\R^n\rightrightarrows\R^m$, is a mapping from $\R^n$ to the power set of $\R^m$. Thus for each  point $x\in\R^n$, $G(x)$ is a subset of $\R^m$. The {\em domain}, {\em graph}, and {\em range} of $G$ are defined to be 
\begin{align*}
\mbox{\rm dom}\, G&:=\{x\in\R^n:G(x)\neq\emptyset\},\\
\mbox{\rm gph}\, G&:=\{(x,y)\in\R^n\times\R^m:y\in G(x)\},\\
\rge G&:= \bigcup_{x\in\R^n} G(x),
\end{align*}
respectively. Observe that $\dom G$ and $\rge G$ are images of $\gph G$ under the projections $(x,y)\mapsto x$ and $(x,y)\mapsto y$, respectively. 

For two sets $M$ and $Q$ in $\R^n$, we will say that the inclusion $M\subset Q$ {\em holds locally around a point} $\bar{x}$, if there exists a neighborhood $U$ of $\bar{x}$ satisfying $M\cap U\subset Q\cap U$.

The key property that we explore in this work is that of finite identification.

\begin{defn}[Identifiable sets]
{\rm Consider a mapping $G\colon\R^n\rightrightarrows\R^m$.
We say that a subset $M\subset\R^n$ is {\em identifiable} at $\bar{x}$ for $\bar{v}$, where $\bar{v}\in G(\bar{x})$, if the inclusion
$$\gph G\subset M\times\R^n \textrm{ holds locally around } (\bar{x},\bar{v}).$$}
\end{defn}
Equivalently, a set $M$ is identifiable at $\bar{x}$ for $\bar{v}\in G(\bar{x})$ if for any sequence $(x_i,v_i)\to (\bar{x},\bar{v})$ in $\gph G$, the points $x_i$ must lie in $M$ for all sufficiently large indices $i$. Clearly $M$ is identifiable at $\bar{x}$ for $\bar{v}$ if and only if the same can be said of $M\cap\dom G$. Hence we will make light of the distinction between two such sets.

Clearly $\dom G$ is identifiable at $\bar{x}$ for $\bar{v}\in G(\bar{x})$. More generally, if $\bar{v}$ lies in the interior of some set $U$, then $G^{-1}(U)$ is identifiable at $\bar{x}$ for $\bar{v}$. Sometimes {\em all} identifiable subsets of $\dom G$ arise locally in this way. In particular, one can readily check that this is the case for any set-valued mapping $G$ satisfying $G^{-1}\circ G=\textrm{Id}$, an important example being the inverse of the projection map $G=P_Q^{-1}$ onto a nonempty, closed, convex set $Q$.



The ``smaller'' the set $M$ is, the more interesting and the more useful it becomes. 
Hence an immediate question arises. When is the identifiable set $M$ {\em locally minimal}, in the sense that for any other identifiable set $M'$ at $\bar{x}$ for $\bar{v}$, the inclusion $M\subset M'$ holds locally around $\bar{x}$? The following notion will be instrumental in addressing this question.

\begin{defn}[Necessary sets]
{\rm Consider a set-valued mapping $G\colon\R^n\rightrightarrows\R^m$, a point $\bar{x}\in\R^n$, and a vector $\bar{v}\in G(\bar{x})$. We say that a subset $M\subset\R^n$, containing $\bar{x}$, is {\em necessary} at $\bar{x}$ for $\bar{v}$ if the function $$x\mapsto d(\bar{v}, G(x)),$$ restricted to $M$, is continuous at $\bar{x}$. }
\end{defn}

Thus $M$ is necessary at $\bar{x}$ for $\bar{v}\in G(\bar{x})$ if for any sequence $x_i\to\bar{x}$ in $M$, there exists a sequence $v_i\in G(x_i)$ with $v_i\to\bar{v}$. The name ``necessary'' arises from the following simple observation.
\begin{lem}\label{lem:dir}
Consider a set-valued mapping $G\colon\R^n\rightrightarrows\R^m$, a point $\bar{x}\in\R^n$, and a vector $\bar{v}\in G(\bar{x})$. Let $M$ and $M'$ be two subsets of $\R^n$. Then the implication
\begin{equation*}
\left.
\begin{array}{c}
M \textrm{ is identifiable at $\bar{x}$ for $\bar{v}$}  \\
M' \textrm{ is necessary at $\bar{x}$ for $\bar{v}$}
\end{array}
\right\}
\Rightarrow
M'\subset M \textrm{ locally around } \bar{x},
\end{equation*}
holds.
\end{lem}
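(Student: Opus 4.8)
The plan is to argue by contradiction, leaning on the sequential reformulations of both hypotheses recorded in the excerpt just after the relevant definitions. Identifiability of $M$ states that every sequence $(x_i,v_i)\to(\bar{x},\bar{v})$ lying in $\gph G$ must satisfy $x_i\in M$ for all large $i$; necessity of $M'$ states that every sequence $x_i\to\bar{x}$ in $M'$ admits companions $v_i\in G(x_i)$ with $v_i\to\bar{v}$. The entire argument amounts to feeding the output of the necessity statement into the hypothesis of the identifiability statement.

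First I would suppose the conclusion fails, so that no neighborhood $U$ of $\bar{x}$ satisfies $M'\cap U\subset M\cap U$. Testing this against the shrinking balls $U=B(\bar{x},1/i)$ produces a sequence $x_i\to\bar{x}$ with $x_i\in M'$ yet $x_i\notin M$ for every $i$. Applying necessity of $M'$ to this sequence, continuity of $x\mapsto d(\bar{v},G(x))$ along $M'$ gives $d(\bar{v},G(x_i))\to d(\bar{v},G(\bar{x}))=0$, where the final equality uses $\bar{v}\in G(\bar{x})$. Selecting for each $i$ a point $v_i\in G(x_i)$ with $\|v_i-\bar{v}\|\le d(\bar{v},G(x_i))+\tfrac{1}{i}$, I obtain $v_i\to\bar{v}$, so that $(x_i,v_i)\to(\bar{x},\bar{v})$ within $\gph G$.

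At this stage identifiability of $M$ applies and forces $x_i\in M$ for all sufficiently large $i$, directly contradicting the fact that $x_i\notin M$ for every $i$. Hence $M'\subset M$ must hold locally around $\bar{x}$. The single delicate point is the choice of $v_i$: since necessity is expressed through the distance function, the set $G(x_i)$ need not contain a nearest point to $\bar{v}$, so one must take approximate minimizers rather than exact projections. Beyond this, the proof is a faithful transcription of the two sequential characterizations, and I expect no substantive obstacle.
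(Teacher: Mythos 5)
Your proof is correct and follows exactly the route the paper intends: the lemma is stated there without proof as a ``simple observation,'' and the same argument (feed the sequence produced by necessity of $M'$ into the sequential characterization of identifiability of $M$) appears verbatim in the paper's proof of the implication $(2)\Rightarrow(3)$ of Proposition~\ref{prop:loc_min}. Your attention to the approximate-minimizer selection of $v_i$ (since $d(\bar{v},G(x_i))$ need not be attained) is a point the paper glosses over, and it is handled properly.
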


The following elementary characterization of locally minimal identifiable sets will be used extensively in the sequel, often without an explicit reference.
\begin{prop}[Characterizing locally minimal identifiability]\label{prop:loc_min} 
 {\ } \\ Consider a set-valued mapping $G\colon\R^n\rightrightarrows\R^m$ and a pair $(\bar{x},\bar{v})\in\gph G$. The following are equivalent.
\begin{enumerate}
\item $M$ is a locally minimal identifiable set at $\bar{x}$ for $\bar{v}$.
\item There exists neighborhood $V$ of $\bar{v}$ such that for any subneighborhood $W\subset V$ of $\bar{v}$, the representation 
$$M= G^{-1}(W) \textrm{ holds locally around } \bar{x}.$$
\item $M$ is a locally maximal necessary set at $\bar{x}$ for $\bar{v}$.
\item $M$ is identifiable and necessary at $\bar{x}$ for $\bar{v}$.
\end{enumerate}
\end{prop}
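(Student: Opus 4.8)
The plan is to treat condition~(4) as the hub, establishing the cycle $(1)\Rightarrow(2)\Rightarrow(4)\Rightarrow(1)$ together with the separate equivalence $(3)\Leftrightarrow(4)$. Throughout I would lean on two tools already available: Lemma~\ref{lem:dir}, and the observation made just before the proposition that whenever $W$ is a neighborhood of $\bar{v}$ (so $\bar{v}\in\inter W$), the preimage $G^{-1}(W)$ is itself identifiable at $\bar{x}$ for $\bar{v}$.

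For $(1)\Rightarrow(2)$ I would play identifiability and minimality against each other. Identifiability of $M$ supplies neighborhoods $U_0\ni\bar{x}$ and $V_0\ni\bar{v}$ with $G^{-1}(V_0)\cap U_0\subset M$, whence $G^{-1}(W)\subset M$ holds locally for every $W\subset V_0$. In the other direction, since each $G^{-1}(W)$ is identifiable when $W$ is a neighborhood of $\bar{v}$, local minimality of $M$ forces $M\subset G^{-1}(W)$ locally. Setting $V:=V_0$ and combining the two inclusions yields $M=G^{-1}(W)$ locally around $\bar{x}$ for every subneighborhood $W\subset V$, which is exactly~(2).

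The implication $(2)\Rightarrow(4)$ splits into identifiability and necessity. Identifiability is immediate, because $M$ agrees locally with the identifiable set $G^{-1}(V)$ and identifiability depends only on $M$ near $\bar{x}$. Necessity is the one genuinely technical point, and where I expect the main obstacle to lie: given $x_i\to\bar{x}$ in $M$, I must produce $v_i\in G(x_i)$ with $v_i\to\bar{v}$. I would apply~(2) to the shrinking balls $W_k=B(\bar{v},1/k)$, for $k$ large enough that $W_k\subset V$, getting for each $k$ a neighborhood $U_k$ on which $M$ coincides with $G^{-1}(W_k)$; membership $x_i\in U_k$ then forces $G(x_i)\cap W_k\neq\emptyset$. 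A diagonal selection $v_i\in G(x_i)\cap W_{k(i)}$ with $k(i)\to\infty$ (possible since $x_i\to\bar{x}$ eventually enters each $U_k$) delivers $v_i\to\bar{v}$.

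The remaining steps are short. For $(4)\Rightarrow(1)$ and $(4)\Rightarrow(3)$, Lemma~\ref{lem:dir} applied with $M$ cast in turn as the identifiable and as the necessary set yields at once that the identifiable set $M$ is minimal and that the necessary set $M$ is maximal. Finally, for $(3)\Rightarrow(4)$ I would argue by contradiction: if a locally maximal necessary $M$ were not identifiable, there would exist $(x_i,v_i)\to(\bar{x},\bar{v})$ in $\gph G$ with $x_i\notin M$. Enlarging to $\tilde{M}:=M\cup\{x_i:i\in\mathbb{N}\}$ keeps it necessary, since $d(\bar{v},G(x_i))\le\|v_i-\bar{v}\|\to 0$ means only finitely many added points exceed any prescribed tolerance, and those finitely many lie at positive distance from $\bar{x}$ and can be excluded by shrinking the neighborhood, so continuity at $\bar{x}$ of the restriction of $x\mapsto d(\bar{v},G(x))$ to $\tilde{M}$ is preserved. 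Maximality then forces $\tilde{M}\subset M$ locally, contradicting that the $x_i\notin M$ accumulate at $\bar{x}$. This closes all the equivalences.
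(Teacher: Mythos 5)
Your proof is correct, and in substance it uses the same ingredients as the paper, but wired through a genuinely different decomposition: the paper proves the single cycle $(1)\Rightarrow(2)\Rightarrow(3)\Rightarrow(4)\Rightarrow(1)$, whereas you make (4) the hub, proving $(1)\Rightarrow(2)\Rightarrow(4)\Rightarrow(1)$ together with $(3)\Leftrightarrow(4)$. Concretely: your $(1)\Rightarrow(2)$ matches the paper's argument; your necessity argument inside $(2)\Rightarrow(4)$ --- shrinking balls $W_k=B(\bar{v},1/k)$ plus a diagonal selection --- is a direct rendering of what the paper does by contradiction with a fixed $\epsilon$-ball in the first half of its $(2)\Rightarrow(3)$; your $(3)\Rightarrow(4)$ enlargement $\tilde{M}=M\cup\{x_i\}$ is exactly the paper's step, and you usefully supply the verification (that $d(\bar{v},G(x_i))\leq|v_i-\bar{v}|\to 0$, with the finitely many exceptional points excluded by shrinking the neighborhood, keeps $\tilde{M}$ necessary at $\bar{x}$) which the paper merely asserts; and your $(4)\Rightarrow(3)$ via Lemma~\ref{lem:dir}, with the roles of the identifiable and the necessary set swapped relative to its use in $(4)\Rightarrow(1)$, replaces the paper's hand-proved maximality in the second half of its $(2)\Rightarrow(3)$. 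What the paper's wiring buys is one fewer implication to verify; what yours buys is that the two extremality statements (1) and (3) are both exposed as the symmetric one-line consequences of Lemma~\ref{lem:dir} applied to the self-contained condition (4), so the genuinely analytic work is cleanly isolated in $(2)\Rightarrow(4)$ and $(3)\Rightarrow(4)$.
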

\begin{proof} 
$(1)\Rightarrow (2):$ Suppose $M$ is a locally minimal identifiable set at $\bar{x}$ for $\bar{v}$. Then there exists a neighborhood $V$ of $\bar{v}$ satisfying
$$G^{-1}(V)\subset M \textrm{ locally around } \bar{x}.$$
In turn, the set $G^{-1}(V)$ is identifiable at $\bar{x}$ for $\bar{v}$. Consequently, by local minimality of $M$ we deduce the equality $M=G^{-1}(V)$ locally around $\bar{x}$. Observe that the same argument is valid for any subneighborhood $W\subset V$ of $\bar{v}$. The result follows.

$(2)\Rightarrow (3):$ We first prove that $M$ is necessary at $\bar{x}$ for $\bar{v}$. Consider any sequence $x_i\to\bar{x}$ in $M$ and, for the sake of contradiction, suppose that there do not exist vectors $v_i\in G(x_i)$ with $v_i\to \bar{v}$. Then there exists a real number $\epsilon >0$ such that, restricting to a subsequence, we have $G(x_i)\cap B_{\epsilon}(\bar{v})=\emptyset$. Consequently, we obtain $x_i\notin G^{-1}(V\cap B_{\epsilon}(\bar{v}))$ for all large indices $i$. We conclude that $G^{-1}(V\cap B_{\epsilon}(\bar{v}))$ and $M$ do not coincide on any neighborhood of $\bar{x}$, which is a contradiction.
Hence $M$ is necessary at $\bar{x}$ for $\bar{v}$. 

Now consider any other necessary set $M'$ at $\bar{x}$ for $\bar{v}$ and a sequence $x_i\to\bar{x}$ in $M'$. By necessity of $M'$, there exist vectors $v_i\in G(x_i)$ converging to $\bar{v}$. By the defining property of $M$, we deduce $x_i\in M$ for all large indices $i$. Hence the inclusion $M'\subset M$ holds locally around $\bar{x}$. We conclude that $M$ is a locally maximal necessary set at $\bar{x}$ for $\bar{v}$.

$(3)\Rightarrow (4):$ Consider any sequence $(x_i,v_i)\to (\bar{x},\bar{v})$ in $\gph G$. Observe that 
the set $$M':=M\cup\bigcup_{i=1}^{\infty}\{x_i\},$$ is necessary at $\bar{x}$ for $\bar{v}$. Consequently, by local maximality of $M$, we have $x_i\in M$ for all large indices $i$. Thus $M$ is identifiable at $\bar{x}$ for $\bar{v}$.

$(4)\Rightarrow (1):$ This is immediate from Lemma~\ref{lem:dir}.
\end{proof}

\begin{rem}
{\rm
It is clear from Proposition~\ref{prop:loc_min} that whenever locally minimal identifiable sets exist, they are locally unique. That is, if $M_1$ and $M_2$ are both locally minimal identifiable sets at $\bar{x}$ for $\bar{v}\in G(\bar{x})$, then we have $M_1=M_2$ locally around $\bar{x}$. }
\end{rem}

The central goal in sensitivity analysis is to understand the behavior of solutions $x$, around $\bar{x}$, to the inclusion 
$$v\in G(x),$$ as $v$ varies near $\bar{v}$. Characterization $2$ of Proposition~\ref{prop:loc_min} shows that a locally minimal identifiable set at $\bar{x}$ for $\bar{v}$ is a locally minimal set that captures {\em all} the sensitivity information about the inclusion above. 

This characterization yields a constructive approach to finding locally minimal identifiable sets. Consider any open neighborhoods $V_1\supset V_2\supset V_3\supset\ldots,$ around $\bar{v}$ with the diameters of $V_i$ tending to zero. If the chain 
$$G^{-1}(V_1)\supset G^{-1}(V_2)\supset G^{-1}(V_3)\supset\ldots,$$
stabilizes, in the sense that for all large indices $i$ and $j$, we have $G^{-1}(V_i)=G^{-1}(V_j)$ locally around $\bar{x}$, then $G^{-1}(V_i)$ is a locally minimal identifiable set at $\bar{x}$ for $\bar{v}$, whenever $i$ is sufficiently large. Moreover, the locally minimal identifiable set at $\bar{x}$ for $\bar{v}$, if it exists, must arise in this way. 

The following example shows that indeed a set-valued mapping can easily fail to admit a locally minimal identifiable set.

\begin{exa}[Failure of existence]\label{exa:nonexist}
{\rm Consider the mapping $G\colon\R^2\rightrightarrows\R$, defined in polar coordinates, by
\begin{displaymath}
G(r,\theta) = \left\{
     \begin{array}{ll}
     |\theta| &\textrm{ if }  r\neq 0, ~\theta\in \left[-\pi,\pi\right], \\
  	 \left[-1,1\right] &\textrm{ if } r=0. \\
\end{array}
\right.
\end{displaymath}
Let $\bar{x}$ be the origin in $\R^2$ and $\bar{v}:=0\in G(\bar{x})$. Observe that for $\epsilon\to 0$, the preimages 
$$G^{-1}(-\epsilon,\epsilon)=\{(r,\theta)\in\R^2:G(r,\theta)\cap (-\epsilon,\epsilon)\neq\emptyset\}= \{(r,\theta):|\theta|<\epsilon\},$$ never coincide around $\bar{x}$. Consequently, there is no locally minimal identifiable set at $\bar{x}$ for $\bar{v}$. 
} 
\end{exa}

Notwithstanding the previous example, locally minimal identifiable sets do often exist. 
Before proceeding, we recall the following two notions of continuity for set-valued mappings.
\begin{defn}[Continuity]
{\rm Consider a set-valued mapping $F\colon\R^n\rightrightarrows\R^m$.
\begin{enumerate}
\item $F$ is {\em outer semicontinuous} at a point $\bar{x}\in\R^n$ if for any sequence of points $x_i\in\R^n$ converging to $\bar{x}$ and any sequence of vectors $v_i\in F(x_i)$ converging to $\bar{v}$, we must have $\bar{v}\in F(\bar{x})$.
\item $F$ is {\em inner semicontinuous} at $\bar{x}$ if for any sequence of points $x_i$ converging to $\bar{x}$ and any vector $\bar{v}\in F(\bar{x})$, there exist vectors $v_i\in F(x_i)$ converging to $\bar{v}$.
\end{enumerate}
If both properties hold, then we say that $F$ is {\em continuous} at $\bar{x}$. We will say that $F$ is inner-semicontinuous at $\bar{x}$, {\em relative} to a certain set $Q\subset\R^n$, if the condition above for inner-semicontinuity is satisfied for sequences $x_i\to\bar{x}$
in $Q$.  }
\end{defn}

Clearly inner-semicontinuous mappings always admit locally minimal identifiable sets.
 
\begin{prop}[Identifiability under continuity]\label{prop:continuity} {\ } \\ 
Consider a set-valued mapping $G\colon\R^n\rightrightarrows\R^m$ that is inner semicontinuous, relative to $\dom G$, at a point $\bar{x}\in\dom G$. Then $\dom G$ is a locally minimal identifiable set at $\bar{x}$ for any vector $\bar{v}\in G(\bar{x})$. 
\end{prop}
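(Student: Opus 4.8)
The plan is to invoke Proposition~\ref{prop:loc_min}, whose implication $(4)\Rightarrow(1)$ reduces the claim to a single task: show that $\dom G$ is simultaneously \emph{identifiable} and \emph{necessary} at $\bar{x}$ for $\bar{v}$. Once both properties are in hand, local minimality of $\dom G$ as an identifiable set follows immediately. The substance of the argument is therefore just a matter of recognizing that each of the two properties is already encoded in the setup.

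For identifiability, I would use the sequential reformulation recorded just after the definition of identifiable sets, together with the observation already made in the text that $\dom G$ is identifiable at $\bar{x}$ for every $\bar{v}\in G(\bar{x})$. Concretely, any sequence $(x_i,v_i)\to(\bar{x},\bar{v})$ in $\gph G$ satisfies $v_i\in G(x_i)$, which forces $G(x_i)\neq\emptyset$, i.e.\ $x_i\in\dom G$ for every index $i$. Thus the points $x_i$ lie in $\dom G$ for all (in particular all large) $i$, which is exactly identifiability; no hypothesis on $G$ is needed for this part.

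For necessity, I would unpack the sequential reformulation stated right after the definition of necessary sets: $\dom G$ is necessary at $\bar{x}$ for $\bar{v}$ precisely when, for every sequence $x_i\to\bar{x}$ with $x_i\in\dom G$, one can choose $v_i\in G(x_i)$ with $v_i\to\bar{v}$. But this is verbatim the definition of inner semicontinuity of $G$ relative to $\dom G$ at $\bar{x}$, applied to the particular vector $\bar{v}\in G(\bar{x})$. Hence the hypothesis delivers necessity of $\dom G$ directly.

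Combining the two paragraphs, characterization $(4)$ of Proposition~\ref{prop:loc_min} yields that $\dom G$ is a locally minimal identifiable set at $\bar{x}$ for $\bar{v}$, as required. There is no genuine obstacle in this proof; the only point demanding any care is aligning the sequential formulation of \emph{necessary} with the relative inner-semicontinuity hypothesis, which is precisely why the statement is flagged in the text as an immediate consequence of the continuity assumption.
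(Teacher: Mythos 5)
Your proof is correct and follows exactly the argument the paper intends: the paper states this proposition without proof as ``clear,'' having already noted that $\dom G$ is always identifiable, and the intended reasoning is precisely your observation that relative inner semicontinuity is verbatim the sequential form of necessity, so that implication $(4)\Rightarrow(1)$ of Proposition~\ref{prop:loc_min} finishes the job.
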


More interesting examples can be constructed by taking pointwise unions of maps admitting locally minimal identifiable sets. 

\begin{prop}[Pointwise union]\label{prop:union}
Consider a finite collection of outer-semicontinuous mappings, 
$G_i\colon\R^n\rightrightarrows\R^m$, for $i=1,\ldots,k.$ Define the pointwise union mapping 
$G\colon\R^n\rightrightarrows\R^m$
to be $$G(x)=\bigcup^m_{i=1} G_i(x).$$
Fix a point $\bar{x}\in\R^n$ and a vector $\bar{v}\in G(\bar{x})$, and suppose that for each index $i$, satisfying $\bar{v}\in G_i(\bar{x})$, there exists a locally minimal identifiable set $M_i$ (with respect to $G_i$) at $\bar{x}$ for $\bar{v}$. Then the set $$M:=\bigcup_{i: \bar{v}\in G_i(\bar{x})} M_i,$$ is a locally minimal identifiable set (with respect to $G$) at $\bar{x}$ for $\bar{v}$.  
\end{prop}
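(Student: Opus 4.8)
The plan is to verify the two defining properties in characterization~(4) of Proposition~\ref{prop:loc_min}: I will show directly that $M$ is both \emph{identifiable} and \emph{necessary} at $\bar{x}$ for $\bar{v}$ with respect to $G$, and then invoke the equivalence $(4)\Leftrightarrow(1)$ to conclude local minimality. Write $I:=\{i: \bar{v}\in G_i(\bar{x})\}$, so that $M=\bigcup_{i\in I}M_i$ and, for each $i\in I$, Proposition~\ref{prop:loc_min} applied to $G_i$ tells us that $M_i$ is simultaneously identifiable and necessary at $\bar{x}$ for $\bar{v}$ with respect to $G_i$. The single recurring device throughout will be the finiteness of the index set: any sequence whose terms are distributed among finitely many sets $G_i$ or $M_i$ admits a subsequence lying in one fixed such set, on which the corresponding single-map hypothesis applies.

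For identifiability, consider a sequence $(x_j,v_j)\to(\bar{x},\bar{v})$ in $\gph G$ and suppose, for contradiction, that $x_j\notin M$ for infinitely many $j$; restrict to that subsequence. For each such $j$ the inclusion $v_j\in G(x_j)=\bigcup_i G_i(x_j)$ supplies an index $i(j)$ with $v_j\in G_{i(j)}(x_j)$, and since there are only finitely many indices, a further subsequence has $i(j)$ equal to a fixed value $i_0$. Along this subsequence $(x_j,v_j)\to(\bar{x},\bar{v})$ in $\gph G_{i_0}$, so outer semicontinuity of $G_{i_0}$ forces $\bar{v}\in G_{i_0}(\bar{x})$, that is $i_0\in I$. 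Identifiability of $M_{i_0}$ with respect to $G_{i_0}$ then yields $x_j\in M_{i_0}\subset M$ for all large $j$, contradicting $x_j\notin M$. Hence $M$ is identifiable.

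For necessity, note first that $\bar{x}\in M$ since each $M_i$ contains $\bar{x}$. Let $x_j\to\bar{x}$ be an arbitrary sequence in $M$; it suffices to show $d(\bar{v},G(x_j))\to 0$. Choose for each $j$ an index $i(j)\in I$ with $x_j\in M_{i(j)}$, and partition the indices $j$ according to the finitely many values of $i(j)$. On each block with fixed value $i_0$, the points $x_j$ form a sequence in $M_{i_0}$ converging to $\bar{x}$, so necessity of $M_{i_0}$ gives $d(\bar{v},G_{i_0}(x_j))\to 0$; since $G_{i_0}(x_j)\subset G(x_j)$ we get $d(\bar{v},G(x_j))\le d(\bar{v},G_{i_0}(x_j))\to 0$ along that block. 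As finitely many blocks exhaust the sequence, $d(\bar{v},G(x_j))\to 0$ for the whole sequence, proving necessity.

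The only genuine difficulty is that the index telling which constituent map or set a given term belongs to may jump along the sequence; finiteness of the collection resolves this by passing to a subsequence (or partitioning into finitely many blocks) on which that index is constant. It is worth emphasizing that outer semicontinuity of the $G_i$ is used exactly once --- in the identifiability step --- to guarantee that the stabilized index $i_0$ actually belongs to $I$, so that $M_{i_0}$ is one of the sets comprising $M$; the necessity step needs no such hypothesis.
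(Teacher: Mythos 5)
Your proof is correct and takes essentially the same approach as the paper: the paper's entire proof is the single line ``This readily follows from Proposition~\ref{prop:loc_min},'' and your argument is precisely the fleshed-out verification of characterization~(4) of that proposition (identifiability plus necessity, via the finite-index subsequence/partition device), with outer semicontinuity invoked correctly and only where needed to place the stabilized index in $I$. Nothing to fix.
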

\begin{proof} 
This readily follows from Proposition~\ref{prop:loc_min}. 
\end{proof}

In particular, locally minimal identifiable sets exist for {\em piecewise polyhedral} mappings. These are those mappings whose graphs can be decomposed into a union of finitely many convex polyhedra.

\begin{exa}[Piecewise polyhedral mappings]\label{exa:poly} {\ } \\ 
{\rm Consider a piecewise polyhedral mapping $G\colon\R^n\rightrightarrows\R^m$, where $\gph G=\bigcup_{i=1}^k V_i$ and $V_i\subset\R^n$ are convex polyhedral sets.
It is easy to check that set-valued mappings whose graphs are convex polyhedral are inner-semicontinuous on their domains.
Fix a point $\bar{x}\in \R^n$ and a vector $\bar{v}\in G(\bar{x})$, and let $\pi\colon\R^n\times\R^m\to\R^n$ be the canonical projection onto $\R^n$. 
Consequently, by Propositions~\ref{prop:continuity} and \ref{prop:union}, the set
$$\bigcup_{i:(\bar{x},\bar{v})\in V_i} \pi(V_i)$$ is a locally minimal identifiable set at $\bar{x}$ for $\bar{v}$. }
\end{exa}



For the remainder of the current work, we will be investigating the notion of identifiability in the context of the workhorse of variational analysis, the {\em subdifferential} mapping (Definition~\ref{defn:subdiff}).

\section{Identifiability in variational analysis}\label{sec:spec_app}
\subsection{Preliminaries from variational analysis}
In this section, we briefly summarize some of the fundamental tools used in variational analysis and nonsmooth optimization.
We refer the reader to the monographs Borwein-Zhu \cite{Borwein-Zhu}, Clarke-Ledyaev-Stern-Wolenski \cite{CLSW}, Mordukhovich \cite{Mord_1,Mord_2}, and Rockafellar-Wets \cite{VA}, for more details.  Unless otherwise stated, we follow the terminology and notation of \cite{VA}.

The functions that we will be considering will take their values in the extended real line $\overline{\R}:=\R\cup\{-\infty,\infty\}$. We say that an extended-real-valued function is proper if it is never $\{-\infty\}$ and is not always $\{+\infty\}$. 

For a function $f\colon\R^n\rightarrow\overline{\R}$, the {\em domain} of $f$ is $$\mbox{\rm dom}\, f:=\{x\in\R^n: f(x)<+\infty\},$$ and the {\em epigraph} of $f$ is $$\mbox{\rm epi}\, f:= \{(x,r)\in\R^n\times\R: r\geq f(x)\}.$$ We will say that $f$ is {\em lower-semicontinuous} ({\em lsc} for short) at a point $\bar{x}$ provided that the inequality $\lf_{x\to\bar{x}}f(x)\geq f(\bar{x})$ holds. If $f$ is lower-semicontinuous at every point, then we will simply say that $f$ is lower-semicontinuous.

Throughout this work, we will only use Euclidean norms. Hence for a point $x\in\R^n$, the symbol $|x|$ will denote the standard Euclidean norm of $x$.
Given a set $Q\subset\R^n$ and a point $\bar{x}\in Q$, 
we let $o(|x-\bar{x}|)$ for $x\in Q$ be shorthand for a function that satisfies $\frac{o(|x-\bar{x}|)}{|x-\bar{x}|}\rightarrow 0$ whenever $x\stackrel{Q}{\rightarrow} \bar{x}$ with $x\neq\bar{x}$.

For a set $Q\subset\R^n$ and a point $x\in\R^n$, the distance of $x$ from $Q$ is   $$d_Q(x):=\inf_{y\in Q}|x-y|,$$ and the projection of $x$ onto $Q$ is $$P_Q(x):=\{y\in Q:|x-y|=d_Q(x)\}.$$

Normal cones are fundamental in variational geometry.
The most intuitive type of a normal cone arises from the metric projection.
\begin{defn}[Proximal normals]
{\rm
Consider a set $Q\subset\R^n$ and a point $\bar{x}\in Q$. The {\em proximal normal cone} to $Q$ at $\bar x$, denoted
$N^{P}_Q(\bar x)$, consists of all vectors $v \in \R^n$ such that $\bar{x}\in P_Q(\bar{x}+\frac{1}{r}v)$ for some $r>0$. In this case, $\{\bar{x}\}=P_Q(\bar{x}+\frac{1}{r'}v)$ for each real number $r'>r$.}
\end{defn}

Geometrically, a vector $v\neq 0$ is a proximal normal to $Q$ at $\bar{x}$ precisely when there exists a ball touching $Q$ at $\bar{x}$ such that $v$ points from $\bar{x}$ towards the center of the ball. Furthermore, this condition amounts to 
$$\langle v,x-\bar{x} \rangle \leq O(|x-\bar{x}|^2) ~~\textrm{ as } x\to\bar{x} \textrm{ in } Q.$$

Relaxing the inequality above, one obtains the following notion.

\begin{defn}[Frech\'{e}t normals]
{\rm Consider a set $Q\subset\R^n$ and a point $\bar{x}\in Q$. The {\em Frech\'{e}t normal cone} to $Q$ at $\bar x$, denoted
$\hat N_Q(\bar x)$, consists of all vectors $v \in \R^n$ such that $$\langle v,x-\bar{x} \rangle \leq o(|x-\bar{x}|) \textrm{ as }x\to\bar{x} \textrm{ in } Q.$$
}
\end{defn}
Note that both $N^P_Q(\bar{x})$ and $\hat{N}_Q(\bar{x})$ are convex cones, while $\hat{N}_Q(\bar{x})$ is also closed. 

For a set $Q\in\R^n$, the set-valued mapping $x\mapsto \hat{N}_Q(x)$ is not outer-semicontinuous, and hence is not robust relative to perturbations in $x$. To correct for that, the following definition is introduced.
\begin{defn}[Limiting normals]
{\rm Consider a set $Q\subset\R^n$ and a point $\bar{x}\in Q$.  The {\em limiting normal cone} to $Q$ at $\bar{x}$, denoted $N_Q(\bar{x})$, consists of all vectors $v\in\R^n$ such that there are sequences $x_i\stackrel{Q}{\rightarrow} \bar{x}$ and $v_i\rightarrow v$ with $v_i\in\hat{N}_Q(x_i)$.}
\end{defn}

The limiting normal cone, as defined above, consists of limits of nearby Frech\'{e}t normals. In fact, the same object arises if we only allow limits of nearby proximal normals. An important and favorable situation arises when the Frech\'{e}t and limiting constructions coincide.

\begin{defn}[Clarke regularity of sets]
{\rm A set $Q\subset\R^n$ is said to be {\em Clarke regular} at a point $\bar{x}\in Q$ if it is locally closed at $\bar{x}$ and every limiting normal vector to $Q$ at $\bar{x}$ is a Frech\'{e}t normal vector, that is $N_Q(\bar{x})=\hat{N}_Q(\bar{x})$.}
\end{defn}

We can study variational properties of functions by means of normal cones to their epigraphs. 
\begin{defn}[Subdifferentials]\label{defn:subdiff}
{\rm Consider a function $f\colon\R^n\rightarrow\overline{\R}$ and a point $\bar{x}\in\R^n$ where $f$ is finite. The {\em proximal}, {\em Frech\'{e}t}, and {\em limiting subdifferentials} of $f$ at $\bar{x}$, respectively, are defined by
\begin{align*}
\partial_P f(\bar{x})&= \{v\in\R^n: (v,-1)\in N^{P}_{\mbox{{\scriptsize {\rm epi}}}\, f}(\bar{x},f(\bar{x}))\},\\
\hat{\partial}f(\bar{x})&= \{v\in\R^n: (v,-1)\in \hat{N}_{\mbox{{\scriptsize {\rm epi}}}\, f}(\bar{x},f(\bar{x}))\},\\
\partial f(\bar{x})&= \{v\in\R^n: (v,-1)\in N_{\mbox{{\scriptsize {\rm epi}}}\, f}(\bar{x},f(\bar{x}))\},
\end{align*}
while the {\em horizon subdifferential} is defined by 
$$\partial^{\infty} f(\bar{x})= \{v\in\R^n: (v,0)\in N_{\mbox{{\scriptsize {\rm epi}}}\, f}(\bar{x},f(\bar{x}))\}.$$
}
\end{defn}
For $\bar{x}$ such that $f(\bar{x})$ is not finite, we follow the convention that $\partial_{P} f(\bar{x})=\hat{\partial}f(\bar{x})=\partial f(\bar{x})=\partial^{\infty}f(\bar{x})=\emptyset$.

For convex functions $f$, the subdifferentials $\partial_p f$, $\hat{\partial} f$, and $\partial f$ reduce to the classical convex subdifferential, while for smooth $f$ they coincide with the gradient mapping $\nabla f$. See for example \cite[Exercise 8.8]{VA}. In this sense, these three subdifferentials generalize the classical gradient. The horizon subdifferential plays an entirely different role; it records horizontal normals to the epigraph of the function and is instrumental in establishing subdifferential calculus rules. See \cite[Theorem 10.6]{VA}.

\begin{defn}[Clarke regularity of functions]
{\rm A function $f\colon\R^n\to\overline{\R}$ is said to be {\em Clarke regular} at a point $\bar{x}\in \R^n$ if the epigraph $\epi f$ is Clarke regular at $(\bar{x},f(\bar{x}))$.}
\end{defn}

Given any set $Q\subset\R^n$ and a mapping $F\colon Q\to \widetilde{Q}$, where $\widetilde{Q}\subset\R^m$, we say that $F$ is ${\bf C}^p$-{\em smooth} $(p\geq 1)$ if for each point $\bar{x}\in Q$, there is a neighborhood $U$ of $\bar{x}$ and a ${\bf C}^p$ mapping $\hat{F}\colon \R^n\to\R^m$ that agrees with $F$ on $Q\cap U$. 

\begin{defn} [Smooth Manifolds]
{\rm We say that a set $M\subset{\R}^n$ is a ${\bf C}^p$-{\em submanifold} of dimension $r$ if for each point $\bar{x}\in M$, there is an open neighborhood $U$ around $\bar{x}$ and a function $F\colon U\to\R^{n-r}$ that is ${\bf C}^p$-smooth with $\nabla F(\bar{x})$ of full rank and satisfying 
$M\cap U=\{x\in U: F(x)=0\}$. In this case, we call $F$ a {\em local defining function} for $M$ around $\bar{x}$.
}
\end{defn}

A good source on smooth manifold theory is \cite{Lee}.

\begin{thm}[{\cite[Example 6.8]{VA}}]\label{thm:clarke_man}
Any ${\bf C}^1$-manifold $M$ is Clarke regular at every point $x\in M$ and the normal cone $N_M(x)$ is equal to the normal space to $M$ at $x$, in the sense of differential geometry.
\end{thm}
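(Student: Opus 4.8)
The plan is to reduce everything to a local computation with a defining function and then exploit the full-rank condition. Fix $\bar{x}\in M$ and choose a $\mathbf{C}^1$ local defining function $F=(F_1,\ldots,F_{n-r})\colon U\to\R^{n-r}$, so that $M\cap U=\{x\in U:F(x)=0\}$ with $\nabla F(\bar{x})$ of full rank $n-r$. The differential-geometric normal space at $\bar{x}$ is the orthogonal complement of the tangent space $T:=\ker\nabla F(\bar{x})$, which is precisely $N:=\spann\{\nabla F_1(\bar{x}),\ldots,\nabla F_{n-r}(\bar{x})\}=\rge \nabla F(\bar{x})^T$. The entire theorem then amounts to the two equalities $\hat{N}_M(\bar{x})=N$ (giving the normal-cone identification at the Fr\'echet level) and $N_M(\bar{x})=\hat{N}_M(\bar{x})$ (Clarke regularity).

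The inclusion $N\subseteq\hat{N}_M(\bar{x})$ is the easy half. For any $x\in M$ near $\bar{x}$ each component vanishes, $F_j(x)=F_j(\bar{x})=0$, so differentiability of $F_j$ yields $\langle\nabla F_j(\bar{x}),x-\bar{x}\rangle=o(|x-\bar{x}|)$ as $x\to\bar{x}$ in $M$. Taking any $v=\sum_j\lambda_j\nabla F_j(\bar{x})\in N$ and summing gives $\langle v,x-\bar{x}\rangle=o(|x-\bar{x}|)$, which in particular satisfies the Fr\'echet inequality, so $v\in\hat{N}_M(\bar{x})$.

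For the reverse inclusion $\hat{N}_M(\bar{x})\subseteq N$ I would produce tangent curves. The full-rank condition lets the implicit function theorem furnish a $\mathbf{C}^1$ chart $\psi$ with $\psi(0)=\bar{x}$ whose image lies in $M$ and with $\rge\nabla\psi(0)=T$. Given $v\in\hat{N}_M(\bar{x})$ and any $w\in T$, write $w=\nabla\psi(0)u$ and run the Fr\'echet inequality along the curve $t\mapsto\psi(tu)$: dividing by $t$ and letting $t\downarrow 0$ gives $\langle v,w\rangle\le 0$, and since $T$ is a subspace the same bound for $-w$ forces $\langle v,w\rangle=0$. Thus $v\in T^{\perp}=N$, establishing $\hat{N}_M(\bar{x})=N$.

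Finally, for Clarke regularity, the trivial inclusion $\hat{N}_M(\bar{x})\subseteq N_M(\bar{x})$ follows from constant sequences. Conversely, the same $F$ is a defining function at every nearby point, so by the previous step $\hat{N}_M(x_i)=\spann\{\nabla F_j(x_i)\}$. Given $x_i\to\bar{x}$ in $M$ and $v_i\in\hat{N}_M(x_i)$ with $v_i\to v$, write $v_i=\sum_j\lambda^i_j\nabla F_j(x_i)$; since some $(n-r)\times(n-r)$ minor of $\nabla F$ is nonzero throughout a neighborhood of $\bar{x}$, the gradients remain independent and the boundedness of $v_i$ forces boundedness of the coefficients $\lambda^i$. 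Passing to a subsequence $\lambda^i\to\lambda$ and using $\nabla F_j(x_i)\to\nabla F_j(\bar{x})$ yields $v=\sum_j\lambda_j\nabla F_j(\bar{x})\in N$, hence $N_M(\bar{x})=\hat{N}_M(\bar{x})=N$. The main obstacle is precisely this last limiting step together with the reverse Fr\'echet inclusion: both hinge on the full-rank hypothesis, which is what guarantees the availability of tangent curves and, quantitatively, the control of the coefficients $\lambda^i$ ensuring the normal spaces vary upper-semicontinuously. (A conceptually cleaner but machinery-heavier alternative is to straighten $M$ by a $\mathbf{C}^1$ diffeomorphism to a linear subspace and invoke a transformation rule for these normal-cone constructions.)
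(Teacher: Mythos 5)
Your proof is correct. Note first that the paper offers no argument of its own for this statement: it is quoted directly from Rockafellar--Wets \cite[Example 6.8]{VA}, so there is no internal proof to compare against. Your self-contained argument is essentially the standard one, and every step checks out: the inclusion $\spann\{\nabla F_j(\bar{x})\}\subseteq\hat{N}_M(\bar{x})$ from differentiating the vanishing constraints, the reverse inclusion via $\mathbf{C}^1$ curves supplied by the implicit function theorem (using that $T$ is a subspace to upgrade $\langle v,w\rangle\le 0$ to equality), and the limiting step, where the uniform linear independence of the gradients near $\bar{x}$ gives $\lambda^i=\bigl(\nabla F(x_i)\nabla F(x_i)^T\bigr)^{-1}\nabla F(x_i)v_i$, so the coefficients are not merely bounded but actually converge, making the subsequence extraction unnecessary. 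Your parenthetical alternative --- straightening $M$ by a $\mathbf{C}^1$ diffeomorphism and invoking a change-of-coordinates rule for normal cones --- is in fact the route Rockafellar--Wets themselves suggest (via their Exercise 6.7), so your direct computation is, if anything, the more elementary of the two. One small point to add for completeness: the paper's definition of Clarke regularity requires $M$ to be locally closed at $\bar{x}$, which you never verify; it is immediate, since $M\cap U=F^{-1}(0)\cap U$ is relatively closed in $U$ by continuity of $F$, but a one-line remark to that effect belongs in the proof. Also make explicit that since $\bar{x}\in M$ was arbitrary, the conclusion holds at every point of $M$, as the theorem asserts.
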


We will have occasion to use the following simple result. 
\begin{prop}\label{prop:help}
Consider a set $M\subset \R^n$ and a function $f\colon\R^n\to\overline{\R}$ that is finite-valued and ${\bf C}^{1}$-smooth on $M$.
Then, at any point $\bar{x}\in M$, we have $$\hat{\partial} f(\bar{x})\subset \nabla g(\bar{x})+\hat{N}_M(\bar{x}),$$ where $g\colon\R^n\to\R$ is any ${\bf C}^1$-smooth function agreeing with $f$ on a neighborhood of $\bar{x}$ in $M$.
\end{prop}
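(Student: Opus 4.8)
The plan is to unwind both sides of the claimed inclusion into their defining first-order inequalities and to bridge them through the smooth representative $g$. Fix a vector $v\in\hat{\partial}f(\bar{x})$; it suffices to show that $w:=v-\nabla g(\bar{x})$ lies in $\hat{N}_M(\bar{x})$, which by the definition of the Frech\'{e}t normal cone means
$$\langle w,x-\bar{x}\rangle\leq o(|x-\bar{x}|)\quad\textrm{as } x\to\bar{x}\textrm{ in }M.$$
The two ingredients I would assemble are, first, the analytic subgradient inequality encoded by $v\in\hat{\partial}f(\bar{x})$, and second, the first-order Taylor expansion of $g$ at $\bar{x}$ supplied by its smoothness.

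First I would pass from the epigraphical definition of $\hat{\partial}f(\bar{x})$ to the familiar directional subgradient inequality
$$f(x)-f(\bar{x})\geq\langle v,x-\bar{x}\rangle+o(|x-\bar{x}|)\quad\textrm{as } x\to\bar{x}\textrm{ in }M,$$
this being the standard correspondence between regular subgradients and epigraphical Frech\'{e}t normals (see \cite[Theorem 8.9]{VA}), or else derived directly from $(v,-1)\in\hat{N}_{\sepi f}(\bar{x},f(\bar{x}))$ by testing the normal-cone inequality against the points $(x,f(x))$ with $x\in M$ near $\bar{x}$. Next I would invoke that $g$ agrees with $f$ on a neighbourhood of $\bar{x}$ in $M$ and that $g$ is ${\bf C}^1$ on $\R^n$: for $x\in M$ close to $\bar{x}$ we have $f(x)=g(x)$ and $f(\bar{x})=g(\bar{x})$, while differentiability of $g$ yields $g(x)-g(\bar{x})=\langle\nabla g(\bar{x}),x-\bar{x}\rangle+o(|x-\bar{x}|)$. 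Substituting this expansion into the subgradient inequality and absorbing the $o$-terms gives
$$\langle\nabla g(\bar{x}),x-\bar{x}\rangle+o(|x-\bar{x}|)\geq\langle v,x-\bar{x}\rangle\quad\textrm{as } x\to\bar{x}\textrm{ in }M,$$
which rearranges precisely to $\langle v-\nabla g(\bar{x}),x-\bar{x}\rangle\leq o(|x-\bar{x}|)$ along $M$, that is $w\in\hat{N}_M(\bar{x})$, establishing the inclusion.

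The one point requiring genuine care --- and the main obstacle --- is the reduction in the first step from the epigraphical normal to the analytic inequality. The epigraph definition controls the relevant quotient against the ambient denominator $|(x,f(x))-(\bar{x},f(\bar{x}))|$ rather than against $|x-\bar{x}|$, so I must verify that these are comparable as $x\to\bar{x}$ in $M$. This is exactly where the hypothesis that $f$ is ${\bf C}^1$-smooth on $M$ does its work: near $\bar{x}$ the function $g$ is locally Lipschitz (being ${\bf C}^1$), whence $|f(x)-f(\bar{x})|=|g(x)-g(\bar{x})|=O(|x-\bar{x}|)$ for $x\in M$, and therefore $|(x,f(x))-(\bar{x},f(\bar{x}))|=O(|x-\bar{x}|)$. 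This simultaneously guarantees that $(x,f(x))\to(\bar{x},f(\bar{x}))$ within $\epi f$ (so that the epigraphical inequality is actually applicable along $M$) and lets me replace $o(|(x,f(x))-(\bar{x},f(\bar{x}))|)$ by $o(|x-\bar{x}|)$, which is what closes the argument. Everything else is routine $o$-calculus.
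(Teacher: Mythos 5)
Your proof is correct, but it travels a different route from the paper's. The paper disposes of the statement in two lines of subdifferential calculus: it introduces the function $h$ agreeing with $f$ on $M$ and equal to $+\infty$ off $M$, so that $f\le h$ with equality at $\bar{x}$ (equivalently $\epi h\subset \epi f$), which forces $\hat{\partial}f(\bar{x})\subset\hat{\partial}h(\bar{x})$; it then notes that $h$ coincides with $g+\delta_M$ on a neighborhood of $\bar{x}$, so by locality $\hat{\partial}h(\bar{x})=\hat{\partial}(g+\delta_M)(\bar{x})$, and finally invokes the exact sum rule for a smooth function plus an arbitrary function to get $\hat{\partial}(g+\delta_M)(\bar{x})=\nabla g(\bar{x})+\hat{N}_M(\bar{x})$. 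You instead verify the inclusion from first principles: pass to the analytic characterization of Fr\'{e}chet subgradients, restrict the resulting inequality to $x\in M$, substitute the first-order expansion of $g$, and rearrange to land in $\hat{N}_M(\bar{x})$. Your handling of the one genuine subtlety --- converting the epigraphical remainder $o(|(x,f(x))-(\bar{x},f(\bar{x}))|)$ into $o(|x-\bar{x}|)$ via the local Lipschitz bound $|f(x)-f(\bar{x})|=O(|x-\bar{x}|)$ furnished by $g$ --- is exactly right, and it is genuinely needed if you derive the analytic inequality by hand from the paper's epigraphical definition of $\hat{\partial}f$; if you simply cite \cite[Theorem 8.9]{VA}, where regular subgradients are defined by the inequality with denominator $|x-\bar{x}|$, the issue evaporates. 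What your route buys is self-containedness: no majorization trick, no sum rule, only $o$-calculus. What the paper's route buys is brevity and a reusable pattern (comparing $f$, $f+\delta_M$, and $g+\delta_M$ through their epigraphs) that recurs elsewhere in the paper, for instance in the functional versions of the reduction results of Section~\ref{sec:func}.
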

\begin{proof}
Define a function $h:\R^n\to\overline{\R}$ agreeing with $f$ on $M$ and equalling plus infinity elsewhere.
We successively deduce
$$\hat{\partial} f(\bar{x})\subset\hat{\partial} h(\bar{x})=\hat{\partial} (g(\cdot)+\delta_M(\cdot))(\bar{x})=\nabla g(\bar{x})+\hat{N}_M(\bar{x}),$$ as we need to show.
\end{proof}

For a set $Q\subset\R^n$, we let $\delta_Q$ denote a function that is $0$ on $Q$ and $+\infty$ elsewhere; we call $\delta_Q$ the {\em indicator function} of $Q$.
Then for any point $\bar{x}\in Q$, we have $N^{P}_Q(\bar{x})=\partial_P \delta_Q(\bar{x})$, $\hat{N}_Q(\bar{x})=\hat{\partial} \delta_Q(\bar{x})$ and $N_Q(\bar{x})=\partial \delta_Q(\bar{x})$.

\subsection{Identifiability in variational analysis}
We are now ready to define the appropriate notion of identifiability in the context of optimization.
\begin{defn}[Identifiability for functions] {\ } \\ 
{\rm Consider a function $f\colon\R^n\to\overline{\R}$, a point $\bar{x}\in\R^n$, and a subgradient $\bar{v}\in\partial f(\bar{x})$. A set $M\subset\R^n$ is {\em identifiable at} $\bar{x}$ {\em for} $\bar{v}$ if for any sequences $(x_i,f(x_i),v_i)\to(\bar{x},f(\bar{x}),\bar{v})$, with $v_i\in\partial f(x_i)$, the points $x_i$ must all lie in $M$ for all sufficiently large indices $i$.}
\end{defn}

The definition above can be interpreted in the sense of Section~\ref{sec:ident}. 
Indeed, consider a function $f\colon\R^n\to\overline{\R}$ and a subgradient $\bar{v}\in\partial f(\bar{x})$, for some point $\bar{x}\in\R^n$.
Define the set-valued mapping $$G\colon\R^n\rightrightarrows\R\times \R^n,$$
$$x\mapsto \{f(x)\}\times \partial f(x).$$ 
Then $M$ is identifiable (relative to $f$) at $\bar{x}$ for $\bar{v}$ if and only if
it is identifiable (relative to $G$) at $\bar{x}$ for the vector $(f(\bar{x}),\bar{v})$. Here, we have to work with the mapping $G$, rather than the subdifferential mapping $\partial f$ directly, so as to facilitate coherence between normal cone mappings and subdifferential mappings via epigraphical geometry. (See Proposition~\ref{prop:coherence}.)
This slight annoyance can be avoided whenever $f$ is subdifferentially continuous at $\bar{x}$ for $\bar{v}$.
\begin{defn}[Subdifferential continuity]
{\rm A function $f\colon\R^n\to\overline{\R}$ is {\em subdifferentially continuous at} $\bar{x}$ {\em for} $\bar{v}\in\partial f(\bar{x})$ if for any sequences $x_i\to\bar{x}$ and $v_i\to\bar{v}$, with $v_i\in\partial f(x_i)$, it must be the case that $f(x_i)\to f(\bar{x})$.}
\end{defn}

Subdifferential continuity of a function $f$ at $\bar{x}$ for $\bar{v}$ was introduced in \cite[Definition 1.14]{prox_reg}, and it amounts to requiring the function $(x,v)\mapsto f(x)$, restricted to $\gph \partial f$, to be continuous in the usual sense at the point $(\bar{x},\bar{v})$. In particular, any lower-semicontinuous convex function is subdifferentially continuous \cite[Example 13.30]{VA}. 

Similarly, we define necessary sets as follows.
\begin{defn}[Necessity for functions]
{\rm Consider a function $f\colon\R^n\to\overline{\R}$. A set $M\subset\R^n$ is {\em necessary at} $\bar{x}$ {\em for} $\bar{v}\in\partial f(\bar{x})$ if both the function $f$ and the mapping $$x\mapsto d(\bar{v}, \partial f(x)),$$ restricted to $M$, are continuous at $\bar{x}$. }
\end{defn}

Specializing the characterization in Proposition~\ref{prop:loc_min} to this setting, we obtain the following.
\begin{prop}[Characterizing locally minimal identifiability]\label{prop:charsub} {\ } \\  
Consider a function $f\colon\R^n\to\overline{\R}$, a point $\bar{x}\in\R^n$, and a subgradient $\bar{v}\in \partial f(\bar{x})$. Then the following are equivalent.
\begin{enumerate}
\item $M$ is a locally minimal identifiable set at $\bar{x}$ for $\bar{v}$,
\item There exists a neighborhood $V$ of $\bar{v}$ and a real number $\epsilon >0$ such that for any subneighborhood $W\subset V$ of $\bar{v}$ and a real number $0<\epsilon'<\epsilon$, the presentation
$$M=(\partial f)^{-1}(W)\cap\{x\in \R^n: |f(x)- f(\bar{x})|<\epsilon'\} \textrm{ holds locally around } \bar{x}.$$
\item $M$ is a locally maximal necessary set at $\bar{x}$ for $\bar{v}$.
\item $M$ is identifiable and necessary at $\bar{x}$ for $\bar{v}$
\end{enumerate}
\end{prop}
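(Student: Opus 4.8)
The plan is to reduce everything to Proposition~\ref{prop:loc_min} applied to the auxiliary set-valued mapping $G\colon\R^n\rightrightarrows\R\times\R^n$ given by $G(x)=\{f(x)\}\times\partial f(x)$, evaluated at the pair $(\bar{x},(f(\bar{x}),\bar{v}))\in\gph G$. As already recorded in the text, $M$ is identifiable relative to $f$ at $\bar{x}$ for $\bar{v}$ precisely when it is identifiable relative to $G$ at $\bar{x}$ for $(f(\bar{x}),\bar{v})$; since local minimality is just identifiability together with a purely set-theoretic inclusion property, the notion of a \emph{locally minimal identifiable set} therefore coincides in the two settings. It remains only to match up the notions of necessity and the preimage representation in characterization (2), after which the fourfold equivalence is inherited verbatim from Proposition~\ref{prop:loc_min}.

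First I would translate necessity. For $x\in\R^n$ a direct computation gives
\[
d\big((f(\bar{x}),\bar{v}),G(x)\big)=\inf_{w\in\partial f(x)}\sqrt{|f(x)-f(\bar{x})|^2+|w-\bar{v}|^2}=\sqrt{|f(x)-f(\bar{x})|^2+d(\bar{v},\partial f(x))^2}.
\]
Since $(f(\bar{x}),\bar{v})\in G(\bar{x})$, the left-hand side vanishes at $\bar{x}$. Hence its restriction to $M$ is continuous at $\bar{x}$ if and only if both $f(x)\to f(\bar{x})$ and $d(\bar{v},\partial f(x))\to 0=d(\bar{v},\partial f(\bar{x}))$ as $x\to\bar{x}$ in $M$; that is, if and only if $M$ is necessary relative to $f$ at $\bar{x}$ for $\bar{v}$ in the sense of the definition above. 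Thus $f$-necessity and $G$-necessity agree as properties of $M$, and the same is then automatic for \emph{locally maximal necessary set}.

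Next I would handle the preimage representation. Writing a typical product neighborhood of $(f(\bar{x}),\bar{v})$ as $I_{\epsilon'}\times W$, where $I_{\epsilon'}=(f(\bar{x})-\epsilon',f(\bar{x})+\epsilon')$ and $W$ is a neighborhood of $\bar{v}$ in $\R^n$, one computes directly from the definition of $G$ that
\[
G^{-1}(I_{\epsilon'}\times W)=(\partial f)^{-1}(W)\cap\{x\in\R^n: |f(x)-f(\bar{x})|<\epsilon'\},
\]
which is exactly the set appearing in characterization (2) of the present proposition. To pass between the product form used here and the arbitrary neighborhoods appearing in characterization (2) of Proposition~\ref{prop:loc_min}, I would use that the products $I_{\epsilon'}\times W$ form a neighborhood base at $(f(\bar{x}),\bar{v})$: any neighborhood contains, and is contained in, such products, so sandwiching $G^{-1}$ of an arbitrary subneighborhood between the preimages of two products and invoking monotonicity of preimages transfers the stabilized representation in both directions. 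Choosing the outer neighborhood to correspond to a radius strictly smaller than $\epsilon$ accommodates the strict inequalities $0<\epsilon'<\epsilon$ and $W\subset V$.

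With these translations in hand, conditions (1)--(4) of this proposition are, term by term, conditions (1)--(4) of Proposition~\ref{prop:loc_min} for the mapping $G$ at $(f(\bar{x}),\bar{v})$, and the desired equivalence follows immediately. I expect the only genuinely delicate point to be the neighborhood-base bookkeeping in the previous paragraph --- keeping track of which products nest inside which neighborhoods, and respecting the strict inequalities in the $\epsilon'$-quantifier --- whereas the identifiability and necessity dictionaries are routine once the distance formula above is in place.
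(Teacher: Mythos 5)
Your proposal is correct and follows exactly the route the paper intends: the paper gives no detailed proof, stating only that the result follows by ``specializing the characterization in Proposition~\ref{prop:loc_min}'' to the mapping $G(x)=\{f(x)\}\times\partial f(x)$, and your argument carries out precisely that specialization. The two details the paper leaves implicit --- the factorization $d\bigl((f(\bar{x}),\bar{v}),G(x)\bigr)^2=|f(x)-f(\bar{x})|^2+d(\bar{v},\partial f(x))^2$ matching the two-part definition of necessity for functions, and the sandwich argument passing between arbitrary neighborhoods of $(f(\bar{x}),\bar{v})$ and the product base $I_{\epsilon'}\times W$ --- are both handled correctly in your write-up.
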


\begin{defn}[Identifiability for sets]
{\rm Given a set $Q\subset\R^n$, we will say that a subset $M\subset Q$ is {\em identifiable} (relative to $Q$) at $\bar{x}$ for $\bar{v}\in N_Q(\bar{x})$ if $M$ is identifiable (relative to $\delta_Q$) at $\bar{x}$ for $\bar{v}\in \partial\delta_Q(\bar{x})$. Analogous conventions will hold for necessary sets and locally minimal identifiable sets.}
\end{defn}

It is instructive to observe the relationship between identifiability and the metric projection in presence of convexity.
\begin{prop}[Identifiability for convex sets] {\ } \\ 
Consider a closed, convex set $Q$ and a subset $M\subset Q$. Let $\bar{x}\in M$ and $\bar{v}\in N_Q(\bar{x})$. Then the following are equivalent.
\begin{enumerate}
\item\label{it:usual} $M$ is identifiable (relative to $Q$) at $\bar{x}$ for $\bar{v}$.
\item\label{it:proj} $M$ is identifiable (relative to $P_Q^{-1}$) at $\bar{x}$ for $\bar{x}+\bar{v}$.
\end{enumerate}  
Analogous equivalence holds for necessary sets.
\end{prop}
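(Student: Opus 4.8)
The plan is to unwind both statements to conditions on the same family of sequences and then to observe that a single change of variables identifies the two graphs involved.

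I would begin by simplifying the left-hand notion. By the definition of identifiability for sets, statement~(\ref{it:usual}) means that $M$ is identifiable (relative to $\delta_Q$) at $\bar{x}$ for $\bar{v}\in\partial\delta_Q(\bar{x})=N_Q(\bar{x})$. Because $\delta_Q$ vanishes on $Q$ and equals $+\infty$ elsewhere, it is subdifferentially continuous at $\bar{x}$ for $\bar{v}$, so the function-value coordinate in the definition of identifiability for functions is redundant: a sequence $(x_i,\delta_Q(x_i),v_i)\to(\bar{x},0,\bar{v})$ in $\gph G$, where $G(x)=\{\delta_Q(x)\}\times\partial\delta_Q(x)$, is nothing more than a sequence $x_i\to\bar{x}$ in $Q$ together with $v_i\in N_Q(x_i)$ satisfying $v_i\to\bar{v}$. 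Thus (\ref{it:usual}) asserts exactly the local containment $\gph\partial\delta_Q\subset M\times\R^n$ around $(\bar{x},\bar{v})$.

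Next I would invoke the standard projection characterization for closed convex sets: for $x\in Q$ one has $y\in P_Q^{-1}(x)$ if and only if $y-x\in N_Q(x)$, so that $\gph P_Q^{-1}=\{(x,y):y-x\in N_Q(x)\}$, whereas $\gph\partial\delta_Q=\{(x,v):v\in N_Q(x)\}$. Consider the homeomorphism $\Phi$ of $\R^n\times\R^n$ given by $\Phi(x,v)=(x,x+v)$, with inverse $(x,w)\mapsto(x,w-x)$. It carries $\gph\partial\delta_Q$ bijectively onto $\gph P_Q^{-1}$ and sends $(\bar{x},\bar{v})$ to $(\bar{x},\bar{x}+\bar{v})$; since it preserves first coordinates it maps $M\times\R^n$ onto itself, and being a homeomorphism it matches neighborhoods of $(\bar{x},\bar{v})$ with neighborhoods of $(\bar{x},\bar{x}+\bar{v})$. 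Applying $\Phi$ to the local containment above therefore shows that (\ref{it:usual}) holds if and only if $\gph P_Q^{-1}\subset M\times\R^n$ locally around $(\bar{x},\bar{x}+\bar{v})$, which is precisely statement~(\ref{it:proj}).

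For necessary sets I would argue analogously. Since $\delta_Q$ is continuous (indeed constant) on $M\subset Q$, necessity of $M$ relative to $Q$ reduces to continuity at $\bar{x}$, along $M$, of the map $x\mapsto d(\bar{v},N_Q(x))$. On the projection side, necessity of $M$ relative to $P_Q^{-1}$ for $\bar{x}+\bar{v}$ is continuity at $\bar{x}$, along $M$, of $x\mapsto d\big(\bar{x}+\bar{v},\,x+N_Q(x)\big)=d\big(\bar{v}+(\bar{x}-x),\,N_Q(x)\big)$. Because distance functions are $1$-Lipschitz in the reference point, this quantity differs from $d(\bar{v},N_Q(x))$ by at most $|x-\bar{x}|$, which tends to $0$; hence the two continuity conditions are equivalent. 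The argument is largely bookkeeping, and the one point requiring care is reconciling the two notions of identifiability: the function version of Section~\ref{sec:spec_app} tracks the extra value coordinate $f(x)$, while the set-valued version of Section~\ref{sec:ident} applied to $P_Q^{-1}$ does not. I expect the main (minor) obstacle to be verifying that this coordinate is genuinely inert here --- which is exactly the subdifferential continuity of $\delta_Q$, valid because $Q$ is closed and convex --- together with, in the necessary-set case, correctly absorbing the vanishing shift $\bar{x}-x$ via the Lipschitz estimate.
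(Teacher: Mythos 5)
Your proof is correct and takes essentially the same route as the paper: the homeomorphism $\Phi(x,v)=(x,x+v)$ carrying $\gph N_Q$ onto $\gph P_Q^{-1}$ is exactly the paper's two sequence arguments (extracting $y_i-x_i\in N_Q(x_i)$ in one direction and forming $(x_i,x_i+v_i)\in\gph P_Q^{-1}$ in the other) packaged into a single change of variables, resting on the same convex-projection identity $P_Q(y)=x\Leftrightarrow y-x\in N_Q(x)$. Your reduction eliminating the inert value coordinate of $\delta_Q$ is sound, and your necessary-set argument via the $1$-Lipschitz estimate $\bigl|d\bigl(\bar{v}+(\bar{x}-x),N_Q(x)\bigr)-d\bigl(\bar{v},N_Q(x)\bigr)\bigr|\le|x-\bar{x}|$ correctly supplies the verification the paper explicitly leaves to the reader.
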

\begin{proof}
Suppose that $M$ is identifiable (relative to $Q$) at $\bar{x}$ for $\bar{v}$. Consider a sequence $(x_i,y_i)\to(\bar{x},\bar{x}+\bar{v})$ in $\gph P_Q^{-1}$. Observe $x_i= P_Q(y_i)$ and the sequence $y_i-x_i\in N_Q(x_i)$ converges to $\bar{v}$. Consequently, the points $x_i$ all eventually lie in $M$.

Conversely suppose that $M$ is identifiable (relative to $P_Q^{-1}$) at $\bar{x}$ for $\bar{x}+\bar{v}$. Consider a sequence $(x_i,v_i)\to(\bar{x},\bar{v})$ in $\gph N_Q$. Then the sequence $(x_i,x_i+v_i)\in \gph P_Q^{-1}$ converges to $(\bar{x},\bar{x}+\bar{v})$. Consequently, we have $x_i\in M$ for all large $i$.

We leave the verification of the analogous equivalence for necessary sets to the reader.
\end{proof}

Thus a subset $M$ of a closed, convex set $Q$ is identifiable at $\bar{x}$ for $\bar{v}\in N_Q(\bar{x})$ if and only if the equality, $P_Q=P_M$, \textrm{ holds locally around } $\bar{x}+\bar{v}$.

The following simple proposition establishes epigraphical coherence, alluded to above, between normal cone mappings and subdifferential mappings in the context of identifiability.
\begin{prop}[Epigraphical coherence]\label{prop:coherence} {\ } \\ 
Consider a function $f\colon\R^n\to\overline{\R}$ and a subgradient $\bar{v}\in\partial f(\bar{x})$, for some point $\bar{x}\in\R^n$. Then $M\subset\dom f$ is an identifiable set (relative to $f$) at $\bar{x}$ for $\bar{v}$ if and only if $\gph f\big|_M$ is an identifiable set (relative to $\epi f$) at $(\bar{x},f(\bar{x}))$ for $(\bar{v},-1)$. Analogous statements hold for necessary, and consequently for locally minimal identifiable sets.
\end{prop}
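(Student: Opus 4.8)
The plan is to reduce both identifiability conditions to statements about sequences and then to match them through the single algebraic fact underlying Definition~\ref{defn:subdiff}, namely that $v\in\partial f(x)$ holds precisely when $(v,-1)\in N_{\epi f}(x,f(x))$. Writing $G(x)=\{f(x)\}\times\partial f(x)$ as in the discussion preceding this proposition, identifiability of $M$ relative to $f$ is identifiability of $M$ relative to $G$, i.e. a condition on sequences $(x_i,f(x_i),v_i)\to(\bar x,f(\bar x),\bar v)$ in $\gph G$; identifiability of $\gph f\big|_M$ relative to $\epi f$ is a condition on sequences $\big((x_i,r_i),(w_i,s_i)\big)\to\big((\bar x,f(\bar x)),(\bar v,-1)\big)$ in $\gph N_{\epi f}$. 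The correspondence $(x,v)\leftrightarrow\big((x,f(x)),(v,-1)\big)$ identifies the graph points of the two pictures exactly, so the whole proof amounts to showing that the extra freedom on the epigraphical side --- base points strictly above the graph, and normal vectors whose last coordinate is only near $-1$ --- contributes nothing new.

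First I would dispatch the implication that epigraphical identifiability of $\gph f\big|_M$ forces identifiability of $M$. Given a sequence $(x_i,f(x_i),v_i)\to(\bar x,f(\bar x),\bar v)$ with $v_i\in\partial f(x_i)$, the points $\big((x_i,f(x_i)),(v_i,-1)\big)$ lie in $\gph N_{\epi f}$ by Definition~\ref{defn:subdiff} and converge to $\big((\bar x,f(\bar x)),(\bar v,-1)\big)$; epigraphical identifiability then places $(x_i,f(x_i))$ in $\gph f\big|_M$ for all large $i$, which is exactly $x_i\in M$. This direction uses nothing beyond the definition of the subdifferential.

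The substantive direction is the converse. Starting from a sequence $\big((x_i,r_i),(w_i,s_i)\big)\to\big((\bar x,f(\bar x)),(\bar v,-1)\big)$ in $\gph N_{\epi f}$, I would first use $s_i\to-1<0$ together with the fact that $N_{\epi f}(x_i,r_i)$ is a cone to rescale by the positive factor $-1/s_i$, replacing $(w_i,s_i)$ by $\big(w_i/(-s_i),-1\big)$ with $w_i/(-s_i)\to\bar v$. The crux is then to show that the base points eventually satisfy $r_i=f(x_i)$, i.e. that they lie on the graph. Here I would invoke the geometry of $\epi f$: since $\epi f$ is invariant under upward vertical translation, the downward direction is feasible at any base point with $r_i>f(x_i)$, which forces every regular (equivalently, proximal) normal there to have nonnegative last coordinate; hence a normal with last coordinate near $-1$ can only arise as a limit of regular normals taken at nearby graph points, and lower semicontinuity --- which, via $r_i\ge f(x_i)$ and $r_i\to f(\bar x)$, already yields $f(x_i)\to f(\bar x)$ --- is what lets this limiting data be realized on the graph itself. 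Granting $r_i=f(x_i)$, Definition~\ref{defn:subdiff} gives $w_i/(-s_i)\in\partial f(x_i)$, so $\big(x_i,f(x_i),w_i/(-s_i)\big)$ is an admissible sequence for $f$; identifiability of $M$ relative to $f$ then yields $x_i\in M$, whence $(x_i,r_i)=(x_i,f(x_i))\in\gph f\big|_M$.

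I expect this base-on-graph reduction to be the main obstacle, since it is exactly the point where the limiting (as opposed to regular) normal cone is delicate: a priori a limiting normal with vertical component $-1$ can sit at a point strictly above the graph, and ruling this out along a convergent sequence is what requires lower semicontinuity and the vertical structure of $\epi f$ rather than the bare definition of $\partial f$. Once the identifiable equivalence is in place, the necessary-set version follows from the same correspondence applied to the two continuity requirements: continuity of $f\big|_M$ and of $x\mapsto d(\bar v,\partial f(x))$ along $M$ match continuity of $(x,r)\mapsto d\big((\bar v,-1),N_{\epi f}(x,r)\big)$ along $\gph f\big|_M$, again using that the $-1$ level pins the base to the graph. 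The locally minimal identifiable statement is then immediate, because by Proposition~\ref{prop:charsub} (cf. Proposition~\ref{prop:loc_min}) a locally minimal identifiable set is precisely one that is simultaneously identifiable and necessary, a property now transported across the correspondence.
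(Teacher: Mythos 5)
Your easy direction (epigraphical identifiability implies functional identifiability) and the conic rescaling of $(w_i,s_i)$ by $-1/s_i$ are fine, and you have correctly located the crux. But the crux step itself --- that along a sequence $\big((x_i,r_i),(w_i,s_i)\big)\to\big((\bar x,f(\bar x)),(\bar v,-1)\big)$ in $\gph N_{\epi f}$ the base points eventually satisfy $r_i=f(x_i)$ --- is not delivered by your appeal to lower semicontinuity, and in fact it is false for merely lsc $f$. Lower semicontinuity closes the \emph{epigraph}, not the \emph{graph}: a limit of graph points $(y,f(y))$ may be a point $(x,r)$ with $r>f(x)$, and precisely such points can carry \emph{limiting} normals with vertical component $-1$, even though (as you correctly note) all \emph{regular} normals there are horizontal. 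Concretely, define $f\colon\R\to\R$ by $f(x)=0$ for $x\le 0$, $f(1/n)=0$ for each $n$, $f\equiv 1/n^2$ on small disjoint punctured closed intervals around the points $1/n$, and $f\equiv 1$ elsewhere on the positive axis; this $f$ is lsc. The regular normal $(0,-1)$ at the graph points $(y,1/n^2)$, $y\neq 1/n$, survives the limit $y\to 1/n$, giving $(0,-1)\in N_{\epi f}(1/n,\,1/n^2)$, while $f(1/n)=0<1/n^2$, so these base points lie strictly above the graph. Moreover $(0,-1)\in N_{\epi f}(0,0)$ (indeed $0\in\partial_P f(0)$), so $\big((1/n,1/n^2),(0,-1)\big)\to\big((0,0),(0,-1)\big)$ is an admissible sequence on the epigraphical side whose bases never lie on $\gph f\big|_M$ for \emph{any} $M$. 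Taking $M=\R=\dom f$, which is trivially identifiable relative to $f$ at $0$ for $0$, shows your claimed reduction cannot be repaired as argued: at this level of generality the base-on-graph claim needs an extra hypothesis such as continuity of $f$ relative to its domain near $\bar x$ (which makes the graph locally closed within the epigraph, at which point your argument does close), and your proof should have surfaced that hypothesis rather than attributed the step to lower semicontinuity.

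For calibration: the paper records this proposition without any proof, as a ``simple'' observation, so there is no argument of record to compare against --- which makes the subtlety you flagged all the more worth resolving honestly rather than waving at. A second, smaller instance of the same value-convergence asymmetry affects your final paragraph on necessary sets: functional necessity of $M$ \emph{includes} continuity of $f\big|_M$ at $\bar x$, whereas epigraphical necessity of $\gph f\big|_M$ is silent about sequences $x_i\to\bar x$ in $M$ along which $f(x_i)\not\to f(\bar x)$, since such sequences simply fail to produce points of $\gph f\big|_M$ converging to $(\bar x,f(\bar x))$. (Take $f(0)=0$, $f\equiv 1$ off the origin, $M=\R$: the graph condition holds vacuously while $f\big|_M$ is discontinuous at $0$.) So the epigraph-to-function transfer for necessary sets, and hence your concluding reduction of local minimality via Proposition~\ref{prop:charsub}, also requires either an added continuity hypothesis or a restriction of the claim.
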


\section{Basic Examples}\label{sec:examples}
In this section, we present some basic examples of identifiable sets. 
\begin{exa}[Smooth functions]
{\rm
If $U$ is an open set containing $\bar{x}$ and $f\colon U\to\R$ is ${\bf C}^1$-smooth. Then $U$ is a locally minimal identifiable set at $\bar{x}$ for $\nabla f(\bar{x})$.}
\end{exa}  

\begin{exa}[Smooth manifolds]
{\rm If $M$ is a ${\bf C}^1$ manifold, the $M$ is a locally minimal identifiable set at any $x\in M$ for any $v\in N_M(x)$. This follows immediately by observing that the normal cone mapping $x\mapsto N_M(x)$ is inner-semicontinuous on $M$.}
\end{exa}
  
We define the {\em support} of any vector $v\in\R^n$, denoted $\supp v$, to be the set consisting of all indices $i\in\{1,\ldots,n\}$ such that $v_i\neq 0$. The {\em rank} of $v$, denoted $\rank v$, is then the size of the support $\supp v$.  
  
\begin{exa}[Convex polyhedra]\label{exa:polyset}
{\rm Let $Q\subset\R^n$ be a convex polyhedron. Example~\ref{exa:poly} shows that $M:=N_Q^{-1}(\bar{v})$ (equivalently, $M:=\argmax_{x\in Q} \langle \bar{v},x \rangle$) is a locally minimal identifiable set at $\bar{x}$ for $\bar{v}$. 

More concretely, suppose that $Q$ has the representation
\begin{equation}\label{eqn:const_rep}
Q=\{x\in\R^n:\langle a_i,x \rangle\leq b_i \textrm{ for all }i\in I\},
\end{equation}
for some index set $I=\{1,\ldots,m\}$ and vectors $a_1,\ldots,a_m\in\R^n$ and $b\in\R^m$. For any point $x\in\R^n$, define the active index set $$I(x):=\{i\in I: \langle a_i,x \rangle =b_i\}.$$
Then we have $N_Q(x)=\cone\{a_i: i\in I(x)\}$ and consequently there exist multipliers $\bar{\lambda}\in\R^{m}_{+}$ with $\supp \bar{\lambda}\subset I(\bar{x})$ satisfying 
$\bar{v}=\sum_{i\in I(\bar{x})}\bar{\lambda}_i a_i$.
Hence for any point $y\in Q$, the equivalence
\begin{align*}
y\in M &\Longleftrightarrow \Big\langle \sum_{i\in I(\bar{x})}\bar{\lambda}_i a_i,y \Big\rangle=\Big\langle \sum_{i\in I(\bar{x})}\bar{\lambda}_i a_i,\bar{x} \Big\rangle \\ &\Longleftrightarrow  \sum_{i\in I(\bar{x})}\bar{\lambda}_i [\langle a_i,y\rangle-b_i ]=0,
\end{align*}
holds. We deduce that $M$ has the alternate description $$M=\{x\in Q:\supp \bar{\lambda} \subset I(x)\}.$$
We should note that under a strict complementarity condition, $\bar{v}\in\ri N_Q(\bar{x})$, we may choose $\bar{\lambda}$ with $\supp \bar{\lambda}=I(\bar{x})$. Then $M$ would consist of all points $x\in Q$, whose active index set $I(x)$ coincides with $I(\bar{x})$. It is then standard to check that $M$ coincides with an affine subspace locally around $\bar{x}$.
}
\end{exa}

\begin{exa}[Polyhedral functions]\label{exa:conv_poly}
{\rm
Analogously, we may analyse a convex polyhedral function $f\colon\R^n\to\overline{\R}$, a function whose epigraph is a convex polyhedron.
To be more precise, we may express $f$ as 
\begin{displaymath}
f(x) = \left\{
     \begin{array}{ll}
     \max_{i\in I}\{\langle a_i,x \rangle + b_i\} &\textrm{whenever } \langle c_j,x\rangle\leq d_j \textrm{ for all } j\in J,\\
  	 \infty &\textrm{otherwise},\\
\end{array}
\right.
\end{displaymath}
for some index sets $I=\{1,\ldots,m\}$ and $J=\{1,\ldots,k\}$, vectors $a_i,c_j\in\R^n$, and real numbers $b_i,d_j$ for $i\in I$ and $j\in J$. For any point $x\in\R^n$, define the active index sets 
\begin{align*}
I(x)&=\{i\in I: \langle a_i,x \rangle + b_i=f(x)\},\\
J(x)&=\{j\in J: \langle c_j,x \rangle =d_j\}.
\end{align*}
A straightforward computation shows 
$$\partial f(x)=\conv\{a_i:i\in I(x)\}+\cone\{c_j: j\in J(x)\}.$$
Consider a pair $(\bar{x},\bar{v})\in\gph \partial f$. Then there exist multipliers $(\bar{\lambda},\bar{\mu})\in\R^{m}_{+}\times\R^{k}_{+}$ satisfying 
$$\bar{v}=\sum_{i\in I}\bar{\lambda}_i a_i +\sum_{j\in J}\bar{\mu}_j c_j,$$
with $\sum_{i\in I}\bar{\lambda}_i =1$, $\supp \bar{\lambda}\subset I(\bar{x})$, and $\supp \bar{\mu} \subset J(\bar{x})$. 
Applying the same argument as in Example~\ref{exa:polyset} to $\epi f$, we deduce that the set
$$M=\{x\in\dom f:\supp \bar{\lambda} \subset I(x), ~\supp \bar{\mu} \subset J(x)\},$$ 
is a locally minimal identifiable set at $\bar{x}$ for $\bar{v}$. 
Again we should note that a particularly nice situation occurs under a strict complementarity condition, $\bar{v} \in \ri \partial f(\bar{x})$. In this case there exist multipliers $(\bar{\lambda}, \bar{\mu})$ so that $\supp \bar{\lambda}=I(\bar{x})$ and $\supp \bar{\mu}=J(\bar{x})$, and then $M$ coincides with an affine subspace locally around $\bar{x}$. 
}
\end{exa}

\begin{exa}[Maximum function]\label{exa:max}
{\rm
In particular, consider the maximum function $\mx\colon\R^n\to\R$, defined by 
$$\mx(x):=\max\{x_1,\ldots,x_n\}.$$ 
Then given a point $\bar{x}$ and a vector $\bar{v} \in\partial (\mx)(\bar{x})$, the set $M=\{x\in\R^n: \supp \bar{v}\subset I(x)\}$, where $$I(x):=\{i:x_i=\mx(x)\},$$
is a locally minimal identifiable set at $\bar{x}$ for $\bar{v}$. 
Alternatively, $M$ admits the presentation 
$$M= \{x\in\R^n: \mult \mx(x) \geq \rank \bar{v}\} \quad\textrm{ locally around } \bar{x},$$ where $\mult \mx(x)$ simply denotes the size of the set $I(x)$.
}
\end{exa}

Generalizing beyond polyhedrality, we now consider the so-called {\em piecewise linear-quadratic functions}; these are those functions whose domain can be represented as the union of finitely many convex polyhedra, so that the function is linear or quadratic on each such set. Convex piecewise linear-quadratic functions are precisely the convex functions whose subdifferential mappings are piecewise polyhedral \cite{piecequad}.  

\begin{prop}[Piecewise linear-quadratic functions]\label{prop:piece} {\ } \\ 
Consider a convex, piecewise linear-quadratic function $f\colon\R^n\to\overline{\R}$. Then there exists a locally minimal identifiable set at any point $x\in\dom f$ for any vector $y\in\partial f(x)$.  
\end{prop}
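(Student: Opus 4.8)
The plan is to reduce the statement to the piecewise polyhedral set-valued setting already handled in Example~\ref{exa:poly}. The key external ingredient is the characterization cited just before the proposition: a convex function is piecewise linear-quadratic precisely when its subdifferential mapping $\partial f$ is piecewise polyhedral, meaning that $\gph \partial f$ decomposes as a finite union of convex polyhedra $\bigcup_{i=1}^k V_i \subset \R^n\times\R^n$. Granting this, Example~\ref{exa:poly} applies verbatim to the set-valued mapping $\partial f$ and produces, for the chosen pair $(x,y)\in\gph\partial f$, the explicit locally minimal identifiable set $\bigcup_{i:(x,y)\in V_i}\pi(V_i)$ relative to $\partial f$ in the sense of Section~\ref{sec:ident}, where $\pi$ denotes the projection onto the first copy of $\R^n$.

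The one gap to bridge is that identifiability for a function is defined not through $\partial f$ directly but through the augmented mapping $G(z)=\{f(z)\}\times\partial f(z)$, whose extra coordinate records the function value. I would close this gap using subdifferential continuity. Since convex piecewise linear-quadratic functions are lower-semicontinuous and convex, $f$ is subdifferentially continuous at $x$ for $y$ by \cite[Example 13.30]{VA}. Consequently, whenever $z_i\to x$ and $v_i\to y$ with $v_i\in\partial f(z_i)$, one automatically has $f(z_i)\to f(x)$, so the function-value coordinate of $G$ carries no information beyond what $\partial f$ already provides. This is exactly what is needed to identify the two notions: a set is identifiable (respectively necessary) relative to $f$ at $x$ for $y$ if and only if it is identifiable (respectively necessary) relative to $\partial f$ at $x$ for $y$.

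With this identification in hand, I would conclude by matching the two characterizations of local minimality. By Proposition~\ref{prop:charsub}, a set is locally minimal identifiable relative to $f$ exactly when it is both identifiable and necessary relative to $f$; by Proposition~\ref{prop:loc_min}, the same description holds for $\partial f$ viewed as a set-valued mapping. Since the previous paragraph shows these pairs of conditions coincide, the set supplied by Example~\ref{exa:poly} is a locally minimal identifiable set relative to $f$ at $x$ for $y$, which is what we must produce.

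I expect the main obstacle to be the middle step, namely verifying that necessity relative to $\partial f$ upgrades to necessity relative to $f$. The definition of a necessary set for a function demands the additional continuity of $f\big|_M$ at $x$, and it is precisely here that subdifferential continuity is indispensable: it is what promotes convergence of the subgradient certificates $v_i\to y$ into convergence $f(z_i)\to f(x)$ of function values. The remaining pieces---the piecewise polyhedrality of $\partial f$ and the application of Example~\ref{exa:poly}---are direct citations and require no further work.
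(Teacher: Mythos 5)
Your proposal is correct and takes essentially the same route as the paper's own proof: reduce to the piecewise polyhedral set-valued mapping $\partial f$ via \cite{piecequad} and Example~\ref{exa:poly}, then use lower semicontinuity plus convexity to get subdifferential continuity \cite[Example 13.30]{VA}, which makes the function-value coordinate of the augmented mapping automatic. The paper compresses your careful middle step (upgrading identifiability and necessity relative to $\partial f$ to the corresponding notions relative to $f$) into the phrase ``the result follows,'' but the content is identical.
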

\begin{proof} 
Convex piecewise linear-quadratic functions have piecewise polyhedral subdifferential mappings \cite{piecequad}. Consequently, Example~\ref{exa:poly} shows that the mapping $x\mapsto \partial f(x)$ admits a locally minimal identifiable set at any point $x\in\R^n$ for any vector $v\in \partial f(x)$. Since piecewise linear-quadratic functions are lower-semicontinuous \cite[Proposition 10.21]{VA}, and lower-semicontinuous convex functions are subdifferentially continuous \cite[Example 13.30]{VA}, the result follows.
\end{proof}

We now briefly consider the three standard convex cones of mathematical programming.
\begin{exa}[Non-negative Orthant]\label{ex:pos_orth}
{\rm 
Consider a point $\bar{x}\in\R^n_{+}$ and a vector $\bar{v}\in N_{\R^n_{+}}(\bar{x})$. Then $M:=\{x\in\R^n_{+}: x_i=0 \textrm{ for each } i\in\supp \bar{v}\}$ is a locally minimal identifiable set at $\bar{x}$ for $\bar{v}$.
Observe that $M$ also admits the presentation  
$$M=\{x\in\R^n_{+}: \rank x +\rank \bar{v}\leq n\}\quad\textrm{ locally around }\bar{x}.$$
}
\end{exa}

\begin{exa}[Lorentz cone]\label{loren}
{\rm Consider the Lorentz cone $$\mathcal{L}^{n}:=\{(x,r)\in\R^n\times\R: r\geq |x|\}.$$ 
Observe that $\mathcal{L}^{n}$ coincides with the epigraph $\epi |\cdot|$. Let $\bar{x}=0$ and consider any $v\in\partial |\cdot|(0)$ with $|v|=1$.
Then for any real $\epsilon >0$, the set $M_{\epsilon}:=\{x\in\R^n: \langle \frac{x}{|x|},\bar{v} \rangle \leq \epsilon\}$ is identifiable at $\bar{x}$ for $\bar{v}$. In particular, for $n\geq 2$ and $\epsilon\neq \epsilon'$ the sets $M_{\epsilon}$ and $M_{\epsilon'}$ do not coincide on any neighborhood of $\bar{x}$, and consequently there is no locally minimal identifiable set at $\bar{x}$ for $\bar{v}$.
}
\end{exa}

In what follows ${\bf S}^n$ will denote the space of $n\times n$ real symmetric matrices with the trace inner product while ${\bf S}^n_{+}$ will denote the convex cone of symmetric positive semi-definite matrices. With every matrix $X\in {\bf S}^n$ we will associate its largest eigenvalue, denoted by $\lambda_1(X)$. The multiplicity of $\lambda_1(X)$ as an eigenvalue of $X$ will be written as $\mult\lambda_1(X)$. Finally ${\bf M}^{n\times m}$ will denote the space of $n\times m$ matrices with real entries.  
We defer the verification of the following two examples to a forthcoming paper \cite{spec_id}. We should also emphasize the intriguing parallel between these two examples and Examples~\ref{exa:max} and \ref{ex:pos_orth}.  
\begin{exa}[Positive semi-definite cone]\label{exa:sdp}
{\rm Consider a matrix $\bar{X}\in {\bf S}_{+}^n$ and a normal $\bar{V}\in N_{{\bf S}_{+}^n}(\bar{X})$.
Then $$M=\{X\in {\bf S}_{+}^n: \rank X+ \rank \bar{V} \leq n\},$$ is an identifiable set at $\bar{X}$
for $\bar{V}$. It is interesting to note that $M$ may fail to be locally minimal in general. Indeed, it is possible that ${\bf S}_{+}^n$ admits no locally minimal identifiable set at $\bar{X}$ for $\bar{V}$. This can easily be seen from the previous example and the fact that ${\bf S}_{+}^2$ and $\mathcal{L}^2$ are isometrically isomorphic.

However, under the strict complementarity condition $\bar{V}\in \ri N_{{\bf S}^n_{+}}(\bar{X})$, we have $\rank \bar{X}+\rank \bar{V}=n$, and consequently $M$ coincides with $\{X\in {\bf S}_{+}^n: \rank X =\rank \bar{X}\}$ around $\bar{X}$. It is then standard that $M$ is an analytic manifold around $\bar{X}$, and
furthermore one can show that $M$ is indeed a locally minimal identifiable set at $\bar{X}$ for $\bar{V}$. For more details see \cite{spec_id}.
} 
\end{exa} 

\begin{exa}[Maximum eigenvalue]
{\rm
Consider a matrix $\bar{X}$ and a subgradient $\bar{V}\in\partial\lambda_1 (\bar{X})$, where $\lambda_1\colon{\bf S}^n\to\R$ is the maximum eigenvalue function. Then $$M:=\{X\in {\bf S}^n: \mult \lambda_1(X) \geq \rank \bar{V}\},$$
is an identifiable set at $\bar{X}$ for $\bar{V}$. Again under a strict complementarity condition $\bar{V}\in\ri\partial\lambda_1 (\bar{X})$, we have $\rank \bar{V} = \mult \lambda_1(\bar{X})$, and consequently $M$ coincides with the manifold $\{X\in {\bf S}^n: \mult \lambda_1(X) =\mult \lambda_1(\bar{X})\}$ locally around $\bar{X}$. Furthermore under this strict complementarity condition, $M$ is locally minimal. For more details see \cite{spec_id}.}
\end{exa}

\begin{exa}[The rank function]
{\rm
Consider the rank function, denoted $\rank\colon {\bf M}^{n\times m}\to \R$. Then $$M:=\{X\in {\bf M}^{n\times m}:\rank X=\rank \bar{X}\}$$ is a locally minimal identifiable set at $\bar{X}$ for any $\bar{V}\in\partial(\rank)(\bar{X})$. To see this, observe that the equality
$$\epi \rank =\epi (\rank \bar{X}+\delta_M) \textrm{ holds locally around } (\bar{X},\rank \bar{X}).$$ Combining this with the standard fact that $M$ is an analytic manifold verifies the claim.
}
\end{exa}

In Examples~\ref{loren} and \ref{exa:sdp}, we already saw that there are simple functions $f\colon\R^n\to\overline{\R}$ that do not admit a locally minimal identifiable set at some point $\bar{x}$ for $\bar{v}\in\partial f(\bar{x})$. However in those examples $\bar{v}$ was degenerate in the sense that $\bar{v}$ was contained in the relative boundary of $\partial f(\bar{x})$. We end this section by demonstrating that locally minimal identifiable sets may, in general, fail to exist even for subgradients $\bar{v}$ lying in the relative interior of the convex subdifferential $\partial f(\bar{x})$.

\begin{exa}[Failure of existence]{\ }{\\}
{\rm
Consider the convex function $f\colon\R^2\to\R$, given by $$f(x,y)=\sqrt{x^4+y^2}.$$ 
Observe that $f$ is continuously differentiable on $\R^2\setminus \{(0,0)\}$, with  
$$|\nabla f(x,y)|^2 = \frac{4x^6+y^2}{x^4+y^2},$$
and $$\partial f(0,0)=\{0\}\times [-1,1].$$

We claim that $f$ does not admit a locally minimal identifiable set at $(0,0)$ for the vector $(0,0)\in \partial f(0,0)$. To see this, suppose otherwise and let $M$ be such a set. 

Consider the curves  $$L_{n}:=\{(x,y)\in\R^2:y=\frac{1}{n}x^2\},$$
parametrized by integers $n$. For a fixed integer $n$, consider a sequence of points $(x_i,y_i)\to(0,0)$ in $L_{n}$. Then 
$$\lim_{i\to\infty} |\nabla f(x_i,y_i)|=\frac{n^2}{n^4+1}.$$
Since $M$ is necessary at $(0,0)$ for $(0,0)$, we deduce that for each integer $n$, there exists a real number $\epsilon_n>0$ such that
$${\bf B}_{\epsilon_n}\cap L_{n}\cap M=\{(0,0)\}.$$

However observe $\lim_{n\to\infty} \frac{n^2}{n^4+1}=0$. Therefore we can choose a sequence $(x_n,y_n)\in {\bf B}_{\epsilon_n}\cap L_{n}$, with $(x_n,y_n)\neq (0,0)$, $(x_n,y_n)\to (0,0)$, and the gradients $\nabla f(x_n,y_n)$ tending to $(0,0)$. Since $M$ is identifiable at $(0,0)$ for $(0,0)$, the points $(x_n,y_n)$ lie in $M$ for all large indices $n$, which is a contradiction. 
}
\end{exa}

\section{Calculus of identifiability}\label{sec:calculus}
To build more sophisticated examples, it is necessary to develop some calculus rules. Our starting point is the following intuitive chain rule.

\begin{prop}[Chain Rule]\label{prop:chain} Consider a function $f(x):=g(F(x))$ defined on an open neighborhood $V\subset\R^n$, where $F\colon V\to\R^m$ is a ${\bf C}^1$-smooth mapping and $g\colon\R^n\to\overline{\R}$ is a lsc function. Suppose that at some point $\bar{x}\in\dom f$, the qualification condition 
\begin{equation}
\kernel \nabla F(\bar{x})^{*} \cap \partial^{\infty}g(F(\bar{x}))=\{0\}, \label{eq:chain_qual0}
\end{equation}
is valid, and hence the inclusion 
$$\partial f(\bar{x})\subset \nabla F(\bar{x})^{*}\partial g(F(\bar{x})) \textrm{ holds}.$$
Consider a vector $\bar{v}\in\partial f(\bar{x})$ and the corresponding multipliers $$\Lambda:=\{y\in\partial g(F(\bar{x})):\bar{v}=\nabla F(\bar{x})^{*}y\}.$$ Suppose that for each vector $y\in \Lambda$, there exists an identifiable set $M_y$ (with respect to $g$) at $F(\bar{x})$ for $y$. Then the set $$M:=\bigcup_{y\in \Lambda} F^{-1}(M_y),$$ is identifiable (with respect to $f$) at $\bar{x}$ for $\bar{v}$.

If, in addition, 
\begin{itemize}
\item $g$ is Clarke regular at all points in $\dom g$ around $F(\bar{x})$, 
\item the collection $\{M_y\}_{y\in S}$ is finite, and 
\item each set $M_y$ is a locally minimal identifiable set (with respect to $g$) at $F(\bar{x})$ for $y$, 
\end{itemize}
then $M$ is a locally minimal identifiable set (with respect to $f$) at $\bar{x}$ for $\bar{v}$.
\end{prop}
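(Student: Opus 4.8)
The plan is to verify the two conditions in part~(4) of Proposition~\ref{prop:charsub}: that $M$ is simultaneously identifiable and necessary at $\bar{x}$ for $\bar{v}$. Identifiability is exactly the conclusion of the first (already established) part of the proposition, so the whole task reduces to showing that $M$ is \emph{necessary} at $\bar{x}$ for $\bar{v}$, i.e.\ that both $f$ and $x\mapsto d(\bar{v},\partial f(x))$, restricted to $M$, are continuous at $\bar{x}$. Concretely, I would fix an arbitrary sequence $x_i\to\bar{x}$ in $M$ and aim to produce subgradients $v_i\in\partial f(x_i)$ with $v_i\to\bar{v}$, together with the convergence $f(x_i)\to f(\bar{x})$.

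Since $\{M_y\}_{y\in\Lambda}$ is finite, $M=\bigcup_{y\in\Lambda}F^{-1}(M_y)$ is a finite union, and by the ``subsequence of a subsequence'' principle it suffices to treat a subsequence lying entirely in a single $F^{-1}(M_{y_0})$ for some fixed $y_0\in\Lambda$. Along such a subsequence we have $F(x_i)\in M_{y_0}$ and, by continuity of $F$, $F(x_i)\to F(\bar{x})$; moreover $F(x_i)\in\dom g$ for large $i$, since $g$ is finite near $F(\bar{x})$ along $M_{y_0}$. Because $M_{y_0}$ is a locally minimal identifiable set for $g$ at $F(\bar{x})$ for $y_0$, it is in particular \emph{necessary} (Proposition~\ref{prop:charsub}), which supplies both the convergence $g(F(x_i))\to g(F(\bar{x}))$ --- that is, $f(x_i)\to f(\bar{x})$ --- and a sequence of subgradients $w_i\in\partial g(F(x_i))$ with $w_i\to y_0$.

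The crux is then to pull these subgradients back to $f$. Here I would invoke the hypothesis that $g$ is Clarke regular at every point of $\dom g$ near $F(\bar{x})$: for large $i$ this gives $\partial g(F(x_i))=\hat{\partial}g(F(x_i))$, so each $w_i$ is in fact a \emph{Fr\'{e}chet} subgradient of $g$ at $F(x_i)$. For a ${\bf C}^1$-smooth inner mapping the pullback inclusion $\nabla F(x)^{*}\hat{\partial}g(F(x))\subset\hat{\partial}f(x)\subset\partial f(x)$ is an elementary first-order estimate requiring \emph{no} constraint qualification (one expands $f(x)=g(F(x))$ using $F(x)-F(x_i)=\nabla F(x_i)(x-x_i)+o(|x-x_i|)$ in the Fr\'{e}chet subgradient inequality for $w_i$). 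Hence $v_i:=\nabla F(x_i)^{*}w_i\in\partial f(x_i)$, and by continuity of $\nabla F$ together with $w_i\to y_0$ we obtain $v_i\to\nabla F(\bar{x})^{*}y_0=\bar{v}$, since $y_0\in\Lambda$. This forces $d(\bar{v},\partial f(x_i))\to 0$ and completes the verification of necessity, hence of local minimality.

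I expect the main obstacle to be precisely this reverse (``pullback'') direction of the chain rule, since the limiting chain rule inclusion recorded before the statement runs the wrong way, and its reverse would ordinarily demand that the qualification condition~\eqref{eq:chain_qual0} persist at the nearby points $x_i$. The elegant way around this is the observation above: Clarke regularity converts the limiting subgradients $w_i$ delivered by necessity into Fr\'{e}chet subgradients, and the Fr\'{e}chet pullback for a smooth inner map is automatic and qualification-free. Thus it is the Clarke regularity assumption, rather than any robustness of~\eqref{eq:chain_qual0}, that makes the argument work, while the finiteness of $\{M_y\}$ is used only to justify the reduction to a single index $y_0$.
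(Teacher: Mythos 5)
Your handling of the local-minimality half is correct and tracks the paper's own argument closely: the paper likewise reduces, via finiteness of the collection, to a subsequence with $F(x_i)\in M_{\bar y}$ for a single $\bar y\in\Lambda$, uses necessity of $M_{\bar y}$ to produce $w_i\in\partial g(F(x_i))$ with $w_i\to\bar y$ (and $f(x_i)=g(F(x_i))\to f(\bar{x})$), sets $v_i:=\nabla F(x_i)^{*}w_i\to\bar{v}$, and concludes via the characterization ``identifiable $+$ necessary $=$ locally minimal'' of Proposition~\ref{prop:charsub}. The one point where you genuinely diverge is the pullback step: the paper cites \cite[Theorem 10.6]{VA} under Clarke regularity to get $v_i\in\partial f(x_i)$, which tacitly relies on the qualification condition persisting at the nearby points, whereas you use regularity only to upgrade $w_i$ to a Fr\'{e}chet subgradient and then invoke the elementary, qualification-free inclusion $\nabla F(x)^{*}\hat{\partial}g(F(x))\subset\hat{\partial}f(x)\subset\partial f(x)$ for a ${\bf C}^1$ inner map. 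That substitution is valid and arguably more self-contained than the paper's citation.

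There is, however, a genuine gap: the proposition's \emph{first} conclusion --- that $M=\bigcup_{y\in\Lambda}F^{-1}(M_y)$ is identifiable at $\bar{x}$ for $\bar{v}$ --- is part of what must be proven, and you wave it off as ``already established.'' Only the inclusion $\partial f(\bar{x})\subset\nabla F(\bar{x})^{*}\partial g(F(\bar{x}))$ is quotable from the literature; the identifiability claim is not, and its proof occupies half of the paper's argument. It is also the only place where the qualification condition \eqref{eq:chain_qual0} does real work. Concretely: given $(x_i,f(x_i),v_i)\to(\bar{x},f(\bar{x}),\bar{v})$ with $v_i\in\partial f(x_i)$, one first checks that $\kernel\nabla F(x_i)^{*}\cap\partial^{\infty}g(F(x_i))=\{0\}$ for large $i$ (robustness of the qualification along the sequence), applies the chain rule to write $v_i=\nabla F(x_i)^{*}y_i$ with $y_i\in\partial g(F(x_i))$, and then must prove the multiplier sequence $(y_i)$ is \emph{bounded}: otherwise, after passing to a subsequence, $y_i/|y_i|\to\tilde{y}$ for some nonzero $\tilde{y}\in\partial^{\infty}g(F(\bar{x}))$ with $\nabla F(\bar{x})^{*}\tilde{y}=\lim_i v_i/|y_i|=0$, contradicting \eqref{eq:chain_qual0}. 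Only then can one extract $y_i\to\bar{y}\in\Lambda$, note that $g(F(x_i))=f(x_i)\to f(\bar{x})=g(F(\bar{x}))$ so that the full triple converges, and invoke identifiability of $M_{\bar{y}}$ to place $F(x_i)$ in $M_{\bar{y}}$, hence $x_i\in M$. Note too that this compactness argument requires no minimality, finiteness, or regularity --- which is exactly why the first conclusion holds under the weaker hypotheses. Relatedly, your closing remark that Clarke regularity ``rather than any robustness of \eqref{eq:chain_qual0}'' makes the argument work is accurate only for your version of the pullback step; the identifiability half you omitted does hinge on that robustness, so it is not dispensable from the proposition as a whole.
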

\begin{proof} 
We first argue the identifiability of $M$. To this effect, consider any sequence $(x_i,f(x_i),v_i)\to (\bar{x},f(\bar{x}),\bar{v})$, with $v_i\in\partial f(x_i)$. 
It is easy to see that the transversality condition 
\begin{equation}
\kernel \nabla F(x_i)^{*} \cap \partial^{\infty}g(F(x_i))=\{0\}, \label{eq:chain_qual2}
\end{equation}
holds for all sufficiently large indices $i$.
Then by \cite[Theorem 10.6]{VA}, we have 
$$v_i\in \partial f(x_i)\subset\nabla F(x_i)^{*}\partial g(F(x_i)).$$
Choose a sequence $y_i\in\partial g(F(x_i))$ satisfying $v_i=\nabla F(x_i)^{*}y_i$. We claim that the sequence $y_i$ is bounded. Indeed suppose otherwise. Then restricting to a subsequence, we can assume $|y_i|\to \infty$ and $\frac{y_i}{|y_i|}\to\tilde{y}$, for some nonzero vector $\tilde{y}\in\partial^{\infty}g(F(\bar{x}))$. Consequently 
$$\nabla F(\bar{x})^{*}\tilde{y}=\lim_{i\to\infty} \nabla F(x_i)^{*}\frac{y_i}{|y_i|}=\lim_{i\to\infty} \frac{v_i}{|y_i|}=0,$$ thus contradicting (\ref{eq:chain_qual2}).

Now restricting to a subsequence, we may suppose that the vectors $y_i\in\partial g(F(x_i))$ converge to $\bar{y}$ for some vector $\bar{y}\in\partial g(F(\bar{x}))$. Furthermore, observe $\bar{y}\in \Lambda$. So for all sufficiently large indices $i$, the points $F(x_i)$ all lie in $M_{\bar{y}}$. Consequently the points $x_i$ lie in $M$ for all large indices $i$, and we conclude that $M$ is identifiable (with respect to $f$) at $\bar{x}$ for $\bar{v}$.

Now suppose that $g$ is Clarke regular at all points of $\dom g$ near $F(\bar{x})$, the collection $\{M_y\}_{y\in S}$ is finite, and each set $M_y$ is a locally minimal identifiable set (with respect to $g$) at $F(\bar{x})$ for $y$. We now show that $M$ is necessary (with respect to $f$) at $\bar{x}$ for $\bar{v}$. To this effect, consider a sequence $x_i\to\bar{x}$ in $M$.
Then restricting to a subsequence, we may suppose that the points $F(x_i)$ all lie in $M_{\bar{y}}$ for some $\bar{y}\in \Lambda$. Consequently there exists a sequence $y_i\in\partial g(F(x_i))$ converging to $\bar{y}$. Hence we deduce 
$$v_i:=\nabla F(x_i)^{*}y_i\to \nabla F(\bar{x})^{*}\bar{y}=\bar{v}.$$ Since $g$ is Clarke regular at all points of $\dom g$ near $F(\bar{x})$, by \cite[Theorem 10.6]{VA}, the inclusion $v_i\in\partial f(x_i)$ holds for all large $i$. Hence $M$ is necessary (with respect to $f$) at $\bar{x}$ for $\bar{v}$.
\end{proof}

Our goal now is to obtain a sum rule. The passage to this result though the chain rule is fairly standard. The first step is to deal with separable functions.
\begin{prop}[Separable functions]\label{prop:sep_sum}{\ }{\\}
Consider proper, lsc functions $f_i\colon\R^{n_{i}}\to\overline{\R}$, for $i=1,\ldots,k$, and define $$f(x_1,\ldots,x_k)=\sum_{i=1}^k f_i(x_i).$$ 
Suppose that $M_{\bar{v}_i}\subset\R^{n_i}$ is an identifiable set (with respect to $f_i$) at $\bar{x}_i$ for $\bar{v}_i\in\partial f_i(\bar{x}_i)$, for each $i=1,\ldots,k$. Then the set $$M:=M_{\bar{v}_1}\times\ldots\times M_{\bar{v}_k},$$ is identifiable (with respect to $f$) at $\bar{x}=(\bar{x}_1,\ldots,\bar{x}_k)$ for $\bar{v}=(\bar{v}_1,\ldots,\bar{v}_k)$. An analogous result holds for necessary sets.
\end{prop}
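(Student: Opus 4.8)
The plan is to reduce everything to the separable structure of the subdifferential. The key fact I would invoke is the product rule for subdifferentials of separable functions, namely $\partial f(x_1,\ldots,x_k)=\partial f_1(x_1)\times\cdots\times\partial f_k(x_k)$ (see \cite[Proposition 10.5]{VA}). With this in hand, identifiability becomes almost a coordinatewise statement, and the only genuine work is controlling the function values.

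For identifiability, I would take a sequence $(x^j,f(x^j),v^j)\to(\bar{x},f(\bar{x}),\bar{v})$ in $\gph \partial f$, write $x^j=(x_1^j,\ldots,x_k^j)$ and $v^j=(v_1^j,\ldots,v_k^j)$, and use the product rule to get $v_l^j\in\partial f_l(x_l^j)$ for each $l$, with $x_l^j\to\bar{x}_l$ and $v_l^j\to\bar{v}_l$. To apply identifiability of $M_{\bar{v}_l}$ I first need the coordinatewise value convergence $f_l(x_l^j)\to f_l(\bar{x}_l)$.

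The least routine step is precisely this value convergence: lower semicontinuity only yields $\liminf_j f_l(x_l^j)\ge f_l(\bar{x}_l)$ for each $l$, whereas I know only that the sum $\sum_l f_l(x_l^j)=f(x^j)$ converges to $\sum_l f_l(\bar{x}_l)$. I would argue by contradiction: passing to a subsequence along which every $f_l(x_l^j)$ converges to some $c_l\in(-\infty,+\infty]$, each $c_l\ge f_l(\bar{x}_l)$ by lower semicontinuity, while $\sum_l c_l=\sum_l f_l(\bar{x}_l)$. Since each $f_l(\bar{x}_l)$ is finite (as $f(\bar{x})$ is finite and each $f_l$ is proper, hence nowhere $-\infty$), equality of the sums forces $c_l=f_l(\bar{x}_l)$ for every $l$, so $f_l(x_l^j)\to f_l(\bar{x}_l)$. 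Then, for each of the finitely many indices $l$, the triple $(x_l^j,f_l(x_l^j),v_l^j)$ converges to $(\bar{x}_l,f_l(\bar{x}_l),\bar{v}_l)$ along $\gph\partial f_l$, so identifiability of $M_{\bar{v}_l}$ places $x_l^j\in M_{\bar{v}_l}$ for all large $j$; taking $j$ large enough simultaneously for all $l$ gives $x^j\in M$, which proves identifiability.

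For the necessary-set statement I would run the argument in reverse, using the restatement that $M'$ is necessary at a point for a vector exactly when every sequence approaching the point within $M'$ admits subgradients converging to that vector, together with continuity of the restricted function. Given $x^j\to\bar{x}$ in $M=M_{\bar{v}_1}\times\cdots\times M_{\bar{v}_k}$, each coordinate sequence $x_l^j\to\bar{x}_l$ lies in $M_{\bar{v}_l}$; necessity of $M_{\bar{v}_l}$ supplies $v_l^j\in\partial f_l(x_l^j)$ with $v_l^j\to\bar{v}_l$ and the value convergence $f_l(x_l^j)\to f_l(\bar{x}_l)$. Summing the values gives continuity of $f|_M$ at $\bar{x}$, and the product rule assembles the $v_l^j$ into $v^j\in\partial f(x^j)$ with $v^j\to\bar{v}$, which is exactly the requirement that $x\mapsto d(\bar{v},\partial f(x))$ tend to $0$ along $M$ at $\bar{x}$. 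Note that here no contradiction argument is needed, since the value convergence is handed to us directly by the necessity hypotheses rather than having to be extracted from the sum.
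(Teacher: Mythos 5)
Your proof is correct and takes essentially the same route as the paper's: the paper compresses exactly your three ingredients---the separable identity $\partial f(x_1,\ldots,x_k)=\prod_{i=1}^k\partial f_i(x_i)$, coordinatewise identifiability for the product mapping, and the observation that lower semicontinuity of each $f_i$ forces the coordinatewise value convergence $f_i(x_i^j)\to f_i(\bar{x}_i)$ from convergence of the sum $f(x^j)\to f(\bar{x})$---into a three-sentence argument, dismissing the last step with ``readily implies.'' Your subsequence argument supplies precisely the detail the paper omits (note only that one should also observe each subsequential limit $c_l$ is finite, since all are bounded below by $f_l(\bar{x}_l)>-\infty$ while the total sum stays finite), and your direct treatment of the necessary-set case matches the paper's ``similar'' remark.
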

\begin{proof}
Clearly $M:=M_{\bar{v}_1}\times\ldots\times M_{\bar{v}_k}$ is identifiable for the set-valued mapping  
$$(x_1,\ldots,x_k)\mapsto \prod_{i=1}^k \{f_i(x_i)\}\times \partial f_i(x_i),$$
at $\bar{x}$ for $\prod_{i=1}^k (f_i(\bar{x}_i),\bar{v}_i)$. Furthermore lower-semicontinuity of the functions $f_i$ readily implies that $M$ is also identifiable for 
$$(x_1,\ldots,x_k)\mapsto \{f(\bar{x})\}\times \prod_{i=1}^k \partial f_i(x_i),$$
at  $\bar{x}$ for $(f(\bar{x}),\bar{v})$. Using the identity $\partial f(x_1,\ldots,x_k)=\prod_{i=1}^k \partial f_i(x_i)$, we deduce the result. The argument in the context of necessary sets is similar.
\end{proof}

\begin{cor}[Sum Rule]\label{cor:sum}
Consider proper, lsc functions $f_i\colon\R^n\to\overline{\R}$, for $i=1,\ldots,k$, and define the sum $f(x)=\sum_{i=1}^k f_i(x)$. Assume  at some point $\bar{x}\in\dom f$, the qualification condition 
$$\sum_{i=1}^k v_i=0 \textrm{ and } v_i\in\partial f_i^{\infty}(\bar{x}) \textrm{ for each } i ~~\Longrightarrow~~ v_i=0 \textrm{ for each }i.$$
Consider a vector $\bar{v}\in\partial f(\bar{x})$ and define the set $$\Lambda=\{(v_1,\ldots, v_k)\in \prod_{i=1}^k \partial f_i(\bar{x}): \bar{v}=\sum_{i=1}^k v_i\}.$$ For each $(v_1,\ldots,v_k)\in \Lambda$, let $M_{v_i}$ be an identifiable set (with respect to $f_i$) at $\bar{x}$ for $v_i$. Then  $$M:=\bigcup_{(v_1,\ldots,v_k)\in \Lambda} M_{v_1}\cap\ldots\cap M_{v_k},$$ is identifiable (with respect to $f$) at $\bar{x}$ for $\bar{v}$.

If, in addition, 
\begin{itemize}
\item each $f_i$ is Clarke regular at all points in $\dom f_i$ around $\bar{x}$, 
\item the collection $\{M_{v_1}\times\ldots\times M_{v_k}\}_{(v_1,\ldots,v_k)\in \Lambda}$ is finite, and 
\item for each $(v_1,\ldots,v_k)\in \Lambda$, the set $M_{v_i}$ is a locally minimal identifiable set (with respect to $f_i$) at $\bar{x}$ for $v_i$, 
\end{itemize}
then $M$ is a locally minimal identifiable set (with respect to $f$) at $\bar{x}$ for $\bar{v}$.
\end{cor}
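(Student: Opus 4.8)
The plan is to reduce the sum to a composition and then invoke the Chain Rule (Proposition~\ref{prop:chain}) together with the separable case (Proposition~\ref{prop:sep_sum}). Concretely, I would introduce the diagonal embedding $F\colon\R^n\to(\R^n)^k$ given by $F(x)=(x,\ldots,x)$ --- a linear, hence $\mathbf{C}^1$-smooth, map --- and the separable function $g\colon(\R^n)^k\to\overline{\R}$ defined by $g(z_1,\ldots,z_k)=\sum_{i=1}^k f_i(z_i)$, so that $f=g\circ F$. The adjoint is $\nabla F(\bar{x})^{*}(y_1,\ldots,y_k)=\sum_{i=1}^k y_i$, whence $\kernel\nabla F(\bar{x})^{*}=\{(y_1,\ldots,y_k):\sum_i y_i=0\}$.

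The first substantive step is to match the two qualification conditions and the two multiplier sets. Using the separable identities $\partial^{\infty}g(F(\bar{x}))=\prod_{i=1}^k\partial^{\infty}f_i(\bar{x})$ and $\partial g(F(\bar{x}))=\prod_{i=1}^k\partial f_i(\bar{x})$, the chain-rule qualification $\kernel\nabla F(\bar{x})^{*}\cap\partial^{\infty}g(F(\bar{x}))=\{0\}$ becomes precisely the stated hypothesis that $\sum_i v_i=0$ with $v_i\in\partial^{\infty}f_i(\bar{x})$ forces every $v_i=0$. The same identities show that the chain-rule multiplier set $\{y\in\partial g(F(\bar{x})):\bar{v}=\nabla F(\bar{x})^{*}y\}$ coincides exactly with $\Lambda$. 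Now, for each $(v_1,\ldots,v_k)\in\Lambda$, Proposition~\ref{prop:sep_sum} guarantees that $M_{v_1}\times\cdots\times M_{v_k}$ is identifiable (relative to $g$) at $F(\bar{x})$ for $(v_1,\ldots,v_k)$; this supplies the set $M_y$ demanded by the Chain Rule. Since $F^{-1}(M_{v_1}\times\cdots\times M_{v_k})=M_{v_1}\cap\cdots\cap M_{v_k}$, the Chain Rule's conclusion $M=\bigcup_{y\in\Lambda}F^{-1}(M_y)$ is exactly the asserted set, and its identifiability follows at once.

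For the local-minimality addendum I would verify that each extra hypothesis of the Chain Rule transfers through this construction. Finiteness is immediate from the assumed finiteness of $\{M_{v_1}\times\cdots\times M_{v_k}\}$. Clarke regularity of $g$ at all points of $\dom g$ near $F(\bar{x})$ follows from Clarke regularity of each $f_i$ by the product structure of the normal cone to $\epi g$. Finally, to see that each $M_{v_1}\times\cdots\times M_{v_k}$ is a \emph{locally minimal} identifiable set relative to $g$, I would use the characterization (Proposition~\ref{prop:charsub}, item~4) that local minimality is equivalent to being simultaneously identifiable and necessary: since each $M_{v_i}$ is locally minimal, hence both identifiable and necessary, the product is both identifiable and necessary by Proposition~\ref{prop:sep_sum} and its necessary-set analogue, and therefore locally minimal. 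Applying the second half of the Chain Rule then yields local minimality of $M$.

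The bookkeeping here is essentially routine; the main point requiring care --- and the only plausible obstacle --- is the clean passage between the separable and the summed settings. One must confirm the standard separable identities for $\partial^{\infty}$, $\partial$, and Clarke regularity, and, more delicately, reduce local minimality of a product to local minimality of its factors through the identifiable-plus-necessary characterization rather than attempting a direct minimality argument, which would be awkward.
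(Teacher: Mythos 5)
Your proposal is correct and follows exactly the paper's route: the paper likewise writes $f=g\circ F$ with $F(x)=(x,\ldots,x)$ and $g$ the separable sum, then applies Proposition~\ref{prop:chain} together with Proposition~\ref{prop:sep_sum}. The details you fill in --- matching the qualification conditions and multiplier sets via the separable subdifferential identities, and deducing local minimality of the product sets from the identifiable-plus-necessary characterization in Proposition~\ref{prop:charsub} --- are precisely the routine verifications the paper leaves implicit.
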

\begin{proof}
We may rewrite  $f$ in the composite form $g\circ F$, where $F(x):=(x,\ldots,x)$ and $g(x_1,\ldots,x_k):=\sum_{i=1}^k f_i(x_i)$ is separable. Then applying Proposition~\ref{prop:chain} and Proposition~\ref{prop:sep_sum} we obtain the result. 
\end{proof}

In particular, we now obtain the following geometric version of the chain rule. 
\begin{prop}[Sets with constraint structure]{\ }{\\}
Consider closed sets $Q\in\R^n$ and $K\in\R^m$, and a ${\bf C}^1$ smooth mapping $F\colon\R^n\to\R^m$. Define the set 
$$L=\{x\in Q: F(x)\in K\}.$$ Consider a pair $(\bar{x},\bar{v})\in\gph N_L$ and suppose that the constraint qualification 
\begin{equation*}
\left.
\begin{array}{c}
y\in N_Q(\bar{x}), w\in N_K(F(\bar{x})) \\
y+ \nabla F(\bar{x})^{*}w=0
\end{array}
\right\}
\Longrightarrow
(y,w)=(0,0),
\end{equation*}
holds. Define the set  
\begin{equation*}
\Lambda=\{(y,w)\in N_Q(\bar{x})\times N_K(F(\bar{x})): y+ \nabla F(\bar{x})^{*}w=\bar{v} \},
\end{equation*}
and for each pair $(y,w)\in \Lambda$, let $M_y$ be an identifiable set (relative to $Q$) at $\bar{x}$ for $y$ and let $K_w$ be an identifiable set (relative to $K$) at $F(\bar{x})$ for $w$. 
Then 
$$M:=\bigcup_{(v,w)\in \Lambda} M_v\cap F^{-1}(K_w),$$
is identifiable (relative to $L$) at $\bar{x}$ for $\bar{v}$.

If, in addition, 
\begin{itemize}
\item $Q$ (respectively $K$) is Clarke regular at each of its point near $\bar{x}$ (respectively $F(\bar{x})$), 
\item the collection $\{M_{y}\times K_{w}\}_{(y,w)\in \Lambda}$ is finite,
\item for each $(y,w)\in \Lambda$, the set $M_y$ (respectively $K_w$) is a locally minimal identifiable set with respect to $Q$ (respectively $K$) at $\bar{x}$ for $y$ (respectively at $F(\bar{x})$ for $w$),
\end{itemize}
then $M$ is a locally minimal identifiable set (relative to $L$) at $\bar{x}$ for $\bar{v}$.
\end{prop}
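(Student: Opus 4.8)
The plan is to reduce this geometric statement to the sum and chain rules already established, by exploiting the identity $\delta_L=\delta_Q+\delta_K\circ F$, valid because $x\in L$ precisely when $x\in Q$ and $F(x)\in K$. Since identifiability (and necessity) relative to a set is by definition identifiability relative to its indicator, and since $N_Q=\partial\delta_Q$, $N_K=\partial\delta_K$, $N_L=\partial\delta_L$, the entire assertion becomes a statement about identifiable sets for the sum $f_1+f_2$, where $f_1:=\delta_Q$ and $f_2:=\delta_K\circ F$, at $\bar{x}$ for $\bar{v}\in\partial(f_1+f_2)(\bar{x})=N_L(\bar{x})$.

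First I would treat the composite term $f_2=\delta_K\circ F$ using the Chain Rule (Proposition~\ref{prop:chain}) with $g:=\delta_K$. For an indicator one has $\partial\delta_K=\partial^{\infty}\delta_K=N_K$, so the chain-rule qualification $\kernel\nabla F(\bar{x})^{*}\cap\partial^{\infty}\delta_K(F(\bar{x}))=\{0\}$ is exactly the $y=0$ instance of the hypothesized constraint qualification. The chain rule then yields $\partial f_2(\bar{x})\subset\nabla F(\bar{x})^{*}N_K(F(\bar{x}))$, and for any $z\in\partial f_2(\bar{x})$ it produces the identifiable set $\bigcup_{w}F^{-1}(K_w)$ relative to $f_2$ at $\bar{x}$ for $z$, the union running over those $w\in N_K(F(\bar{x}))$ with $\nabla F(\bar{x})^{*}w=z$ (here $K_w$ is the given $K$-identifiable set, playing the role of the ``$M_y$'' of the chain rule with $g=\delta_K$).

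Next I would combine the two pieces through the Sum Rule (Corollary~\ref{cor:sum}) applied to $f_1=\delta_Q$ and $f_2=\delta_K\circ F$. The horizon chain rule gives $\partial^{\infty}f_2(\bar{x})\subset\nabla F(\bar{x})^{*}N_K(F(\bar{x}))$, whence the stated constraint qualification implies — and is in fact equivalent to — the conjunction of the chain-rule and sum-rule qualifications, and in particular it guarantees that $\bar{v}$ decomposes as $y+\nabla F(\bar{x})^{*}w$, so that $\Lambda\neq\emptyset$. The sum rule then furnishes the identifiable set $\bigcup_{(v_1,v_2)}M_{v_1}\cap M_{v_2}$ over multipliers $v_1+v_2=\bar{v}$ with $v_1\in N_Q(\bar{x})$ and $v_2\in\partial f_2(\bar{x})$, where $M_{v_1}$ is $Q$-identifiable for $v_1$ and $M_{v_2}$ is the chain-rule set of the previous step. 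Substituting the description of $M_{v_2}$ and merging the two nested unions reparametrizes the index by the pair $(v_1,w)=(y,w)$ subject to $y\in N_Q(\bar{x})$, $w\in N_K(F(\bar{x}))$, $y+\nabla F(\bar{x})^{*}w=\bar{v}$; this collapses the index set to $\Lambda$ exactly, giving $M=\bigcup_{(y,w)\in\Lambda}M_y\cap F^{-1}(K_w)$ as claimed.

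For the local-minimality half I would verify that each additional hypothesis transfers correctly. Clarke regularity of $Q$ near $\bar{x}$ is equivalent to Clarke regularity of $\delta_Q$; the more delicate point, which I expect to be the main obstacle, is that Clarke regularity of $K$ near $F(\bar{x})$ must be upgraded — via the equality-and-regularity case of the chain rule \cite[Theorem 10.6]{VA} — to Clarke regularity of $f_2=\delta_K\circ F$ at all nearby points of its domain, which is precisely the regularity the Sum Rule demands of its summands. The assumed finiteness of $\{M_{y}\times K_{w}\}_{(y,w)\in\Lambda}$ supplies both the finiteness needed in the chain-rule step (only finitely many distinct $K_w$ appear) and that needed by the sum rule, and local minimality of each $M_y$ and each $K_w$ is exactly the local-minimality input to the two rules. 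With these checks in place, the local-minimality conclusions of Proposition~\ref{prop:chain} and Corollary~\ref{cor:sum} compose to yield that $M$ is a locally minimal identifiable set relative to $L$ at $\bar{x}$ for $\bar{v}$.
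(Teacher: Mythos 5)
Your proposal is correct and takes essentially the same route as the paper, whose entire proof reads ``Observe $\delta_L=\delta_Q+\delta_{F^{-1}(K)}$. Combining Proposition~\ref{prop:chain} and Corollary~\ref{cor:sum}, we obtain the result'' --- precisely your decomposition $f_1=\delta_Q$, $f_2=\delta_K\circ F$ with the chain rule applied to $g=\delta_K$ and the sum rule to combine. The additional verifications you supply (that the stated constraint qualification implies both rule qualifications via $\partial^{\infty}\delta_K=N_K$, the reparametrization of the nested unions by $\Lambda$, and the transfer of the Clarke regularity and finiteness hypotheses) are exactly the details the paper leaves implicit.
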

\begin{proof}
Observe $\delta_L=\delta_Q+\delta_{F^{-1}(K)}$. Combining Proposition~\ref{prop:chain} and Corollary~\ref{cor:sum}, we obtain the result.
\end{proof}

\begin{cor}[Max-type functions]{\ }{\\}
Consider ${\bf C}^1$-smooth functions $f_i\colon\R^n\to\overline{\R}$, for $i\in I:=\{1,\ldots,m\}$, and let $f(x):=\max\{f_1(x),f_2(x),\ldots,f_m(x)\}$.
For any $x\in\R^n$, define the active set 
$$I(x)=\{i\in I: f(x)=f_i(x)\}.$$
Consider a pair $(\bar{x},\bar{v})\in\gph \partial f$, and the corresponding set of multipliers
$$\Lambda=\{\lambda\in \R^m: \bar{v}=\sum_{i\in I(\bar{x})} \lambda_i \nabla f_i(\bar{x}), ~ \supp \lambda \subset I(\bar{x})\}.$$
Then $$M=\bigcup_{\lambda\in \Lambda} \{x\in\R^n: \supp \lambda\subset I(x)\},$$ is a locally minimal identifiable set (relative to $f$) at $\bar{x}$ for $\bar{v}$.
\end{cor}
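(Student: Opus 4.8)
The plan is to recognize $f$ as a composition and then invoke the chain rule (Proposition~\ref{prop:chain}) together with the maximum-function example. Write $f = \mx \circ F$, where $F := (f_1,\ldots,f_m)\colon \R^n \to \R^m$ is the $C^1$ mapping assembling the component functions and $\mx\colon \R^m\to\R$ is the maximum function of Example~\ref{exa:max}. The point of this reformulation is that the active index set is preserved: for every $x$ we have $I(x) = \{i : f_i(x) = f(x)\} = \{i : F(x)_i = \mx(F(x))\}$, which is exactly the active set of $\mx$ at the point $F(x)$. Consequently, for any index subset $S \subset I(\bar x)$, the preimage under $F$ of the $\mx$-set $\{y \in \R^m : S \subset I(y)\}$ is precisely $\{x \in \R^n : S \subset I(x)\}$.

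First I would verify that Proposition~\ref{prop:chain} applies. Since $\mx$ is a finite convex function, it is locally Lipschitz, so $\partial^{\infty}\mx(F(\bar x)) = \{0\}$ and the qualification condition (\ref{eq:chain_qual0}) holds automatically; moreover $\mx$ is Clarke regular at every point of $\R^m = \dom \mx$. The exact chain rule then identifies the multipliers: $\partial \mx(F(\bar x))$ is the unit simplex supported on $I(\bar x)$, that is the set of $\lambda \geq 0$ with $\sum_i \lambda_i = 1$ and $\supp \lambda \subset I(\bar x)$, and the relevant multiplier set is $\Lambda = \{\lambda \in \partial \mx(F(\bar x)) : \bar v = \nabla F(\bar x)^{*}\lambda\}$, where $\nabla F(\bar x)^{*}\lambda = \sum_{i \in I(\bar x)} \lambda_i \nabla f_i(\bar x)$.

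Next I would feed in Example~\ref{exa:max}: for each $\lambda \in \Lambda$ the set $M_\lambda := \{y \in \R^m : \supp \lambda \subset I(y)\}$ is a locally minimal identifiable set for $\mx$ at $F(\bar x)$ for $\lambda$. Pulling these back through $F$ and using the active-set identity above gives $F^{-1}(M_\lambda) = \{x : \supp \lambda \subset I(x)\}$, so the union produced by the chain rule is exactly the claimed set $M$. To obtain the \emph{locally minimal} conclusion rather than mere identifiability, I must supply the three hypotheses of the second part of Proposition~\ref{prop:chain}: Clarke regularity of $\mx$ near $F(\bar x)$ (done above), local minimality of each $M_\lambda$ (Example~\ref{exa:max}), and finiteness of the collection.

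The one genuinely non-routine point is finiteness. Although $\Lambda$ is typically an infinite polytope, each set $M_\lambda$ depends on $\lambda$ only through its support $\supp \lambda$, and every such support is a subset of the finite set $I(\bar x)$; hence only finitely many distinct sets $M_\lambda$ arise (at most $2^{|I(\bar x)|}$), and the collection is finite as required. With all hypotheses verified, Proposition~\ref{prop:chain} delivers that $M$ is a locally minimal identifiable set for $f$ at $\bar x$ for $\bar v$. I would also flag that the multipliers defining $M$ should be read as genuine subgradients of $\mx$, i.e.\ as elements of the simplex $\partial\mx(F(\bar x))$ (so $\lambda \geq 0$, $\sum_i \lambda_i = 1$, and $\supp \lambda \neq \emptyset$); this normalization is the substantive content behind $\Lambda$, since without it a spurious multiplier such as $\lambda = 0$ (possible when $\bar v = 0$) would contribute the set $\{x : \emptyset \subset I(x)\} = \R^n$ and inflate the union.
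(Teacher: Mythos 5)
Your proof is correct and follows essentially the same route as the paper, which likewise writes $f=\mx\circ F$ with $F=(f_1,\ldots,f_m)$ and invokes Proposition~\ref{prop:chain} (the paper cites Example~\ref{exa:conv_poly}, of which your Example~\ref{exa:max} is precisely the relevant specialization). The details you fill in --- the automatic qualification condition since $\mx$ is finite convex and hence $\partial^{\infty}\mx(F(\bar{x}))=\{0\}$, the normalization of the multipliers as elements of the simplex $\partial\mx(F(\bar{x}))$, and especially the finiteness of the collection $\{M_\lambda\}$ because each $M_\lambda$ depends on $\lambda$ only through $\supp\lambda\subset I(\bar{x})$ --- are exactly what the paper's one-line proof leaves implicit.
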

\begin{proof}
This follows directly from Proposition~\ref{prop:chain} and Example~\ref{exa:conv_poly} by writing $f$ as the composition $\mx\circ F$, where $F(x)=(f_1(x),\ldots,f_m(x))$.
\end{proof}





\begin{cor}[Smooth constraints]{\ }{\\}
Consider ${\bf C}^1$-smooth functions $g_i\colon\R^n\to\overline{\R}$, for $i\in I:=\{1,\ldots,m\}$, and define the set
$$Q=\{x\in\R^n:g_i(x)\leq 0 \textrm{ for each } i\in I\}.$$
For any $x\in\R^n$, define the active set 
$$I(x)=\{i\in I: g_i(x)=0\}.$$
and suppose that for a certain pair $(\bar{x},\bar{v})\in\gph N_Q$, the constraint qualification

$$\sum_{i\in I(\bar{x})} \lambda_i\nabla g_i(\bar{x})=0 \textrm{ and } \lambda_i \geq 0 \textrm{ for all } i\in I(\bar{x}) ~~\Longrightarrow ~~ \lambda_i=0 \textrm{ for all }  i\in I(\bar{x}),$$
holds. Then in terms of the Lagrange multipliers 
$$\Lambda:=\{\lambda\in \R^m: \bar{v}=\sum_{i\in I(\bar{x})} \lambda_i \nabla g_i(\bar{x}), ~ \supp \lambda \subset I(\bar{x})\},$$
the set $$M=\bigcup_{\lambda\in \Lambda} \{x\in Q: g_j(x)= 0 \textrm{ for each }j\in \supp \lambda\},$$ is a locally minimal identifiable set (relative to $Q$) at $\bar{x}$ for $\bar{v}$.
\end{cor}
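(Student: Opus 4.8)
The plan is to recognize this corollary as a direct specialization of the geometric chain rule (the Proposition on sets with constraint structure), obtained by writing $Q$ in the model form $L=\{x\in Q':F(x)\in K\}$. Concretely, I would take the ambient set to be $Q'=\R^n$, the smooth map $F(x):=(g_1(x),\ldots,g_m(x))$, and $K:=\{y\in\R^m:y_i\le 0 \textrm{ for all } i\}$ the nonpositive orthant, so that $F(x)\in K$ if and only if $g_i(x)\le 0$ for every $i$, i.e.\ $L=Q$. The payoff of this reduction is that all the remaining ingredients are already recorded in the excerpt: $\R^n$ carries the trivial normal cone $N_{\R^n}\equiv\{0\}$ and is its own locally minimal identifiable set (Proposition~\ref{prop:continuity}), while the identifiable geometry of the orthant is exactly Example~\ref{ex:pos_orth}.

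Next I would check that the abstract constraint qualification of the geometric result collapses onto the stated one. Since $N_{\R^n}(\bar{x})=\{0\}$, the only freedom is in $w\in N_K(F(\bar{x}))$, and because $F(\bar{x})_i=0$ precisely when $i\in I(\bar{x})$ one computes $N_K(F(\bar{x}))=\{w:w_i\ge 0 \textrm{ for } i\in I(\bar{x}),\ w_i=0 \textrm{ for } i\notin I(\bar{x})\}$ together with $\nabla F(\bar{x})^{*}w=\sum_{i\in I(\bar{x})}w_i\nabla g_i(\bar{x})$. Thus the hypothesis $\nabla F(\bar{x})^{*}w=0\Rightarrow w=0$ on this cone is verbatim the assumed positive-linear-independence condition on the active gradients, and the abstract multiplier set $\{(0,w):w\in N_K(F(\bar{x})),\ \nabla F(\bar{x})^{*}w=\bar{v}\}$ is exactly the (nonnegative) Lagrange multiplier set $\Lambda$, under the identification $\lambda=w$. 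Applying Example~\ref{ex:pos_orth} to the orthant $K$ gives the locally minimal identifiable set $K_w=\{y\in K:y_j=0 \textrm{ for } j\in\supp w\}$, whose preimage is $F^{-1}(K_w)=\{x\in Q:g_j(x)=0 \textrm{ for } j\in\supp w\}$; intersecting with the trivial piece $\R^n$ and taking the union over $\Lambda$ reproduces the claimed $M$.

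Finally, to upgrade identifiability to local minimality I would verify the three extra hypotheses of the geometric result. Clarke regularity of $\R^n$ and of the convex orthant $K$ near the relevant points is automatic, and local minimality of the pieces has just been established. The one genuinely substantive point --- and the step I expect to be the main obstacle --- is the required finiteness of the collection $\{M_y\times K_w\}_{(y,w)\in\Lambda}$, since the multiplier polyhedron $\Lambda$ may well be infinite. This is resolved by observing that $M_y$ is always $\R^n$ and that $K_w$ depends on $w$ only through its support $\supp w\subset I(\bar{x})$; as $I(\bar{x})$ is finite there are only finitely many possible supports, hence only finitely many distinct sets $M_y\times K_w$. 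With finiteness in hand, the geometric chain rule delivers the conclusion that $M$ is a locally minimal identifiable set relative to $Q$ at $\bar{x}$ for $\bar{v}$.
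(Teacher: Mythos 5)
Your proof is correct and is essentially the paper's own argument: the paper proves this corollary in one line by combining Proposition~\ref{prop:chain} with Example~\ref{ex:pos_orth}, and your instantiation of the constraint-structure proposition with ambient set $\R^n$ (which trivializes the sum-rule component, since $N_{\R^n}\equiv\{0\}$) is exactly that specialization, with the multiplier identification, Clarke regularity of the convex orthant, and local minimality of the pieces all correctly supplied. Your observation that finiteness of the collection $\{M_y\times K_w\}_{(y,w)\in\Lambda}$ holds because $K_w$ depends on $w$ only through $\supp w\subset I(\bar{x})$ is precisely the detail the paper's ``follows immediately'' leaves implicit, and you resolve it correctly.
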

\begin{proof}
This follows immediately from Proposition~\ref{prop:chain} and Example~\ref{ex:pos_orth}.
\end{proof}

We end the section by observing that, in particular, the chain rule, established in Proposition~\ref{prop:chain}, allows us to consider the rich class of fully amenable functions, introduced in \cite{amen}. 
\begin{defn}[Fully amenable functions]{\ }{\\}
{\rm
A function $f\colon\R^n\to\overline{\R}$ is {\em fully amenable} at $\bar{x}$ if $f$ is finite at $\bar{x}$, and there is an open neighborhood $U$ of $\bar{x}$ on which $f$ can be represented as $f=g\circ F$ for a ${\bf C}^2$-smooth mapping $F\colon V\to\R^m$ and a convex, piecewise linear-quadratic function $g\colon\R^m\to\overline{\R}$, and such that the qualification condition 
$$\kernel \nabla F(\bar{x})^{*}\cap \partial g^{\infty}(F(\bar{x}))=\{0\},$$
holds.
}
\end{defn}

The qualification condition endows the class of fully amenable functions with exact calculus rules. Such functions are indispensable in nonsmooth second order theory. For more details, see \cite{amen}. 

\begin{prop}[Identifiable sets for fully amenable functions]\label{prop:amen}{\ }{\\}
A function $f\colon\R^n\to\overline{\R}$ that is fully amenable at a point $\bar{x}\in\R^n$ admits a locally minimal identifiable set at $\bar{x}$ for any vector $\bar{v}\in\partial f(\bar{x})$. 
\end{prop}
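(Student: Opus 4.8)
The plan is to deduce this directly from the Chain Rule (Proposition~\ref{prop:chain}), applied to the local representation $f=g\circ F$ furnished by full amenability. By definition, there is a neighborhood on which $f=g\circ F$ with $F$ being ${\bf C}^2$-smooth (in particular ${\bf C}^1$) and $g$ a convex, piecewise linear-quadratic function (in particular lsc), and the qualification condition $\kernel\nabla F(\bar{x})^{*}\cap\partial^{\infty}g(F(\bar{x}))=\{0\}$ holds. These are precisely the standing hypotheses of Proposition~\ref{prop:chain}, so the first step is simply to record this identification and introduce the multiplier set $\Lambda=\{y\in\partial g(F(\bar{x})):\bar{v}=\nabla F(\bar{x})^{*}y\}$.

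Next I would supply the ingredient sets $M_y$. Since $g$ is convex and piecewise linear-quadratic, Proposition~\ref{prop:piece} guarantees that $g$ admits a locally minimal identifiable set at $F(\bar{x})$ for every subgradient $y\in\partial g(F(\bar{x}))$; choose $M_y$ to be this set for each $y\in\Lambda$. To upgrade the conclusion of Proposition~\ref{prop:chain} from mere identifiability to \emph{local minimality}, I must verify its three additional hypotheses. Clarke regularity of $g$ at every point of $\dom g$ is automatic, since convex functions are Clarke regular. Local minimality of each $M_y$ holds by the very choice above. This leaves the finiteness of the collection $\{M_y\}_{y\in\Lambda}$, which is the crux of the argument.

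The main obstacle is therefore finiteness, and I would address it through the piecewise polyhedral structure of $\partial g$. The set $\Lambda$ is generally an infinite (convex) set, so I cannot argue finiteness by counting multipliers; instead I argue that the assignment $y\mapsto M_y$ takes only finitely many distinct values. Because $g$ is convex piecewise linear-quadratic, the graph $\gph\partial g$ is piecewise polyhedral, say $\gph\partial g=\bigcup_{i=1}^{k}V_i$ with each $V_i$ a convex polyhedron. By Example~\ref{exa:poly}, the locally minimal identifiable set at $F(\bar{x})$ for $y$, read off the mapping $\partial g$, is
$$M_y=\bigcup_{i:\,(F(\bar{x}),y)\in V_i}\pi(V_i),$$
where $\pi$ is the canonical projection onto the domain space; this depends on $y$ only through the index set $\{i:(F(\bar{x}),y)\in V_i\}\subset\{1,\ldots,k\}$. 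Since there are finitely many such index sets, the family $\{M_y\}_{y\in\Lambda}$ is finite, as required.

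A small care-point is the passage between the subdifferential mapping $\partial g$ and the augmented mapping $z\mapsto\{g(z)\}\times\partial g(z)$ underlying identifiability for functions; this is handled exactly as in the proof of Proposition~\ref{prop:piece}, using that $g$ is lsc and convex, hence subdifferentially continuous, so that the function-value coordinate imposes no further local restriction. With all three hypotheses verified, Proposition~\ref{prop:chain} yields that $M=\bigcup_{y\in\Lambda}F^{-1}(M_y)$ is a locally minimal identifiable set for $f$ at $\bar{x}$ for $\bar{v}$, completing the argument.
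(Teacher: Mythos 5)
Your proposal is correct and takes essentially the same route as the paper, whose entire proof is the remark that the result ``follows immediately from Propositions~\ref{prop:piece} and \ref{prop:chain}.'' Your verification of the three extra hypotheses of Proposition~\ref{prop:chain} --- in particular the finiteness of $\{M_y\}_{y\in\Lambda}$ via the piecewise polyhedral decomposition $\gph \partial g=\bigcup_{i=1}^k V_i$ (so that $M_y$ depends on $y$ only through the index set $\{i:(F(\bar{x}),y)\in V_i\}$), together with the subdifferential-continuity point that lets you pass between $\partial g$ and the augmented mapping $z\mapsto\{g(z)\}\times\partial g(z)$ --- supplies exactly the details the paper leaves implicit.
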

\begin{proof} This follows immediately from Propositions~\ref{prop:piece} and \ref{prop:chain}.
\end{proof}

\section{Variational geometry of identifiable sets}\label{sec:geo}
In the previous sections, we have introduced the notions of identifiability, analyzed when locally minimal identifiable sets exist, developed calculus rules, and provided important examples. In this section, we consider the interplay between variational geometry of a set $Q$ and its identifiable subsets $M$. Considering sets rather than functions has the advantage of making our arguments entirely geometric.
We begin with the simple observation that locally minimal identifiable sets are locally closed.
\begin{prop}\label{prop:closed}
Consider a closed set $Q\subset\R^n$ and a subset $M\subset Q$ that is a locally minimal identifiable set at $\bar{x}$ for $\bar{v}\in N_Q(\bar{x})$. Then $M$ is locally closed at $\bar{x}$
\end{prop}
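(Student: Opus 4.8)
The plan is to exploit characterization (2) of local minimality, which grants freedom in choosing the neighborhood $W$ of $\bar{v}$ used to represent $M$ as a preimage under the normal cone mapping, and to combine this with the outer-semicontinuity of $N_Q$ for closed $Q$.

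First I would reduce to the purely set-valued setting of Section~\ref{sec:ident}. Being a locally minimal identifiable set (relative to $Q$) at $\bar{x}$ for $\bar{v}$ means the same relative to $\delta_Q$; and since $\delta_Q$ is constant on its domain $Q$, the function-value bookkeeping is vacuous, so this is equivalent to $M$ being a locally minimal identifiable set for the mapping $N_Q\colon\R^n\rightrightarrows\R^n$ at $\bar{x}$ for $\bar{v}$. Applying characterization (2) of Proposition~\ref{prop:loc_min} to $G=N_Q$, I obtain a neighborhood $V$ of $\bar{v}$ such that $M=N_Q^{-1}(W)$ holds locally around $\bar{x}$ for \emph{every} subneighborhood $W\subset V$ of $\bar{v}$.

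The key step is to use this freedom with two concentric open balls. Choose radii $0<r_1<r_2$ small enough that $W_2:=B_{r_2}(\bar{v})\subset V$, and set $W_1:=B_{r_1}(\bar{v})$, so that $\cl W_1\subset W_2$. Both equalities $M=N_Q^{-1}(W_1)$ and $M=N_Q^{-1}(W_2)$ then hold on a common neighborhood of $\bar{x}$; fix a closed ball $U$ around $\bar{x}$ on which both hold. Now let $x_i\in M\cap U$ be any sequence converging to some point $x_0\in U$. Since $x_i\in N_Q^{-1}(W_1)$, I may pick $v_i\in N_Q(x_i)\cap W_1$. The sequence $v_i$ lies in the bounded set $W_1$, so after passing to a subsequence it converges to some $v_0\in\cl W_1\subset W_2$. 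Because $Q$ is closed, the mapping $N_Q$ is outer-semicontinuous, whence $v_0\in N_Q(x_0)$. Thus $v_0\in N_Q(x_0)\cap W_2$, giving $x_0\in N_Q^{-1}(W_2)\cap U=M\cap U$. This shows $M\cap U$ is closed, i.e. $M$ is locally closed at $\bar{x}$.

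The main obstacle is precisely the boundary issue: the limiting normal $v_0$ produced by outer-semicontinuity need only lie in the \emph{closure} of the neighborhood one starts with, so a naive argument with a single open $W$ breaks down when $v_0$ escapes to $\bd W$. Local minimality is exactly what rescues the argument: characterization (2) lets me nest a smaller ball $W_1$ inside $W_2$, both representing $M$, so that $\cl W_1$ is still trapped inside the open $W_2$. Both the identifiability and the necessity of $M$ are genuinely used — necessity yields the inclusion $M\subset N_Q^{-1}(W_1)$ that produces the normals $v_i$, while identifiability yields $N_Q^{-1}(W_2)\subset M$ that reabsorbs the limit $x_0$.
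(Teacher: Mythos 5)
Your proof is correct, but it is organized differently from the paper's. The paper argues by contradiction directly from the sequence definitions: assuming $M$ is not locally closed, it selects $x_i\in(\bd M)\setminus M$ with $x_i\to\bar{x}$; identifiability then forces $\cl V\cap N_Q(x_i)=\emptyset$ for a suitable neighborhood $V$ of $\bar{v}$, outer semicontinuity of $N_Q$ propagates the emptiness $V\cap N_Q(y)=\emptyset$ to points $y$ near $x_i$, and picking such points inside $M$ (possible since $x_i\in\bd M$) contradicts necessity. You instead give a direct proof pivoting on characterization (2) of Proposition~\ref{prop:loc_min}, which the paper's proof never invokes: the freedom to represent $M$ as $N_Q^{-1}(W)$ for \emph{every} subneighborhood $W$ lets you nest $\cl W_1\subset W_2$ with both balls representing $M$ on a common closed ball $U$, and then closedness of $M\cap U$ falls out of boundedness of $W_1$ plus outer semicontinuity of $N_Q$ (your preliminary reduction from $\delta_Q$ to the mapping $N_Q$ is legitimate, since the value bookkeeping for an indicator is vacuous, and it matches how the paper itself uses Proposition~\ref{prop:charsub} elsewhere, e.g.\ in the proof of prox-regularity under local minimality). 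Both proofs rest on the same two pillars --- graph-closedness of $N_Q$ over the closed set $Q$, and the use of \emph{both} halves of local minimality --- and both defeat the same boundary-escape issue (a limiting normal may land on $\bd W$) by trapping a closed set inside an open one: the paper via $\cl V$ versus $V$, you via $\cl W_1\subset W_2$. What your route buys is transparency: it makes explicit where identifiability ($N_Q^{-1}(W_2)\subset M$) and necessity ($M\subset N_Q^{-1}(W_1)$) each enter, and it expands the paper's slightly compressed step ``every point $y$ sufficiently close to $x_i$ satisfies $V\cap N_Q(y)=\emptyset$'' (which itself tacitly needs $V$ bounded, or a compactness argument, together with outer semicontinuity). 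What the paper's route buys is economy: it needs no choice of representing neighborhoods at all, working straight from the defining sequence properties.
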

\begin{proof}
Suppose not. Then there exists a sequence $x_i\in (\bd M)\setminus M$ with $x_i\to\bar{x}$. 
Since $M$ is identifiable at $\bar{x}$ for $\bar{v}$, there exists a neighborhood $V$ of $\bar{v}$ satisfying $\cl V\cap N_Q(x_i)=\emptyset$ for all large indices $i$. Observe that for each index $i$, every point $y$ sufficiently close to $x_i$ satisfies 
$V\cap N_Q(y)=\emptyset$. Consequently, there exists a sequence $y_i\in Q$ converging to $\bar{x}$ with $V\cap N_Q(y)=\emptyset$, which contradicts the necessity of $M$ at $\bar{x}$ for $\bar{v}$.  
\end{proof}

Recall that for a set $Q\subset\R^n$ and a subset $M\subset Q$, the inclusion $\hat{N}_Q(x)\subset N_M(x)$ holds for each point $x\in M$, while the analogous inclusion for the limiting normal cone may fail. This pathology does not occur for identifiable sets.

\begin{prop}\label{prop:subset}
Consider a closed set $Q$ and a set $M$ that is identifiable at $\bar{x}$ for $\bar{v}\in N_Q(\bar{x})$. Then the equation
$$\gph N_Q\subset \gph N_M~~ \textrm{ holds locally around } (\bar{x},\bar{v}).$$ 
\end{prop}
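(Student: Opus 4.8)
The plan is to unwind the definition of identifiability into a pair of fixed neighborhoods and then run a routine limiting argument through the Fréchet normal cone. First I would record what identifiability buys us: since $M$ is identifiable (relative to $Q$) at $\bar{x}$ for $\bar{v}$, there exist a neighborhood $U$ of $\bar{x}$ and a neighborhood $V$ of $\bar{v}$ such that every pair $(x,v)$ with $x\in Q\cap U$ and $v\in N_Q(x)\cap V$ forces $x\in M$. The target inclusion means precisely: there is a neighborhood $W$ of $(\bar{x},\bar{v})$ so that $(x,v)\in\gph N_Q$ together with $(x,v)\in W$ implies $v\in N_M(x)$.

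Next I would fix such a pair $(x,v)\in\gph N_Q$ with $(x,v)$ close enough to $(\bar{x},\bar{v})$ that $x\in U$ and $v\in V$. Applying identifiability directly to $(x,v)$ already gives $x\in M$, so $N_M(x)$ is meaningful. To show $v\in N_M(x)$, I would expand the limiting normal $v\in N_Q(x)$ via its defining approximation: there are sequences $x^j\to x$ in $Q$ and $v^j\to v$ with $v^j\in\hat{N}_Q(x^j)$. Because $x^j\to x\in U$ and $v^j\to v\in V$, for all large $j$ the pair $(x^j,v^j)$ again satisfies $x^j\in Q\cap U$, $v^j\in\hat{N}_Q(x^j)\subset N_Q(x^j)$, and $v^j\in V$; hence identifiability yields $x^j\in M$.

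The final step transfers the Fréchet normals from $Q$ to $M$ and passes to the limit. Since $M\subset Q$, any Fréchet normal to $Q$ at a point of $M$ is automatically a Fréchet normal to $M$ there, so $v^j\in\hat{N}_Q(x^j)\subset\hat{N}_M(x^j)$. We now have $x^j\to x$ in $M$ and $v^j\to v$ with $v^j\in\hat{N}_M(x^j)$, which is exactly the defining condition for $v\in N_M(x)$. This gives $(x,v)\in\gph N_M$, as required.

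I expect the only delicate point to be the bookkeeping with two layers of neighborhoods: one must be sure that not merely $(x,v)$ but all of its approximating pairs $(x^j,v^j)$ eventually land inside the identifiability window $U\times V$. This is automatic, since $x^j$ and $v^j$ converge to $x$ and $v$, which already lie in the open sets $U$ and $V$. Thus no genuine obstacle arises, and the argument is essentially a clean composition of identifiability with the inclusion $\hat{N}_Q\subset\hat{N}_M$ valid on $M$.
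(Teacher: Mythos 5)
Your proof is correct and takes essentially the same approach as the paper's: approximate a limiting normal pair $(x,v)\in\gph N_Q$ near $(\bar{x},\bar{v})$ by Fr\'{e}chet pairs $(x^j,v^j)\in\gph \hat{N}_Q$, use identifiability to force $x^j\in M$, invoke the inclusion $\hat{N}_Q(x^j)\subset\hat{N}_M(x^j)$ valid at points of $M\subset Q$, and pass to the limit. Your explicit neighborhood bookkeeping, including the separate check that $x$ itself lies in $M$ so that $N_M(x)$ is well-defined, is if anything slightly more careful than the paper's sequence-of-sequences formulation of the identical argument.
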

\begin{proof}
Consider a sequence $(x_i,v_i)\in\gph N_Q$ converging to $(\bar{x},\bar{v})$. Then for each $i$, there exists a sequence $(x^j_i,v^j_i)\in\gph \hat{N}_Q$ converging to $(x_i,v_i)$. For sufficiently large indices $i$, the points $x^j_i$ lie in $M$ for all large $j$. For such indices we have $v^j_i\in \hat{N}_Q(x^j_i)\subset\hat{N}_M(x^j_i)$, and consequently $v_i\in N_M(x_i)$. This verifies the inclusion 
$\gph N_Q\subset\gph N_M \textrm{ locally around } (\bar{x},\bar{v}).$
\end{proof}

It turns out that in order to make further headway in studying properties of identifiable sets, one must impose the condition of {\em prox-regularity}.
This concept has been discovered and rediscovered by various authors, notably by Federer~\cite{federer} and Rockafellar-Poliquin~\cite{prox_reg}. We follow the development of \cite{prox_reg} and \cite{theo}.
\begin{defn}[Prox-regularity]\label{defn:prox_glob}
{\rm
We say that a set $M\subset\R^n$ is {\em prox-regular} at $\bar{x}\in M$ if it is locally closed around $\bar{x}$ and there exists a neighborhood $U$ of $\bar{x}$, such that the projection map $P_M$ is single-valued on $U$.}
\end{defn}

Prox-regularity unifies the notions of convex sets and ${\bf C}^2$-manifolds. A proof may be found in {\cite[Example 13.30, Proposition 13.32]{VA}}.
\begin{thm}[Prox-regularity under convexity and smoothness]{\ }{\\}
Convex sets and ${\bf C}^2$-manifolds are prox-regular at each of their points.
\end{thm}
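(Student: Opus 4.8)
The plan is to treat the two classes by entirely separate mechanisms, since convexity gives a global argument while smoothness requires a genuine localization.

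For convex sets I would invoke the classical uniqueness of the metric projection onto a closed convex set. Given a closed convex $Q$ and any $x\in\R^n$, the function $y\mapsto|x-y|^2$ is strictly convex and coercive, so it attains its minimum over $Q$; uniqueness follows because if two distinct points of $Q$ both attained the minimum, their midpoint would lie in $Q$ by convexity and give a strictly smaller value by the parallelogram identity. Hence $P_Q$ is single-valued on all of $\R^n$, a fortiori on any neighborhood, and closedness supplies local closedness. So convex sets are prox-regular at every point, with no localization needed.

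For the ${\bf C}^2$-manifold case, local closedness is immediate, since a local defining function $F$ gives $M\cap U=F^{-1}(0)$, which is relatively closed in $U$. The substance is single-valuedness of $P_M$, and I would obtain it from the endpoint map on the normal bundle. By Theorem~\ref{thm:clarke_man} the normal cone $N_M(y)$ agrees with the differential-geometric normal space $\spann\nabla F(y)$ at each nearby $y\in M$, and since $F$ is ${\bf C}^2$ these gradients vary ${\bf C}^1$-smoothly; consequently the normal bundle $N:=\{(y,v):y\in M,\ v\in N_M(y)\}$ is a ${\bf C}^1$ manifold of dimension $n$. Consider the ${\bf C}^1$ endpoint map $E(y,v)=y+v$. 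At the zero-section point $(\bar x,0)$ the connection terms vanish, so $DE(\bar x,0)$ sends $(\xi,\eta)\mapsto\xi+\eta$ on the tangent space $T_{\bar x}M\oplus N_M(\bar x)=\R^n$; this is an isomorphism. By the inverse function theorem $E$ is a ${\bf C}^1$ diffeomorphism from a neighborhood of $(\bar x,0)$ in $N$ onto a neighborhood $U$ of $\bar x$. Thus for each $x\in U$ there is a \emph{unique} pair $(y,v)$ near $(\bar x,0)$ with $y\in M$, $v\in N_M(y)$, and $x=y+v$. To match this with the projection, note that any nearest point $y\in P_M(x)$ satisfies, directly from the definition of the proximal normal cone, $x-y\in N^P_M(y)\subseteq N_M(y)$, so $(y,x-y)$ is a normal-bundle pair. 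Shrinking $U$, every nearest point lies close to $\bar x$: from $|x-y|=d_M(x)\le|x-\bar x|$ we get $|y-\bar x|\le 2|x-\bar x|$, which forces $(y,x-y)$ into the region where $E$ is injective. Hence $P_M(x)$ is a singleton for every $x\in U$, establishing single-valuedness on a neighborhood and completing the manifold case.

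The main obstacle is exactly this final localization: the inverse function theorem only yields uniqueness of the decomposition \emph{among pairs near} $(\bar x,0)$, so one must rule out a nearest point escaping to a distant sheet of $M$, which is what the distance estimate does. It is worth flagging that ${\bf C}^2$ (not merely ${\bf C}^1$) regularity is essential here: it is precisely ${\bf C}^2$ smoothness of $F$ that makes the normal bundle a ${\bf C}^1$ object and $E$ continuously differentiable, so that the inverse function theorem applies at all. An alternative, more computational route writes $M$ locally as a graph of a ${\bf C}^2$ function $\phi$ with $\nabla\phi(\bar x)=0$ and shows that the restricted squared-distance function has positive-definite Hessian near $\bar x$ for $x$ near $\bar x$ (the second-order terms being suppressed by the smallness of both $\nabla\phi$ and the residual $x-y$), yielding strict convexity and hence a unique local minimizer; I would prefer the normal-bundle argument for its conceptual transparency.
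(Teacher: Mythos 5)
Your argument is correct, but note that the paper does not actually prove this theorem: it defers entirely to \cite[Example 13.30, Proposition 13.32]{VA}. The route in that reference is variational rather than geometric --- prox-regularity of a set is tied to prox-regularity of its indicator function and then verified through monotonicity/hypomonotonicity-type characterizations of the (truncated) normal cone mapping, essentially the criterion the paper reproduces as Proposition~\ref{prop:prox_char}; convexity enters through monotonicity of $N_Q$, and smoothness through the ${\bf C}^1$ behavior of the normal space. Your proof instead handles both classes from first principles: the strict-convexity/parallelogram argument is the classical projection theorem for closed convex sets, and the tubular-neighborhood argument --- the ${\bf C}^1$ endpoint map $E(y,v)=y+v$ on the ${\bf C}^1$ normal bundle of a ${\bf C}^2$ manifold, invertible at $(\bar{x},0)$ by the inverse function theorem --- is essentially Federer's positive-reach computation \cite{federer}. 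You also correctly isolate the two genuine pitfalls: the inverse function theorem gives uniqueness of the decomposition only among pairs near $(\bar{x},0)$, which your estimate $|y-\bar{x}|\leq 2|x-\bar{x}|$ for an arbitrary nearest point resolves, and it is precisely ${\bf C}^2$ smoothness that makes $E$ continuously differentiable (${\bf C}^{1,1}$ would still do; mere ${\bf C}^1$ manifolds can indeed fail prox-regularity). What your route buys is a self-contained, geometrically transparent proof; what the cited route buys is uniformity with the functional theory and the quantitative constants $r,\epsilon$ that feed directly into the directional Definition~\ref{defn:dir_prox}.

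Two small points to tidy. First, single-valuedness of $P_M$ in Definition~\ref{defn:prox_glob} should include nonemptiness: in the manifold case add the one-line remark that for $x$ near $\bar{x}$ any minimizing sequence for $d_M(x)$ stays in the closed truncation $M\cap \bar{B}_{3\rho}(\bar{x})$ (local closedness of the submanifold), so a nearest point exists; your distance estimate already supplies the confinement. Second, the theorem's phrase ``convex sets'' must be read as closed (or at least locally closed) convex sets, since prox-regularity presupposes local closedness; your global argument covers exactly this intended case, and the locally closed variant follows by the same truncation device.
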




It will be useful to consider a variant of prox-regularity where we consider localization with respect to direction \cite[Definition 2.10]{prox_reg}.
\begin{defn}[Directional prox-regularity for sets]\label{defn:dir_prox}{\ }{\\}
{\rm Consider a set $Q\subset\R^n$, a point $\bar{x}\in Q$, and a normal vector $\bar{v}\in N_Q(\bar{x})$. We say that $Q$ is {\em prox-regular at} $\bar{x}$ {\em for} $\bar{v}$ if $Q$ is locally closed at $\bar{x}$ and there exist real numbers $\epsilon >0$ and $r >0$ such that the implication
\begin{equation*}
\left.
\begin{array}{cc}
x\in Q, &v\in N_Q(x) \\
|x-\bar{x}|<\epsilon, &|v-\bar{v}|<\epsilon
\end{array}
\right\}
\Rightarrow
P_{\mathsmaller{Q\cap B_{\epsilon}(\bar{x})}}(x+r^{-1}v)=x,
\end{equation*}
holds.
}
\end{defn}

Observe that if the implication above holds for some $r,\epsilon > 0$, then it also holds for any $r',\epsilon'>0$ with $r'>r$ and $\epsilon'<\epsilon$.

In particular, a set $Q$ is prox-regular at $\bar{x}$, in the sense of Definition~\ref{defn:prox_glob}, if and only of $Q$ is prox-regular at $\bar{x}$ for every vector $v\in N_Q(\bar{x})$. Clearly, if $Q$ is prox-regular at $\bar{x}$ for $\bar{v}$, then the equalities
$$\gph N^{P}_Q=\gph \hat{N}_Q=\gph N_Q\textrm{ hold locally around } (\bar{x},\bar{v}).$$

The following characterization \cite[Corollary 3.4]{prox_reg} will be of some use for us.
\begin{prop}[Prox-regularity and monotonicity]\label{prop:prox_char}{\ }{\\}
For a set $Q\subset\R^n$ and $\bar{x}\in Q$, with $Q$ locally closed at $\bar{x}$, the following are equivalent.
\begin{enumerate}
\item $Q$ is prox-regular at $\bar{x}$ for $\bar{v}$.
\item The vector $\bar{v}$ is a proximal normal to $Q$ at $\bar{x}$, and there exists a real number $r>0$ satisfying 
$$\langle v_1-v_0, x_1-x_0\rangle \geq -r|x_1-x_0|^2,$$ for any pairs $(x_i,v_i)\in\gph N_Q$  (for $i=0,1$) near $(\bar{x},\bar{v})$. 
\end{enumerate}
\end{prop}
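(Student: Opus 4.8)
The plan is to read condition (2) as a local \emph{hypomonotonicity} of the normal-cone mapping $N_Q$ near $(\bar{x},\bar{v})$, and to connect it with Definition~\ref{defn:dir_prox} through the elementary dictionary between metric projection and a quadratic inequality. Concretely, for locally closed $Q$ the identity $P_{Q\cap B_{\epsilon}(\bar{x})}(x+r^{-1}v)=x$ is, after expanding $|x+r^{-1}v-y|^{2}\ge|x+r^{-1}v-x|^{2}$, equivalent to the proximal inequality $\langle v,\,y-x\rangle\le\tfrac{r}{2}|y-x|^{2}$ holding for all $y\in Q\cap B_{\epsilon}(\bar{x})$. Thus Definition~\ref{defn:dir_prox} is precisely the assertion that this quadratic inequality holds \emph{uniformly} over all pairs $(x,v)\in\gph N_Q$ with $|x-\bar{x}|<\epsilon$ and $|v-\bar{v}|<\epsilon$. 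I would set up this reformulation first and then verify the two implications in these terms.

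For $(1)\Rightarrow(2)$, the routine direction, I would first apply Definition~\ref{defn:dir_prox} at the pair $(\bar{x},\bar{v})$ itself: the resulting projection identity is exactly the statement that $\bar{v}$ is a proximal normal to $Q$ at $\bar{x}$ with constant $r$. For the inequality, I would invoke the uniform quadratic inequality twice: once for $(x_0,v_0)$ with test point $y=x_1$, once for $(x_1,v_1)$ with test point $y=x_0$ (both test points lie in $Q\cap B_{\epsilon}(\bar{x})$ since the pairs are near $(\bar{x},\bar{v})$). Adding $\langle v_0,x_1-x_0\rangle\le\tfrac{r}{2}|x_1-x_0|^2$ and $\langle v_1,x_0-x_1\rangle\le\tfrac{r}{2}|x_1-x_0|^2$ and rearranging yields $\langle v_1-v_0,\,x_1-x_0\rangle\ge -r|x_1-x_0|^2$, which is condition (2).

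The substantive content is $(2)\Rightarrow(1)$: I must manufacture the uniform quadratic inequality from the infinitesimal hypomonotonicity together with the single proximal normal at $(\bar{x},\bar{v})$. The clean route is operator-theoretic: adding $rI$ to a suitable directional localization $T$ of $N_Q$ around $(\bar{x},\bar{v})$ converts the hypomonotone localization into a \emph{monotone} one, and the goal is to show that the associated localized resolvent --- equivalently the projection $P_{Q\cap B_{\epsilon}(\bar{x})}$ evaluated near $\bar{x}+r'^{-1}\bar{v}$ --- is single-valued for $r'>r$. I would exploit the proximal normal at $(\bar{x},\bar{v})$ as an anchor guaranteeing that $\bar{x}$ is the \emph{unique} nearest point of $Q\cap B_{\epsilon}(\bar{x})$ to $\bar{x}+r'^{-1}\bar{v}$ (strictness coming from $r'>r$), and then propagate single-valuedness to all nearby shifts $x+r'^{-1}v$ by a continuation argument, using monotonicity of $T+rI$ to force any two candidate projections to coincide.

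The main obstacle, and the reason a naive argument fails, is \emph{neighborhood control of the generated normals}. If one simply minimizes $z\mapsto|x+r'^{-1}v-z|^{2}$ over $Q\cap B_{\epsilon}(\bar{x})$ and reads off the optimality normal $w=v-r'(\bar{y}-x)$ at a minimizer $\bar{y}$, then although the descent estimate forces $\bar{y}\to\bar{x}$ as $r'\to\infty$, the normal $w$ may sit at distance on the order of $|\bar{v}|$ from $v$, so the pair $(\bar{y},w)$ escapes the neighborhood of $(\bar{x},\bar{v})$ on which hypomonotonicity is assumed. Overcoming this requires the directional localization built into Definition~\ref{defn:dir_prox} together with the local maximal monotonicity of $T+rI$, so that single-valuedness of the localized resolvent can be invoked without ever leaving the region where (2) holds; this monotone-operator single-valuedness is the technical heart of the argument and the step I expect to be hardest to make fully rigorous.
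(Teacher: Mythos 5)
You should know at the outset that the paper contains no proof of this proposition: it is quoted verbatim as \cite[Corollary 3.4]{prox_reg} (Poliquin--Rockafellar), so the only benchmark is the literature's argument. Your reformulation of Definition~\ref{defn:dir_prox} is correct: expanding $|x+r^{-1}v-y|^2\geq r^{-2}|v|^2$ shows the projection identity is equivalent to $\langle v, y-x\rangle\leq \tfrac{r}{2}|y-x|^2$ for all $y\in Q\cap B_{\epsilon}(\bar{x})$, and your two-sided application of this inequality proves $(1)\Rightarrow(2)$ completely (one small point: the identity at $(\bar{x},\bar{v})$ is a projection onto $Q\cap B_{\epsilon}(\bar{x})$, so to conclude $\bar{v}\in N^P_Q(\bar{x})$ you must shrink the step $t\leq r^{-1}$ so that points of $Q$ outside $B_{\epsilon}(\bar{x})$ cannot compete with $\bar{x}$; this is routine but should be said).

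The genuine gap is in $(2)\Rightarrow(1)$, and you located it yourself: the ``neighborhood control of generated normals'' is never supplied, and no resolvent single-valuedness theorem can substitute for it, because single-valuedness of a localized resolvent only helps \emph{after} you know the projection pair lies in the localization region --- which is exactly the open step. Your proposed detour through a maximal monotone extension and a ``continuation argument'' is not a proof; moreover it is unnecessary, because the proximal normal in hypothesis (2) closes the gap \emph{quantitatively}, not merely as a uniqueness anchor. Fix small $\lambda>0$ and write $u=\bar{x}+\lambda\bar{v}+e$; let $y\in P_{Q\cap B_{\epsilon}(\bar{x})}(u)$ (nonempty by local closedness). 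Comparing $|u-y|\leq|u-\bar{x}|$ with the proximal inequality $\langle \bar{v},y-\bar{x}\rangle\leq\tfrac{r_0}{2}|y-\bar{x}|^2$ gives, after expanding squares, $(1-\lambda r_0)|y-\bar{x}|^2-2|e|\,|y-\bar{x}|\leq 4\lambda|\bar{v}|\,|e|$, hence $|y-\bar{x}|=O(\sqrt{|e|})$, and therefore the generated proximal normal $w:=\lambda^{-1}(u-y)\in N_Q(y)$ satisfies $|w-\bar{v}|\leq\lambda^{-1}\bigl(|y-\bar{x}|+|e|\bigr)=O(\lambda^{-1}\sqrt{|e|})$. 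So for $(x,v)\in\gph N_Q$ close enough to $(\bar{x},\bar{v})$, projecting $u=x+\lambda v$ produces a pair $(y,w)$ that \emph{does} stay inside the hypomonotonicity neighborhood. Since $y+\lambda w=x+\lambda v$, i.e.\ $y-x=\lambda(v-w)$, hypomonotonicity yields $-\lambda^{-1}|y-x|^2=\langle w-v,\,y-x\rangle\geq -r|y-x|^2$, which forces $y=x$ once $\lambda<1/r$; that is precisely $P_{Q\cap B_{\epsilon}(\bar{x})}(x+\lambda v)=x$, i.e.\ Definition~\ref{defn:dir_prox}. Your outline (anchor plus monotonicity forcing coincidence) is the right skeleton, but without an estimate of this kind the hard direction simply is not proved.
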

So $Q$ is prox-regular at $\bar{x}$ for a proximal normal $\bar{v}\in N^{P}_Q(\bar{x})$ as long as $N_Q+rI$ has a monotone localization around $(\bar{x},\bar{v}+r\bar{x})$, for some real number $r>0$.
We may talk about prox-regularity of functions by means of epigraphical geometry \cite[Theorem 3.5]{prox_reg}.
\begin{defn}[Directional prox-regularity for functions]{\ }{\\}
{\rm We say that a function $f\colon\R^n\to\overline{\R}$ is {\em prox-regular at} $\bar{x}$ {\em for} $\bar{v}\in\partial f(\bar{x})$ if the epigraph $\epi f$ is prox-regular at $(\bar{x},f(\bar{x}))$ for the vector $(\bar{v},-1)$.
}
\end{defn}

The following proposition shows that prox-regularity of an identifiable subset $M$ of a set $Q$ implies that $Q$ itself is prox-regular. 
\begin{prop}[Prox-regularity of identifiable sets]\label{prop:prox}{\ }{\\}
Consider a closed set $Q$ and a subset $M\subset Q$ that is identifiable at $\bar{x}$ for $\bar{v}\in \hat{N}_Q(\bar{x})$. In addition, suppose that $M$ is prox-regular at $\bar{x}$ for $\bar{v}$. Then $Q$ is prox-regular at $\bar{x}$ for $\bar{v}$.
\end{prop}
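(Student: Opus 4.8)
The plan is to derive prox-regularity of $Q$ at $\bar{x}$ for $\bar{v}$ from the monotonicity characterization in Proposition~\ref{prop:prox_char}. That criterion splits the target into two independent requirements: first, that $\bar{v}$ is a \emph{proximal} normal to $Q$ at $\bar{x}$ (not merely the Fréchet normal we are handed); and second, that there is a constant $r>0$ for which the hypomonotonicity estimate $\langle v_1-v_0,x_1-x_0\rangle\geq -r|x_1-x_0|^2$ holds for all pairs $(x_i,v_i)\in\gph N_Q$ near $(\bar{x},\bar{v})$. Since $Q$ is closed it is locally closed at $\bar{x}$, so the proposition applies, and I would treat the two requirements separately.

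The hypomonotonicity half is routine. Because $\bar{v}\in\hat{N}_Q(\bar{x})\subset N_Q(\bar{x})$ and $M$ is identifiable at $\bar{x}$ for $\bar{v}$, Proposition~\ref{prop:subset} gives $\gph N_Q\subset\gph N_M$ locally around $(\bar{x},\bar{v})$. Prox-regularity of $M$, fed again through Proposition~\ref{prop:prox_char} (applied to $M$, which is locally closed by hypothesis), supplies a constant $r_M>0$ for which the hypomonotonicity estimate holds for pairs in $\gph N_M$ near $(\bar{x},\bar{v})$. Intersecting the two neighborhoods, any pair $(x_i,v_i)\in\gph N_Q$ sufficiently close to $(\bar{x},\bar{v})$ is also a pair in $\gph N_M$ close to $(\bar{x},\bar{v})$, so $Q$ inherits the estimate verbatim with $r=r_M$.

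The proximal-normal half carries the real content, and I would argue it by contradiction with the metric projection. If $\bar{v}$ were not proximal to $Q$ at $\bar{x}$, then for arbitrarily small $t>0$ the point $\bar{x}$ would fail to be a nearest point of $Q$ to $\bar{x}+t\bar{v}$; picking $p_t\in P_Q(\bar{x}+t\bar{v})$ yields $p_t\neq\bar{x}$ with $|p_t-\bar{x}|\leq 2t|\bar{v}|$, hence $p_t\to\bar{x}$, while expanding the strict inequality $|p_t-(\bar{x}+t\bar{v})|^2<t^2|\bar{v}|^2$ gives $\langle\bar{v},p_t-\bar{x}\rangle>\tfrac{1}{2t}|p_t-\bar{x}|^2>0$. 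Setting $v_t:=\tfrac{1}{t}(\bar{x}+t\bar{v}-p_t)\in N_Q(p_t)$, we have $v_t=\bar{v}-\tfrac{1}{t}(p_t-\bar{x})$, and the decisive step is that the Fréchet inequality $\langle\bar{v},p_t-\bar{x}\rangle\leq o(|p_t-\bar{x}|)$ combined with the strict lower bound forces $|p_t-\bar{x}|/t\to 0$, so that $v_t\to\bar{v}$. Then $(p_t,v_t)\in\gph N_Q$ converges to $(\bar{x},\bar{v})$ with $p_t\neq\bar{x}$, so identifiability places $p_t\in M$ for all small $t$. Prox-regularity of $M$ makes $\bar{v}$ a proximal normal to $M$, producing a constant $\sigma$ with $\langle\bar{v},p_t-\bar{x}\rangle\leq\sigma|p_t-\bar{x}|^2$; against the earlier bound this says $\tfrac{1}{2t}<\sigma$, impossible for small $t$. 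This contradiction yields $\bar{v}\in N^{P}_Q(\bar{x})$, and Proposition~\ref{prop:prox_char} then closes the argument.

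I expect this projection argument to be the main obstacle. The delicate point is upgrading the mere boundedness of $\tfrac{1}{t}(p_t-\bar{x})$ to genuine convergence to zero, and this is precisely where the Fréchet (rather than only proximal) hypothesis on $\bar{v}$ relative to $Q$ is indispensable: without it the candidate normals $v_t$ need not approach $\bar{v}$, and identifiability could not be invoked to pull $p_t$ into $M$.
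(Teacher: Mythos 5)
Your proposal is correct and takes essentially the same route as the paper: both reduce the claim via Proposition~\ref{prop:prox_char} to (i) showing $\bar{v}\in N^P_Q(\bar{x})$ by projecting $\bar{x}+t\bar{v}$ onto $Q$, using the Fr\'{e}chet inequality to force $|p_t-\bar{x}|/t\to 0$ so the proximal normals $v_t$ converge to $\bar{v}$, then invoking identifiability to pull $p_t$ into $M$ and prox-regularity of $M$ to conclude, and (ii) transferring the hypomonotonicity estimate from $\gph N_M$ to $\gph N_Q$ via Proposition~\ref{prop:subset}. The only cosmetic difference is that you frame the proximal-normal step as a contradiction closed by the quadratic inequality $\langle\bar{v},p_t-\bar{x}\rangle\leq\sigma|p_t-\bar{x}|^2$ on $M$, whereas the paper argues directly that the projected points equal $\bar{x}$ via single-valuedness of the projection onto $M$.
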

\begin{proof}
To show that $Q$ is prox-regular at $\bar{x}$ for $\bar{v}$, we will utilize Proposition~\ref{prop:prox_char}. To this end, we first claim that the inclusion $\bar{v}\in N^{P}_Q(\bar{x})$ holds. To see this, choose a sequence of real numbers $r_i\to \infty$ and a sequence of points
$$x_i\in P_Q(\bar{x}+r_i^{-1} \bar{v}).$$ 
We have 
$$v_i:=r_i(\bar{x}-x_i)+\bar{v}\in N^{P}_Q(x_i).$$
Clearly $x_i\to \bar{x}$. We now claim that the sequence $v_i$ converges to $\bar{v}$. Observe by definition of $x_i$, we have 
$$|(\bar{x}- x_i)+r_i^{-1}\bar{v}|\leq |r_i^{-1}\bar{v}|.$$ Squaring and cancelling terms, we obtain 
$$2\langle\bar{v},\bar{x}-x_i \rangle\leq -r_i|\bar{x}-x_i|^2.$$  
Combining this with the inclusion $\bar{v}\in\hat{N}_Q(\bar{x})$, we deduce 
$$\frac{o(\bar{x}-x_i)}{|\bar{x}-x_i|}\leq 2\langle \bar{v},\frac{\bar{x}-x_i}{|\bar{x}-x_i|}\rangle \leq -r_i|\bar{x}-x_i|$$
We conclude $r_i(\bar{x}-x_i)\to 0$ and consequently $v_i\to\bar{v}$.
Since $M$ is identifiable at $\bar{x}$ for $\bar{v}$, we deduce $x_i\in M$ for all large indices $i$. In addition, since $M$ is prox-regular at $\bar{x}$ for $\bar{v}$, we have $$x_i=P_{M\cap B_{\epsilon}(\bar{x})}(\bar{x}+r_i^{-1} \bar{v})=\bar{x},$$ for some $\epsilon >0$ and for sufficiently large indices $i$. Hence the inclusion $\bar{v}\in N^{P}_Q(\bar{x})$ holds.

Now since $M$ is prox-regular at $\bar{x}$ for $\bar{v}$ we deduce, using Proposition~\ref{prop:prox_char}, that
there exists a real number $r>0$ satisfying 
$$\langle v_1-v_0, x_1-x_0\rangle \geq -r|x_1-x_0|^2,$$ for any pairs $(x_i,v_i)\in\gph N_M$  (for $i=0,1$) near $(\bar{x},\bar{v})$.

By Proposition~\ref{prop:subset}, we have 
$$\gph N_Q\subset \gph N_M ~\textrm{ locally around }(\bar{x},\bar{v}).$$
Recalling that $\bar{v}$ is a proximal normal to $Q$ at $\bar{x}$ and again appealing to Proposition~\ref{prop:prox_char}, we deduce that $Q$ is prox-regular at $\bar{x}$ for $\bar{v}$. 
\end{proof}

\begin{rem}
{\rm In Proposition~\ref{prop:prox}, we assumed that the vector $\bar{v}$ is a Frech\'{e}t normal. Without this assumption, the analogous result fails.
For instance, consider the sets $$Q:=\{(x,y)\in\R^2: xy=0\},$$ $$M:=\{(x,y)\in\R^2: y=0\}.$$ Then clearly $M$ is identifiable at $\bar{x}:=(0,0)$ for 
the normal vector $\bar{v}:=(0,1)\in N_Q(\bar{x})$. However, $\bar{v}$ is not a proximal normal. } 
\end{rem}

The following result brings to the fore the insight one obtains by combining the notions of identifiability and prox-regularity. It asserts that given a prox-regular identifiable set $M$ at $\bar{x}$ for $\bar{v}\in\hat{N}_Q(\bar{x})$, not only does the inclusion $\gph N_Q\subset \gph N_M$ hold locally around $(\bar{x},\bar{v})$, but rather the two sets $\gph N_Q$ and $\gph N_M$ coincide around $(\bar{x},\bar{v})$. 

\begin{prop}[Reduction I]\label{prop:even}
Consider a closed set $Q$ and let $M\subset Q$ be a set that is prox-regular at a point $\bar{x}$ for $\bar{v}\in\hat{N}_Q(\bar{x})$. Then $M$ is identifiable at $\bar{x}$ for $\bar{v}$ if and only if $$\gph N_Q=\gph N_M ~\textrm{ locally around } (\bar{x},\bar{v}).$$
\end{prop}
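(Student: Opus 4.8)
The plan is to treat the two implications asymmetrically, since one is essentially a tautology and the other carries all the weight.

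For the reverse implication, suppose $\gph N_Q=\gph N_M$ locally around $(\bar{x},\bar{v})$. To check identifiability, I take any sequence $(x_i,v_i)\to(\bar{x},\bar{v})$ in $\gph N_Q$. For all large $i$ the pair $(x_i,v_i)$ lies in the neighborhood on which the two graphs agree, so $(x_i,v_i)\in\gph N_M$; in particular $x_i\in M$. Hence $M$ is identifiable at $\bar{x}$ for $\bar{v}$, and this direction needs nothing beyond the definition.

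For the forward implication, assume $M$ is identifiable at $\bar{x}$ for $\bar{v}$. The inclusion $\gph N_Q\subset\gph N_M$ locally around $(\bar{x},\bar{v})$ is exactly Proposition~\ref{prop:subset}, so the whole problem reduces to the reverse inclusion $\gph N_M\subset\gph N_Q$ locally. Two preliminary facts will be in hand: by Proposition~\ref{prop:prox}, $Q$ is itself prox-regular at $\bar{x}$ for $\bar{v}$ and $\bar{v}\in N^P_Q(\bar{x})$; and by directional prox-regularity of both $M$ and $Q$ I may fix common constants $r>0$ and $\epsilon>0$ so that $P_{M\cap B_\epsilon(\bar{x})}$ and $P_{Q\cap B_\epsilon(\bar{x})}$ behave as in Definition~\ref{defn:dir_prox} near $(\bar{x},\bar{v})$. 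Now I fix $(x,v)\in\gph N_M$ close to $(\bar{x},\bar{v})$ and set $z:=x+r^{-1}v$. Prox-regularity of $M$ gives $x=P_{M\cap B_\epsilon(\bar{x})}(z)$, so $|z-x|=d_{M\cap B_\epsilon(\bar{x})}(z)$. Choosing $x'\in P_{Q\cap B_\epsilon(\bar{x})}(z)$ and putting $v':=r(z-x')\in N^P_{Q\cap B_\epsilon(\bar{x})}(x')$, a short limiting argument (using that $\bar{x}$ is the unique projection of $\bar{x}+r^{-1}\bar{v}$ onto $Q\cap B_\epsilon(\bar{x})$) shows $(x',v')\to(\bar{x},\bar{v})$ as $(x,v)\to(\bar{x},\bar{v})$. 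Thus for $(x,v)$ close enough, $(x',v')\in\gph N_Q$, and the inclusion $\gph N_Q\subset\gph N_M$ forces $x'\in M\cap B_\epsilon(\bar{x})$. Since $M\cap B_\epsilon(\bar{x})\subset Q\cap B_\epsilon(\bar{x})$, I get $|z-x|=d_{M\cap B_\epsilon(\bar{x})}(z)\le|z-x'|=d_{Q\cap B_\epsilon(\bar{x})}(z)\le|z-x|$, so $x'$ also attains the distance to $M\cap B_\epsilon(\bar{x})$; single-valuedness of $P_{M\cap B_\epsilon(\bar{x})}$ then yields $x'=x$. Consequently $v=r(z-x)=v'\in N^P_{Q\cap B_\epsilon(\bar{x})}(x)\subset N_Q(x)$, so $(x,v)\in\gph N_Q$, completing the reverse inclusion and hence the proof.

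The hard part is this reverse inclusion $\gph N_M\subset\gph N_Q$: a normal to the small set $M$ need not be a normal to the larger set $Q$, and the proximal inequality encoding $v\in N_M(x)$ only controls competitor points inside $M$, not inside $Q$. The device that breaks the impasse is to project the lifted point $z=x+r^{-1}v$ onto $Q$, use identifiability (via $\gph N_Q\subset\gph N_M$) to certify that this nearest $Q$-point in fact lies in $M$, and then invoke single-valuedness of the prox-regular projection to identify it with $x$. The one place demanding care is verifying that $(x',v')$ genuinely converges to $(\bar{x},\bar{v})$, so that both the inclusion $\gph N_Q\subset\gph N_M$ and the directional prox-regularity estimates apply; this is precisely where uniqueness of the projection of $\bar{x}+r^{-1}\bar{v}$ onto $Q\cap B_\epsilon(\bar{x})$ enters.
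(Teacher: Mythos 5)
Your proof is correct and follows essentially the same route as the paper's: both hinge on projecting the lifted point $z=x+r^{-1}v$ onto $Q$, using continuity of the projection at $\bar{x}+r^{-1}\bar{v}$ (where Proposition~\ref{prop:prox} gives $\bar{v}\in N^P_Q(\bar{x})$), invoking identifiability to place the projected point in $M$, and then using prox-regularity of $M$ to force that point to coincide with $x$. The only cosmetic differences are that you argue directly via the distance comparison $d_{Q\cap B_\epsilon(\bar{x})}(z)\le d_{M\cap B_\epsilon(\bar{x})}(z)$ together with single-valuedness of $P_{M\cap B_\epsilon(\bar{x})}$, where the paper instead derives a contradiction by applying the prox-regularity implication a second time at the projected pair, and that your appeal to full prox-regularity of $Q$ is slightly more than needed (single-valuedness of the projection at the one point $\bar{x}+r^{-1}\bar{v}$ suffices, as in the paper).
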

\begin{proof}
We must show that locally around $(\bar{x},\bar{v})$, we have the equivalence 
$$\gph N_Q\subset M\times\R^n \Leftrightarrow \gph N_Q=\gph N_M.$$
The implication ``$\Leftarrow$'' is clear. Now assume $\gph N_Q\subset M\times \R^n$ locally around $(\bar{x},\bar{v})$.
By prox-regularity, there exist real numbers $r,\epsilon >0$ so that $P_Q(\bar{x}+r^{-1}\bar{v})=\bar{x}$ (Proposition~\ref{prop:prox}) and so that the implication 
\begin{equation*}
\left.
\begin{array}{cc}
x\in M, &v\in N_M(x) \\
|x-\bar{x}|<\epsilon, &|v-\bar{v}|<\epsilon
\end{array}
\right\}
\Rightarrow
P_{\mathsmaller{M\cap B_{\epsilon}(\bar{x})}}(x+r^{-1}v)=x,
\end{equation*}
holds.
By Proposition~\ref{prop:subset}, it is sufficient to argue that the inclusion 
$$\gph N_M\subset \gph N_Q \textrm{ holds locally around } (\bar{x},\bar{v}).$$ 
Suppose this is not the case. Then there exists a sequence $(x_i,v_i)\to(\bar{x},\bar{v})$, with $(x_i,v_i)\in \gph N_M$ and $(x_i,v_i)\notin \gph N_Q$. 
Let $z_i\in P_Q(x_i+r^{-1}v_i)$. We have 

\begin{align}
(x_i-z_i)+&r^{-1}v_i\in N^{P}_Q(z_i),\notag \\
&x_i\neq z_i. \label{eqn:neq}
\end{align} 
Observe $z_i\to\bar{x}$ by the continuity of the projection map. Consequently, by the finite identification property, for large indices $i$, we have $z_i\in M$ and 
$$x_i+r^{-1}v_i \in z_i + N^{P}_Q(z_i)\subset z_i +N_M(z_i).$$
Hence $z_i=P_{\mathsmaller{M\cap B_{\epsilon}(\bar{x})}}(x_i+r^{-1}v_i)=x_i$, for large $i$, thus contradicting (\ref{eqn:neq}). 
\end{proof}

Recall that Proposition~\ref{prop:prox} shows that prox-regularity of an identifiable subset $M\subset Q$ is inherited by $Q$. It is then natural to consider to what extent the converse holds. It clearly cannot hold in full generality, since identifiable sets may contain many extraneous pieces. However we will see shortly that the converse does hold for a large class of identifiable sets $M$, and in particular for ones that are locally minimal. The key tool is the following lemma, which may be of independent interest.
\begin{lem}[Accessibility]\label{lem:acc}
Consider a closed set $Q\subset\R^n$ and a subset $M\subset Q$ containing a point $\bar{x}$. Suppose that for some vector $\bar{v}\in\hat{N}_Q(\bar{x})$, there exists a sequence 
$$y_i\in N^{P}_M(\bar{x})\setminus N^{P}_Q(\bar{x}) \textrm{ with } y_i\to\bar{v}.$$ 
Then there exists a sequence $(x_i,v_i)\in \gph N^{P}_Q$ converging to $(\bar{x},\bar{v})$ with $x_i\notin M$ for each index $i$.
\end{lem}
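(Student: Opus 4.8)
The plan is to produce the desired pairs $(x_i,v_i)$ by projecting the points $\bar{x}+r_i^{-1}y_i$ onto $Q$, for a suitable sequence $r_i\to\infty$ chosen large relative to $i$. For each fixed $i$, since $y_i\in N^{P}_M(\bar{x})$, there is a threshold $r_i^0>0$ with $\bar{x}\in P_M(\bar{x}+r^{-1}y_i)$ for all $r>r_i^0$; I would take $r_i>\max\{r_i^0,i\}$, pick any $x_i\in P_Q(\bar{x}+r_i^{-1}y_i)$, and set $v_i:=r_i(\bar{x}-x_i)+y_i$. The identity $v_i=r_i\big((\bar{x}+r_i^{-1}y_i)-x_i\big)$ shows at once that $v_i\in N^{P}_Q(x_i)$, so $(x_i,v_i)\in\gph N^{P}_Q$; and the elementary bound $|x_i-\bar{x}|\le 2r_i^{-1}|y_i|$ (from $x_i$ being a nearest point and $\bar{x}\in Q$), together with $r_i\to\infty$ and the boundedness of $\{y_i\}$, gives $x_i\to\bar{x}$.

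The crux of the ``$x_i\notin M$'' requirement is a distance comparison exploiting the inclusion $M\subset Q$. On one hand, $y_i\in N^{P}_M(\bar{x})$ forces $d_M(\bar{x}+r_i^{-1}y_i)=r_i^{-1}|y_i|$ for $r_i>r_i^0$, the value being attained at $\bar{x}$. On the other hand, $y_i\notin N^{P}_Q(\bar{x})$ means $\bar{x}$ is never a nearest point of $Q$ to $\bar{x}+r_i^{-1}y_i$, so $d_Q(\bar{x}+r_i^{-1}y_i)<r_i^{-1}|y_i|$ for every $r_i$. Hence $d_Q(\bar{x}+r_i^{-1}y_i)<d_M(\bar{x}+r_i^{-1}y_i)$, and since each point of $M$ is an admissible competitor in the infimum defining $d_Q$, the nearest point $x_i$ of $Q$ cannot lie in $M$. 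This is the conceptually central but technically short step.

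The main obstacle is verifying $v_i\to\bar{v}$, equivalently $r_i(\bar{x}-x_i)\to 0$. Since $y_i\to\bar{v}$ by hypothesis, it remains to control the term $r_i(\bar{x}-x_i)$, and here I would mimic the estimate in the proof of Proposition~\ref{prop:prox}. Squaring the minimality inequality $|(\bar{x}-x_i)+r_i^{-1}y_i|\le r_i^{-1}|y_i|$ and cancelling yields $2\langle y_i,\bar{x}-x_i\rangle\le -r_i|\bar{x}-x_i|^2$. Splitting $\langle y_i,\bar{x}-x_i\rangle=\langle\bar{v},\bar{x}-x_i\rangle+\langle y_i-\bar{v},\bar{x}-x_i\rangle$, invoking $\bar{v}\in\hat{N}_Q(\bar{x})$ to bound $\langle\bar{v},\bar{x}-x_i\rangle\ge -o(|x_i-\bar{x}|)$ along the sequence $x_i\to\bar{x}$ in $Q$, and using $|\langle y_i-\bar{v},\bar{x}-x_i\rangle|\le |y_i-\bar{v}|\,|\bar{x}-x_i|$, I arrive, after dividing by $|\bar{x}-x_i|$, at an inequality of the form $\tfrac{r_i}{2}|\bar{x}-x_i|\le \tfrac{o(|x_i-\bar{x}|)}{|x_i-\bar{x}|}+|y_i-\bar{v}|$. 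Both terms on the right tend to $0$, so $r_i(\bar{x}-x_i)\to 0$ and therefore $v_i\to\bar{v}$. The delicate point is that the Fr\'echet estimate is only asymptotic, so one must first secure $x_i\to\bar{x}$ (obtained above, independently of this estimate) before invoking it, thereby avoiding any circularity.
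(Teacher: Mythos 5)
Your proof is correct and follows essentially the same route as the paper's: you project $\bar{x}+r_i^{-1}y_i$ onto $Q$ with $r_i\to\infty$ past the proximal threshold, set $v_i:=r_i(\bar{x}-x_i)+y_i$, rule out $x_i\in M$ by the strict comparison $d_Q<d_M$ (the paper's equivalent phrasing: $x_i\in M$ would force $x_i=\bar{x}$ and hence $y_i\in N^{P}_Q(\bar{x})$, a contradiction), and derive $r_i(\bar{x}-x_i)\to 0$ from the same squared-minimality inequality combined with the Fr\'{e}chet normal estimate. If anything, your explicit bound $|x_i-\bar{x}|\le 2r_i^{-1}|y_i|$ (where the paper loosely invokes ``continuity of the projection'') and your remark that $x_i\neq\bar{x}$ licenses dividing by $|\bar{x}-x_i|$ are slightly more careful than the original.
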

\begin{proof}
For each index $i$, there exists a real $r_i >0$ satisfying $P_M(\bar{x}+r_i^{-1}y_i)=\{\bar{x}\}$. Furthermore we can clearly assume $r_i\to \infty$. Define a sequence $(x_i,v_i)\in\gph N^P_Q$ by $$x_i\in P_Q(\bar{x}+r_i^{-1}y_i) ~~ \textrm{ and }~~ v_i:=r_i(\bar{x}-x_i)+y_i.$$
Observe $x_i\notin M$ since otherwise we would have $x_i=\bar{x}$ and $y_i=r_i v_i\in N^{P}_Q(\bar{x})$, a contradiction. By continuity of the projection $P_Q$, clearly we have $x_i\to\bar{x}$. Now observe 
$$|(\bar{x}-x_i)+r_i^{-1}y_i|\leq r_i^{-1}|y_i|.$$ Squaring and simplifying we obtain $$r_i|\bar{x}-x_i|+2\Big\langle \frac{\bar{x}-x_i}{|\bar{x}-x_i|},y_i\Big\rangle\leq 0.$$ Since $\bar{v}$ is a Frech\'{e}t normal, we deduce 
$$\lf_{i\to\infty}\, \Big\langle \frac{\bar{x}-x_i}{|\bar{x}-x_i|},y_i\Big\rangle= \lf_{i\to\infty}\, \Big\langle \frac{\bar{x}-x_i}{|\bar{x}-x_i|},\bar{v}\Big\rangle\geq 0.$$ Consequently we obtain $r_i|\bar{x}-x_i|\to 0$ and $v_i\to\bar{v}$, as claimed.
\end{proof}

\begin{prop}[Reduction II]\label{prop:red2}
Consider a closed set $Q\subset\R^n$, a point $\bar{x}$, and a normal $\bar{v}\in N_Q(\bar{x})$. Suppose $\gph N^P_Q =\gph N_Q \textrm{ locally around } (\bar{x},\bar{v})$, and consider a set $M:=N^{-1}_Q(V)$, where $V$ is a convex, open neighborhood of $\bar{v}$.
Then the equation 
$$\gph N_Q=\gph N_M \textrm{ holds locally around } (\bar{x},\bar{v}).$$
\end{prop}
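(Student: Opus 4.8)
The plan is to prove the two inclusions $\gph N_Q \subset \gph N_M$ and $\gph N_M \subset \gph N_Q$ separately, both locally around $(\bar{x},\bar{v})$. The forward inclusion is immediate: since $V$ is open with $\bar{v}\in V$, the set $M=N_Q^{-1}(V)$ is a preimage of a neighborhood of $\bar{v}$ under the mapping $x\mapsto N_Q(x)$, and so (as observed in Section~\ref{sec:ident}) it is identifiable at $\bar{x}$ for $\bar{v}$. Proposition~\ref{prop:subset} then yields $\gph N_Q\subset\gph N_M$ locally around $(\bar{x},\bar{v})$. I also record that, by hypothesis, $(\bar{x},\bar{v})\in\gph N_Q=\gph N^P_Q$ locally, so $\bar{v}\in N^P_Q(\bar{x})\subset\hat{N}_Q(\bar{x})$.

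For the reverse inclusion I would first pass to proximal normals. Since the limiting normal cone is the outer limit of proximal normal cones, and since the hypothesis gives $N^P_Q=\hat{N}_Q=N_Q$ throughout a neighborhood of $(\bar{x},\bar{v})$, it suffices to prove that $\gph N^P_M=\gph N^P_Q$ locally around $(\bar{x},\bar{v})$; taking outer limits then transfers the equality to the limiting cones, because a limiting normal to either set near $(\bar{x},\bar{v})$ is a limit of proximal normals at points that eventually enter the neighborhood where the two proximal graphs agree. The inclusion $\gph N^P_Q\subset\gph N^P_M$ is the easy half: if $(\xi,\omega)\in\gph N^P_Q$ lies near $(\bar{x},\bar{v})$ then $\omega\in V$ (as $V$ is open and $\omega$ is close to $\bar{v}$), whence $\xi\in N_Q^{-1}(V)=M$, and since $M\subset Q$ every proximal normal to $Q$ at $\xi$ is a proximal normal to $M$ at $\xi$.

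The real content is the opposite inclusion $\gph N^P_M\subset\gph N^P_Q$, and here the Accessibility Lemma (Lemma~\ref{lem:acc}) is the natural engine. Arguing by contradiction, I would take a sequence $(\xi_k,\omega_k)\to(\bar{x},\bar{v})$ with $\xi_k\in M$ and $\omega_k\in N^P_M(\xi_k)\setminus N^P_Q(\xi_k)$, and run the projection construction of that lemma: choosing $s_k\to\infty$ with $P_M(\xi_k+s_k^{-1}\omega_k)=\{\xi_k\}$, set $z_k\in P_Q(\xi_k+s_k^{-1}\omega_k)$ and $w_k:=\omega_k+s_k(\xi_k-z_k)\in N^P_Q(z_k)$. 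Testing the projection inequality against $\xi_k$ forces $z_k\neq\xi_k$ (otherwise $\omega_k$ would already be a proximal $Q$-normal), hence $z_k\notin M$, and the same inequality delivers $z_k\to\bar{x}$ together with the length bound $|w_k|\leq|\omega_k|$, so that a subsequence of $w_k$ converges to some $w^{*}\in N_Q(\bar{x})$ with $|w^{*}|\leq|\bar{v}|$. The goal is to contradict identifiability of $M$, which would follow at once if one could arrange $w_k\to\bar{v}$.

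The hard part will be precisely this last step. Because $z_k\notin M=N_Q^{-1}(V)$, every $Q$-normal at $z_k$ — in particular $w_k$ — avoids $V$, so $w^{*}\notin V$ and the constructed normals do \emph{not} automatically return to $\bar{v}$; the naive projection onto $Q$ therefore does not by itself close the argument. Overcoming this is the crux, and it is where the convexity of $V$, the coincidence $N^P_Q=N_Q$, and the bound $|w^{*}|\leq|\bar{v}|$ must be combined. The strategy I would pursue is to anchor Lemma~\ref{lem:acc} at the fixed base point $\bar{x}$, where $\bar{v}\in\hat{N}_Q(\bar{x})$ is guaranteed, and to use the convexity of $V$ (via an interpolation/first-exit argument between a normal $u_k\in N_Q(\xi_k)\cap V$ witnessing $\xi_k\in M$ and the offending direction $\omega_k$) to manufacture a genuine sequence $y_i\in N^P_M(\bar{x})\setminus N^P_Q(\bar{x})$ with $y_i\to\bar{v}$; Lemma~\ref{lem:acc} then produces points outside $M$ carrying proximal $Q$-normals converging to $\bar{v}$, contradicting identifiability. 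Reconciling the moving base points $\xi_k$ with the fixed anchor $\bar{x}$, so that the violations accumulate in a direction lying inside the convex set $V$, is the main obstacle to be resolved.
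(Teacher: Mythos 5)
Your frame is right up to the crux, and then the crux is left open. The forward inclusion via Proposition~\ref{prop:subset}, the reduction to proximal graphs, the easy half $\gph N^P_Q\subset\gph N^P_M$, the identification of Lemma~\ref{lem:acc} as the engine, and even the convexity-of-$V$ ``first-exit'' interpolation idea all match the paper's proof. But the step you yourself flag as ``the main obstacle to be resolved'' is the entire content of the proposition, and the strategy you sketch for it --- anchoring Lemma~\ref{lem:acc} at the \emph{fixed} point $\bar{x}$ and manufacturing $y_i\in N^{P}_M(\bar{x})\setminus N^{P}_Q(\bar{x})$ with $y_i\to\bar{v}$ --- cannot work in general. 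Failure of the inclusion $\gph N^P_M\subset\gph N^P_Q$ near $(\bar{x},\bar{v})$ only furnishes offending pairs $(x,v)$ at points $x$ near, and typically distinct from, $\bar{x}$; proximal normal cones have no semicontinuity property that would transfer these violations back to the base point, and it is perfectly possible to have $N^P_M(\bar{x})=N^P_Q(\bar{x})$ on a neighborhood of $\bar{v}$ while the graph inclusion fails at nearby points. So the sequence your plan requires need not exist, and your earlier projection construction (the $z_k$, $w_k$ detour) is, as you correctly diagnosed, a dead end because the constructed normals escape $V$.

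The paper's resolution is to run your interpolation at the \emph{moving} point and to draw the contradiction not from identifiability at $\bar{x}$ for $\bar{v}$ but from the membership criterion defining $M$. Given $(x,v)\in\gph N^P_M$ near $(\bar{x},\bar{v})$ with $v\in V$ and $v\notin N^P_Q(x)$: since $x\in M$ there is $z\in N_Q(x)\cap V$, a proximal normal by the hypothesis $\gph N^P_Q=\gph N_Q$; the segment $\gamma$ from $z$ to $v$ lies in $V\cap N^P_M(x)$ (both sets are convex, and $N^P_Q(x)\subset N^P_M(x)$), while $\gamma\cap N^P_Q(x)$ is a proper subsegment with second endpoint $w$. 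Points of $\gamma$ just beyond $w$ give $y_i\in N^P_M(x)\setminus N^P_Q(x)$ with $y_i\to w\in \hat{N}_Q(x)$, and Lemma~\ref{lem:acc} applied \emph{at $x$} produces $(x_i,v_i)\in\gph N^P_Q$ with $x_i\notin M$ and $v_i\to w$. Since $w\in V$ and $V$ is open, eventually $v_i\in V$, forcing $x_i\in N_Q^{-1}(V)=M$ --- a contradiction. The point you missed is that $w$ need not be close to $\bar{v}$ at all; it only needs to lie in the open set $V$, which is exactly why no reconciliation between the moving base points and $\bar{x}$ is necessary. (The proposition then follows by taking closures, as in your reduction step.)
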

\begin{proof}
First observe that since $M$ is identifiable at $\bar{x}$ for $\bar{v}$, applying Proposition~\ref{prop:subset}, we deduce that the inclusion $\gph N_Q\subset\gph N_M$ holds locally around $(\bar{x},\bar{v})$. To see the reverse inclusion, suppose that there exists a pair $(x,v)\in\gph N^P_M$, arbitrarily close to $(\bar{x},\bar{v})$, with $v\in V$ and $v\notin N^P_Q(x)$. By definition of $M$, we have $N^P_Q(x)\cap V\neq \emptyset$. Let $z$ be a vector in this intersection, and consider the line segment $\gamma$ joining $z$ and $v$. Clearly the inclusion $\gamma\subset V\cap N^P_M(x)$ holds. Observe that the line segment $\gamma\cap N^P_Q(x)$ is strictly contained in $\gamma$, with $z$ being one of its endpoints. Let $w$ be the other endpoint of $\gamma\cap N^P_Q(x)$. Then we immediately deduce that there exists a sequence $$y_i\in N^{P}_M(x)\setminus N^{P}_Q(x) \textrm{ with } y_i\to w.$$ Applying Lemma~\ref{lem:acc}, we obtain a contradiction. Therefore the inclusion $\gph N_Q\supset\gph N^P_M$ holds locally around $(\bar{x},\bar{v})$. Taking the closure the result follows.
\end{proof}

In particular, we obtain the following essential converse of Proposition~\ref{prop:prox}.
\begin{prop}[Prox-regularity under local minimality] {\ }{\\}
Consider a closed set $Q$ and a subset $M\subset Q$ that is a locally minimal identifiable set at $\bar{x}$ for $\bar{v}\in \hat{N}_Q(\bar{x})$. Then $Q$ is prox-regular at $\bar{x}$ for $\bar{v}$ if and only if $M$ is prox-regular at $\bar{x}$ for $\bar{v}$.
\end{prop}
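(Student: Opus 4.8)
The plan is to treat the two implications separately. The implication that prox-regularity of $M$ forces prox-regularity of $Q$ is already in hand: a locally minimal identifiable set is in particular identifiable, and $\bar{v}\in\hat{N}_Q(\bar{x})$ by hypothesis, so this direction is exactly Proposition~\ref{prop:prox}. Thus all the real work lies in the converse, that prox-regularity of $Q$ at $\bar{x}$ for $\bar{v}$ descends to $M$.

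For the converse, I would first record the two consequences that prox-regularity of $Q$ at $\bar{x}$ for $\bar{v}$ supplies. By definition the graphs $\gph N^{P}_Q$ and $\gph N_Q$ agree locally around $(\bar{x},\bar{v})$, and by Proposition~\ref{prop:prox_char} the vector $\bar{v}$ is a proximal normal to $Q$ at $\bar{x}$, accompanied by a monotonicity estimate $\langle v_1-v_0,x_1-x_0\rangle\ge -r|x_1-x_0|^2$ for pairs $(x_i,v_i)\in\gph N_Q$ near $(\bar{x},\bar{v})$. Next I would use local minimality to put $M$ into the form required by Reduction~II. By the characterization of locally minimal identifiable sets (Proposition~\ref{prop:charsub} applied to $\delta_Q$, whose value is constant on $Q$, equivalently Proposition~\ref{prop:loc_min} applied to the mapping $N_Q$), there is a neighborhood of $\bar{v}$ on which $M$ coincides locally around $\bar{x}$ with $N_Q^{-1}(W)$; shrinking, I take $W$ to be a small open ball about $\bar{v}$, which is both convex and open. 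Applying Reduction~II (Proposition~\ref{prop:red2}) to the set $N_Q^{-1}(W)$, whose hypothesis $\gph N^{P}_Q=\gph N_Q$ is precisely the first recorded fact, yields $\gph N_Q=\gph N_{N_Q^{-1}(W)}$ locally around $(\bar{x},\bar{v})$; since normal cones depend only on the set near the point and $M$ agrees with $N_Q^{-1}(W)$ near $\bar{x}$, this gives $\gph N_Q=\gph N_M$ locally around $(\bar{x},\bar{v})$.

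With this graph identity in place, transferring prox-regularity is routine. Because $M\subset Q$ and $\bar{x}\in P_Q(\bar{x}+r^{-1}\bar{v})$ for a suitable $r$, the defining projection inequality for $Q$ holds a fortiori over the smaller set $M$, so $\bar{x}\in P_M(\bar{x}+r^{-1}\bar{v})$ and hence $\bar{v}$ is a proximal normal to $M$ at $\bar{x}$. The monotonicity estimate, stated for pairs in $\gph N_Q$, transfers verbatim to pairs in $\gph N_M$ because the two graphs coincide near $(\bar{x},\bar{v})$. Finally, $M$ is locally closed at $\bar{x}$ by Proposition~\ref{prop:closed}. Proposition~\ref{prop:prox_char} then certifies that $M$ is prox-regular at $\bar{x}$ for $\bar{v}$, completing the converse.

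The main obstacle is the bookkeeping in the converse: one must arrange $M$ as a preimage $N_Q^{-1}(W)$ with $W$ convex and open so that Reduction~II applies, and one must recognize that prox-regularity of $Q$ is exactly what delivers the hypothesis $\gph N^{P}_Q=\gph N_Q$ of that reduction. Once the graph identity $\gph N_Q=\gph N_M$ is secured, the remaining checks---that $\bar{v}$ remains a proximal normal to $M$ and that the monotonicity estimate survives---follow immediately from $M\subset Q$ and the coincidence of the two graphs.
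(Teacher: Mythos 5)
Your proposal is correct and follows essentially the same route as the paper's own proof: the easy direction via Proposition~\ref{prop:prox}, and the converse by writing $M$ locally as $N_Q^{-1}(W)$ for a convex open $W$ (Proposition~\ref{prop:charsub}), applying Reduction~II to obtain $\gph N_Q=\gph N_M$ locally, and then transferring prox-regularity via Proposition~\ref{prop:prox_char} together with local closedness from Proposition~\ref{prop:closed}. Your explicit verification that $\bar{v}$ remains a proximal normal to $M$ and that the monotonicity estimate survives is a welcome filling-in of a step the paper compresses into ``immediately implies.''
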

\begin{proof}
The implication $\Leftarrow$ was proven in Proposition~\ref{prop:prox}. To see the reverse implication, first recall that $M$ is locally closed by Proposition~\ref{prop:closed}. Furthermore Propositions~\ref{prop:charsub} shows that there exists an open convex neighborhood $V$ of $\bar{v}$ so that $M$ coincides locally with $N^{-1}_Q(V)$. In turn, applying Proposition \ref{prop:red2} we deduce that the equation 
$$\gph N_Q=\gph N_M \textrm{ holds locally around } (\bar{x},\bar{v}).$$
Finally by Proposition~\ref{prop:prox_char}, prox-regularity of $Q$ at $\bar{x}$ for $\bar{v}$ immediately implies that $M$ is prox-regular at $\bar{x}$ for $\bar{v}$.
\end{proof}

We end this section by exploring the strong relationship between identifiable sets and the metric projection map. We begin with the following proposition.
\begin{prop}[Identifiability and the metric projection]{\ }{\\}
Consider a closed set $Q$ and a subset $M\subset Q$. Let $\bar{x}\in M$ and $\bar{v}\in N^P_Q(\bar{x})$. Consider the following conditions.
\begin{enumerate}
\item\label{it:usual2} $M$ is identifiable (relative to $Q$) at $\bar{x}$ for $\bar{v}$.
\item\label{it:proj2} For all sufficiently small $\lambda >0$, the set $M$ is identifiable (relative to $P_Q^{-1}$) at $\bar{x}$ for $\bar{x}+\lambda\bar{v}$.
\end{enumerate}  
Then the implication $\ref{it:usual2}\Rightarrow\ref{it:proj2}$ holds. If in addition $Q$ is prox-regular at $\bar{x}$ for $\bar{v}$, then the equivalence $\ref{it:usual2}\Leftrightarrow\ref{it:proj2}$ holds.
\end{prop}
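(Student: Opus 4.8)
The plan is to transport sequences back and forth between $\gph N_Q$ and $\gph P_Q^{-1}$ through the elementary correspondence $y = x + \lambda v$ (equivalently $v = \lambda^{-1}(y-x)$), exactly as in the convex proposition treated earlier, but now accounting for the proximal (rather than merely limiting) nature of $\bar v$ and for the fact that $P_Q$ is only locally single-valued. The two facts I would lean on throughout are that $x \in P_Q(y)$ forces $y - x \in N^{P}_Q(x)$, and that, because $\bar v \in N^{P}_Q(\bar x)$, the base point $\bar x + \lambda \bar v$ projects onto $\bar x$ for all small $\lambda > 0$.

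For the forward implication $\ref{it:usual2}\Rightarrow\ref{it:proj2}$, let $\lambda_0 > 0$ be small enough that $\bar x \in P_Q(\bar x + \lambda \bar v)$ for every $0 < \lambda < \lambda_0$; such $\lambda_0$ exists since $\bar v \in N^{P}_Q(\bar x)$. Fix any such $\lambda$ and take a sequence $(x_i, y_i) \to (\bar x, \bar x + \lambda \bar v)$ in $\gph P_Q^{-1}$, so that $x_i \in P_Q(y_i)$. I would set $v_i := \lambda^{-1}(y_i - x_i)$. Since $y_i - x_i$ is a proximal normal to $Q$ at $x_i$ and $N^{P}_Q(x_i)$ is a cone, we have $v_i \in N^{P}_Q(x_i) \subset N_Q(x_i)$, while $v_i \to \lambda^{-1}(\lambda \bar v) = \bar v$. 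Thus $(x_i, v_i) \to (\bar x, \bar v)$ in $\gph N_Q$, and identifiability of $M$ relative to $Q$ places $x_i$ in $M$ for all large $i$. This step uses no prox-regularity.

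For the converse, assume $Q$ is prox-regular at $\bar x$ for $\bar v$, with constants $r, \epsilon > 0$ as in Definition~\ref{defn:dir_prox}; by the monotonicity remark following that definition I may enlarge $r$ freely. Given $(x_i, v_i) \to (\bar x, \bar v)$ in $\gph N_Q$, set $\lambda := r^{-1}$ and $y_i := x_i + \lambda v_i \to \bar x + \lambda \bar v$. For large $i$ the hypotheses of prox-regularity hold, giving the local projection identity $P_{\mathsmaller{Q \cap B_\epsilon(\bar x)}}(y_i) = x_i$. The crux is to upgrade this to the global statement $x_i \in P_Q(y_i)$: since $y_i$ lies within distance about $\lambda |\bar v|$ of $\bar x$, any competitor point of $Q$ lying outside $B_\epsilon(\bar x)$ is at distance at least $\epsilon - \lambda|\bar v| - o(1)$ from $y_i$, which exceeds $|y_i - x_i| = \lambda |v_i| \approx \lambda|\bar v|$ once $\lambda = r^{-1}$ is small enough. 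Hence the genuine nearest point to $y_i$ already lies in $B_\epsilon(\bar x)$, where it equals $x_i$. Consequently $(x_i, y_i) \to (\bar x, \bar x + \lambda \bar v)$ in $\gph P_Q^{-1}$ (the limit is admissible because the same reconciliation applied at $(\bar x, \bar v)$ yields $\bar x \in P_Q(\bar x + \lambda \bar v)$), and invoking $\ref{it:proj2}$ for this particular small $\lambda$ forces $x_i \in M$ eventually.

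The main obstacle is precisely this reconciliation of the local projection onto $Q \cap B_\epsilon(\bar x)$, which is all the definition of prox-regularity supplies, with the true metric projection onto $Q$ demanded by $P_Q^{-1}$; the remainder is routine cone-scaling bookkeeping. The distance estimate above shows the reconciliation is harmless provided $\lambda$ is taken small, which is legitimate since $\ref{it:proj2}$ is assumed for all sufficiently small $\lambda$ and since the prox-regularity constant $r$ may be increased at will.
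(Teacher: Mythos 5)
Your proposal is correct and follows essentially the same route as the paper's own proof: the forward direction via rescaling $v_i:=\lambda^{-1}(y_i-x_i)\in N^P_Q(x_i)$ is identical, and the converse likewise sends $(x_i,v_i)\in\gph N_Q$ to $(x_i,x_i+\lambda v_i)\in\gph P_Q^{-1}$ using prox-regularity with $\lambda=r^{-1}$ small. If anything, you are more careful than the paper, which simply asserts that $(x_i,x_i+\lambda v_i)$ eventually lies in $\gph P_Q^{-1}$, whereas you explicitly justify the passage from the localized projection $P_{Q\cap B_\epsilon(\bar{x})}$ supplied by Definition~\ref{defn:dir_prox} to the genuine projection $P_Q$ via the distance estimate ruling out competitors outside $B_\epsilon(\bar{x})$.
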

\begin{proof}
$\ref{it:usual2}\Rightarrow\ref{it:proj2}$: Recall that for all small $\lambda >0$, we have $P_Q(\bar{x}+\lambda\bar{v})=\bar{x}$. Fix such a real number $\lambda$ and consider a sequence $(x_i,y_i)\to(\bar{x},\bar{x}+\lambda\bar{v})$ in $\gph P_Q^{-1}$. Observe $x_i\in P_Q(y_i)$ and the sequence $\lambda^{-1}(y_i-x_i)\in N_Q(x_i)$ converges to $\bar{v}$. Consequently, the points $x_i$ all eventually lie in $M$.

Suppose now that $Q$ is prox-regular at $\bar{x}$ for $\bar{v}$.

$\ref{it:proj2}\Rightarrow \ref{it:usual2}$: We may choose $\lambda,\epsilon >0$ as in Definition~\ref{defn:dir_prox}, and satisfying $P_Q(\bar{x}+\lambda\bar{v})=\bar{x}$. Consider a sequence $(x_i,v_i)\to(\bar{x},\bar{v})$ in $\gph N_Q$. Then the sequence $(x_i,x_i+\lambda v_i)$ converges to $(\bar{x},\bar{x}+\lambda \bar{v})$ and lies in $\gph P_Q^{-1}$ for all sufficiently large $i$. Consequently, we have $x_i\in M$ for all large $i$.
\end{proof}

Assuming prox-regularity, a simple way to generate identifiable subsets $M\subset Q$ is by projecting open sets onto $Q$. 
\begin{prop}[Projections of neighborhoods are identifiable]\label{prop:proj_inter} {\ }{\\}
Consider a set $Q\subset\R^n$ that is prox-regular at $\bar{x}$ for $\bar{v}\in N_Q(\bar{x})$. Then for all sufficiently small $\lambda >0$, 
if the inclusion $\bar{x}+\lambda \bar{v}\in\inter U$ holds for some set $U$, then $P_Q(U)$ is identifiable at $\bar{x}$ for $\bar{v}$.
\end{prop}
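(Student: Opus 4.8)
The plan is to verify identifiability directly from the definition. Writing $M:=P_Q(U)$, I must show that for an arbitrary sequence $(x_i,v_i)\to(\bar{x},\bar{v})$ in $\gph N_Q$, the points $x_i$ lie in $M$ for all large $i$. The strategy is to exhibit, for each large $i$, a point $u_i\in U$ whose projection onto $Q$ is exactly $x_i$, forcing $x_i\in P_Q(U)$. The natural candidate is $u_i:=x_i+\lambda v_i$. Since $u_i\to\bar{x}+\lambda\bar{v}\in\inter U$, membership $u_i\in U$ holds automatically for all large $i$, so the whole burden is to establish $x_i\in P_Q(u_i)$.

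First I would fix the directional prox-regularity constants $\epsilon,r>0$ supplied by Definition~\ref{defn:dir_prox} and restrict attention to $\lambda>0$ with $\lambda<r^{-1}$. By the monotonicity remark following that definition, the prox-regularity implication remains valid with step $\lambda$ in place of $r^{-1}$ (after possibly shrinking $\epsilon$). Since $|x_i-\bar{x}|<\epsilon$ and $|v_i-\bar{v}|<\epsilon$ for all large $i$, that implication yields
$$P_{\mathsmaller{Q\cap B_{\epsilon}(\bar{x})}}(u_i)=x_i.$$
This identity, however, concerns the projection onto the \emph{localized} set $Q\cap B_{\epsilon}(\bar{x})$, whereas $P_Q(U)$ involves the global projection onto $Q$. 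Reconciling these two is the crux of the argument.

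To bridge the gap I would use a distance estimate confining the global projection to $B_{\epsilon}(\bar{x})$. Since $x_i\in Q$, we have $d_Q(u_i)\le|u_i-x_i|=\lambda|v_i|$, so every $z\in P_Q(u_i)$ satisfies
$$|z-\bar{x}|\le|z-u_i|+|u_i-\bar{x}|\le\lambda|v_i|+|u_i-\bar{x}|.$$
The right-hand side is independent of $z$ and tends to $2\lambda|\bar{v}|$ as $i\to\infty$, so by further requiring $\lambda$ small enough that $2\lambda|\bar{v}|<\epsilon$, I obtain $P_Q(u_i)\subset B_{\epsilon}(\bar{x})$ for all large $i$; note that local closedness of $Q$ at $\bar{x}$ guarantees these nearest points exist once $u_i$ is near $\bar{x}$. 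Consequently the minimizers of $|\,\cdot-u_i|$ over $Q$ and over $Q\cap B_{\epsilon}(\bar{x})$ coincide, whence $P_Q(u_i)=P_{\mathsmaller{Q\cap B_{\epsilon}(\bar{x})}}(u_i)=\{x_i\}$. In particular $x_i\in P_Q(u_i)\subset P_Q(U)=M$, which is exactly the required conclusion. All the constraints imposed on $\lambda$ depend only on $Q,\bar{x},\bar{v}$ and not on $U$, matching the quantifier structure of the statement.

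I expect the main obstacle to be precisely this localization mismatch: Definition~\ref{defn:dir_prox} only controls the projection onto $Q\cap B_{\epsilon}(\bar{x})$, and one must rule out that a distant portion of $Q$ furnishes a strictly closer point to $u_i$. The distance estimate resolves this by exploiting that $u_i$ lies within distance $\approx\lambda|\bar{v}|$ of $Q$ while sitting close to $\bar{x}$, so that for small $\lambda$ no competitor outside $B_{\epsilon}(\bar{x})$ can win. The remaining steps---verifying $u_i\in U$ and passing from the local to the global projection---are routine.
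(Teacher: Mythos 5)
Your proof is correct and is essentially the paper's own argument: both take $u_i:=x_i+\lambda v_i\to\bar{x}+\lambda\bar{v}\in\inter U$, so that $u_i\in U$ eventually, and invoke directional prox-regularity to conclude $P_Q(u_i)=x_i$ and hence $x_i\in P_Q(U)$. Your only addition is the explicit distance estimate upgrading the localized identity $P_{Q\cap B_{\epsilon}(\bar{x})}(u_i)=x_i$ from Definition~\ref{defn:dir_prox} to the global projection $P_Q(u_i)=\{x_i\}$ --- a step the paper's one-line proof leaves implicit --- and you handle it correctly, with constraints on $\lambda$ depending only on $Q$, $\bar{x}$, $\bar{v}$ as the quantifiers require.
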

\begin{proof}
Suppose $(x_i,v_i)\to(\bar{x},\bar{v})$ in $\gph N_Q$. Then by prox-regularity for all sufficiently small $\lambda>0$, we have  
$P_{Q}(x_i+\lambda v_i)=x_i$ and $x_i+\lambda v_i \in U$ for all large $i$. We deduce $x_i\in P_Q(U)$ for all large $i$, as we needed to show.   
\end{proof}

In fact, we will see shortly that under the prox-regularity assumption, {\em all} locally minimal identifiable sets arise in this way.  
\begin{lem}\label{lem:inter}
Consider a closed set $Q\subset\R^n$ and a subset $M$ that is identifiable at $\bar{x}$ for $\bar{v}\in N^P_Q(\bar{x})$. Then for all sufficiently small $\lambda >0$ and all $\epsilon >0$, the inclusion
$$\bar{x}+\lambda \bar{v}\in \inter \{x+\lambda v: x\in M, v\in N_Q(x), |x-\bar{x}|<\epsilon, |v-\bar{v}|<\epsilon\},$$
holds.
\end{lem}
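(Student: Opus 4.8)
The plan is to argue by contradiction, reducing the interior claim to the stability of the metric projection at the point $\bar{x}+\lambda\bar{v}$. First I would fix $\lambda>0$ small enough that $P_Q(\bar{x}+\lambda\bar{v})=\{\bar{x}\}$; this is possible precisely because $\bar{v}$ is a proximal normal, and crucially the threshold on $\lambda$ depends only on $\bar{v}$, not on $\epsilon$, which matches the quantifier order in the statement. Write $S$ for the set $\{x+\lambda v: x\in M,\ v\in N_Q(x),\ |x-\bar{x}|<\epsilon,\ |v-\bar{v}|<\epsilon\}$ whose interior we must enter. If $\bar{x}+\lambda\bar{v}\notin\inter S$, then there is a sequence $z_i\to\bar{x}+\lambda\bar{v}$ with $z_i\notin S$, and the goal is to contradict this.

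The second step is a construction that turns each $z_i$ into an admissible pair. Since $Q$ is closed, choose $x_i\in P_Q(z_i)$ and set $v_i:=\lambda^{-1}(z_i-x_i)$. Because $x_i$ is a nearest point to $z_i$, we have $z_i-x_i\in N^P_Q(x_i)$, and as the proximal normal cone is a cone it follows that $v_i\in N^P_Q(x_i)\subset N_Q(x_i)$, together with the identity $z_i=x_i+\lambda v_i$. Thus each $z_i$ is automatically of the form appearing in the definition of $S$; what remains is to verify the three side conditions on $(x_i,v_i)$.

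The analytic heart, and the step I expect to be the main obstacle, is showing $x_i\to\bar{x}$. Here I would invoke outer semicontinuity of the projection: continuity of the distance function forces $|z_i-x_i|=d_Q(z_i)\to d_Q(\bar{x}+\lambda\bar{v})$, so the $x_i$ are bounded, and any cluster point lies in $Q$ and realizes the distance to $\bar{x}+\lambda\bar{v}$, hence belongs to $P_Q(\bar{x}+\lambda\bar{v})=\{\bar{x}\}$. Since every subsequence has a further subsequence converging to $\bar{x}$, the whole sequence converges to $\bar{x}$. Consequently $v_i=\lambda^{-1}(z_i-x_i)\to\lambda^{-1}(\bar{x}+\lambda\bar{v}-\bar{x})=\bar{v}$. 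This is precisely where the singleton property of the projection, and thus the proximal normality of $\bar{v}$, is indispensable.

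Finally I would close the loop. Since $(x_i,v_i)\to(\bar{x},\bar{v})$ in $\gph N_Q$ and $M$ is identifiable at $\bar{x}$ for $\bar{v}$, we obtain $x_i\in M$ for all large $i$; the same convergences also give $|x_i-\bar{x}|<\epsilon$ and $|v_i-\bar{v}|<\epsilon$ for all large $i$. Hence $z_i=x_i+\lambda v_i\in S$ for all large $i$, contradicting $z_i\notin S$. This establishes $\bar{x}+\lambda\bar{v}\in\inter S$, and since $\epsilon>0$ was arbitrary and the bound on $\lambda$ was chosen independently of $\epsilon$, the lemma follows.
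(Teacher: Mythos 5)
Your proof is correct and follows essentially the same route as the paper's: fix $\lambda$ small enough (using proximal normality of $\bar{v}$) so that $P_Q(\bar{x}+\lambda\bar{v})=\{\bar{x}\}$, project nearby points $z_i$ onto $Q$, observe $x_i\to\bar{x}$ and $v_i:=\lambda^{-1}(z_i-x_i)\to\bar{v}$ with $v_i\in N_Q(x_i)$, and invoke identifiability to conclude $z_i$ eventually lies in the set. The paper states the argument directly rather than by contradiction and leaves the convergence $x_i\to\bar{x}$ as an observation, which you correctly fill in via continuity of the distance function and closedness of $Q$.
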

\begin{proof}
For all sufficiently small $\lambda>0$, we have $P_Q(\bar{x}+\lambda \bar{v})=\bar{x}$. Consider a sequence $z_i\to \bar{x}+\lambda \bar{v}$ and choose points
$$x_i\in P_Q(z_i).$$
Observe $x_i\to\bar{x}$ and the vectors $\lambda^{-1}(z_i-x_i)\in N_Q(x_i)$ converge to $\bar{v}$. Consequently, the points $x_i$ lie in $M$ for all large $i$, and hence $z_i=x_i+\lambda(\lambda^{-1}(z_i-x_i))$ lie in the desired set eventually.
\end{proof}

\begin{prop}[Representing locally minimal identifiable sets] \label{prop:projid}{\ }{\\}
Consider a set $Q\subset\R^n$ that is prox-regular at $\bar{x}$ for $\bar{v}\in N_Q(\bar{x})$ and let $M$ be a locally minimal identifiable set at $\bar{x}$ for $\bar{v}$. For $\lambda,\epsilon>0$, define $$U:=\{x+\lambda v: x\in M, v\in N_Q(x), |x-\bar{x}|<\epsilon, |v-\bar{v}|<\epsilon\}.$$ Then for all sufficiently small $\lambda,\epsilon>0$, we have $\bar{x}+\lambda\bar{v}\in\inter U$ and $M$ admits the presentation
$$M=P_Q(U) \textrm{ locally around }\bar{x}.$$
\end{prop}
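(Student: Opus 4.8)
The plan is to establish the two assertions separately — that $\bar x+\lambda\bar v$ lies in the interior of $U$, and that $M=P_Q(U)$ locally around $\bar x$ — proving the set equality by a double inclusion. First I would record the two facts that make the available machinery applicable: since $Q$ is prox-regular at $\bar x$ for $\bar v$, Proposition~\ref{prop:prox_char} guarantees that $\bar v$ is a proximal normal, $\bar v\in N^P_Q(\bar x)$; and, being locally minimal, $M$ is in particular identifiable. The interior claim is then immediate from Lemma~\ref{lem:inter}, which asserts exactly that $\bar x+\lambda\bar v\in\inter U$ for all sufficiently small $\lambda>0$ and all $\epsilon>0$. With the interior claim in hand, Proposition~\ref{prop:proj_inter} shows that $P_Q(U)$ is itself identifiable at $\bar x$ for $\bar v$; since $M$ is locally minimal, this yields the inclusion $M\subset P_Q(U)$ locally around $\bar x$.

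For the reverse inclusion $P_Q(U)\subset M$, I would argue that every $u\in U$ projects back onto its generating point. Writing $u=x+\lambda v$ with $x\in M$, $v\in N_Q(x)$, and $|x-\bar x|,|v-\bar v|<\epsilon$, directional prox-regularity (Definition~\ref{defn:dir_prox}), applied with $r=\lambda^{-1}$, gives $P_{Q\cap B_\epsilon(\bar x)}(u)=x$. The subtlety is that $U$ and $P_Q(U)$ are defined through the \emph{global} projection $P_Q$, whereas prox-regularity only controls the localized projection onto $Q\cap B_\epsilon(\bar x)$. To bridge this gap I would first fix $\lambda$ small enough that $P_Q(\bar x+\lambda\bar v)=\{\bar x\}$ globally — available because $\bar v$ is a proximal normal — and that directional prox-regularity holds with $r=\lambda^{-1}$. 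Outer semicontinuity and local boundedness of the projection onto the closed set $Q$, together with single-valuedness at $\bar x+\lambda\bar v$, then furnish a neighborhood $N$ of $\bar x+\lambda\bar v$ with $P_Q(N)\subset\inter B_\epsilon(\bar x)$. Shrinking $\epsilon$ so that $U\subset N$, I obtain, for $u=x+\lambda v\in U$, that every genuine nearest point of $u$ in $Q$ lies in $B_\epsilon(\bar x)$; such a point is automatically a nearest point in $Q\cap B_\epsilon(\bar x)$, whence $P_Q(u)=P_{Q\cap B_\epsilon(\bar x)}(u)=\{x\}\subset M$. Thus $P_Q(U)\subset M$, and combining with the first inclusion gives $M=P_Q(U)$ locally around $\bar x$.

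The main obstacle is precisely this reconciliation of the global and localized projections in the second inclusion: directional prox-regularity is an intrinsically local statement, so care is needed to guarantee that, as $u$ ranges over the (small) set $U$, the true nearest points to $Q$ cannot escape the ball $B_\epsilon(\bar x)$ on which prox-regularity has force. Everything else — the interior claim and the easy inclusion $M\subset P_Q(U)$ — reduces to direct citations of Lemma~\ref{lem:inter} and Proposition~\ref{prop:proj_inter}, together with local minimality of $M$.
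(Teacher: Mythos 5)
Your proposal is correct and takes essentially the same route as the paper's own proof: the interior claim via Lemma~\ref{lem:inter} (with $\bar{v}\in N^P_Q(\bar{x})$ supplied by prox-regularity), the inclusion $M\subset P_Q(U)$ from Proposition~\ref{prop:proj_inter} together with local minimality, and $P_Q(U)\subset M$ from the prox-regularity projection identity $P_Q(x+\lambda v)=x$. The only difference is that you carefully spell out the passage from the localized projection $P_{Q\cap B_\epsilon(\bar{x})}$ to the global one --- a step the paper compresses into a single sentence --- and your globalization argument is sound.
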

\begin{proof}
Using prox-regularity and Lemma~\ref{lem:acc}, we deduce that for all sufficiently small $\lambda,\epsilon >0$ we have 
\begin{equation*}
\left.
\begin{array}{cc}
x\in Q, &v\in N_Q(x) \\
|x-\bar{x}|<\epsilon, &|v-\bar{v}|<\epsilon
\end{array}
\right\}
\Rightarrow
P_{\mathsmaller{Q}}(x+\lambda v)=x,
\end{equation*}
and $\bar{x}+\lambda\bar{v}\in\inter U$.
Using the fact that $M$ is locally minimal at $\bar{x}$ for $\bar{v}$, it is easy to verify that $M$ and $P_Q(U)$ coincide locally around $\bar{x}$. 
\end{proof}


Finally, we record the following characterization of identifiable sets.
\begin{prop}[Characterization of identifiable sets]{\ }{\\}
Consider a closed set $Q\subset\R^n$, a subset $M\subset Q$, a point $\bar{x}\in M$, and a normal vector $\bar{v}\in \hat{N}_Q(\bar{x})$. Consider the properties:
\begin{enumerate}
\item\label{it:proj_1} $M$ is identifiable at $\bar{x}$ for $\bar{v}$.
\item\label{it:proj_2} $Q$ is prox-regular at $\bar{x}$ for $\bar{v}$ and for all sufficiently small $\lambda>0$ and all $\epsilon >0$ the inclusion
$$\bar{x}+\lambda \bar{v}\in \inter \{x+\lambda v: x\in M, v\in N_Q(x), |x-\bar{x}|<\epsilon, |v-\bar{v}|<\epsilon\},$$
holds.
\end{enumerate}
Then the implication $(\ref{it:proj_2})\Rightarrow(\ref{it:proj_1})$ holds. If $M$ is prox-regular at $\bar{x}$ for $\bar{v}$,
then we have the equivalence $(\ref{it:proj_1})\Leftrightarrow(\ref{it:proj_2})$.
\end{prop}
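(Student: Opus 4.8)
The plan is to treat the two implications separately, leaning on Proposition~\ref{prop:proj_inter} for $(2)\Rightarrow(1)$ and on Proposition~\ref{prop:prox} together with Lemma~\ref{lem:inter} for $(1)\Rightarrow(2)$. Throughout I abbreviate $U_{\lambda,\epsilon}:=\{x+\lambda v: x\in M,\, v\in N_Q(x),\, |x-\bar{x}|<\epsilon,\, |v-\bar{v}|<\epsilon\}$, so that the displayed inclusion in $(2)$ reads $\bar{x}+\lambda\bar{v}\in\inter U_{\lambda,\epsilon}$, and I recall that for the indicator-function/set setting, identifiability of a set $N$ relative to $Q$ at $\bar{x}$ for $\bar{v}$ simply means $\gph N_Q\subset N\times\R^n$ locally around $(\bar{x},\bar{v})$.

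For $(2)\Rightarrow(1)$, I would first exploit prox-regularity of $Q$ at $\bar{x}$ for $\bar{v}$ to fix $\lambda,\epsilon>0$ small enough that the \emph{global} projection identity $P_Q(x+\lambda v)=x$ holds for every $x\in Q$, $v\in N_Q(x)$ with $|x-\bar{x}|<\epsilon$ and $|v-\bar{v}|<\epsilon$; this is exactly the passage from the localized nearest-point property in Definition~\ref{defn:dir_prox} to a genuine global projection, carried out via Lemma~\ref{lem:acc} in the proof of Proposition~\ref{prop:projid}. With such $\lambda,\epsilon$, every point $y=x+\lambda v\in U_{\lambda,\epsilon}$ has $x\in M\subset Q$ and $P_Q(y)=x$, whence $P_Q(U_{\lambda,\epsilon})\subset M$. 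On the other hand, hypothesis $(2)$ supplies $\bar{x}+\lambda\bar{v}\in\inter U_{\lambda,\epsilon}$, so Proposition~\ref{prop:proj_inter} shows that $P_Q(U_{\lambda,\epsilon})$ is itself identifiable at $\bar{x}$ for $\bar{v}$. Thus $\gph N_Q\subset P_Q(U_{\lambda,\epsilon})\times\R^n$ locally around $(\bar{x},\bar{v})$, and combining with $P_Q(U_{\lambda,\epsilon})\subset M$ yields $\gph N_Q\subset M\times\R^n$ locally around $(\bar{x},\bar{v})$; that is, $M$ is identifiable, which is $(1)$.

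For $(1)\Rightarrow(2)$ under the extra assumption that $M$ is prox-regular at $\bar{x}$ for $\bar{v}$, the work is essentially already packaged. Proposition~\ref{prop:prox} applies verbatim — we have $\bar{v}\in\hat{N}_Q(\bar{x})$, $M$ identifiable, and $M$ prox-regular — and yields that $Q$ is prox-regular at $\bar{x}$ for $\bar{v}$, which is the first half of $(2)$; moreover this forces $\bar{v}$ to be a proximal normal, $\bar{v}\in N^P_Q(\bar{x})$, by the monotonicity characterization of Proposition~\ref{prop:prox_char}. The second half of $(2)$ is then precisely the conclusion of Lemma~\ref{lem:inter} applied with this $\bar{v}\in N^P_Q(\bar{x})$ and the identifiability hypothesis $(1)$.

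The only genuinely delicate point is the inclusion $P_Q(U_{\lambda,\epsilon})\subset M$ in the first implication: it requires the \emph{unlocalized} projection equality $P_Q(x+\lambda v)=x$, not merely the localized version $P_{Q\cap B_{\epsilon}(\bar{x})}(x+\lambda v)=x$ furnished directly by Definition~\ref{defn:dir_prox}. Upgrading to the global nearest point is exactly the role of the Accessibility Lemma~\ref{lem:acc}, so I would reuse that argument (as in Proposition~\ref{prop:projid}) rather than re-derive it. Everything else is a bookkeeping matter of choosing $\lambda,\epsilon$ small enough that prox-regularity, the interior inclusion of $(2)$, and Proposition~\ref{prop:proj_inter} are simultaneously in force.
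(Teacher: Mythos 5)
Your proposal is correct and takes essentially the same route as the paper's own proof: $(2)\Rightarrow(1)$ by combining Proposition~\ref{prop:proj_inter} with the containment $P_Q(U_{\lambda,\epsilon})\subset M$ obtained from the projection identity, and $(1)\Rightarrow(2)$ directly from Proposition~\ref{prop:prox} and Lemma~\ref{lem:inter}. The only quibble is attributional: the upgrade from the localized identity $P_{Q\cap B_{\epsilon}(\bar{x})}(x+\lambda v)=x$ to the global one is a short distance estimate (for small $\lambda$ any nearest point of $x+\lambda v$ in $Q$ lies within $2\lambda|v|$ of $x$, hence inside $B_{\epsilon}(\bar{x})$), not really an application of the Accessibility Lemma~\ref{lem:acc}, whose role is to manufacture proximal normals to $Q$ from proximal normals to $M$.
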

\begin{proof}
$(\ref{it:proj_2})\Rightarrow(\ref{it:proj_1})$: 
By Proposition~\ref{prop:proj_inter}, for all sufficiently small $\lambda>0$ and all $\epsilon >0$, the set $$P_Q\Big(\{x+\lambda v: x\in M, v\in N_Q(x), |x-\bar{x}|<\epsilon, |v-\bar{v}|<\epsilon\}\Big),$$ is identifiable at $\bar{x}$ for $\bar{v}$. Furthermore, for all sufficiently small $\lambda,\epsilon>0$ this set is contained in $M$ locally around $\bar{x}$. Consequently $M$ is identifiable at $\bar{x}$ for $\bar{v}$.

Now suppose that $M$ is prox-regular at $\bar{x}$ for $\bar{v}$.

$(\ref{it:proj_1})\Rightarrow(\ref{it:proj_2}):$ This follows trivially from Proposition~\ref{prop:prox} and Lemma~\ref{lem:inter}.
\end{proof}

\section{Identifiable sets and critical cones}\label{sec:crit}
In this section, we consider {\em critical cones}, a notion that has been instrumental in sensitivity analysis, particularly in connection with polyhedral variational inequalities. See \cite[Section 2E]{imp} for example. We will see that there is a strong relationship between these objects and locally minimal identifiable sets. We begin with the notion of tangency.

\begin{defn}[Tangent cones]
{\rm Consider a set $Q\subset\R^n$ and a point $\bar{x}\in Q$. The {\em tangent cone} to $Q$ at $\bar{x}$, written $T_Q(\bar{x})$, consists of all vectors $w$ such that 
$$w=\lim_{i\to\infty} \frac{x_i-\bar{x}}{\tau_i},~ \textrm{ for some } x_i\stackrel{Q}{\rightarrow}\bar{x},~ \tau_i\downarrow 0.$$}
\end{defn}

The tangent cone is always closed but may easily fail to be convex. For any cone $K\in\R^n$, we consider the polar cone 
$$K^{*}:=\{y: \langle y,v\rangle\leq 0 \textrm{ for all } v\in K\}.$$ 
It turns out that the sets $\cl\convv T_Q(\bar{x})$ and $\hat{N}_Q(\bar{x})$ are dual to each other, that is the equation
$$\hat{N}_Q(\bar{x})=T_Q(\bar{x})^{*},$$
holds \cite[Theorem 6.28]{VA}. Consequently if $Q$ is locally closed at $\bar{x}$, then $Q$ is Clarke regular at $\bar{x}$ if and only if the equation $N_Q(\bar{x})=T_Q(\bar{x})^{*}$ holds.

A companion notion to tangency is smooth derivability.
\begin{defn}[smooth derivability]
{\rm
Consider a set $Q$ and a point $\bar{x}\in Q$. Then a tangent vector $w\in T_Q(\bar{x})$ is {\em smoothly derivable} if there exists a ${\bf C}^1$-smooth path $\gamma\colon [0,\epsilon)\to Q$ 
satisfying
$$w=\lim_{t\downarrow 0}\frac{\gamma(t)-\bar{x}}{t},$$
where $\epsilon >0$ is a real number and $\gamma(0)=\bar{x}$. We will say that $Q$ is {\em smoothly derivable} at $\bar{x}$ if every tangent vector $w\in T_Q(\bar{x})$ is smoothly derivable.}
\end{defn}

We should note that there is a related weaker notion of geometric derivability, where the path $\gamma$ is not required to be ${\bf C}^1$-smooth. For more details see \cite[Definition 6.1]{VA}.

Most sets that occur in practice are smoothly derivable. In particular, any smooth manifold is smoothly derivable at each of its point, as is any semi-algebraic set $Q\subset\R^n$. We omit the proof of the latter claim, since it is a straightforward consequence of the curve selection lemma \cite[Property 4.6]{DM} and the details needed for the argument would take us far off field. For a nice survey on semi-algebraic geometry, see \cite{Coste-semi}.

We now arrive at the following central notion. 
\begin{defn}[Critical cones]
{\rm
For a set $Q\subset\R^n$ that is Clarke regular at a point $\bar{x}\in Q$, the {\em critical cone} to $Q$ at $\bar{x}$ for $\bar{v}\in N_Q(\bar{x})$ is the set $$K_Q(\bar{x},\bar{v}):=N_{N_Q(\bar{x})}(\bar{v}).$$
}
\end{defn}

Because of the polarity relationship between normals and tangents, the critical cone $K_Q(\bar{x},\bar{v})$ can be equivalently described as
$$K_Q(\bar{x},\bar{v})=T_Q(\bar{x})\cap \bar{v}^{\perp},$$ where $\bar{v}^{\perp}$ is the subspace perpendicular to $\bar{v}$. For more information about critical cones and their use in variational inequalities and complementarity problems, see \cite{fac_pang}.

Connecting the classical theory of critical cones to our current work, we will now see that critical cones provide tangential approximations to locally minimal identifiable sets.
In what follows, we denote the closed convex hull of any set $Q\subset\R^n$ by $\clco Q$.

\begin{prop}[Critical cones as tangential approximations]\label{prop:tang}{\ }{\\}
Consider a set $Q$ that is Clarke regular at a point $\bar{x}$ and a  locally minimal identifiable set $M$ at $\bar{x}$ for $\bar{v}\in N_Q(\bar{x})$. Suppose furthermore that $M$ is prox-regular at $\bar{x}$ for $\bar{v}$ and is smoothly derivable at $\bar{x}$. Then the equation 
$$\clco T_M(\bar{x})= K_Q(\bar{x},\bar{v}),$$
holds.
\end{prop}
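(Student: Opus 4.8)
The plan is to prove the two inclusions $\clco T_M(\bar{x})\subseteq K_Q(\bar{x},\bar{v})$ and $K_Q(\bar{x},\bar{v})\subseteq \clco T_M(\bar{x})$ separately, after first recording the standing consequences of the hypotheses. Clarke regularity of $Q$ at $\bar{x}$ gives $N_Q(\bar{x})=\hat{N}_Q(\bar{x})$, makes $T_Q(\bar{x})$ closed and convex, and justifies the description $K_Q(\bar{x},\bar{v})=T_Q(\bar{x})\cap\bar{v}^{\perp}$. Since $M$ is identifiable and prox-regular at $\bar{x}$ for $\bar{v}\in\hat{N}_Q(\bar{x})$, Proposition~\ref{prop:prox} shows $Q$ is prox-regular at $\bar{x}$ for $\bar{v}$, and Proposition~\ref{prop:even} gives $\gph N_Q=\gph N_M$ locally around $(\bar{x},\bar{v})$; in particular $N_Q(\bar{x})$ and $N_M(\bar{x})$ coincide near $\bar{v}$, so by locality of the normal cone the critical cone is intrinsic to $M$ as well.

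For the inclusion $\clco T_M(\bar{x})\subseteq K_Q(\bar{x},\bar{v})$, since $M\subseteq Q$ and $T_Q(\bar{x})$ is closed and convex, it suffices to show every $w\in T_M(\bar{x})$ lies in $T_Q(\bar{x})\cap\bar{v}^{\perp}$; membership in $T_Q(\bar{x})$ is automatic. Using smooth derivability I would pick a ${\bf C}^1$ path $\gamma$ in $M$ with $\gamma(0)=\bar{x}$ and $\gamma'(0)=w$. The Fr\'{e}chet normal inequality for $\bar{v}\in\hat{N}_Q(\bar{x})$ applied along $\gamma$ gives $\langle\bar{v},w\rangle\leq 0$. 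For the reverse sign I would invoke necessity of $M$ (part of local minimality) to produce normals $v_t\in N_Q(\gamma(t))$ with $v_t\to\bar{v}$; prox-regularity of $Q$ makes each $v_t$ proximal, so testing the proximal inequality against $\bar{x}\in Q$ yields $\langle v_t,\gamma(t)-\bar{x}\rangle\geq -O(t^2)$, and dividing by $t$ and passing to the limit gives $\langle\bar{v},w\rangle\geq 0$. Hence $w\perp\bar{v}$, and taking closed convex hulls closes this inclusion.

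The reverse inclusion $K_Q(\bar{x},\bar{v})\subseteq\clco T_M(\bar{x})$ is the main obstacle, and I would attack it through the metric projection. Fix a critical direction $w\in K_Q(\bar{x},\bar{v})$ and $\lambda>0$ small enough that $P_Q(\bar{x}+\lambda\bar{v})=\bar{x}$, and set $p_t:=P_Q(\bar{x}+\lambda\bar{v}+tw)$ for $t\downarrow 0$. Writing $v_t:=\lambda^{-1}(\bar{x}+\lambda\bar{v}+tw-p_t)\in N_Q(p_t)$, continuity of the projection gives $p_t\to\bar{x}$, and then $\lambda v_t=\lambda\bar{v}+tw-(p_t-\bar{x})\to\lambda\bar{v}$ forces $v_t\to\bar{v}$; identifiability of $M$ therefore places $p_t\in M$ for all small $t$. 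Conceptually this is exactly the picture of Proposition~\ref{prop:projid}: since $\bar{x}+\lambda\bar{v}$ lies in the \emph{interior} of the neighborhood $U$ with $M=P_Q(U)$ locally, a full ball about $\bar{x}+\lambda\bar{v}$ is carried by $P_Q$ into $M$, so $T_M(\bar{x})$ should contain the image of the directional derivative of $P_Q$ at $\bar{x}+\lambda\bar{v}$. The crux is to show $(p_t-\bar{x})/t\to w$, equivalently that this directional derivative in direction $w$ is the projection of $w$ onto $K_Q(\bar{x},\bar{v})$, which is $w$ itself because $w\in K_Q(\bar{x},\bar{v})$; then $w\in T_M(\bar{x})$.

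I expect the delicate step to be precisely this rate estimate. From $p_t-\bar{x}=tw-\lambda(v_t-\bar{v})$ one sees that any subsequential velocity has the form $w-\lambda\eta$, and the naive bounds from the projection only give $v_t-\bar{v}=O(t)$, so $\eta$ need not vanish without further work. The prox-regular monotone localization of $N_Q+rI$ from Proposition~\ref{prop:prox_char}, applied to the pairs $(p_t,v_t)$ and $(\bar{x},\bar{v})$, is what must be combined with smooth derivability of $M$ to pin the velocity of $p_t$ to exactly $w$ rather than a perturbed direction; smooth derivability also guarantees that the directions produced are genuine tangent velocities of ${\bf C}^1$ paths in $M$ and not merely limiting secants. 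Once this estimate is secured, $T_M(\bar{x})$ contains all of $K_Q(\bar{x},\bar{v})$, and since $K_Q(\bar{x},\bar{v})$ is closed and convex the two inclusions combine to yield $\clco T_M(\bar{x})=K_Q(\bar{x},\bar{v})$.
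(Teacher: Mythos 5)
Your first inclusion is essentially sound: $T_M(\bar{x})\subseteq T_Q(\bar{x})$ is automatic from $M\subseteq Q$, and your two one-sided estimates (the Frech\'{e}t normal inequality for $\langle \bar{v},w\rangle\leq 0$; necessity of $M$ plus the uniform proximal-normal inequality for $\langle \bar{v},w\rangle\geq 0$) correctly yield $T_M(\bar{x})\subseteq \bar{v}^{\perp}$, which is exactly the orthogonality step in the paper's proof. The genuine gap is your reverse inclusion. You reduce it to the rate estimate $(p_t-\bar{x})/t\to w$ for $p_t:=P_Q(\bar{x}+\lambda\bar{v}+tw)$, and then only assert that the hypomonotonicity of Proposition~\ref{prop:prox_char} ``combined with smooth derivability'' pins this down; no argument is given, and none of the cited tools yields it. That estimate is precisely directional differentiability of the metric projection with derivative $P_{K_Q(\bar{x},\bar{v})}$, i.e.\ protodifferentiability of $N_Q$ --- a strictly stronger second-order property that can fail even for closed convex sets and appears nowhere among the hypotheses. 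What hypomonotonicity actually gives: with $v_t:=\lambda^{-1}(\bar{x}+\lambda\bar{v}+tw-p_t)\in N_Q(p_t)$, the inequality $\langle v_t-\bar{v},p_t-\bar{x}\rangle\geq -r|p_t-\bar{x}|^2$ rearranges to $(1-r\lambda)|p_t-\bar{x}|^2\leq t\langle w,p_t-\bar{x}\rangle$, so for $\lambda<1/r$ the secants $(p_t-\bar{x})/t$ are merely bounded; any cluster point has the form $u=w-\lambda\eta$ with $\eta$ a cluster point of $(v_t-\bar{v})/t$, and nothing forces $\eta=0$. You thus obtain only some $u\in T_M(\bar{x})$ with $\langle w,u\rangle\geq(1-r\lambda)|u|^2$, not $w$ itself. (Your side remark that smooth derivability is needed to upgrade limiting secants to genuine tangents is also backwards: $T_M(\bar{x})$ is by definition the set of limits of secants, so $p_{t_i}\in M$ with $(p_{t_i}-\bar{x})/t_i\to u$ already gives $u\in T_M(\bar{x})$; smooth derivability is needed only for the orthogonality step.)

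The irony is that you already hold the ingredient that makes the reverse inclusion free: the paper's proof never touches the projection. By Proposition~\ref{prop:even}, $\gph N_Q=\gph N_M$ locally around $(\bar{x},\bar{v})$, and prox-regularity of $M$ identifies $\gph N_M$ with $\gph \hat{N}_M$ there; since the normal cone at $\bar{v}$ depends only on the set near $\bar{v}$, one gets
$$K_Q(\bar{x},\bar{v})=N_{N_Q(\bar{x})}(\bar{v})=N_{N_M(\bar{x})}(\bar{v})=N_{\hat{N}_M(\bar{x})}(\bar{v})=\clco T_M(\bar{x})\cap \bar{v}^{\perp},$$
the last equality by polarity of $\hat{N}_M(\bar{x})$ and $\clco T_M(\bar{x})$. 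Combined with your correct orthogonality $T_M(\bar{x})\subseteq\bar{v}^{\perp}$, this single duality computation delivers both inclusions at once. Replace your projection analysis by this polarity step and the proof closes.
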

\begin{proof}
Observe $$K_Q(\bar{x},\bar{v})=N_{N_Q(\bar{x})}(\bar{v})=N_{N_M(\bar{x})}(\bar{v})= N_{\hat{N}_M{(\bar{x})}}(\bar{v})=\clco T_M(\bar{x})\cap \bar{v}^{\perp},$$
where the second equality follows from Proposition~\ref{prop:even} and the last equality follows from polarity of $\cl\convv T_M(\bar{x})$ and $\hat{N}_M(\bar{x})$. Hence to establish the claim, it is sufficient to argue that every tangent vector $w\in T_M(\bar{x})$ is orthogonal to $\bar{v}$.

To this end, fix a vector $w\in T_M(\bar{x})$ and a ${\bf C}^1$-smooth path $\gamma\colon [0,\epsilon)\to Q$ 
satisfying
$$w=\lim_{t\downarrow 0}\frac{\gamma(t)-\bar{x}}{t},$$
where $\epsilon >0$ is a real number and $\gamma(0)=\bar{x}$.

Let $t_i\in (0,\epsilon)$ be a sequence converging to $0$ and define $x_i:=\gamma(t_i)$. Observe that for each index $i$, the tangent cone $T_M(x_i)$ contains the line $\{\lambda \dot{\gamma}(t_i):\lambda\in\R\}$. 
Since $M$ is necessary at $\bar{x}$ for $\bar{v}$, there exist vectors $v_i\in N_Q(\gamma(t_i))$ with $v_i\to\bar{v}$. By Proposition~\ref{prop:even}, we have $v_i\in \hat{N}_M(\gamma(t_i))$ for all large $i$. For such indices, we have
$\langle v_i,\dot{\gamma}(t_i)\rangle =0$. Letting $i$ tend to $\infty$, we deduce $\langle \bar{v},\bar{w}\rangle =0$, as we needed to show. 
\end{proof}

Classically, the main use of critical cones has been in studying polyhedral variational inequalities. Their usefulness in that regard is due to Proposition~\ref{prop:poly_equiv}, stated below. We provide a simple proof of this proposition that makes it evident that this result is simply a special case of Proposition~\ref{prop:even}. This further reinforces the theory developed in our current work. For an earlier proof that utilizes representations of polyhedral sets, see for example \cite[Lemma 2E.4]{imp}.
\begin{prop}[Polyhedral reduction]\label{prop:poly_equiv}
Consider a polyhedron $Q\subset\R^n$ and a normal vector $\bar{v}\in N_Q(\bar{x})$, for some point $\bar{x}\in Q$. Let $K:=K_Q(\bar{x},\bar{v})$. Then we
have $$\gph N_Q - (\bar{x},\bar{v})= \gph N_K \textrm{ locally around } (0,0).$$
\end{prop}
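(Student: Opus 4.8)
The plan is to recognize this statement as a direct instance of the Reduction~I principle (Proposition~\ref{prop:even}), with the exposed face of $Q$ serving as the identifiable set and the critical cone appearing as its localization. Throughout I treat $Q$ as a convex polyhedron, so that $N_Q=\hat{N}_Q$ and in particular $\bar{v}\in\hat{N}_Q(\bar{x})$, and I write $K:=K_Q(\bar{x},\bar{v})=T_Q(\bar{x})\cap\bar{v}^{\perp}$.

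First I would introduce the exposed face $M:=N_Q^{-1}(\bar{v})=\argmax_{x\in Q}\langle\bar{v},x\rangle$. By Example~\ref{exa:polyset}, $M$ is a locally minimal identifiable set at $\bar{x}$ for $\bar{v}$, and, being a convex polyhedron containing $\bar{x}$, it is prox-regular at $\bar{x}$ for $\bar{v}$ (note $\bar{v}\in N_Q(\bar{x})\subset N_M(\bar{x})$ since $M\subset Q$). Proposition~\ref{prop:even} then applies and yields
$$\gph N_Q=\gph N_M\quad\textrm{locally around }(\bar{x},\bar{v}).$$

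The next step is to identify $M$ with a translate of $K$. Using the local polyhedral structure of $Q$ — namely $Q=\bar{x}+T_Q(\bar{x})$ near $\bar{x}$ — together with the fact that, since $\bar{x}$ maximizes $\langle\bar{v},\cdot\rangle$ over $Q$, a nearby point $x\in Q$ lies on the exposed face exactly when $\langle\bar{v},x-\bar{x}\rangle=0$, one checks that $x\in M\iff x-\bar{x}\in T_Q(\bar{x})\cap\bar{v}^{\perp}=K$ for $x$ near $\bar{x}$; that is, $M=\bar{x}+K$ locally around $\bar{x}$. Since normal cones are local and translation covariant, $N_M(x)=N_K(x-\bar{x})$ for $x$ near $\bar{x}$, and combining with the previous display gives
$$\gph N_Q-(\bar{x},0)=\gph N_K\quad\textrm{locally around }(0,\bar{v}).$$

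Finally I must reconcile the shift in the normal coordinate, passing from the translate $(\bar{x},0)$ to $(\bar{x},\bar{v})$; this is the only genuinely delicate point, since naively the two displayed equalities differ by a shift of $\bar{v}$ in the second factor. The key observation is that $\R\bar{v}$ lies in the lineality space of every normal cone $N_K(w)$. Indeed, for $w\in K$ the normal cone to the cone $K$ is $N_K(w)=K^{*}\cap w^{\perp}$, and because $K\subset\bar{v}^{\perp}$ both $\bar{v}$ and $-\bar{v}$ annihilate $K$, so $\pm\bar{v}\in K^{*}\cap w^{\perp}=N_K(w)$. Hence $N_K(w)=N_K(w)-\bar{v}$, which means $\gph N_K$ is invariant under the translation $(0,-\bar{v})$. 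Subtracting $(0,\bar{v})$ from both sides of the previous display and using this invariance converts the $(\bar{x},0)$-translate into the $(\bar{x},\bar{v})$-translate while moving the base point from $(0,\bar{v})$ to $(0,0)$, producing
$$\gph N_Q-(\bar{x},\bar{v})=\gph N_K\quad\textrm{locally around }(0,0),$$
as required. I expect this last reconciliation to be the main obstacle; the remaining ingredients are Proposition~\ref{prop:even}, Example~\ref{exa:polyset}, and the standard local description of a polyhedron by its tangent cone.
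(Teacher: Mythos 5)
Your proof is correct and follows essentially the same route as the paper's: both reduce to the exposed face $M=\argmax_{x\in Q}\langle \bar{v},\cdot\rangle$ via Proposition~\ref{prop:even}, identify $M=\bar{x}+K$ locally around $\bar{x}$, and absorb the shift by $\bar{v}$ using the fact that $N_K(w)$ contains the line $\R\bar{v}$ for every $w\in K$ (the paper phrases this inside its equivalence chain, you as translation-invariance of $\gph N_K$). The only, harmless, difference is that you verify $M=\bar{x}+K$ by a direct polyhedral computation from $Q=\bar{x}+T_Q(\bar{x})$ near $\bar{x}$, whereas the paper deduces it from Proposition~\ref{prop:tang} together with the identity $M=\bar{x}+T_M(\bar{x})$ locally.
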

\begin{proof}
By Example~\ref{exa:conv_poly}, the set $M:=\argmax_{x\in Q}\langle x,\bar{v}\rangle$ is the locally minimal identifiable set at $\bar{x}$ for $\bar{v}$.  Being polyhedral, $M$ is smoothly derivable and it satisfies $$\bar{x}+T_M(\bar{x})=M \textrm{ locally around } \bar{x}.$$ In light of Proposition~\ref{prop:tang}, we deduce $M-\bar{x}= K$ locally around $0$.

Thus for all $(u,w)$ sufficiently near $(0,0)$ we have
\begin{align*}
\bar{v}+u\in N_Q(\bar{x}+w) &\Longleftrightarrow\bar{v}+u\in N_M(\bar{x}+w)\\
&\Longleftrightarrow \bar{v}+u\in N_K(w)\\
&\Longleftrightarrow u\in N_K(w)
\end{align*}
where the first equivalence follows from Proposition~\ref{prop:even}, and
the last equivalence follows from the fact that $K\subset \bar{v}^{\perp}$ and so for all $w\in K$, the cone $N_K(w)$ contains the line spanned by $\bar{v}$.
\end{proof}

Proposition~\ref{prop:poly_equiv} easily fails for nonpolyhedral sets. Indeed, in light of Proposition~\ref{prop:tang}, this is to be expected since critical cones provide only tangential approximations to locally minimal identifiable sets. Such an approximation is exact only for polyhedral sets. Hence the theory of locally minimal identifiable sets (in particular, Proposition~\ref{prop:even}) extends Proposition~\ref{prop:poly_equiv} far beyond polyhedrality. 

We end this section by showing how Proposition~\ref{prop:tang} can be extended even further to the situation when locally minimal identifiable sets do not even exist. Indeed, consider a set $Q$ that is Clarke regular at a point $\bar{x}$, and let $\bar{v}\in N_Q(\bar{x})$. Consider a nested sequence of open neighborhoods $V_i$ of $\bar{v}$ satisfying $\bigcap_{i=1}^{\infty} V_i=\{\bar{v}\}$. One would then expect that, under reasonable conditions, the equality $$K_Q(\bar{x},\bar{v})=\clco \bigcap_{i=1}^{\infty} T_{N^{-1}_Q(V_i)}(\bar{x}),$$ holds. To put this in perspective, observe that if there exists a locally minimal identifiable set $M$ at $\bar{x}$ for $\bar{v}$, then the sets $T_{N^{-1}_Q(V_i)}(\bar{x})$ are equal to $T_M(\bar{x})$ for all large $i$, and  
the equation above reduces to Proposition~\ref{prop:tang}. More generally, the following is true.

\begin{prop}[Critical cones more generally]
Consider a set $Q$ that is Clarke regular at a point $\bar{x}$, and let $\bar{v}\in N_Q(\bar{x})$. Consider a nested sequence of open neighborhoods $V_i$ of $\bar{v}$ satisfying $\bigcap_{i=1}^{\infty} V_i=\{\bar{v}\}$ and the corresponding preimages $M_i:=\hat{N}^{-1}_Q(V_i)$. Assume that each $M_i$ is smoothly derivable at $\bar{x}$. Then the inclusion 
\begin{equation}\label{eq:inc_tang}
K_Q(\bar{x},\bar{v})\supset \clco \bigcap_{i=1}^{\infty} T_{M_i}(\bar{x}),
\end{equation}
holds.
Assume in addition that each $M_i$ is prox-regular at $\bar{x}$ for $\bar{v}$ and that the formula 
\begin{equation}\label{eq:commute}
\clco\bigcap_{i=1}^{\infty} T_{M_i}(\bar{x})=\bigcap_{i=1}^{\infty} \clco T_{M_i}(\bar{x}),
\end{equation}
holds. Then each $M_i$ is an identifiable set at $\bar{x}$ for $\bar{v}$ and we have 
$$K_Q(\bar{x},\bar{v})=\clco\bigcap_{i=1}^{\infty} T_{M_i}(\bar{x}).$$
\end{prop}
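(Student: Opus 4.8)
The plan is to establish the stated inclusion \eqref{eq:inc_tang} first, and then, under the extra hypotheses, to obtain the reverse inclusion by showing that each $M_i$ is identifiable and running the cone computation behind Proposition~\ref{prop:tang}. Since $Q$ is Clarke regular, $K_Q(\bar{x},\bar{v})=T_Q(\bar{x})\cap\bar{v}^{\perp}$ is a closed convex cone, so for \eqref{eq:inc_tang} it suffices to show $\bigcap_i T_{M_i}(\bar{x})\subset K_Q(\bar{x},\bar{v})$ and then pass to the closed convex hull. Fix $w\in\bigcap_i T_{M_i}(\bar{x})$. Because $M_i\subset Q$ we get $w\in T_Q(\bar{x})$, so only $\langle\bar{v},w\rangle=0$ remains. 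Here I would mimic the path argument of Proposition~\ref{prop:tang}: for each $i$, smooth derivability supplies a ${\bf C}^1$ path $\gamma_i\colon[0,\epsilon_i)\to M_i$ with $\gamma_i(0)=\bar{x}$ and right derivative $w$. At each interior parameter $t$ the point $\gamma_i(t)$ lies in $M_i=\hat{N}^{-1}_Q(V_i)$, so I may pick $u_i(t)\in\hat{N}_Q(\gamma_i(t))\cap V_i\subset\hat{N}_{M_i}(\gamma_i(t))$; since both $\pm\dot{\gamma}_i(t)$ are tangent to $M_i$ at an interior path point, orthogonality gives $\langle u_i(t),\dot{\gamma}_i(t)\rangle=0$. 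Diagonalizing, choose $t_i\downarrow 0$ with $\dot{\gamma}_i(t_i)\to w$ and set $u_i:=u_i(t_i)\in V_i$; since the $V_i$ shrink to $\bar{v}$ we have $u_i\to\bar{v}$, and passing to the limit in $\langle u_i,\dot{\gamma}_i(t_i)\rangle=0$ yields $\langle\bar{v},w\rangle=0$.

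Now assume each $M_i$ is prox-regular at $\bar{x}$ for $\bar{v}$ and that \eqref{eq:commute} holds. The key preliminary step is the unconditional ``easy'' inclusion $\gph N_Q\subset\gph N_{M_i}$ locally around $(\bar{x},\bar{v})$: given $(x,v)\in\gph N_Q$ near $(\bar{x},\bar{v})$, write $v=\lim v_l$ with $v_l\in\hat{N}_Q(x_l)$, $x_l\to x$; as $v\in V_i$ and $V_i$ is open, eventually $v_l\in V_i$, so $x_l\in\hat{N}^{-1}_Q(V_i)=M_i$ and $v_l\in\hat{N}_Q(x_l)\subset\hat{N}_{M_i}(x_l)$, giving $(x,v)\in\gph N_{M_i}$. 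Using this inclusion in place of identifiability, I would rerun the proof of Proposition~\ref{prop:prox}: with $r$ the prox-regularity constant of $M_i$, the projection points $x_k\in P_Q(\bar{x}+r^{-1}\bar{v})$ satisfy $x_k\to\bar{x}$ and $v_k:=r(\bar{x}-x_k)+\bar{v}\to\bar{v}$ with $(x_k,v_k)\in\gph N_Q$, so the easy inclusion forces $x_k\in M_i$; prox-regularity of $M_i$ (Definition~\ref{defn:dir_prox}) then gives $x_k=\bar{x}$, whence $\bar{v}\in N^{P}_Q(\bar{x})$, while the monotonicity estimate of Proposition~\ref{prop:prox_char} for $N_{M_i}$ transfers verbatim to $N_Q$ along the easy inclusion. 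By Proposition~\ref{prop:prox_char}, $Q$ is prox-regular at $\bar{x}$ for $\bar{v}$.

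With $Q$ prox-regular we have $\gph N_Q=\gph\hat{N}_Q$ locally around $(\bar{x},\bar{v})$, and identifiability of $M_i$ becomes transparent: any $(x_j,v_j)\to(\bar{x},\bar{v})$ in $\gph N_Q$ eventually has $v_j\in\hat{N}_Q(x_j)\cap V_i$, so $x_j\in M_i$. Proposition~\ref{prop:even} then yields $\gph N_Q=\gph N_{M_i}$ locally around $(\bar{x},\bar{v})$, so $N_Q(\bar{x})$ and $N_{M_i}(\bar{x})$ agree near $\bar{v}$, and by prox-regularity of $M_i$ so do $N_{M_i}(\bar{x})$ and $\hat{N}_{M_i}(\bar{x})$. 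Running the computation of Proposition~\ref{prop:tang},
\[
K_Q(\bar{x},\bar{v})=N_{N_Q(\bar{x})}(\bar{v})=N_{N_{M_i}(\bar{x})}(\bar{v})=N_{\hat{N}_{M_i}(\bar{x})}(\bar{v})=\clco T_{M_i}(\bar{x})\cap\bar{v}^{\perp},
\]
using at the last step that $\hat{N}_{M_i}(\bar{x})$ is a closed convex cone containing $\bar{v}$ with polar $\clco T_{M_i}(\bar{x})$. Hence $K_Q(\bar{x},\bar{v})\subset\clco T_{M_i}(\bar{x})$ for every $i$; intersecting over $i$ and invoking \eqref{eq:commute} gives $K_Q(\bar{x},\bar{v})\subset\bigcap_i\clco T_{M_i}(\bar{x})=\clco\bigcap_i T_{M_i}(\bar{x})$, which combined with \eqref{eq:inc_tang} proves the equality.

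The main obstacle is the bootstrap in the second step: prox-regularity of $Q$ is not assumed, yet it is exactly what makes identifiability of $M_i=\hat{N}^{-1}_Q(V_i)$ immediate, so one must avoid the apparent circularity between ``$Q$ is prox-regular'' and ``$M_i$ is identifiable''. The resolution is that the one-sided inclusion $\gph N_Q\subset\gph N_{M_i}$ holds for free, since $M_i$ is the preimage of the \emph{open} set $V_i$ under the Fréchet normal map, and that this single inclusion already drives both halves of the prox-regularity criterion of Proposition~\ref{prop:prox_char}. A secondary point to handle carefully is the selection $u_i\to\bar{v}$ in the first part, which genuinely requires that the nested neighborhoods $V_i$ shrink to $\bar{v}$.
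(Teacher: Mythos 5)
Your argument is essentially the route the paper itself intends: the paper omits the proof of this proposition, remarking only that it ``follows along the same lines as the proof of Proposition~\ref{prop:tang}'', and your write-up is a faithful execution of that plan --- the ${\bf C}^1$-path orthogonality argument for the inclusion, and the polarity chain $K_Q(\bar{x},\bar{v})=N_{N_Q(\bar{x})}(\bar{v})=N_{N_{M_i}(\bar{x})}(\bar{v})=N_{\hat{N}_{M_i}(\bar{x})}(\bar{v})=\clco T_{M_i}(\bar{x})\cap\bar{v}^{\perp}$ for the equality. The genuine added value in your proposal is the bootstrap, which is precisely what the paper's one-line remark glosses over: Proposition~\ref{prop:tang} obtains the graph identity $\gph N_Q=\gph N_M$ from an assumed locally minimal identifiable set via Proposition~\ref{prop:even}, whereas here neither identifiability of $M_i$ nor prox-regularity of $Q$ is hypothesized. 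Your observation that the one-sided inclusion $\gph N_Q\subset\gph N_{M_i}$ holds for free (because $M_i$ is the preimage of the \emph{open} set $V_i$ under $\hat{N}_Q$), and that this inclusion alone drives both halves of the criterion of Proposition~\ref{prop:prox_char} --- the proximal-normality computation from the proof of Proposition~\ref{prop:prox}, which needs only $\bar{v}\in\hat{N}_Q(\bar{x})$ and is supplied here by Clarke regularity, together with the hypomonotonicity estimate, which passes to limits of Fr\'{e}chet pairs --- is correct and resolves the apparent circularity cleanly.

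One step does require more than the proposition literally states, and you flag it without quite resolving it: the claim that any selection $u_i\in V_i$ converges to $\bar{v}$. Nestedness plus $\bigcap_{i=1}^{\infty}V_i=\{\bar{v}\}$ does \emph{not} imply this: take $V_i:=B_{1/i}(\bar{v})\cup\bigl(B_{1/i}(z)\setminus\{z\}\bigr)$ for a fixed $z\neq\bar{v}$ --- these are open, nested, with intersection $\{\bar{v}\}$, yet a selection may converge to $z$. Nor is this a removable artifact of your method: for the convex wedge $Q=\{x\in\R^2:x_2\leq 0,\ x_2\leq x_1\}$ with $\bar{x}=(0,0)$, $\bar{v}=(0,1)$ and $z=(-1,1)$ (the two extreme rays of $N_Q(\bar{x})$), this choice of $V_i$ makes $M_i=\hat{N}_Q^{-1}(V_i)$ equal to the entire boundary of the wedge, so that $\bigcap_i T_{M_i}(\bar{x})$ contains the direction $(-1,-1)$, which is not orthogonal to $\bar{v}$; the first inclusion itself then fails, even though each $M_i$ is smoothly derivable and prox-regular at $\bar{x}$ for $\bar{v}$. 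The proposition must therefore be read with the $V_i$ shrinking to $\bar{v}$ metrically (diameters tending to zero), as in the constructive discussion of Section~\ref{sec:ident}; under that reading your selection step, and with it the whole proof, is sound.
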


We omit the proof of the proposition above since it follows along the same lines as the proof of Proposition~\ref{prop:tang}. In particular, let us note that (\ref{eq:commute}) holds whenever the tangent spaces $T_{M_{i}}(\bar{x})$ all coincide for sufficiently large indices $i$ or whenever all $M_i$ are Clarke regular at $\bar{x}$.





\section{Optimality conditions}\label{sec:opt}
In this section, we will see that the order of growth of a function $f$ around a critical point (a point satisfying $0\in\partial f(x)$) is dictated entirely by its order of growth around this point on a corresponding identifiable set. Here is a preliminary geometric result.



\begin{prop}[Restricted optimality]\label{prop:rest:opt}
Consider a closed set $Q$ and a subset $M\subset Q$ that is identifiable at $\bar{x}$ for a proximal normal $\bar{v}\in N^P_Q(\bar{x})$. Then $\bar{x}$ is a (strict) local maximizer of the linear function $\langle \bar{v},\cdot \rangle$ on $M$ if and only if $\bar{x}$ is a (strict) local maximizer of $\langle \bar{v},\cdot \rangle$ on $Q$.

\end{prop}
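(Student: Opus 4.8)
The plan is to prove the two implications of the equivalence separately. One of them is immediate: since $M\subset Q$, the restriction to $M$ of a function having a (strict) local maximizer at $\bar{x}$ on $Q$ still has a (strict) local maximizer at $\bar{x}$; this needs no special structure. All the content lies in the reverse implication, namely that a (strict) local maximum of $\langle\bar{v},\cdot\rangle$ on $M$ propagates to one on $Q$, and this I would prove by contraposition: assuming $\bar{x}$ fails to be a (strict) local maximizer on $Q$, I will manufacture points of $M$ that witness the failure on $M$.

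To set up, recall that since $\bar{v}$ is a proximal normal, there exist $r,\delta_0>0$ with $\langle\bar{v},x-\bar{x}\rangle\le r|x-\bar{x}|^2$ for all $x\in Q\cap B_{\delta_0}(\bar{x})$. From the failure of the maximizer property I extract a sequence $y_i\in Q\setminus\{\bar{x}\}$ with $y_i\to\bar{x}$ and $\langle\bar{v},y_i-\bar{x}\rangle>0$ in the non-strict case, respectively $\langle\bar{v},y_i-\bar{x}\rangle\ge 0$ in the strict case. I then fix a single small $t>0$ (depending only on $r$, $|\bar{v}|$, and $\delta_0$) and choose $w_i\in P_Q(y_i+t\bar{v})$. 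Because $y_i\in Q$, the projection inequality gives $|y_i+t\bar{v}-w_i|\le t|\bar{v}|$, so $w_i$ lies within $2t|\bar{v}|$ of $y_i$ and $n_i:=t^{-1}(y_i+t\bar{v}-w_i)\in N^{P}_Q(w_i)$. Expanding $|y_i+t\bar{v}-w_i|^2\le t^2|\bar{v}|^2$ yields $\langle\bar{v},w_i-y_i\rangle\ge\tfrac{1}{2t}|w_i-y_i|^2\ge 0$, and hence $\langle\bar{v},w_i-\bar{x}\rangle\ge\langle\bar{v},y_i-\bar{x}\rangle$: the projected point is at least as high as $y_i$ in the direction $\bar{v}$.

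The crux, and what I expect to be the main obstacle, is to upgrade $n_i$ from a merely bounded vector to one converging to $\bar{v}$, so that identifiability can be invoked. The naive estimate only gives $|n_i-\bar{v}|=t^{-1}|w_i-y_i|\le 2|\bar{v}|$, which is not enough. The key is to feed $w_i$ back into the proximal bound: for $t$ small the crude estimate keeps $w_i$ inside $B_{\delta_0}(\bar{x})$ for all large $i$, so that $\tfrac{1}{2t}|w_i-y_i|^2\le\langle\bar{v},w_i-y_i\rangle\le\langle\bar{v},w_i-\bar{x}\rangle\le r|w_i-\bar{x}|^2$. Combining this with $|w_i-\bar{x}|\le|w_i-y_i|+|y_i-\bar{x}|$ gives $|w_i-y_i|\le\frac{\sqrt{2tr}}{1-\sqrt{2tr}}\,|y_i-\bar{x}|$, whence $|n_i-\bar{v}|=t^{-1}|w_i-y_i|=O(|y_i-\bar{x}|)\to 0$ and also $w_i\to\bar{x}$. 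Thus the proximal inequality is exactly the ingredient that forces the normals $n_i$ toward $\bar{v}$.

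Finally, $(w_i,n_i)\to(\bar{x},\bar{v})$ in $\gph N_Q$, so identifiability of $M$ forces $w_i\in M$ for all large $i$. In the non-strict case $\langle\bar{v},w_i-\bar{x}\rangle\ge\langle\bar{v},y_i-\bar{x}\rangle>0$ exhibits points of $M$ arbitrarily close to $\bar{x}$ on which $\langle\bar{v},\cdot\rangle$ strictly exceeds its value at $\bar{x}$, so $\bar{x}$ is not a local maximizer on $M$. In the strict case I additionally note $w_i\ne\bar{x}$ (otherwise $\langle\bar{v},w_i-y_i\rangle\ge\tfrac{1}{2t}|y_i-\bar{x}|^2>0$ would force $\langle\bar{v},y_i-\bar{x}\rangle<0$, contradicting $\langle\bar{v},y_i-\bar{x}\rangle\ge 0$); then $w_i\in M$ with $w_i\ne\bar{x}$ and $\langle\bar{v},w_i-\bar{x}\rangle\ge 0$ contradicts strictness of the maximizer on $M$. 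In either case the contrapositive is established, completing the proof.
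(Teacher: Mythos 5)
Your proof is correct and follows essentially the same route as the paper's: both arguments take a value-violating point of $Q$, translate it by a small multiple of $\bar{v}$, project back onto $Q$, use the resulting proximal normal together with identifiability to conclude the projected point lies in $M$, and compare values via the projection inequality $\langle \bar{v},w-y\rangle\geq \frac{1}{2t}|w-y|^2$. The only real divergence is in detail: the paper obtains $z_i\to\bar{x}$ (hence convergence of the normals to $\bar{v}$) for free from single-valuedness of $P_Q(\bar{x}+r^{-1}\bar{v})$ and outer semicontinuity of the projection, whereas your quantitative bound $|w_i-y_i|=O(|y_i-\bar{x}|)$ extracted from the proximal inequality is correct but more work than needed, and the paper treats the strict and non-strict cases simultaneously by showing $\langle\bar{v},\cdot\rangle$ is \emph{strictly} dominated by $\langle\bar{v},\bar{x}\rangle$ on $Q\setminus M$ near $\bar{x}$, where you split the two cases at the end.
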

\begin{proof}
One implication is clear. To establish the converse, suppose that $\bar{x}$ is a local maximizer of the linear function $\langle \bar{v},\cdot \rangle$ on $M$. We will show that the inequality, $\langle\bar{v},\bar{x} \rangle > \langle\bar{v},x \rangle$, holds for all points $x\in Q\setminus M$ near $\bar{x}$.
Indeed, suppose this is not the case. Then there exists a sequence $x_i\to\bar{x}$ in $Q\setminus M$ satisfying
\begin{equation}\label{eq:imp2}
\langle\bar{v},\bar{x} \rangle \leq \langle\bar{v},x_i \rangle.
\end{equation}
Since $\bar{v}$ is a proximal normal, we deduce that there exists a real number $r>0$ satisfying $P_Q(\bar{x}+r^{-1}\bar{v})=\{\bar{x}\}$.
Consider any points $z_i$ with $$z_i\in P_Q(x_i+r^{-1}\bar{v}).$$ Clearly we have $z_i\to\bar{x}$ and
$$(x_i-z_i)+r^{-1}\bar{v}\in N_Q(z_i).$$ Since $M$ is identifiable at $\bar{x}$ for $\bar{v}$, we deduce $z_i\in M$ for all large indices $i$. Consequently, for such indices $i$, we have $x_i\neq z_i$.

Observe $$|(x_i-z_i)+r^{-1}\bar{v}|\leq r^{-1}|\bar{v}|.$$ Squaring and canceling terms, we obtain
\begin{equation}\label{eq:imp3}
\langle\bar{v},z_i-x_i \rangle\geq \frac{r}{2}|z_i-x_i|^2.
\end{equation}
Consequently,
$$\frac{r}{2}|z_i-x_i|^2 \leq \langle \bar{v},\bar{x}-x_i\rangle\leq 0,$$
which is a contradiction. Claim (1) now follows.
\end{proof}

Recall that a function $f\colon\R^n\to\overline{\R}$ is said to {\em grow quadratically} around $\bar{x}$ provided that the inequality 
$$\lf_{x\to \bar{x}} \,\frac{f(x)-f(\bar{x})}{|x-\bar{x}|^2}>0,$$
holds. We now arrive at the main result of this section.
\begin{prop}[Order of growth]\label{prop:quad}
Consider a function $f\colon\R^n\to\overline{\R}$ and a set $M\subset\R^n$. Suppose that $M$ is identifiable at $\bar{x}$ for  $\bar{v}=0\in \partial_P f(\bar{x})$. Then the following are true.
\begin{enumerate}
\item $\bar{x}$ is a (strict) local minimizer of $f$ restricted to $M$ $\Leftrightarrow$ $\bar{x}$ is a (strict) local minimizer of the unrestricted function $f$.
\item More generally, consider a growth function $g\colon U\to\R$, defined on an open neighborhood $U$ of $0$, that is ${\bf C}^1$-smooth and satisfies
\begin{align*}
& f(\bar{x}) < f(x) - g(x-\bar{x}) \textrm{   for all } x\in M \textrm{ near } \bar{x},\\
&g(0)=0,~ \nabla g(0)=0, 
\end{align*}
Then the above inequality, in fact, holds for all points $x\in \R^n$ near $\bar{x}$.

In particular, the function $f$, restricted to $M$, grows quadratically near $\bar{x}$ if and only if the unrestricted function $f$ grows quadratically near $\bar{x}$.
\end{enumerate}
\end{prop}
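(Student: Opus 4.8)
The plan is to transfer everything to the epigraph and invoke the geometric result Proposition~\ref{prop:rest:opt}. Since $\bar{v}=0\in\partial_P f(\bar{x})$, the pair $(0,-1)$ is a \emph{proximal} normal to $\epi f$ at $(\bar{x},f(\bar{x}))$, and by epigraphical coherence (Proposition~\ref{prop:coherence}) the lifted set $\gph f\big|_M=\{(x,f(x)):x\in M\}$ is identifiable relative to $\epi f$ at $(\bar{x},f(\bar{x}))$ for $(0,-1)$. This places us exactly in the setting of Proposition~\ref{prop:rest:opt} with $Q=\epi f$ and the proximal normal $(0,-1)$.

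First I would prove claim (1). Note that $\bar{x}\in M$, by applying identifiability to the constant sequence at $(\bar{x},f(\bar{x}),0)$. For the linear functional $\langle(0,-1),(x,r)\rangle=-r$, maximizing $-r$ over $\epi f$ near $(\bar{x},f(\bar{x}))$ amounts to minimizing $f$ near $\bar{x}$, since for fixed $x$ the value $-r$ is largest at $r=f(x)$; likewise maximizing $-r$ over $\gph f\big|_M$ amounts to minimizing $f$ over $M$. One checks directly that $(\bar{x},f(\bar{x}))$ is a (strict) local maximizer of $-r$ on $\epi f$ if and only if $\bar{x}$ is a (strict) local minimizer of $f$, and similarly for the restriction to $M$. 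Hence Proposition~\ref{prop:rest:opt} yields claim (1) in both the strict and non-strict forms simultaneously.

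For the quadratic growth equivalence at the end of the statement I would reduce to claim (1). Quadratic growth of $f\big|_M$ means $f(x)>f(\bar{x})+\tfrac{c}{2}|x-\bar{x}|^2$ for $x\in M$ near $\bar{x}$ with $x\neq\bar{x}$, for some $c>0$. Set $\tilde{f}:=f-\tfrac{c}{2}|\cdot-\bar{x}|^2$. Subtracting this smooth function with vanishing gradient at $\bar{x}$ preserves $0\in\partial_P\tilde{f}(\bar{x})$, and since it merely shifts subgradients by $c(x-\bar{x})$ while leaving function values continuous, it preserves identifiability of $M$ at $\bar{x}$ for $0$. The hypothesis says $\bar{x}$ is a strict local minimizer of $\tilde{f}$ on $M$, so claim (1) gives the same for the unrestricted $\tilde{f}$, i.e.\ $f(x)>f(\bar{x})+\tfrac{c}{2}|x-\bar{x}|^2$ for all $x$ near $\bar{x}$; the converse is trivial. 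This is exactly statement (2) for the quadratic choice $g(w)=\tfrac{c}{2}|w|^2$.

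The hard part is claim (2) for a general ${\bf C}^1$ growth function $g$, where the reduction above breaks down: because $g$ is only ${\bf C}^1$, its remainder at $0$ is merely $o(|x-\bar{x}|)$ rather than $O(|x-\bar{x}|^2)$, so $0\in\partial_P(f-g(\cdot-\bar{x}))(\bar{x})$ can fail and the proximal machinery no longer applies verbatim. I would instead argue by contradiction in the epigraph: supposing $f(x_i)\le f(\bar{x})+g(x_i-\bar{x})$ for some $x_i\to\bar{x}$, the points $x_i$ must lie off $M$, and projecting $(x_i,f(x_i)-s^{-1})$ onto $\epi f$ yields $z_i\in M$ with $f(x_i)-f(z_i)\ge\tfrac{s}{2}|x_i-z_i|^2$. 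Comparing with $g$ through the mean value theorem only gives $|x_i-z_i|\le\tfrac{2}{s}|\nabla g(\xi_i)|\to0$, which is not yet a contradiction, since the quadratic estimate supplied by the projection is too weak to overcome the sublinear $g$. The essential extra ingredient — and the main obstacle — is the \emph{quantitative} form of identifiability: off $M$, near $\bar{x}$, and at function values near $f(\bar{x})$, every subgradient of $f$ is bounded away from $0$ in norm. The crux is to convert this into an \emph{at least linear} lower bound on $f(x)-f(\bar{x})$ as $x$ moves transversally off $M$ — a rate that strictly dominates $g(x-\bar{x})=o(|x-\bar{x}|)$ — thereby excluding the bad sequence. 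Establishing this transverse linear growth rigorously for a merely lower-semicontinuous $f$ is the delicate point on which the whole claim turns.
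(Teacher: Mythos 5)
Your treatment of claim (1) is correct and is exactly the paper's argument: epigraphical coherence (Proposition~\ref{prop:coherence}) lifts identifiability of $M$ to identifiability of $\gph f\big|_M$ relative to $\epi f$ for the proximal normal $(0,-1)$, and Proposition~\ref{prop:rest:opt} applied to the linear functional $(x,r)\mapsto -r$ gives both the strict and non-strict equivalences. Your reduction for the quadratic case (perturb by $\tfrac{c}{2}|\cdot-\bar{x}|^2$, which preserves both $0\in\partial_P$ and identifiability, then invoke claim (1)) is also sound, and it is in fact the paper's proof of claim (2) in full: the paper sets $h:=f-g(\cdot-\bar{x})$ for the \emph{general} ${\bf C}^1$ growth function $g$, asserts that identifiability transfers (true, since $\partial h(x)=\partial f(x)-\nabla g(x-\bar{x})$ and $h(x)-f(x)\to 0$) and that $0\in\partial_P h(\bar{x})$ ``easily follows'', and then applies claim (1) to $h$. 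Your objection to that last membership is fair: for merely ${\bf C}^1$ $g$ the proximal subgradient inequality for $f$ only yields $h(x)\geq h(\bar{x})-\tfrac{r}{2}|x-\bar{x}|^2-g(x-\bar{x})$ with $g(x-\bar{x})=o(|x-\bar{x}|)$, which is not a quadratic lower bound (for $g$ of class ${\bf C}^2$, or any $g$ with $g(w)\leq C|w|^2$, it is immediate, and your reduction already covers that case, including the quadratic-growth assertion that the paper actually uses later).

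The genuine gap is that, having rejected the paper's reduction, you never prove claim (2) for general ${\bf C}^1$ $g$: your proposal ends by naming a missing lemma (``transverse linear growth'') and conceding it is unestablished. That lemma is exactly where a standard tool finishes the job, and your epigraph-projection attempt stalls precisely because the projected point is only an approximate witness; an \emph{exact} minimizer from Ekeland's variational principle is not. Concretely: from $0\in\partial_P f(\bar{x})$ one gets $\inf_{B_\delta(\bar{x})} h\geq h(\bar{x})-\omega(\delta)$ where $\omega(\delta):=\sup_{|w|\leq\delta}\max\{\tfrac{r}{2}|w|^2+g(w),0\}=o(\delta)$. If some $x_0\neq\bar{x}$ near $\bar{x}$ satisfied $h(x_0)\leq h(\bar{x})$, set $b:=|x_0-\bar{x}|$ and apply Ekeland to $h$ on $B_{2b}(\bar{x})$ with radius $\lambda:=\sqrt{\omega(2b)\,b}$; for small $b$ we have $\lambda<b$, so the resulting point $z$ satisfies $z\neq\bar{x}$, $h(z)\leq h(\bar{x})$ (hence $f(z)\to f(\bar{x})$ as $b\downarrow 0$), and $z$ exactly minimizes $h+\tfrac{\omega(2b)}{\lambda}|\cdot-z|$ on a neighborhood, whence $d\big(0,\hat{\partial}f(z)\big)\leq \sqrt{\omega(2b)/b}+|\nabla g(z-\bar{x})|\to 0$. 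Identifiability then forces $z\in M$, contradicting the strict growth $h>h(\bar{x})$ on $M\setminus\{\bar{x}\}$ --- no linear rate off $M$ is ever needed, which is why your planned route, even if completable via a descent/error-bound argument from the uniform slope bound you correctly extracted from identifiability, is heavier than necessary. As submitted, claims (1) and the quadratic specialization are proved; claim (2) is not.
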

\begin{proof} We first prove claim (1). By Proposition~\ref{prop:coherence}, $\gph f\big|_M$ is identifiable, with respect to $\epi f$, at $(\bar{x},f(\bar{x}))$ for $(0,-1)$. Now observe that $\bar{x}$ is a (strict) local minimizer of $f\big|_M$ if and only of $(\bar{x},f(\bar{x}))$ is a (strict) local maximizer of the linear function, $(x,r)\mapsto -r$, on $\gph f\big|_M$. Similarly $\bar{x}$ is a (strict) local minimizer of $f$ if and only of $(\bar{x},f(\bar{x}))$ is a (strict) local maximizer of the linear function, $(x,r)\mapsto -r $, on $\epi f$. Combining these equivalences with Proposition~\ref{prop:rest:opt} establishes the claim.

We now prove claim (2). Suppose that the growth condition is satisfied. Let $h:=f-g(x-\bar{x})$. Since $f$ is ${\bf C}^1$-smooth, $g(0)=0$, and $\nabla g(0)=0$, it easily follows that $M$ is identifiable, now with respect to $h$, at $\bar{x}$ for $0\in\partial_P h(\bar{x})$. Furthermore, the point $\bar{x}$ is a strict local minimizer of $h\big|_M$. Applying claim (1) of the current proposition, we deduce that $\bar{x}$ is a strict local minimizer of the unrestricted function $h$, that is
$$f(\bar{x})=h(\bar{x})<h(x)= f(x)-g(x-\bar{x}), \textrm{   for all } x\textrm{ near } \bar{x},$$ as we needed to show.
\end{proof}

In particular, we obtain the following curious characterization of quadratic growth.
\begin{cor}[Refined optimality]
Consider a function $f\colon\R^n\to\overline{\R}$ and a point $\bar{x}$ with $0\in\partial_P f(\bar{x})$. Then $f$ grows quadratically around $\bar{x}$ if and only if
\begin{equation}\label{eq:grow}
\lf_{\substack{(x,f(x),v)\to(\bar{x},f(\bar{x}),0)\\v\in\partial f(x)}} \frac{f(x)-f(\bar{x})}{|x-\bar{x}|^2}>0.
\end{equation}
\end{cor}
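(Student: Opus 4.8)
The plan is to prove the two implications separately, observing that the forward implication is essentially free while the reverse is where Proposition~\ref{prop:quad} does the work. Write $L$ for the unrestricted quantity $\lf_{x\to\bar{x}}\frac{f(x)-f(\bar{x})}{|x-\bar{x}|^2}$ and $L'$ for the restricted liminf appearing in (\ref{eq:grow}). The sequences defining $L'$ form a subfamily of those defining $L$: every sequence $(x_i,f(x_i),v_i)\to(\bar{x},f(\bar{x}),0)$ with $v_i\in\partial f(x_i)$ in particular has $x_i\to\bar{x}$, and the ratio being averaged depends only on the $x$-coordinate. Hence every cluster value of the $L'$-family is also a cluster value of the $L$-family, so $L\le L'$. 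Thus quadratic growth ($L>0$) immediately gives $L'>0$, settling the ``only if'' direction.

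For the converse, suppose $L'>0$ and fix a constant $c$ with $0<c<L'$. By the definition of liminf there are radii $\rho,\epsilon',\delta>0$ such that
\[
f(x)-f(\bar{x})\ge c\,|x-\bar{x}|^2\quad\text{for } x \text{ with } 0<|x-\bar{x}|<\rho,\ |f(x)-f(\bar{x})|<\epsilon',\ \partial f(x)\cap B_{\delta}(0)\neq\emptyset .
\]
I would then introduce the canonical set $M:=(\partial f)^{-1}(B_{\delta}(0))\cap\{x:|f(x)-f(\bar{x})|<\epsilon'\}$. Since $(f(\bar{x}),0)$ lies in the interior of the neighborhood $(f(\bar{x})-\epsilon',f(\bar{x})+\epsilon')\times B_{\delta}(0)$, this $M$ is precisely a preimage of a neighborhood of $(f(\bar{x}),0)$ under the mapping $x\mapsto\{f(x)\}\times\partial f(x)$, and is therefore identifiable (relative to $f$) at $\bar{x}$ for $0$, as recorded in Section~\ref{sec:ident} and Proposition~\ref{prop:charsub}. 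By the displayed inequality, $f$ restricted to $M$ grows quadratically: $f(x)-f(\bar{x})\ge c\,|x-\bar{x}|^2$ for all $x\in M$ near $\bar{x}$.

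The final step transports this growth off of $M$. Take the smooth growth function $g(w):=\tfrac{c}{2}|w|^2$, which satisfies $g(0)=0$ and $\nabla g(0)=0$, so that the restricted growth yields the strict inequality $f(\bar{x})<f(x)-g(x-\bar{x})$ for all $x\in M$ near $\bar{x}$ with $x\neq\bar{x}$. Because $0\in\partial_P f(\bar{x})$ and $M$ is identifiable at $\bar{x}$ for this proximal subgradient, claim~(2) of Proposition~\ref{prop:quad} upgrades this to the inequality $f(\bar{x})<f(x)-g(x-\bar{x})$ for \emph{all} $x\in\R^n$ near $\bar{x}$. Hence $f(x)-f(\bar{x})>\tfrac{c}{2}|x-\bar{x}|^2$ for such $x$, giving $L\ge\tfrac{c}{2}>0$ and proving the ``if'' direction.

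I expect the only genuine subtlety to be the bookkeeping in the reverse direction: verifying that positivity of the restricted liminf really does package into a single product neighborhood $(f(\bar{x})-\epsilon',f(\bar{x})+\epsilon')\times B_{\delta}(0)$ whose preimage $M$ is simultaneously identifiable and contained in the region where the quadratic lower bound holds. Once this canonical identifiable set is in hand, the essential geometric content --- passing from growth along approximately critical sequences to growth in every direction --- is exactly what claim~(2) of Proposition~\ref{prop:quad} supplies, so no further hard work is needed.
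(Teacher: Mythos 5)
Your proof is correct and takes essentially the same route as the paper's: both hinge on the canonical identifiable sets $(\partial f)^{-1}(V)\cap\{x:|f(x)-f(\bar{x})|<\epsilon\}$, establish quadratic growth of $f$ restricted to such a set, and then invoke claim (2) of Proposition~\ref{prop:quad} to transfer the growth to the unrestricted function. The only (harmless) difference is presentational: you extract a single such set with an explicit constant $c$ directly from the definition of the liminf, whereas the paper argues by contradiction over a shrinking sequence of sets $M_i$.
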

\begin{proof}
Clearly if $f$ grows quadratically around $\bar{x}$, then (\ref{eq:grow}) holds. Conversely, assume $(\ref{eq:grow})$ holds and let $V_i$ be a sequence of neighborhoods of $0$ shrinking to $0$ and let $\epsilon_i>0$ be real number tending to $0$. Then the sets $$M_i:=(\partial f)^{-1}(V_i)\cap \{x\in\R^n: |f(x)-f(\bar{x})|<\epsilon_i\},$$ are identifiable at $\bar{x}$ for $0$. Furthermore, $f$ restricted to $M_i$ must grow quadratically around $\bar{x}$, for all sufficiently large indices $i$, since the alternative would contradict (\ref{eq:grow}). Applying Proposition~\ref{prop:quad}, we obtain the result.  
\end{proof}

\section{Identifiable Manifolds} \label{sec:man}
Consider a closed set $Q$ and a normal vector $\bar{v}\in \hat{N}_Q(\bar{x})$, for a point $\bar{x}\in Q$. The inherent difficulty in analyzing properties of the optimization problem, 
\begin{align*}
P(v):~~~&\max ~~\langle  v,x\rangle,\\
&~~{\rm s.t.} ~~x\in Q,
\end{align*}
such as dependence of the local maximizers of $P(v)$ on $v$ or the order of growth of the function $x\mapsto \langle x,\bar{v}\rangle$ on $Q$ near $\bar{x}$, stem entirely from the potential nonsmoothness of $Q$. However, as we have seen in Proposition~\ref{prop:even}, the local geometry of $\gph N_Q$ is entirely the same as that of a prox-regular identifiable set $M$ at $\bar{x}$ for $\bar{v}$. Thus, for instance, existence of an {\em identifiable manifold}  $M$ at $\bar{x}$ for $\bar{v}$ shows that the nonsmoothness of $Q$ is not intrinsic to the problem at all. Our goal in this section is to investigate this setting. We begin with the following easy consequence of Proposition~\ref{prop:even}. 

\begin{prop}\label{prop:inner_set}
Consider a closed set $Q\subset\R^n$ and suppose that a subset $M\subset Q$ is a ${\bf C}^2$ identifiable manifold at $\bar{x}$ for $\bar{v}\in \hat{N}_Q(\bar{x})$. Then the following properties hold.
\begin{enumerate}
\item $\bar{v}$ lies in the interior of the cone $N^{P}_Q(\bar{x})$, relative to its linear span $N_M(\bar{x})$.
\item There exists an open neighborhood $U$ of $\bar{x}$ and $V$ of $\bar{v}$ such that the mapping $x\mapsto V \cap N_Q(x)$, restricted to $M$, is inner-semicontinuous at each $x\in U\cap M$.
\end{enumerate}
\end{prop}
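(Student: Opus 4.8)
The plan is to reduce both assertions to the local coincidence of $\gph N_Q$ with $\gph N_M$ and then exploit that $N_M(\bar{x})$ is a linear subspace. First I would record the structural facts. Since $M\subset Q$ and $\bar{x}\in M$, the standing inclusion $\hat{N}_Q(\bar{x})\subset N_M(\bar{x})$ places $\bar{v}$ in $N_M(\bar{x})$; because $M$ is a $\mathbf{C}^2$-manifold it is prox-regular at $\bar{x}$ for every normal vector, in particular for $\bar{v}$, and by Theorem~\ref{thm:clarke_man} the cone $L:=N_M(\bar{x})$ is the (linear) normal space. Having both that $M$ is identifiable at $\bar{x}$ for $\bar{v}\in\hat{N}_Q(\bar{x})$ and that $M$ is prox-regular there, Proposition~\ref{prop:even} yields $\gph N_Q=\gph N_M$ locally around $(\bar{x},\bar{v})$, while Proposition~\ref{prop:prox} shows $Q$ is itself prox-regular at $\bar{x}$ for $\bar{v}$, so that $\gph N^{P}_Q=\gph N_Q$ locally around $(\bar{x},\bar{v})$ as well. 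Fixing a product neighbourhood $U\times V$ of $(\bar{x},\bar{v})$ on which these graph identities hold is the common engine for both parts.

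For part (1), I would specialize the graph equalities to $x=\bar{x}$, obtaining $N^{P}_Q(\bar{x})\cap V=N_M(\bar{x})\cap V=L\cap V$. Writing $C:=N^{P}_Q(\bar{x})$, recall that $C$ is a convex cone. The key step is a short containment-in-a-subspace argument: if some $w\in C$ failed to lie in $L$, then for small $t>0$ the point $\bar{v}+t(w-\bar{v})$ would lie in $C$ (by convexity, as $\bar{v},w\in C$) and in $V$, hence in $C\cap V=L\cap V\subset L$, forcing $w-\bar{v}\in L$ and so $w\in L$, a contradiction; thus $C\subset L$. Since $C$ also contains the relatively open set $L\cap V\ni\bar{v}$, its span is all of $L$ and $\bar{v}$ lies in its relative interior, which is exactly the assertion that $\bar{v}$ is interior to $N^{P}_Q(\bar{x})$ relative to its linear span $N_M(\bar{x})$. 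I expect this little convexity argument to be the only genuinely new computation; everything else is assembling the earlier results.

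For part (2), the same product neighbourhood $U\times V$ gives, for every $x\in U\cap M$, the identity $V\cap N_Q(x)=V\cap N_M(x)$, so on $M$ the mapping $x\mapsto V\cap N_Q(x)$ agrees with $x\mapsto V\cap N_M(x)$. I would then invoke that the normal-cone mapping of a $\mathbf{C}^1$-manifold is inner-semicontinuous on $M$ (as used for the smooth-manifold example). Fixing any $x_0\in U\cap M$, a sequence $x_i\to x_0$ in $M$, and any $\bar{w}\in V\cap N_Q(x_0)=V\cap N_M(x_0)$, inner-semicontinuity of $x\mapsto N_M(x)$ produces $w_i\in N_M(x_i)$ with $w_i\to\bar{w}$; since $V$ is open and $\bar{w}\in V$, for all large $i$ we have $x_i\in U$ and $w_i\in V$, whence $w_i\in V\cap N_M(x_i)=V\cap N_Q(x_i)$. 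As convergence concerns only the tail, this establishes inner-semicontinuity of $x\mapsto V\cap N_Q(x)$ relative to $M$ at each $x_0\in U\cap M$. The only point requiring a touch of care is that intersecting with the fixed open set $V$ does not destroy inner-semicontinuity, which is precisely guaranteed by the openness of $V$.
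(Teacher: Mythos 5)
Your proposal is correct and takes essentially the same route as the paper: both parts are deduced from the local graph identity $\gph N_Q=\gph N_M$ supplied by Proposition~\ref{prop:even}, combined with prox-regularity of ${\bf C}^2$ manifolds, Proposition~\ref{prop:prox}, and the resulting local equality $\gph N^{P}_Q=\gph N_Q$. Your convexity argument for part (1) is simply an expanded, direct version of the paper's terse contradiction argument, usefully making explicit the containment $N^{P}_Q(\bar{x})\subset N_M(\bar{x})$ and the identification of the linear span that the paper leaves implicit.
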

\begin{proof}
To see the validity of the first claim, observe that if it did not hold, then we could choose a sequence of vectors $v_i$ satisfying
$$v_i\to \bar{v},~~ v_i\in N_M(\bar{x}),~~ v_i\notin N_Q(\bar{x}),$$ thus contradicting Proposition~\ref{prop:even}. 

The second claim now easily follows from Proposition~\ref{prop:even}.
\end{proof}

Consider a locally minimal identifiable subset $M\subset Q$ at $\bar{x}$ for $\bar{v}\in N_Q(\bar{x})$. Then $M$ remains identifiable at $x$ for $v\in N_Q(x)$, whenever the pair $(x,v)$ is sufficiently close to $(\bar{x},\bar{v})$. However under such perturbations, $M$ might cease to be locally minimal, as one can see even from polyhedral examples. (Indeed when $Q$ is a convex polyhedron, this instability occurs whenever the inclusion $\bar{v}\in\rb N_Q(\bar{x})$ holds.) 

In the case of identifiable manifolds, the situation simplifies. Identifiable manifolds at $\bar{x}$ for $\bar{v}\in \hat{N}_Q(\bar{x})$ are automatically locally minimal, and furthermore they remain locally minimal under small perturbations to $(\bar{x},\bar{v})$ in $\gph N_Q$.

This important observation is summarized below.
\begin{prop}\label{prop:aut}
Consider a closed set $Q$ and a ${\bf C}^2$ identifiable manifold $M\subset Q$ at $\bar{x}$ for $\bar{v}\in\hat{N}_Q(\bar{x})$. Then $M$ is automatically a locally minimal identifiable set at $x\in M$ for $v\in N_Q(x)$ whenever the pair $(x,v)$ is near $(\bar{x},\bar{v})$. 
\end{prop}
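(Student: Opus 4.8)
The plan is to verify the two ingredients that, by characterization~(4) of Proposition~\ref{prop:loc_min} (applied to the set-valued map $N_Q$, equivalently through the set-analogue underlying Proposition~\ref{prop:charsub}), together amount to local minimality: namely that $M$ is both \emph{identifiable} and \emph{necessary} at $x$ for $v$, for every pair $(x,v)\in\gph N_Q$ sufficiently near $(\bar{x},\bar{v})$. The starting point is the observation that a ${\bf C}^2$-manifold is prox-regular at each of its points, so the hypotheses of Proposition~\ref{prop:even} are met at $\bar{x}$ for $\bar{v}\in\hat{N}_Q(\bar{x})$. Applying that proposition, I obtain an open neighborhood $W$ of $(\bar{x},\bar{v})$ on which the graphs coincide, that is $\gph N_Q\cap W=\gph N_M\cap W$.

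For identifiability at nearby base points I would exploit that this coincidence is an open condition. Fix any $(x,v)\in\gph N_Q$ close enough to $(\bar{x},\bar{v})$ that some neighborhood $W'\subset W$ of $(x,v)$ is available (this automatically forces $x\in M$). Since $\dom N_M=M$, we have $\gph N_M\subset M\times\R^n$, whence $\gph N_Q\cap W'=\gph N_M\cap W'\subset M\times\R^n$. This is precisely the statement that $\gph N_Q\subset M\times\R^n$ holds locally around $(x,v)$, so $M$ is identifiable at $x$ for $v$.

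Necessity at nearby base points is where the manifold structure really enters, and here I would invoke Proposition~\ref{prop:inner_set}(2) directly: it furnishes neighborhoods $U$ of $\bar{x}$ and $V$ of $\bar{v}$ such that the mapping $x'\mapsto V\cap N_Q(x')$, restricted to $M$, is inner-semicontinuous at every $x'\in U\cap M$. Thus for $(x,v)$ near $(\bar{x},\bar{v})$ with $x\in U\cap M$ and $v\in V\cap N_Q(x)$, any sequence $x_i\to x$ in $M$ admits vectors $v_i\in V\cap N_Q(x_i)\subset N_Q(x_i)$ with $v_i\to v$; this is exactly necessity of $M$ at $x$ for $v$.

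Combining the previous two paragraphs, for all $(x,v)\in\gph N_Q$ in a small enough neighborhood of $(\bar{x},\bar{v})$, the set $M$ is simultaneously identifiable and necessary at $x$ for $v$, and Proposition~\ref{prop:loc_min} then yields local minimality. The only real care needed --- the step I expect to be a bookkeeping obstacle rather than a conceptual one --- is reconciling the three neighborhoods ($W$ from Proposition~\ref{prop:even} together with $U$ and $V$ from Proposition~\ref{prop:inner_set}) so that every ``locally around $(x,v)$'' claim genuinely holds on one common neighborhood of $(\bar{x},\bar{v})$ in $\gph N_Q$; everything else reduces to the openness observation and to the inner-semicontinuity already recorded for smooth manifolds.
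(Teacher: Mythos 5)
Your proof is correct and takes essentially the same route as the paper, whose proof of this proposition is precisely the combination of Proposition~\ref{prop:loc_min} (identifiable $+$ necessary $\Rightarrow$ locally minimal) with the inner-semicontinuity statement of Proposition~\ref{prop:inner_set}. The only cosmetic difference is that your detour through Proposition~\ref{prop:even} for identifiability at nearby pairs is unnecessary: the defining inclusion $\gph N_Q\subset M\times\R^n$ locally around $(\bar{x},\bar{v})$ is itself an open condition and transfers directly to any nearby pair $(x,v)\in\gph N_Q$.
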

\begin{proof}
This follows directly from Proposition~\ref{prop:loc_min} and Proposition~\ref{prop:inner_set}.
\end{proof}

In particular, identifiable manifolds at $\bar{x}$ for $\bar{v}\in\partial f(\bar{x})$ are locally unique.

\subsection{Relation to Partial Smoothness}\label{sec:char}
In this section, we will relate identifiable manifolds to the notion of partial smoothness, introduced in \cite{Lewis-active}.
The motivation behind partial smoothness is two-fold. On one hand, it is an attempt to model an intuitive idea of a ``stable active set''. On the other hand, partial smoothness, along with certain nondegeneracy and growth conditions, provides checkable sufficient conditions for optimization problems to possess good sensitivity properties. 
Evidently, partial smoothness imposes conditions that are unnecessarily strong. We now describe a variant of partial smoothness that is localized in a directional sense. This subtle distinction, however, will be important for us.

\begin{defn}[Directional Partial Smoothness]
{\rm 
Consider a closed set $Q\subset\R^n$ and a ${\bf C}^2$-manifold $M\subset Q$. Then $Q$
is {\em partly smooth} {\em with respect
to} $M$ {\em at} $\bar{x}\in M$ {\em for} $\bar{v}\in N_Q(\bar{x})$ if  
\begin{enumerate}
\item {\bf (prox-regularity)}  $Q$ is prox-regular at $\bar{x}$ for $\bar{v}$.
\item {\bf (sharpness)} $\spann \hat{N}_Q(\bar{x})=N_M(\bar{x})$.  
\item {\bf (continuity)} There exists a neighborhood $V$ of $\bar{v}$, such that the mapping, $x\mapsto V\cap N_Q(x)$, when restricted 
to $M$, is inner-semicontinuous at $\bar{x}$.
\end{enumerate}
}
\end{defn}

We arrive at the main result of this subsection.
\begin{prop}[Identifiable manifolds and partial smoothness]\label{prop:eqv2}{\ }{\\}
Consider a closed set $Q\subset\R^n$ and a subset $M\subset Q$ that is a ${\bf C}^2$ manifold around a point $\bar{x}\in Q$. Let $\bar{v}\in \hat{N}_Q(\bar{x})$. Then the following are equivalent.
\begin{enumerate}
\item \label{item:ident} $M$ is an identifiable manifold at $\bar{x}$ for $\bar{v}$. 
\item \label{item:graph} We have 
$$\gph N_Q=\gph N_M ~\textrm{ locally around } (\bar{x},\bar{v}).$$
\item \label{item:part}
\begin{itemize}
\item $Q$ is partly smooth with respect to $M$ at $\bar{x}$ for $\bar{v}$. 
\item the strong inclusion $\bar{v}\in \ri \hat{N}_Q(\bar{x})$ holds. 
\end{itemize}
\item \label{item:val} The set $Q$ is prox-regular at $\bar{x}$ for $\bar{v}$, and for all sufficiently small real numbers $\lambda, \epsilon >0$, the inclusion
$$\bar{x}+\lambda \bar{v}\in \inter \Big(\bigcup_{x\in M\cap B_{\epsilon}(\bar{x})} \big(x+N_Q(x)\big)\Big),$$
holds.
\end{enumerate}
\end{prop}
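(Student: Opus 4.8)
The key simplification is that $M$, being a ${\bf C}^2$-manifold, is automatically prox-regular at $\bar x$ for $\bar v$ and Clarke regular there, with $N_M(\bar x)$ equal to its normal space (Theorem~\ref{thm:clarke_man}). Consequently every earlier result carrying a ``$M$ prox-regular'' hypothesis --- in particular Propositions~\ref{prop:even}, \ref{prop:prox}, \ref{prop:proj_inter}, Lemma~\ref{lem:inter}, and the final characterization proposition of Section~\ref{sec:geo} --- applies to $M$ without further checking. I would organise the argument as: the equivalence $(\ref{item:ident})\Leftrightarrow(\ref{item:graph})$; the easy implication $(\ref{item:graph})\Rightarrow(\ref{item:part})$; the hard implication $(\ref{item:part})\Rightarrow(\ref{item:ident})$; and the separate equivalence $(\ref{item:ident})\Leftrightarrow(\ref{item:val})$.

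The equivalence $(\ref{item:ident})\Leftrightarrow(\ref{item:graph})$ is immediate from Proposition~\ref{prop:even}. For $(\ref{item:ident})\Leftrightarrow(\ref{item:val})$, the forward direction combines Proposition~\ref{prop:prox} (yielding prox-regularity of $Q$) with Lemma~\ref{lem:inter}, after noting that the swept set of Lemma~\ref{lem:inter} sits inside the cone-swept set appearing in~$(\ref{item:val})$; the reverse direction invokes Proposition~\ref{prop:proj_inter} together with the final characterization of Section~\ref{sec:geo}, using that only normals near $\bar v$ (after rescaling by $\lambda$) contribute to a neighborhood of $\bar x+\lambda\bar v$, so the two descriptions of the relevant set agree locally around $\bar x+\lambda\bar v$.

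For $(\ref{item:graph})\Rightarrow(\ref{item:part})$ I would read off each ingredient of partial smoothness from the local equality $\gph N_Q=\gph N_M$. Prox-regularity of $Q$ follows from Proposition~\ref{prop:prox}. Sharpness holds because, near $(\bar x,\bar v)$, the cone $\hat N_Q(\bar x)$ coincides with the subspace $N_M(\bar x)$ and hence spans it. The continuity condition follows since $x\mapsto N_M(x)$ is continuous on the manifold $M$ and agrees with $N_Q$ near $(\bar x,\bar v)$. Finally $\bar v\in\ri\hat N_Q(\bar x)$ because $\hat N_Q(\bar x)$ fills a full neighborhood of $\bar v$ inside $N_M(\bar x)=\spann\hat N_Q(\bar x)$.

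The implication $(\ref{item:part})\Rightarrow(\ref{item:ident})$ is the crux. I would argue by contradiction: given $(x_i,v_i)\to(\bar x,\bar v)$ in $\gph N_Q$ with $x_i\notin M$, set $\tilde x_i:=P_M(x_i)$ (well defined by ${\bf C}^2$-smoothness), $n_i:=x_i-\tilde x_i\in N_M(\tilde x_i)$, and $d_i:=|n_i|>0$, passing to a subsequence so that $n_i/d_i\to\nu$, a unit normal in $N_M(\bar x)$. Testing the Fréchet normal $v_i$ at $x_i$ against $\tilde x_i\in Q$, testing the vectors $\bar v\pm\rho\nu\in\hat N_Q(\bar x)$ (available from the strong inclusion) against $x_i\in Q$, and then combining with the prox-regular monotonicity estimate of Proposition~\ref{prop:prox_char} and the ${\bf C}^2$ bound $\langle\nu,\tilde x_i-\bar x\rangle=O(|\tilde x_i-\bar x|^2)$, should force $\langle\bar v,\nu\rangle=0$ and ultimately $d_i=o(d_i)$. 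The main obstacle is controlling the two independent scales $d_i$ and $|\tilde x_i-\bar x|$: the estimate closes cleanly when $x_i$ leaves $M$ at least as fast as it approaches $\bar x$, but the regime $|\tilde x_i-\bar x|\gg d_i$ requires knowing that the strong inclusion persists uniformly along $M$ near $\bar x$. I would remove this obstacle by first upgrading the continuity and sharpness hypotheses to the local identity $N_Q(x)=N_M(x)$ near $\bar v$ for all $x\in M$ close to $\bar x$ --- a Painlevé--Kuratowski inner-limit argument using inner-semicontinuity, the inclusion $\hat N_Q(x)\subseteq N_M(x)$, convexity of the prox-regular normal cone, and constancy of $\dim N_M(x)$ along the manifold. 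Granting this uniform nondegeneracy, the contradiction is reached and identifiability follows.
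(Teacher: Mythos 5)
Your proposal is sound, but it routes through the four conditions genuinely differently from the paper. The paper proves the cycle $(\ref{item:ident})\Leftrightarrow(\ref{item:graph})$ via Proposition~\ref{prop:even}, $(\ref{item:ident})\Rightarrow(\ref{item:part})$ via Propositions~\ref{prop:prox} and~\ref{prop:inner_set}, and then makes $(\ref{item:part})\Rightarrow(\ref{item:val})$ the hard step: assuming $\bar{x}+r^{-1}\bar{v}$ lies on the boundary of the swept set, it projects the offending points $z_i$ onto $M$, separates $z_i-x_i$ from the convex cone $N^{P}_Q(x_i)$ inside $N_M(x_i)$ by unit vectors $a_i\to a$, and uses sharpness, the strong inclusion, and inner-semicontinuity to manufacture normals converging to $\bar{v}+\delta a$, forcing $\langle a,\bar{v}+\delta a\rangle\le\langle a,\bar{v}\rangle$, a contradiction; the leg $(\ref{item:val})\Rightarrow(\ref{item:ident})$ is then the easy prox-regular projection argument. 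You instead prove $(\ref{item:part})\Rightarrow(\ref{item:ident})$ directly and handle $(\ref{item:val})$ separately, and your separate equivalence $(\ref{item:ident})\Leftrightarrow(\ref{item:val})$ does go through exactly as you say: Proposition~\ref{prop:prox} gives prox-regularity of $Q$ (hence $\bar{v}\in N^{P}_Q(\bar{x})$ via Proposition~\ref{prop:prox_char}), Lemma~\ref{lem:inter} gives the interior inclusion since its swept set sits inside the cone-swept set, and the reverse direction is Proposition~\ref{prop:proj_inter} plus the rescaling bookkeeping, which matches the paper's $(\ref{item:val})\Rightarrow(\ref{item:ident})$ in substance. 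Your crux --- the ``upgrade'' to the identity $N_Q(x)\cap V'=N_M(x)\cap V'$ for all $x\in M$ near $\bar{x}$ --- is indeed provable from the named ingredients: compactness of a relative ball around $\bar{v}$ in $\hat{N}_Q(\bar{x})$ makes the inner-semicontinuity at $\bar{x}$ uniform, and convexity of $N^{P}_Q(x)\cap B_{\epsilon}(\bar{v})$ (with $N_Q=N^{P}_Q$ near $(\bar{x},\bar{v})$ by prox-regularity), the inclusion $\hat{N}_Q(x)\subset N_M(x)$, and constancy of $\dim N_M(x)$ then trap a uniform relative ball inside $N_Q(x)$. Note that this lemma is a strictly stronger intermediate statement than anything the paper isolates: it is precisely the inclusion $\gph N_M\subset \gph N_Q$ near $(\bar{x},\bar{v})$, i.e.\ half of item~(\ref{item:graph}) obtained before identifiability is known. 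Granting it, your contradiction closes more cleanly than your pre-upgrade sketch suggests --- the intermediate claims $\langle\bar{v},\nu\rangle=0$ and $d_i=o(d_i)$ are unnecessary: setting $w_i:=P_{N_M(\tilde{x}_i)}\bar{v}+\delta\, n_i/d_i\in N_Q(\tilde{x}_i)$ and applying the hypomonotonicity of Proposition~\ref{prop:prox_char} to the pairs $(x_i,v_i)$ and $(\tilde{x}_i,w_i)$ yields $\delta\le\langle v_i-\bar{v},\, n_i/d_i\rangle+r\,d_i\to 0$, the desired contradiction, with no two-scale difficulty. In the end both proofs pivot on the same mechanism --- project onto $M$ and tilt $\bar{v}$ in the offending normal direction using sharpness, the strong inclusion, and inner-semicontinuity --- but the paper's boundary-separation argument buys condition~(\ref{item:val}) as a waypoint for free, whereas your decomposition buys a reusable uniform nondegeneracy lemma and replaces separation by the monotonicity characterization, at the modest cost of verifying $(\ref{item:ident})\Leftrightarrow(\ref{item:val})$ separately from the Section~\ref{sec:geo} machinery.
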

\begin{proof} 
The equivalence $(\ref{item:ident})  \Leftrightarrow(\ref{item:graph})$ has been established in Proposition~\ref{prop:even}.
The implication $(\ref{item:ident})\Rightarrow (\ref{item:part})$ follows trivially from Propositions~\ref{prop:prox} and \ref{prop:inner_set}. 

$(\ref{item:part})\Rightarrow (\ref{item:val})$: There exist real numbers $r,\epsilon>0$ so that the implication 
\begin{equation*}
\left.
\begin{array}{cc}
x\in Q, &v\in N_Q(x) \\
|x-\bar{x}|<\epsilon, &|v-\bar{v}|<\epsilon
\end{array}
\right\}
\Rightarrow
P_{\mathsmaller{Q\cap B_{\epsilon}(\bar{x})}}(x+r^{-1}v)=x,
\end{equation*}
holds.

For the sake of contradiction, suppose $$\bar{x}+r^{-1}\bar{v}\in \bd \Big(\bigcup_{x\in M\cap B_{\epsilon}(\bar{x})} \big(x+N_Q(x)\big)\Big).$$
Then there exists a sequence of points $z_i\to \bar{x}+r^{-1}\bar{v}$ with $$z_i\notin \bigcup_{x\in M\cap B_{\epsilon}(\bar{x})} \big( x+N_Q(x)\big),$$ for each index $i$. Let 
$x_i\in P_M(z_i)$. Observe $$x_i\to \bar{x}, ~~z_i-x_i\in N_M(x_i).$$ Clearly, $$z_i-x_i\notin N^{P}_Q(x_i)\subset N_M(x_i),$$ for large indices $i$. Hence, there exist separating vectors $a_i\in N_M(x_i)$ with $|a_i|=1$ satisfying
$$\sup_{v\in N^P_Q(x_i)}\langle a_i, v\rangle \leq \langle a_i, z_i-x_i \rangle=\langle a_i,z_i-\bar{x}\rangle +\langle a_i,\bar{x}-x_i\rangle.$$ 
We deduce, 
$$\sup_{v\in N^P_Q(x_i)}\langle a_i, v\rangle \leq \langle a_i,r(z_i-\bar{x})\rangle +\langle a_i,r(\bar{x}-x_i)\rangle.$$ 
Passing to a subsequence, we may assume $a_i\to a$ for some nonzero vector $a\in N_M(\bar{x})$. Observe $\bar{v}+\delta a\in N_Q(x_i)$ for all small $\delta >0$. Consequently for all sufficiently small $\delta >0$, there exist vectors $v_i\in N_Q(x_i)$ with $v_i\to\bar{v}+\delta a$. Observe
$$\langle a_i,v_i\rangle\leq \langle a_i, r(z_i-\bar{x})\rangle+\langle a_i, r(\bar{x}-x_i)\rangle.$$ Letting $i$ tend to $\infty$, we obtain 
$$\langle a,\bar{v}+\delta a\rangle \leq\langle a,\bar{v}\rangle,$$ which is a contradiction.
 
$(\ref{item:val})\Rightarrow (\ref{item:ident})$: Choose $r,\epsilon >0$ so as to ensure $$\bar{x}+r^{-1}\bar{v}\in \inter \Big(\bigcup_{x\in M\cap B_{\epsilon}(\bar{x})} \big( x+N_Q(x)\big)\Big).$$ Consider any sequence of points $x_i\in\R^n$ and vectors $v_i\in N_Q(x_i)$, with $x_i\to\bar{x}$ and $v_i\to\bar{v}$. Then for all large indices $i$, the inclusion $$x_i+r^{-1}v_i\in \bigcup_{x\in M\cap B_{\epsilon}(\bar{x})} \big( x+N_Q(x)\big),$$
holds. Shrinking $r$ and $\epsilon$, from prox-regularity of $Q$, we deduce $x_i\in M$ for all large indices $i$. Hence $M$ is identifiable at $\bar{x}$ for $\bar{v}$. 
\end{proof}

Some comments concerning characterization (\ref{item:val}) of the previous proposition are in order. Consider a convex set $Q$ containing a point $\bar{x}$, and let $\bar{v}\in N_Q(\bar{x})$ be a normal vector. Then the arguments $(\ref{item:part})\Rightarrow (\ref{item:val})$ and $(\ref{item:val})\Rightarrow (\ref{item:ident})$ show that a manifold $M\subset Q$ is identifiable at $\bar{x}$ for $\bar{v}$ if and only if the inclusion 
$$\bar{x}+\bar{v}\in \inter\Big(\bigcup_{x\in M} \big(x+N_Q(x)\big)\Big),$$ holds. The region $\bigcup_{x\in M} \big(x+N_Q(x)\big)$ is formed by attaching cones $N_Q(x)$ to each point $x\in  M$. This set is precisely the set of points in $\R^n$ whose projections onto $Q$ lie in $M$. Thus a manifold $M$ is identifiable at $\bar{x}$ for $\bar{v}$ whenever the region $\bigcup_{x\in M} \big(x+N_Q(x)\big)$ is ``valley-like'' around $\bar{x}+\bar{v}$. 
We end the section with an observation relating identifiable manifolds to critical cones.
\begin{cor}
Consider a closed set $Q\subset\R^n$ that is Clarke regular at a point $\bar{x}\in Q$. Suppose that $M\subset Q$ is a ${\bf C}^2$ identifiable manifold at $\bar{x}$ for $\bar{v}$. Then the critical cone $K_Q(\bar{x},\bar{v})$ coincides with the tangent space $T_M(\bar{x})$. 
\end{cor}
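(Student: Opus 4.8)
The plan is to recognize this corollary as essentially an immediate consequence of Proposition~\ref{prop:tang} (critical cones as tangential approximations), once all of its hypotheses are checked for a $\mathbf{C}^2$ identifiable manifold. First I would observe that because $Q$ is Clarke regular at $\bar{x}$, we have $N_Q(\bar{x})=\hat{N}_Q(\bar{x})$, so the normal vector $\bar{v}$ is in fact a Fr\'echet normal, $\bar{v}\in\hat{N}_Q(\bar{x})$. This places us squarely in the setting of Proposition~\ref{prop:aut} and Proposition~\ref{prop:tang}, whose standing assumptions require exactly this.

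Next I would verify the three hypotheses needed to invoke Proposition~\ref{prop:tang}. The set $M$ must be \emph{locally minimal} identifiable at $\bar{x}$ for $\bar{v}$: this is supplied directly by Proposition~\ref{prop:aut}, which guarantees that a $\mathbf{C}^2$ identifiable manifold is automatically locally minimal. The set $M$ must be \emph{prox-regular} at $\bar{x}$ for $\bar{v}$: since $M$ is a $\mathbf{C}^2$ manifold, this follows from the theorem asserting that $\mathbf{C}^2$-manifolds are prox-regular at each of their points (prox-regularity for every normal direction in particular gives it for $\bar{v}$). Finally, $M$ must be \emph{smoothly derivable} at $\bar{x}$: this holds because, as noted in Section~\ref{sec:crit}, every smooth manifold is smoothly derivable at each of its points, the $\mathbf{C}^1$-path being obtained from a local chart. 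With these three facts in hand, Proposition~\ref{prop:tang} yields
$$\clco T_M(\bar{x})=K_Q(\bar{x},\bar{v}).$$

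It then remains only to remove the closed-convex-hull operation. Since $M$ is a $\mathbf{C}^2$ manifold, the tangent cone $T_M(\bar{x})$ is the tangent space to $M$ at $\bar{x}$ in the sense of differential geometry (consistent with Theorem~\ref{thm:clarke_man} and polarity), hence a linear subspace of $\R^n$. A linear subspace is already closed and convex, so $\clco T_M(\bar{x})=T_M(\bar{x})$, and combining with the displayed equality gives $K_Q(\bar{x},\bar{v})=T_M(\bar{x})$, as claimed. I do not expect any genuine obstacle here: the content is entirely assembled from earlier results, and the only points requiring a word of care are confirming that Clarke regularity delivers $\bar{v}\in\hat{N}_Q(\bar{x})$ (so that Proposition~\ref{prop:aut} applies) and recalling that the tangent cone of a manifold is a linear subspace, which renders the closed convex hull in Proposition~\ref{prop:tang} vacuous.
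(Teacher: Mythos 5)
Your proposal is correct and takes essentially the same approach as the paper, which dispatches this corollary in one line as an immediate consequence of Proposition~\ref{prop:tang}. Your hypothesis-checking (local minimality via Proposition~\ref{prop:aut} after noting Clarke regularity gives $\bar{v}\in\hat{N}_Q(\bar{x})$, prox-regularity and smooth derivability of a ${\bf C}^2$ manifold, and the observation that $T_M(\bar{x})$ is a linear subspace so the closed convex hull is vacuous) merely spells out what the paper leaves implicit.
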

\begin{proof}
This is an immediate consequence of Proposition~\ref{prop:tang}. 
\end{proof}

\subsection{Projections onto identifiable manifolds}
In this subsection, we show that ${\bf C}^2$ identifiable manifolds occur precisely as the images of ${\bf C}^1$-smooth projections with constant rank. Predecessor results of a similar flavour can be found in \cite{Hare} and \cite{Wright}. We will need the following classical result. 

\begin{prop}\label{prop:exists}
Consider a ${\bf C}^2$-smooth manifold $M\subset\R^n$ and a point $\bar{x}\in M$. Then there exists a neighborhood $U$ of $\bar{x}$ so that 
the projection mapping $P_M$ is a ${\bf C}^1$-smooth submersion on $U$. In particular it sends open subsets of $U$ to open subsets of $M$.
\end{prop}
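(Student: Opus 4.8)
The plan is to realize $P_M$ locally as the solution map of a nondegenerate system of stationarity equations, and then invoke the implicit function theorem. First I would fix a ${\bf C}^2$ local defining function $F\colon U_0\to\R^{n-r}$ for $M$ around $\bar{x}$, so that $M\cap U_0=\{x:F(x)=0\}$ and the Jacobian $\nabla F(x)$ has full row rank $n-r$ on $U_0$; here $r=\dim M$. For $x\in M\cap U_0$ the normal space is $N_M(x)=\rge \nabla F(x)^{*}$, so a point $x$ is the projection of $y$ precisely when it is stationary for $\min_x \tfrac12|y-x|^2$ subject to $F(x)=0$, i.e. when there is a multiplier $\mu\in\R^{n-r}$ with $x-y+\nabla F(x)^{*}\mu=0$ and $F(x)=0$. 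I would package these as the zero set of the map
\begin{equation*}
H(x,\mu;y):=\big(x-y+\nabla F(x)^{*}\mu,\; F(x)\big),
\end{equation*}
noting that $H$ is ${\bf C}^1$ because $F$ is ${\bf C}^2$, and that $H(\bar{x},0;\bar{x})=0$.

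Next I would verify the hypotheses of the implicit function theorem at the base point $(\bar{x},0;\bar{x})$. The partial Jacobian of $H$ in $(x,\mu)$ there is the bordered matrix
\begin{equation*}
\begin{pmatrix} I & \nabla F(\bar{x})^{*}\\ \nabla F(\bar{x}) & 0\end{pmatrix},
\end{equation*}
since the Hessian terms $\sum_k \mu_k \nabla^2 F_k(\bar{x})$ drop out at $\mu=0$. This matrix is invertible: the top-left block $I$ is positive definite, and because $\nabla F(\bar{x})$ has full row rank the Schur complement $-\nabla F(\bar{x})\nabla F(\bar{x})^{*}$ is negative definite. The implicit function theorem then produces a ${\bf C}^1$ map $y\mapsto(x(y),\mu(y))$, defined near $\bar{x}$, with $H(x(y),\mu(y);y)=0$ and $x(\bar{x})=\bar{x}$.

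It remains to identify $x(y)$ with $P_M(y)$. Here I would use that a ${\bf C}^2$-manifold is prox-regular (stated above), so that $P_M$ is single-valued and continuous on some neighborhood of $\bar{x}$; in particular $P_M(y)\to\bar{x}$ as $y\to\bar{x}$. Since $P_M(y)$ always satisfies the stationarity system $H(\cdot,\cdot;y)=0$, and the implicit function theorem guarantees local uniqueness of the solution near $(\bar{x},0)$, the two branches must coincide on a sufficiently small neighborhood $U$ of $\bar{x}$. Thus $P_M=x(\cdot)$ is ${\bf C}^1$ on $U$.

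Finally, for the submersion claim I would argue via the rank. Because $P_M$ restricts to the identity on $M$, for $x\in M$ the differential $dP_M(x)$ fixes every vector of $T_M(x)$, so $\rank dP_M(x)\geq r$; on the other hand $P_M$ takes values in $M$, forcing $\rank dP_M(y)\leq r$ for all $y$. Hence $\rank dP_M(\bar{x})=r$, and since the rank is lower semicontinuous and bounded above by $r$, it is identically $r=\dim M$ on a possibly smaller neighborhood. Thus $P_M$ is a ${\bf C}^1$ submersion there, and submersions are open maps onto the target manifold, which yields the last assertion. The one genuinely delicate point is the identification $x(y)=P_M(y)$: the implicit function theorem only detects stationary points, and it is prox-regularity — single-valuedness together with continuity of the true nearest-point map — that upgrades the local stationary branch to the projection itself.
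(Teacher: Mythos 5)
Your proof is correct and follows essentially the same route as the paper's: both set up the stationarity system $F(x)=0$, $x-y+\nabla F(x)^{*}\mu=0$ for a ${\bf C}^2$ defining function, use prox-regularity of the ${\bf C}^2$-manifold to know $P_M$ is single-valued and continuous (hence coincides with the locally unique solution branch), and apply the implicit/inverse function theorem to the bordered system to get ${\bf C}^1$ smoothness. You simply fill in details the paper leaves implicit — the invertibility of the bordered Jacobian via the Schur complement, the matching of the multiplier branch with the true projection, and the rank argument for the submersion claim — all of which are sound.
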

\begin{proof}
Let $V$ be a neighborhood of $\bar{x}$ so that $$M\cap V=\{x\in V:h_i(x)=0 \textrm{ for } i=1,\ldots,m\},$$
where $h_i\colon V\to\R$ are ${\bf C}^2$-smooth defining functions and the gradients $\nabla h_i(\bar{x})$ are linearly independent. 
Since $M$ is prox-regular at $\bar{x}$, the mapping $y\mapsto P_M(y)$ is single-valued near $\bar{x}$. Furthermore, the point $P_M(y)$ is precisely the point $x$ solving the system
\begin{align*}
h_i(x)&=0, \textrm{ for } i=1,\ldots,m,\\
x+\sum^m_{i=1}\lambda_i\nabla h_i(x) &=y.
\end{align*}
for some $\lambda\in\R^m$.
Applying the inverse function theorem to the system above yields the result.
\end{proof}

\begin{prop}[Smooth projections]\label{prop:main}
Consider a closed set $Q\subset\R^n$ and a ${\bf C}^2$ manifold $M\subset Q$ containing a point $\bar{x}$. Let $\bar{v}\in \hat{N}_Q(\bar{x})$ be a normal vector. Then the following are equivalent.
\begin{enumerate}
\item \label{item:ident_proj1} $M$ is an identifiable manifold around $\bar{x}$ for $\bar{v}$. 
\item \label{item:proj_ident1}
The set $Q$ is prox-regular at $\bar{x}$ for $\bar{v}$, and for all sufficiently small real numbers $\lambda$ and all sufficiently small neighborhoods $U$ of $\bar{x}+\lambda\bar{v}$,
\begin{itemize}
\item The projection mapping $P_Q$ is ${\bf C}^1$-smooth and has constant rank on the ball $U$. 
\item The image $P_Q(U)$ coincides with $M$ near $\bar{x}$.
\end{itemize}
\end{enumerate}
\end{prop}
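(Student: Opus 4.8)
The plan is to prove the two implications separately, and I expect $(\ref{item:proj_ident1})\Rightarrow(\ref{item:ident_proj1})$ to be the short one. Assuming (2), I would fix a small $\lambda>0$ and let $U$ be a small open ball around $\bar{x}+\lambda\bar{v}$; then trivially $\bar{x}+\lambda\bar{v}\in\inter U$, so Proposition~\ref{prop:proj_inter} guarantees that $P_Q(U)$ is identifiable at $\bar{x}$ for $\bar{v}$. Since (2) asserts that $P_Q(U)$ coincides with $M$ near $\bar{x}$, and identifiability depends only on the local structure around $\bar{x}$, the set $M$ is itself identifiable at $\bar{x}$ for $\bar{v}$; as $M$ is assumed to be a ${\bf C}^2$ manifold, it is an identifiable manifold, which is (1). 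Note that for this direction the constant-rank and smoothness content of (2) is not even used.

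For $(\ref{item:ident_proj1})\Rightarrow(\ref{item:proj_ident1})$, I would first set up the geometry. Since $M$ is a ${\bf C}^2$ manifold it is prox-regular at $\bar{x}$ for $\bar{v}$, so Proposition~\ref{prop:prox} yields that $Q$ is prox-regular at $\bar{x}$ for $\bar{v}$, which is the first half of (2). Proposition~\ref{prop:even} then gives the crucial identity $\gph N_Q=\gph N_M$ locally around $(\bar{x},\bar{v})$. In particular $\bar{v}\in N_M(\bar{x})$, so prox-regularity of the manifold $M$ gives $P_M(\bar{x}+\lambda\bar{v})=\bar{x}$ for all small $\lambda>0$, and $P_M$ is single-valued and continuous near $\bar{x}$.

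The heart of the argument is to show that $P_Q$ and $P_M$ agree on a full neighborhood of $\bar{x}+\lambda\bar{v}$. I would argue as follows. As in the proof of Proposition~\ref{prop:projid} (via prox-regularity of $Q$ together with Lemma~\ref{lem:acc}), for all sufficiently small $\lambda,\epsilon>0$ one has the implication: if $x\in Q$, $v\in N_Q(x)$, $|x-\bar{x}|<\epsilon$, and $|v-\bar{v}|<\epsilon$, then $P_Q(x+\lambda v)=x$. Now I fix such a small $\lambda$ and take any $y$ close to $\bar{x}+\lambda\bar{v}$. Setting $x:=P_M(y)$, the point $x$ is well-defined and close to $\bar{x}$; then $y-x\in N_M(x)$, and since $y-x$ is close to $\lambda\bar{v}$, the rescaled vector $v:=\lambda^{-1}(y-x)$ lies in $N_M(x)$ and is close to $\bar{v}$. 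Hence $(x,v)\in\gph N_M=\gph N_Q$ near $(\bar{x},\bar{v})$, so $v\in N_Q(x)$, and the displayed implication gives $P_Q(y)=P_Q(x+\lambda v)=x=P_M(y)$. Thus $P_Q=P_M$ on a neighborhood $W$ of $\bar{x}+\lambda\bar{v}$. Finally, Proposition~\ref{prop:exists} shows $P_M$ is a ${\bf C}^1$-smooth submersion, hence of constant rank $\dim M$ and an open map, on a neighborhood of $\bar{x}$ that contains $\bar{x}+\lambda\bar{v}$ for $\lambda$ small. Therefore on any sufficiently small neighborhood $U\subset W$ of $\bar{x}+\lambda\bar{v}$ the map $P_Q=P_M$ is ${\bf C}^1$-smooth with constant rank, and its image $P_Q(U)=P_M(U)$ is an open neighborhood of $\bar{x}$ in $M$, i.e.\ coincides with $M$ near $\bar{x}$; this is exactly (2).

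The main obstacle is this middle step: upgrading the pointwise graph identity $\gph N_Q=\gph N_M$ and the \emph{directional} single-valuedness of the projection into the statement that $P_Q$ equals $P_M$ on an entire neighborhood of the off-manifold point $\bar{x}+\lambda\bar{v}$. The care required is in tracking the scaling by $\lambda$, so that rescaled difference vectors land near $\bar{v}$ where the two graphs coincide, and in invoking the version of prox-regular single-valuedness for the \emph{global} projection $P_Q$ (rather than the projection localized to a ball in Definition~\ref{defn:dir_prox}) that is furnished through Lemma~\ref{lem:acc}. Once the local coincidence $P_Q=P_M$ is in hand, the smoothness, constant-rank, and openness conclusions follow immediately from the classical Proposition~\ref{prop:exists}.
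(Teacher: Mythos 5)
Your proposal is correct and follows essentially the same route as the paper, which proves $(\ref{item:proj_ident1})\Rightarrow(\ref{item:ident_proj1})$ directly from Proposition~\ref{prop:proj_inter} and $(\ref{item:ident_proj1})\Rightarrow(\ref{item:proj_ident1})$ from Propositions~\ref{prop:prox}, \ref{prop:exists}, and \ref{prop:projid}. The coincidence $P_Q=P_M$ on a neighborhood of $\bar{x}+\lambda\bar{v}$, which you establish via the graph identity of Proposition~\ref{prop:even} together with the global single-valuedness furnished through Lemma~\ref{lem:acc}, is exactly the step the paper compresses into ``follows immediately,'' so your write-up is a faithful (and more detailed) rendering of the same argument.
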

\begin{proof}
$(\ref{item:ident_proj1})\Rightarrow (\ref{item:proj_ident1})$:
This follows immediately from Propositions~\ref{prop:prox},~\ref{prop:exists}, and \ref{prop:projid}. 

%

$(\ref{item:proj_ident1})\Rightarrow (\ref{item:ident_proj1})$:
This is immediate from Proposition~\ref{prop:proj_inter}.
\end{proof}

\section{The case of functions}\label{sec:func}
Many of the results we have derived in the previous sections were stated in terms of sets. Their generalizations to the functional setting can be carried out in a standard way by appealing to Proposition~\ref{prop:coherence}. In this section, we record some of these results for ease of reference in future work. Most of the proofs are omitted.

\begin{prop}[Prox-regularity of identifiable sets]
Consider a lsc function $f\colon\R^n\to\overline{\R}$ and a set $M\subset\R^n$ that is identifiable at $\bar{x}$ for $\bar{v}\in \hat{\partial} f(\bar{x})$. In addition, suppose that $f+\delta_M$ is prox-regular at $\bar{x}$ for $\bar{v}$. Then $f$ is prox-regular at $\bar{x}$ for $\bar{v}$.
\end{prop}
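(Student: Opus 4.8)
The plan is to lift the statement to epigraphs and invoke the set version already proved in Proposition~\ref{prop:prox}. I would set $Q:=\epi f$; since $f$ is lsc, $Q$ is closed. I will work at the point $(\bar{x},f(\bar{x}))$ with the vector $(\bar{v},-1)$, which is a Fr\'{e}chet normal to $Q$ there because $\bar{v}\in\hat{\partial}f(\bar{x})$ means exactly $(\bar{v},-1)\in\hat{N}_{\epi f}(\bar{x},f(\bar{x}))$. The subset of $Q$ to feed into Proposition~\ref{prop:prox} is $\tilde{M}:=\epi(f+\delta_M)=\{(x,r):x\in M,\ r\geq f(x)\}\subset\epi f$, chosen precisely so that its prox-regularity is the hypothesis verbatim.

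The only two facts to verify are that $\tilde{M}$ is identifiable relative to $\epi f$ at $(\bar{x},f(\bar{x}))$ for $(\bar{v},-1)$, and that $\tilde{M}$ is prox-regular there. The prox-regularity is immediate: by the definition of prox-regularity of functions, $\epi(f+\delta_M)$ being prox-regular at $(\bar{x},f(\bar{x}))$ for $(\bar{v},-1)$ is precisely the assumption that $f+\delta_M$ is prox-regular at $\bar{x}$ for $\bar{v}$. For the identifiability, I would not argue about epigraphical normals directly; instead I would route through Proposition~\ref{prop:coherence}. Assuming without loss of generality that $M\subset\dom f$ (replacing $M$ by $M\cap\dom f$ changes neither its identifiability nor the sets $\gph f\big|_M$ and $\epi(f+\delta_M)$), Proposition~\ref{prop:coherence} turns identifiability of $M$ relative to $f$ into identifiability of $\gph f\big|_M$ relative to $\epi f$ at $(\bar{x},f(\bar{x}))$ for $(\bar{v},-1)$. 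Since $\gph f\big|_M\subset\tilde{M}$ and identifiability is manifestly inherited by any larger subset (if a sequence eventually lies in the smaller set, it eventually lies in the larger one), the set $\tilde{M}$ is identifiable relative to $\epi f$ at $(\bar{x},f(\bar{x}))$ for $(\bar{v},-1)$ as well.

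With both hypotheses of Proposition~\ref{prop:prox} in hand, I conclude that $\epi f$ is prox-regular at $(\bar{x},f(\bar{x}))$ for $(\bar{v},-1)$, which by definition is exactly the assertion that $f$ is prox-regular at $\bar{x}$ for $\bar{v}$.

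The step I expect to require the most care is the pairing of the two auxiliary sets: the prox-regularity assumption is naturally about the epigraph $\epi(f+\delta_M)$, whereas Proposition~\ref{prop:coherence} delivers identifiability only for the graph $\gph f\big|_M$, and these are different subsets of $\epi f$. The resolution is to take $\tilde{M}=\epi(f+\delta_M)$ at the set level and to transport identifiability upward from the graph via the inclusion $\gph f\big|_M\subset\tilde{M}$ together with monotonicity of identifiability under enlargement. This device avoids having to analyze limiting normals to $\epi f$ at points lying strictly above the graph, where the naive correspondence between epigraphical normals and subgradients can fail.
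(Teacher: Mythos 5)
Your proof is correct and takes essentially the paper's intended route: Section~\ref{sec:func} omits the proofs precisely because, as the paper says, the functional statements follow ``in a standard way by appealing to Proposition~\ref{prop:coherence}'' together with the set versions, and your argument is exactly that reduction to Proposition~\ref{prop:prox} with $Q=\epi f$ and $\tilde{M}=\epi(f+\delta_M)$. Your one nonobvious device --- transporting identifiability from $\gph f\big|_{M}$ up to $\epi(f+\delta_M)$ via monotonicity under enlargement, after harmlessly replacing $M$ by $M\cap\dom f$ --- is a valid and careful way to fill in the gap between what Proposition~\ref{prop:coherence} delivers and what the prox-regularity hypothesis refers to.
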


In stating the next results, it will be convenient to define ``subjets'' associated to each subdifferential. Namely, for a function $f\colon\R^n\to\overline{\R}$, we define the {\em limiting subjet} to be the set
$$[\partial f]:=\{(x,f(x),v)\in\R^n\times\R\times\R^n: v\in\partial f(x)\}.$$
Subjets corresponding to the other subdifferentials are defined analogously. These objects are useful for concisely describing variational properties of functions that are not necessarily subdifferentially continuous. On the other hand if $f$ is subdifferentially continuous, as is often the case, then in the results that follow we may simply use subdifferential graphs, instead of subjets.   

\begin{prop}[Reduction I]
Consider a lsc function $f\colon\R^n\to\overline{\R}$ and a set $M\subset\R^n$ so that $f+\delta_M$ is prox-regular at a point $\bar{x}$ for $\bar{v}\in\hat{\partial} f(\bar{x})$. Then $M$ is identifiable at $\bar{x}$ for $\bar{v}$ if and only if $$[\partial f]=[\partial (f+\delta_M)] ~\textrm{ locally around } (\bar{x},f(\bar{x}),\bar{v}).$$
\end{prop}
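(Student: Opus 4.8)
The plan is to lift the entire statement to the level of epigraphs and invoke the set-level result, Proposition~\ref{prop:even}, exactly as advertised at the start of the section. I would set $Q := \epi f$ and $M' := \epi(f+\delta_M)$. Since $f+\delta_M \geq f$ we have $M' \subset Q$, and $Q$ is closed because $f$ is lsc. Under the epigraphical dictionary $v \in \partial g(x) \Leftrightarrow (v,-1) \in N_{\epi g}(x,g(x))$, the hypotheses convert directly: the assumption $\bar{v} \in \hat{\partial} f(\bar{x})$ is precisely $(\bar{v},-1) \in \hat{N}_{\epi f}(\bar{x},f(\bar{x}))$, and prox-regularity of $f+\delta_M$ at $\bar{x}$ for $\bar{v}$ is, by definition, prox-regularity of $M' = \epi(f+\delta_M)$ at $(\bar{x},f(\bar{x}))$ for $(\bar{v},-1)$. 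Thus all the standing hypotheses of Proposition~\ref{prop:even} are met for $Q$ and $M'$.

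Next I would handle the identifiability correspondence. By Proposition~\ref{prop:coherence}, $M$ is identifiable relative to $f$ at $\bar{x}$ for $\bar{v}$ if and only if $\gph f|_M$ is identifiable relative to $\epi f$ at $(\bar{x},f(\bar{x}))$ for $(\bar{v},-1)$. I would then observe that for the purpose of identifiability the set $\gph f|_M$ may be replaced by $M' = \epi(f+\delta_M)$: any sequence in $\gph N_{\epi f}$ converging to $((\bar{x},f(\bar{x})),(\bar{v},-1))$ consists of pairs whose normal has vertical component near $-1$, hence nonzero, which forces each base point onto the graph of $f$ (that is, $r_i = f(x_i)$); for such base points, membership in $\epi(f+\delta_M)$ and membership in $\gph f|_M$ both reduce to the single condition $x_i \in M$. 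Hence $M'$ is identifiable relative to $\epi f$ at $(\bar{x},f(\bar{x}))$ for $(\bar{v},-1)$ exactly when $M$ is identifiable relative to $f$. Proposition~\ref{prop:even} now applies and yields that $M'$ is identifiable if and only if $\gph N_{\epi f} = \gph N_{M'}$ locally around $((\bar{x},f(\bar{x})),(\bar{v},-1))$.

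It remains to convert this normal-cone-graph equality into the subjet equality, and this is the step that needs the most care. I would show that, locally around $((\bar{x},f(\bar{x})),(\bar{v},-1))$, the equality $\gph N_{\epi f} = \gph N_{\epi(f+\delta_M)}$ is equivalent to $[\partial f] = [\partial(f+\delta_M)]$ locally around $(\bar{x},f(\bar{x}),\bar{v})$. The forward direction is simply restriction to the slice whose vertical coordinate equals $-1$. For the converse, given a pair $((x,r),(w,s)) \in \gph N_{\epi f}$ near the reference point, the component $s$ is near $-1$, in particular $s < 0$; this forces $r = f(x)$, and rescaling the normal by $1/(-s) > 0$ produces $(-w/s,-1) \in N_{\epi f}(x,f(x))$, that is $(x,f(x),-w/s) \in [\partial f]$, which lies in the neighborhood of $(\bar{x},f(\bar{x}),\bar{v})$ where the two subjets coincide; rescaling back by $-s$ then gives $(w,s) \in N_{\epi(f+\delta_M)}(x,f(x))$, and the symmetric argument handles the reverse containment. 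Since no horizon normals (those with $s=0$) occur in this neighborhood, the rescaling is valid throughout. Combining this equivalence with the output of Proposition~\ref{prop:even} yields the claim. The only genuine subtlety is precisely this last translation between a statement about full normal cones and one about the single-slice subjets $[\partial f]$ and $[\partial(f+\delta_M)]$; everything else is a routine unwinding of the epigraphical definitions.
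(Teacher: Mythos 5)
Your high-level route --- lift everything to epigraphs, set $Q:=\epi f$ and $M':=\epi(f+\delta_M)$, and invoke Proposition~\ref{prop:even} --- is exactly the reduction the paper intends (the paper omits the proof, remarking that Section~\ref{sec:func} follows from the set results via Proposition~\ref{prop:coherence}), and your verification of the standing hypotheses of Proposition~\ref{prop:even} is correct. The genuine gap is the lemma you invoke twice to translate between the epigraphical and functional statements: that a pair $\big((x,r),(w,s)\big)\in\gph N_{\epi f}$ with $s$ near $-1$ forces $r=f(x)$. This is true for \emph{proximal} and \emph{Fr\'{e}chet} normals, but false for \emph{limiting} normals, because the limiting normal cone to an epigraph is not $f$-attentive: a normal with negative vertical component at $(x,r)$ may arise as a limit of graph normals at points $x^k\to x$ with $f(x^k)\to r>f(x)$. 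Concretely, take $f\colon\R\to\R$ with $f(x)=0$ for all $x$ except $f(1/j)=-1/j^{2}$ for $j=1,2,\ldots$; then $f$ is lsc and $\bar{v}=0\in\hat{\partial}f(0)$, and since $f\equiv 0$ on a neighborhood of every $x\notin\{0\}\cup\{1/k\}$ we get $(0,-1)\in\hat{N}_{\epi f}(x,0)$ there, whence, letting $x\to 1/j$, $(0,-1)\in N_{\epi f}(1/j,0)$ even though $0>f(1/j)$. The ``phantom'' pairs $\big((1/j,0),(0,-1)\big)$ lie in $\gph N_{\epi f}$ and converge to $\big((\bar{x},f(\bar{x})),(\bar{v},-1)\big)$, so they occur arbitrarily close to your reference point; your claim that the vertical component forces the base onto the graph is therefore false precisely where you need it.

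The two directions that rest on this lemma are (i) the implication from functional identifiability of $M$ to identifiability of $\epi(f+\delta_M)$ relative to $\epi f$ (the reverse implication is fine, since there you only feed in graph-based sequences), and (ii) the implication from the subjet equality $[\partial f]=[\partial(f+\delta_M)]$ to the full equality $\gph N_{\epi f}=\gph N_{\epi(f+\delta_M)}$ (again, the restriction direction is fine). In both cases one must control phantom points above the graph, and your argument asserts they do not exist rather than controlling them --- this is exactly why the paper formulates Section~\ref{sec:func} in terms of the $f$-attentive subjets $[\partial f]$ rather than epigraphical normal graphs. A correct repair works attentively throughout: prove the inclusion $[\partial f]\subset[\partial(f+\delta_M)]$ near the reference directly (a limiting subgradient is an $f$-attentive limit of Fr\'{e}chet subgradients at points $y_k$ with $f(y_k)\to f(x)$; identifiability places $y_k\in M$, where Fr\'{e}chet subgradients of $f$ are Fr\'{e}chet subgradients of $f+\delta_M$, and lower semicontinuity of $f+\delta_M$ keeps the limit attentive and puts $x\in M$), and obtain the reverse inclusion by rerunning the proximal-projection argument from the proof of Proposition~\ref{prop:even} on $\epi f$: the normals constructed there at projection points are \emph{proximal} normals, for which a vertical component near $-1$ genuinely does force the base point onto the graph with convergent values, so the rescaling and the appeal to functional identifiability become legitimate at exactly that stage.
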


\begin{prop}[Reduction II]
Consider a lsc function $f\colon\R^n\to\overline{\R}$, a point $\bar{x}\in\R^n$, and a subgradient $\bar{v}\in \partial f(\bar{x})$. Suppose $$[\partial^P f]=[\partial f] \textrm{ locally around } (\bar{x},f(\bar{x}),\bar{v}),$$ and consider a set $M:=(\partial f)^{-1}(V)\cap \{x:|f(x)-f(\bar{x})|<\epsilon\}$, where $V$ is a convex, open neighborhood of $\bar{v}$ and $\epsilon >0$ is a real number.
Then the equation 
$$\gph [\partial f]=\gph [\partial (f+\delta_M)] \textrm{ holds locally around } (\bar{x},f(\bar{x}),\bar{v}).$$
\end{prop}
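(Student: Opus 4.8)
The plan is to reduce the statement to its set-theoretic counterpart, Proposition~\ref{prop:red2}, applied to the epigraph, and then translate everything back through the epigraphical dictionary of Proposition~\ref{prop:coherence}. Throughout, write $Q:=\epi f$ (closed, since $f$ is lsc), and set $\hat{x}:=(\bar{x},f(\bar{x}))$ and $\hat{v}:=(\bar{v},-1)$, so that $\hat{v}\in N_{Q}(\hat{x})$ by Definition~\ref{defn:subdiff}. The idea is that the subjet equalities in the statement are exactly the $(v,-1)$-slices, over graph points, of the corresponding equalities of epigraphical normal-cone graphs.

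First I would translate the hypothesis. Near $\hat{v}=(\bar{v},-1)$ the last coordinate of every limiting normal to $Q$ is bounded away from $0$, so by positive homogeneity of the normal cone each such normal is a positive multiple of a vector $(v,-1)$, with $v$ a subgradient of $f$; the same holds for proximal normals and the proximal subdifferential by Definition~\ref{defn:subdiff}. Consequently the assumption $[\partial^P f]=[\partial f]$ locally around $(\bar{x},f(\bar{x}),\bar{v})$ is precisely the assertion that $\gph N^{P}_{Q}=\gph N_{Q}$ locally around $(\hat{x},\hat{v})$, which is exactly the hypothesis demanded by Proposition~\ref{prop:red2}.

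Next I would exhibit $M$ as a normal-cone preimage on the epigraph side. Set
$$\tilde{V}:=\{(w,\beta)\in\R^n\times\R:\beta<0,\ -\beta^{-1}w\in V\}.$$
This $\tilde{V}$ is a convex open neighbourhood of $\hat{v}$: it is the preimage of the convex open set $V$ under the continuous map $(w,\beta)\mapsto-\beta^{-1}w$ on the half-space $\{\beta<0\}$, and convexity follows because a convex combination of points $(w_i,\beta_i)$ projects to a convex combination of the points $-\beta_i^{-1}w_i\in V$. For a graph point $(x,f(x))$ one checks $N_{Q}(x,f(x))\cap\tilde{V}\neq\emptyset$ exactly when $\partial f(x)\cap V\neq\emptyset$, i.e. $x\in(\partial f)^{-1}(V)$; and localizing around $\hat{x}$ keeps the vertical coordinate within $\epsilon$ of $f(\bar{x})$, so the constraint $|f(x)-f(\bar{x})|<\epsilon$ is automatic. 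Hence $N_{Q}^{-1}(\tilde{V})=\gph f\big|_{M}$ locally around $\hat{x}$, and Proposition~\ref{prop:red2} yields $\gph N_{Q}=\gph N_{N_{Q}^{-1}(\tilde{V})}$ locally around $(\hat{x},\hat{v})$.

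Finally I would pass back to subjets. Near $\hat{v}$, whose last coordinate is nonzero, the normal cones of $\gph f\big|_{M}$ and of $\epi(f+\delta_M)$ agree: these sets share the lower boundary $\gph f\big|_{M}$, and the vectors distinguishing them are either horizontal (horizon directions) or point into the region strictly above the graph, neither of which contributes normals with last coordinate near $-1$. Intersecting the resulting equality $\gph N_{Q}=\gph N_{\epi(f+\delta_M)}$ with the slice of normals of the form $(v,-1)$ over graph points recovers $[\partial f]=[\partial(f+\delta_M)]$ locally around $(\bar{x},f(\bar{x}),\bar{v})$. I expect the main obstacle to be exactly this last translation: one must verify that, near the genuine subgradient direction $(\bar{v},-1)$, horizon subgradients and the portion of the restricted epigraph lying strictly above the graph are irrelevant, so that the set-level identity transfers faithfully to the level of subjets — and in particular that the possible appearance of downward limiting normals at above-graph points (a pathology of merely lsc $f$) does not spoil the preimage identity in the third paragraph. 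Working with subjets rather than subdifferential graphs is precisely what makes this legitimate without assuming $f$ subdifferentially continuous.
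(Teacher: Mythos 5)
Your overall plan---pass to $Q:=\epi f$ and cite Proposition~\ref{prop:red2}---is the natural one, but the hypothesis translation in your second paragraph is false for merely lsc $f$, and this is precisely where the above-graph pathology you flag at the very end actually bites (not merely in the final translation back). The subjet equality $[\partial^P f]=[\partial f]$ near $(\bar{x},f(\bar{x}),\bar{v})$ constrains normals to $\epi f$ only at \emph{graph} points, whereas the hypothesis of Proposition~\ref{prop:red2}, namely $\gph N^P_Q=\gph N_Q$ near $(\hat{x},\hat{v})$, also constrains limiting normals with last coordinate near $-1$ at points $(x,\alpha)$ with $\alpha>f(x)$. At such a point no proximal normal with negative last coordinate can exist (the vertical segment below it stays in the epigraph, so the point cannot be its own projection foot), yet limiting ones can. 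Concretely, take $f\colon\R\to\R$ with $f(x)=0$ for $x\le 0$ and $f(x)=1/i$ for $x\in(1/(i+1),\,1/i]$, with $\bar{x}=0$, $\bar{v}=0$. A direct check gives $[\partial^P f]=[\partial f]$ on a neighborhood of $(0,0,0)$: at every $x$ near $0$ with value near $0$, the proximal and limiting subdifferentials agree near $v=0$ (the discrepancy $\partial f(0)=[0,\infty)\ne[0,1]=\partial_P f(0)$ lives only at subgradients bounded away from $\bar{v}=0$). Yet at the above-graph corners $p_i:=(1/(i+1),\,1/i)\to(0,0)=\hat{x}$ the vector $(0,-1)=\hat{v}$ is a limiting normal to $\epi f$ (limit of the proximal normals $(0,-1)$ at the graph points $(y,1/i)$ as $y\downarrow 1/(i+1)$) but not a proximal normal. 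Hence $\gph N^P_{\epi f}\ne\gph N_{\epi f}$ in \emph{every} neighborhood of $(\hat{x},\hat{v})$, and Proposition~\ref{prop:red2} is simply not applicable. The same points defeat your third paragraph: $p_i\in N_{\epi f}^{-1}(\tilde{V})\setminus \gph f\big|_M$ (you verified the preimage identity only over graph points, but $N_{\epi f}^{-1}(\tilde{V})$ is taken over all of $\epi f$), so the identity $N_{\epi f}^{-1}(\tilde{V})=\gph f\big|_M$ near $\hat{x}$ fails as well.

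The paper omits the proof of this proposition, indicating only that the functional results follow ``in a standard way'' via Proposition~\ref{prop:coherence}; your example shows that for this particular statement a bare citation of Proposition~\ref{prop:red2} cannot be the route, precisely because the subjet hypothesis is strictly weaker than its epigraphical counterpart. What is needed is a subjet-level rerun of the argument: adapt Lemma~\ref{lem:acc} and the segment argument of Proposition~\ref{prop:red2} while tracking function values. The saving grace is that the accessibility construction only ever produces proximal normals as feet of projections $P_{\epi f}$ taken in directions with last coordinate near $-1$; such feet are automatically graph points, and the estimate $r_i|\bar{x}-x_i|\to 0$ from the lemma forces the triples $(x_i,f(x_i),v_i)$ to converge to $(\bar{x},f(\bar{x}),\bar{v})$, which is exactly the form of data the subjet hypothesis $[\partial^P f]=[\partial f]$ can be tested against. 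With the lemma restated in that form, your convex-segment argument goes through; as written, however, the reduction has a genuine gap at its entry point.
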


\begin{prop}[Prox-regularity and local minimality] {\ }{\\}
Consider a lsc function $f\colon\R^n\to\overline{\R}$ and a set $M\subset\R^n$ that is a locally minimal identifiable set at $\bar{x}$ for $\bar{v}\in \hat{\partial} f(\bar{x})$. Then $f$ is prox-regular at $\bar{x}$ for $\bar{v}$ if and only if $f+\delta_M$ is prox-regular at $\bar{x}$ for $\bar{v}$.
\end{prop}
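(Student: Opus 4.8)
The plan is to establish the two implications separately, mirroring the proof of the set-theoretic counterpart (the Proposition ``Prox-regularity under local minimality'' in Section~\ref{sec:geo}), but now running the argument through the functional reduction results recorded earlier in this section rather than through epigraphical geometry by hand. The direction asserting that prox-regularity of $f+\delta_M$ forces prox-regularity of $f$ is immediate: since $M$ is a locally minimal identifiable set at $\bar{x}$ for $\bar{v}$, it is in particular identifiable, and $\bar{v}\in\hat{\partial} f(\bar{x})$, so the functional ``Prox-regularity of identifiable sets'' proposition stated above applies verbatim and yields that $f$ is prox-regular at $\bar{x}$ for $\bar{v}$.

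For the reverse direction I would assume $f$ is prox-regular at $\bar{x}$ for $\bar{v}$ and proceed in three moves. First, prox-regularity of $f$ means that $\epi f$ is prox-regular at $(\bar{x},f(\bar{x}))$ for $(\bar{v},-1)$, whence $\gph N^P_{\sepi f}=\gph N_{\sepi f}$ locally around that point; read off at the level of subjets, this says $[\partial^P f]=[\partial f]$ locally around $(\bar{x},f(\bar{x}),\bar{v})$. Second, since $M$ is locally minimal identifiable, Proposition~\ref{prop:charsub} furnishes a convex open neighborhood $V$ of $\bar{v}$ and a real $\epsilon>0$ with $M=(\partial f)^{-1}(V)\cap\{x:|f(x)-f(\bar{x})|<\epsilon\}$ locally around $\bar{x}$. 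These are exactly the hypotheses of the functional ``Reduction~II'' proposition, which then delivers the identification $[\partial f]=[\partial (f+\delta_M)]$ locally around $(\bar{x},f(\bar{x}),\bar{v})$.

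It remains to convert this equality of subjets into prox-regularity of $f+\delta_M$, and here I would invoke the monotonicity characterization of prox-regularity (Proposition~\ref{prop:prox_char}) applied to the two epigraphs. Prox-regularity of $f$ at $\bar{x}$ for $\bar{v}$ supplies a modulus $r>0$ and a monotone localization of $\gph N_{\sepi f}$ around $\big((\bar{x},f(\bar{x})),(\bar{v},-1)\big)$, together with proximal-normality of $(\bar{v},-1)$. Because normals near $(\bar{v},-1)$ have final coordinate bounded away from $0$ and hence, after normalization, correspond precisely to subgradients, the local equality of subjets transports this monotone localization and the proximal-normality from $\epi f$ to $\epi(f+\delta_M)$; local closedness of $\epi(f+\delta_M)$ comes from lower semicontinuity of $f$ together with local closedness of $M$ (the functional analogue of Proposition~\ref{prop:closed}). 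Proposition~\ref{prop:prox_char} then certifies that $\epi(f+\delta_M)$ is prox-regular at $(\bar{x},f(\bar{x}))$ for $(\bar{v},-1)$, i.e.\ that $f+\delta_M$ is prox-regular at $\bar{x}$ for $\bar{v}$.

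The step I expect to be the main obstacle is this final transfer: one must check carefully that equality of the limiting subjets in a neighborhood of $(\bar{x},f(\bar{x}),\bar{v})$ genuinely promotes to equality of the graphs of the normal cone mappings to the two epigraphs in a full neighborhood of $\big((\bar{x},f(\bar{x})),(\bar{v},-1)\big)$ --- in particular handling the positive scaling of normals whose final coordinate is near $-1$ --- so that the monotonicity modulus and the proximal-normality pass through cleanly. Everything else is a routine application of the already-established functional reduction machinery.
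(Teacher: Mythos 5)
Your proposal is correct and takes essentially the same route the paper intends: the paper omits this proof as a ``standard'' epigraphical transposition (via Proposition~\ref{prop:coherence}) of the set-level result in Section~\ref{sec:geo}, whose proof is exactly your scheme --- the functional form of Proposition~\ref{prop:prox} for one direction, and Proposition~\ref{prop:charsub} plus Reduction~II plus the monotonicity characterization of Proposition~\ref{prop:prox_char} for the other. The transfer issue you flag at the end is the only content hidden in the word ``standard,'' and your handling of it is right: near $(\bar{v},-1)$ all normals to the epigraphs have last coordinate bounded away from zero (points strictly above the graph carry only horizontal normals, which the neighborhood excludes), so the local equality of subjets rescales to local equality of the normal-cone graphs of the two epigraphs, letting the proximal normality and the monotonicity modulus pass through.
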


The following is a standard generalization of critical cones to the functional setting.
\begin{defn}[Critical cones]
{\rm
For a function $f\colon\R^n\to\overline{\R}$ that is Clarke regular at a point $\bar{x}$, the {\em critical cone} of $f$ at $\bar{x}$ for $\bar{v}\in\partial f(\bar{x})$ is the set 
$$K_f(\bar{x},\bar{v}):=N_{\partial f(\bar{x})}(\bar{v}).$$ 
}
\end{defn}

\begin{defn}[Smooth derivability]
{\rm
A function $f\colon\R^n\to\overline{\R}$ is said to be {\em smoothly derivable} at a point $\bar{x}\in\R^n$ if $\gph f$ is smoothly derivable at $(\bar{x},f(\bar{x})$. 
}
\end{defn}

\begin{prop}[Critical cones more generally]
Consider a function $f\colon\R^n\to\overline{\R}$ that is Clarke regular at a point $\bar{x}$, and let $\bar{v}\in \partial f(\bar{x})$. Consider a nested sequence of open neighborhoods $V_i$ of $\bar{v}$ satisfying $\bigcap_{i=1}^{\infty} V_i=\{\bar{v}\}$ and a sequence of reals $\epsilon_i\downarrow 0$, and the corresponding preimages $M_i:=\partial f^{-1}(V_i)\cap \{x:|f(x)-f(\bar{x})|<\epsilon\}$. Assume that each $f+\delta_{M_i}$ is smoothly derivable at $\bar{x}$. Then the inclusion 
\begin{equation}
K_f(\bar{x},\bar{v})\supset \clco \bigcap_{i=1}^{\infty} T_{M_i}(\bar{x}),
\end{equation}
holds.
Assume in addition that each $f+\delta_{M_i}$ is prox-regular at $\bar{x}$ for $\bar{v}$ and that the formula 
\begin{equation}
\clco\bigcap_{i=1}^{\infty} T_{M_i}(\bar{x})=\bigcap_{i=1}^{\infty} \clco T_{M_i}(\bar{x}),
\end{equation}
holds. Then each $M_i$ is an identifiable set at $\bar{x}$ for $\bar{v}$ and we have 
$$K_f(\bar{x},\bar{v})=\clco\bigcap_{i=1}^{\infty} T_{M_i}(\bar{x}).$$
\end{prop}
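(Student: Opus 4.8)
The plan is to mirror the proof of the set-valued ``critical cones more generally'' statement of Section~\ref{sec:crit} by lifting everything to the epigraph, exactly as advertised at the start of Section~\ref{sec:func}. Write $\bar{p}:=(\bar{x},f(\bar{x}))$ and $\bar{w}:=(\bar{v},-1)$. Clarke regularity of $f$ at $\bar{x}$ is by definition Clarke regularity of $Q:=\epi f$ at $\bar{p}$, and $\bar{v}\in\partial f(\bar{x})$ gives $\bar{w}\in N_{Q}(\bar{p})$. I would run the purely geometric argument of Section~\ref{sec:crit} on $Q$ at $\bar{p}$ for $\bar{w}$, and then transport its conclusion back to $\R^n$ along the canonical projection $\pi\colon\R^n\times\R\to\R^n$, $(x,r)\mapsto x$.

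First I would translate the hypotheses. Put $\tilde{M}_i:=\gph f\big|_{M_i}$; since $\gph(f+\delta_{M_i})=\tilde{M}_i$, the assumed smooth derivability (resp.\ prox-regularity at $\bar{p}$ for $\bar{w}$) of $f+\delta_{M_i}$ is exactly smooth derivability (resp.\ prox-regularity) of the set $\tilde{M}_i$. By Proposition~\ref{prop:coherence} together with characterization (2) of Proposition~\ref{prop:charsub}, the sets $\tilde{M}_i$ are, locally around $\bar{p}$, the epigraphical normal-cone preimages cut out by a nested family of neighborhoods of $\bar{w}$ shrinking to $\bar{w}$, so they are of the form to which the geometric argument applies; the necessity half of that argument produces subgradients $v\in\partial f$ converging to $\bar{v}$, i.e.\ genuine limiting normals $(v,-1)\to\bar{w}$, consistent with the limiting subdifferential used to define $M_i$. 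Proposition~\ref{prop:coherence} also converts the identifiability conclusion for $\tilde{M}_i$ relative to $\epi f$ back into identifiability of $M_i$ relative to $f$, and vice versa.

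It then remains to push the cone identities through $\pi$. The decisive observation is that, because $f$ is Clarke regular at $\bar{x}$, a direction $(u,\beta)$ lies in $K_{\epi f}(\bar{p},\bar{w})=T_{\epi f}(\bar{p})\cap\bar{w}^{\perp}$ precisely when $u\in K_f(\bar{x},\bar{v})$ and $\beta=\langle\bar{v},u\rangle$; hence $\pi$ restricts to a linear isomorphism of $K_{\epi f}(\bar{p},\bar{w})$ onto $K_f(\bar{x},\bar{v})$. Since each $\tilde{M}_i$ is the graph of $f$ over $M_i$ with $f$ continuous on $M_i$ (necessity) and therefore has no vertical tangents, $\pi$ likewise carries $T_{\tilde{M}_i}(\bar{p})$ injectively onto $T_{M_i}(\bar{x})$. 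Being linear, continuous, and injective on these cones, $\pi$ commutes with both $\bigcap_i$ and $\clco$, so applying it to the set-valued conclusion $K_{\epi f}(\bar{p},\bar{w})=\clco\bigcap_i T_{\tilde{M}_i}(\bar{p})$ (and, for the first assertion, to the corresponding inclusion) yields the claimed identities for $f$.

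The main obstacle is this last translation: checking that $\pi$ truly intertwines the tangent-cone passage, the intersection over $i$, and the closed convex hull without distorting the cones. The containment $\pi\big(T_{\tilde{M}_i}(\bar{p})\big)\subset T_{M_i}(\bar{x})$ is automatic, but the reverse inclusion requires lifting each tangent direction $u\in T_{M_i}(\bar{x})$ to a tangent $(u,\beta)$ of the graph $\tilde{M}_i$; this is where continuity of $f$ on $M_i$ and the smooth derivability of $\tilde{M}_i$ (which rules out vertical tangent directions and so gives injectivity of $\pi$) are essential, and the same facts are what allow $\clco$ and $\bigcap_i$ to pass through the projection.
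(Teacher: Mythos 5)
Your overall strategy---lift to $Q=\epi f$ at $\bar{p}:=(\bar{x},f(\bar{x}))$ for $\bar{w}:=(\bar{v},-1)$ via Proposition~\ref{prop:coherence}, invoke the set-valued ``critical cones more generally'' proposition of Section~\ref{sec:crit}, and project back along $\pi$---is exactly the route the paper intends (the paper omits the proof, saying the functional statements follow ``in a standard way'' from Proposition~\ref{prop:coherence}), and your claim that $\pi$ maps $K_{\epi f}(\bar{p},\bar{w})$ isomorphically onto $K_f(\bar{x},\bar{v})$ is correct under Clarke regularity, since then $\partial^{\infty}f(\bar{x})$ is the recession cone of $\partial f(\bar{x})$. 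But two of your translation steps have genuine gaps. The smaller one: prox-regularity of the function $f+\delta_{M_i}$ is defined in the paper through its \emph{epigraph}, not its graph, so your assertion ``since $\gph(f+\delta_{M_i})=\tilde{M}_i$, the assumed \dots prox-regularity \dots is exactly prox-regularity of the set $\tilde{M}_i$'' is false as stated. The set-valued proposition applied to $\epi f$ needs prox-regularity of the graph-sets $\tilde{M}_i$ (these are the epigraphical normal-cone preimages), and passing from the epigraphical hypothesis to that requires a genuine argument---for instance, checking via Proposition~\ref{prop:prox_char} that proximal normals with negative last component, together with the relevant localizations, agree for $\gph f\big|_{M_i}$ and $\epi(f+\delta_{M_i})$. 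This is true near $\bar{w}=(\bar{v},-1)$ but not free.

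The more serious gap is your claim that $\pi$ carries $T_{\tilde{M}_i}(\bar{p})$ \emph{onto} $T_{M_i}(\bar{x})$ because $f$ is continuous on $M_i$. Continuity gives $f(x_j)\to f(\bar{x})$ but says nothing about the difference quotients $(f(x_j)-f(\bar{x}))/\tau_j$, which may be unbounded; in that case a tangent $u\in T_{M_i}(\bar{x})$ has no lift $(u,\beta)\in T_{\tilde{M}_i}(\bar{p})$, vertical tangents $(0,\pm 1)$ can occur, and smooth derivability of the graph does not exclude them (a ${\bf C}^1$ path in $\R^{n+1}$ may be vertical). This breaks your proof of the first assertion: projecting the set-valued inclusion gives only $K_f(\bar{x},\bar{v})\supset \pi\big(\clco\bigcap_i T_{\tilde{M}_i}(\bar{p})\big)$, which without surjectivity of $\pi$ on tangent cones can be strictly smaller than $\clco\bigcap_i T_{M_i}(\bar{x})$---and in the first assertion no prox-regularity is available to repair this. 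For the equality assertion the lift \emph{can} be salvaged, but by prox-regularity rather than continuity: the subgradient inequality $f(y)\ge f(x)+\langle v,y-x\rangle-\frac{r}{2}|y-x|^2$, valid for pairs $(x,v)$ in the localized subdifferential graph near $(\bar{x},\bar{v})$ (Proposition~\ref{prop:prox_char} read epigraphically), applied once at $(x_j,v_j)$ with $y=\bar{x}$ and once at $(\bar{x},\bar{v})$ with $y=x_j$, squeezes the difference quotients and forces $(f(x_j)-f(\bar{x}))/\tau_j\to\langle\bar{v},u\rangle$; this yields both the lift and the injectivity of $\pi$ on $T_{\tilde{M}_i}(\bar{p})$. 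You should run that estimate explicitly in place of the appeal to continuity, and note that your argument as written does not establish the first (derivability-only) inclusion.
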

In particular, the following is true.

\begin{prop}[Critical cones as tangential approximations]{\ }{\\}
Consider a function $f\colon\R^n\to\overline{\R}$ that is Clarke regular at a point $\bar{x}$, and let $M$ be a locally minimal identifiable set at $\bar{x}$ for $\bar{v}\in \partial f(\bar{x})$. Suppose furthermore that $f+\delta_M$ is prox-regular at $\bar{x}$ for $\bar{v}$ and is smoothly derivable at $\bar{x}$. Then the equation 
$$\clco T_M(\bar{x})= K_f(\bar{x},\bar{v}),$$
holds.
\end{prop}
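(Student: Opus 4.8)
The plan is to reduce to the set version, Proposition~\ref{prop:tang}, applied to the epigraph $\epi f$ at the point $(\bar{x},f(\bar{x}))$ for the normal vector $(\bar{v},-1)$, and then to descend the resulting identity from $\R^{n+1}$ back to $\R^n$. First I would verify the hypotheses of Proposition~\ref{prop:tang} for the pair $\big(\epi f,\;\gph f\big|_M\big)$. Clarke regularity of $f$ at $\bar{x}$ is by definition Clarke regularity of $\epi f$ at $(\bar{x},f(\bar{x}))$, and it yields $\partial f(\bar{x})=\hat{\partial}f(\bar{x})$, so that $(\bar{v},-1)\in\hat{N}_{\epi f}(\bar{x},f(\bar{x}))$. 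By epigraphical coherence, Proposition~\ref{prop:coherence}, the set $\gph f\big|_M$ is a locally minimal identifiable set relative to $\epi f$ at $(\bar{x},f(\bar{x}))$ for $(\bar{v},-1)$. Smooth derivability of $f+\delta_M$ at $\bar{x}$ is, by the very definition of smooth derivability for functions, smooth derivability of $\gph f\big|_M$ at $(\bar{x},f(\bar{x}))$; and prox-regularity of $f+\delta_M$ at $\bar{x}$ for $\bar{v}$—that is, of $\epi(f+\delta_M)$ at $(\bar{x},f(\bar{x}))$ for $(\bar{v},-1)$—carries over to prox-regularity of its lower boundary $\gph f\big|_M$ there for that normal, since for the downward direction $(\bar{v},-1)$ the relevant local geometry of $\epi(f+\delta_M)$ is supported entirely on $\gph f\big|_M$. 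Proposition~\ref{prop:tang} therefore applies and gives
$$\clco T_{\gph f\big|_M}(\bar{x},f(\bar{x}))=K_{\epi f}\big((\bar{x},f(\bar{x})),(\bar{v},-1)\big).$$

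It remains to transport this equality down to $\R^n$, which I would do through the linear isomorphism $\iota\colon w\mapsto(w,\langle\bar{v},w\rangle)$ of $\R^n$ onto the hyperplane $(\bar{v},-1)^{\perp}$. On the normal side, the standard relation between the critical cone of a Clarke regular function and that of its epigraph gives $K_{\epi f}((\bar{x},f(\bar{x})),(\bar{v},-1))=\iota\big(K_f(\bar{x},\bar{v})\big)$: writing $K_{\epi f}=T_{\epi f}(\bar{x},f(\bar{x}))\cap(\bar{v},-1)^{\perp}$ and intersecting $\epi(df(\bar{x}))$ with the hyperplane, the subgradient inequality $df(\bar{x})(w)\geq\langle\bar{v},w\rangle$ forces the fibre to the slope $\langle\bar{v},w\rangle$ exactly on $\{w:df(\bar{x})(w)=\langle\bar{v},w\rangle\}=K_f(\bar{x},\bar{v})$. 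On the tangent side I would establish the companion identity $\clco T_{\gph f\big|_M}(\bar{x},f(\bar{x}))=\iota\big(\clco T_M(\bar{x})\big)$. Applying $\iota^{-1}$ to the displayed equality then yields $\clco T_M(\bar{x})=K_f(\bar{x},\bar{v})$, as required.

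The tangent-side identity is the substantive step, and I expect the slope condition it rests on to be the main obstacle. One half is the containment $T_{\gph f\big|_M}(\bar{x},f(\bar{x}))\subset(\bar{v},-1)^{\perp}$, which I would obtain exactly as in the closing paragraph of the proof of Proposition~\ref{prop:tang}: for a ${\bf C}^1$ path $\tilde{\gamma}$ in $\gph f\big|_M$ through $(\bar{x},f(\bar{x}))$, necessity of $\gph f\big|_M$ furnishes normals $\tilde{v}_i\in N_{\epi f}(\tilde{\gamma}(t_i))$ with $\tilde{v}_i\to(\bar{v},-1)$, Proposition~\ref{prop:even} upgrades these to Fr\'{e}chet normals of $\gph f\big|_M$, whence $\langle\tilde{v}_i,\dot{\tilde{\gamma}}(t_i)\rangle=0$ and, in the limit, $\langle(\bar{v},-1),\dot{\tilde{\gamma}}(0)\rangle=0$; this is precisely the assertion that every graph tangent has slope $\langle\bar{v},\cdot\rangle$. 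The genuinely delicate point is surjectivity of the projection $(w,\rho)\mapsto w$ from $T_{\gph f\big|_M}(\bar{x},f(\bar{x}))$ onto $T_M(\bar{x})$: lifting a direction $w\in T_M(\bar{x})$ to a graph tangent requires controlling the difference quotients $(f(x_i)-f(\bar{x}))/\tau_i$, and it is here that smooth derivability of $\gph f\big|_M$ does the essential work, producing ${\bf C}^1$ graph paths whose projections exhaust the smoothly derivable directions of $M$ while keeping the height increments first-order controlled. Once both inclusions hold, the projection and the closed convex hull commute on the hyperplane $(\bar{v},-1)^{\perp}$, and the tangent-side identity follows.
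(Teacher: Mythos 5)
Your architecture is exactly the paper's intended one: Section~\ref{sec:func} omits this proof, saying the functional statements follow from their set counterparts ``in a standard way by appealing to Proposition~\ref{prop:coherence}'', and that is precisely your reduction to Proposition~\ref{prop:tang} applied to $\epi f$ and $\gph f\big|_M$ at $(\bar{x},f(\bar{x}))$ for $(\bar{v},-1)$, followed by descent along $\iota(w)=(w,\langle\bar{v},w\rangle)$. Your verification of Clarke regularity, identifiability via Proposition~\ref{prop:coherence}, and the normal-side identity $K_{\sepi f}\big((\bar{x},f(\bar{x})),(\bar{v},-1)\big)=\iota\big(K_f(\bar{x},\bar{v})\big)$ are all fine. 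However, the two steps you leave sketchy are where the real content sits, and both need repair. The prox-regularity transfer is a genuine lemma, not a remark: Proposition~\ref{prop:tang} requires the identifiable set itself, $\gph f\big|_M$, to be prox-regular at $(\bar{x},f(\bar{x}))$ for $(\bar{v},-1)$, while the hypothesis gives prox-regularity of $\epi(f+\delta_M)$, and the graph of a function generally carries strictly more normals than its epigraph. The statement is true, but proving it takes an argument: a Fr\'{e}chet normal $(w,\beta)$ to the graph with $\beta<0$ forces, via the Fr\'{e}chet inequality applied along steep downward sequences, a one-sided bound $g(x')\geq g(x)-C|x'-x|$ for $g:=f+\delta_M$, whence $-w/\beta$ is a Fr\'{e}chet subgradient of $g$; so near $(\bar{v},-1)$ the graph normals agree, up to positive scaling, with epigraph normals, and hypomonotonicity then transfers through Proposition~\ref{prop:prox_char} (the proximal normality of $(\bar{v},-1)$ to the graph is immediate from that to the epigraph, since the graph is a subset containing the foot point).

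The second soft spot is misattributed rather than merely unproven. Smooth derivability of $\gph f\big|_M$ cannot ``lift'' an arbitrary $w\in T_M(\bar{x})$ to a graph tangent: it only realizes tangent vectors the graph already has by ${\bf C}^1$ paths, and it is consumed as a hypothesis of Proposition~\ref{prop:tang}; it says nothing about surjectivity of the shadow map. Fortunately, no lifting is needed once the set-level conclusion $\clco T_{\gph f\big|_M}(\bar{x},f(\bar{x}))=\iota\big(K_f(\bar{x},\bar{v})\big)$ is in hand. Given $x_i\in M$ with $(x_i-\bar{x})/\tau_i\to w$, set $q_i:=(x_i,f(x_i))$, which converges to $\bar{p}:=(\bar{x},f(\bar{x}))$ by necessity of $M$; normalize $u_i:=(q_i-\bar{p})/|q_i-\bar{p}|$ and extract a cluster point $u\in T_{\gph f\big|_M}(\bar{p})\subset\iota\big(K_f(\bar{x},\bar{v})\big)$ with $|u|=1$, say $u=(z,\langle\bar{v},z\rangle)$. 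If $\tau_i/|q_i-\bar{p}|\to 0$ along the subsequence, then $z=0$ and $u=(0,\pm 1)\notin\iota\big(K_f(\bar{x},\bar{v})\big)$, a contradiction; otherwise $z$ is a positive multiple of $w$, so $w\in K_f(\bar{x},\bar{v})$ since $K_f(\bar{x},\bar{v})$ is a cone. This gives $T_M(\bar{x})\subset K_f(\bar{x},\bar{v})$, and the reverse inclusion $K_f(\bar{x},\bar{v})\subset\clco T_M(\bar{x})$ follows by projecting the displayed set-level equality, since the shadow of every graph tangent is trivially a tangent to $M$. With these two repairs your proof is complete and coincides with the route the paper prescribes.
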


\begin{defn}[Identifiable manifolds]
{\rm
Given a function $f\colon\R^n\to\overline{\R}$, a set $M\subset\R^n$ is a ${\bf C}^p$ {\em identifiable manifold} at $\bar{x}$ for $\bar{v}\in\partial f(\bar{x})$ provided that the following hold.
\begin{itemize}
\item $M$ is a ${\bf C}^p$ manifold around $\bar{x}$ and the restriction $f\big|_M$ is ${\bf C}^p$-smooth near $\bar{x}$.
\item $M$ is identifiable at $\bar{x}$ for $\bar{v}$.
\end{itemize}
}
\end{defn}

\begin{prop}
Consider a lsc function $f\colon\R^n\to\overline{\R}$ and a ${\bf C}^2$ identifiable manifold $M\subset \dom f$ at $\bar{x}$ for $\bar{v}\in\hat{\partial} f(\bar{x})$. Then $M$ is automatically a locally minimal identifiable set at $x\in M$ for $v\in N_Q(x)$, whenever the triple $(x,f(x),v)$ is near 
$(\bar{x},f(\bar{x}),\bar{v})$. 
\end{prop}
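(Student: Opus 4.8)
The plan is to deduce this functional statement directly from its geometric counterpart, Proposition~\ref{prop:aut}, by passing to the epigraph via the epigraphical coherence of Proposition~\ref{prop:coherence}. Set $Q:=\epi f$ and $\widetilde{M}:=\gph f\big|_M=\{(x,f(x)):x\in M\}$. Since $M$ is a ${\bf C}^2$-manifold around $\bar{x}$ and the restriction $f\big|_M$ is ${\bf C}^2$-smooth near $\bar{x}$, the set $\widetilde{M}$ is a ${\bf C}^2$-manifold around $(\bar{x},f(\bar{x}))$: composing a local ${\bf C}^2$ parametrization of $M$ with the map $x\mapsto(x,f(x))$ produces a local ${\bf C}^2$ embedding of $\widetilde{M}$. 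Moreover, by the very definition of the Fréchet subdifferential, $(\bar{v},-1)\in\hat{N}_{\epi f}(\bar{x},f(\bar{x}))$ precisely because $\bar{v}\in\hat{\partial} f(\bar{x})$. Proposition~\ref{prop:coherence} then shows that identifiability of $M$ (relative to $f$) at $\bar{x}$ for $\bar{v}$ is equivalent to identifiability of $\widetilde{M}$ (relative to $\epi f$) at $(\bar{x},f(\bar{x}))$ for $(\bar{v},-1)$. Hence $\widetilde{M}$ is a ${\bf C}^2$ identifiable manifold (relative to $Q$) at $(\bar{x},f(\bar{x}))$ for $(\bar{v},-1)\in\hat{N}_Q(\bar{x},f(\bar{x}))$, so the hypotheses of Proposition~\ref{prop:aut} are in force.

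Applying Proposition~\ref{prop:aut} to $Q$ and $\widetilde{M}$ yields that $\widetilde{M}$ is automatically a locally minimal identifiable set (relative to $\epi f$) at every pair $\big((x,r),(w,s)\big)\in\gph N_Q$ lying sufficiently close to $\big((\bar{x},f(\bar{x})),(\bar{v},-1)\big)$. The final step is to read this conclusion back through Proposition~\ref{prop:coherence}, restricting attention to the distinguished normals of the form $(v,-1)$. Concretely, fix $x\in M$ and $v\in\partial f(x)$ with the triple $(x,f(x),v)$ near $(\bar{x},f(\bar{x}),\bar{v})$. Then $(x,f(x))\in\widetilde{M}$, the vector $(v,-1)$ lies in $N_Q(x,f(x))$, and the pair $\big((x,f(x)),(v,-1)\big)$ is near $\big((\bar{x},f(\bar{x})),(\bar{v},-1)\big)$ exactly under the stated closeness of the triple. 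Thus $\widetilde{M}$ is locally minimal identifiable (relative to $\epi f$) at $(x,f(x))$ for $(v,-1)$, and since this point again has the graph form $\gph f\big|_M$ over the base point $(x,f(x))$, the analogous statement for locally minimal identifiable sets in Proposition~\ref{prop:coherence} translates it into the assertion that $M$ is locally minimal identifiable (relative to $f$) at $x$ for $v$, which is what we want.

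The only genuine care required lies in this last translation: one must verify that the coherence equivalence of Proposition~\ref{prop:coherence} may be invoked at each perturbed base point $(x,f(x))$, not merely at $(\bar{x},f(\bar{x}))$, and that the perturbation regime of Proposition~\ref{prop:aut} specializes consistently to the slice $\{s=-1\}$, where it coincides exactly with the hypothesis that $(x,f(x),v)$ is near $(\bar{x},f(\bar{x}),\bar{v})$. Both points are routine, since Proposition~\ref{prop:coherence} is a pointwise statement valid at every point of $\widetilde{M}$, and since the normals of the form $(v,-1)$ to $\epi f$ correspond bijectively and continuously to subgradients $v\in\partial f(x)$. A direct alternative would be to establish a functional analogue of Proposition~\ref{prop:inner_set} and combine it with characterization~(4) of Proposition~\ref{prop:charsub}, but the epigraphical reduction avoids redoing that argument and keeps the proof entirely geometric.
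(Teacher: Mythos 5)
Your proof is correct and takes precisely the route the paper intends: the paper omits the proof of this proposition, saying only that the functional results of Section~\ref{sec:func} follow from their geometric counterparts ``in a standard way by appealing to Proposition~\ref{prop:coherence},'' and your epigraphical reduction --- taking $Q=\epi f$ (closed by lower semicontinuity), checking that $\widetilde{M}=\gph f\big|_M$ is a ${\bf C}^2$ manifold, applying Proposition~\ref{prop:aut}, and translating back through Proposition~\ref{prop:coherence} on the slice of normals of the form $(v,-1)$ --- is exactly that standard argument. One cosmetic remark: the ``$v\in N_Q(x)$'' in the statement is evidently a typo for ``$v\in\partial f(x)$,'' which you correctly read past.
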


A functional version of partial smoothness is as follows. For a convex sets $K\subset\R^n$, we let $\para K$ be parallel subspace to $K$, that is $\para K=\spann \{K-x\}$, where $x$ is any point of $K$.

\begin{defn}[Directional partial smoothness]
{\rm 
Consider a lsc function $f\colon\R^n\to\overline{\R}$ and a ${\bf C}^2$-manifold $M\subset\dom f$. Then $f$
is {\em partly smooth with respect to} $M$ at $\bar{x}\in M$ {\em for} $\bar{v}\in\partial f(\bar{x})$ if
\begin{enumerate}
\item {\bf (prox-regularity)} $f$ is prox-regular at $\bar{x}$ for $\bar{v}$. 
\item {\bf (smoothness)} The restriction $f\big|_M$ is ${\bf C}^2$ smooth around $\bar{x}$. 
\item {\bf (sharpness)} $\para \hat{\partial} f =N_M(\bar{x})$.
\item {\bf (continuity)} There exists a neighborhood $V$ of $\bar{v}$, such that the mapping, $x\mapsto V\cap\partial f(x)$, when restricted to $M$, is inner-semicontinuous at $\bar{x}$.  
\end{enumerate}
}
\end{defn}

\begin{prop}[Identifiable manifolds and partial smoothness]{\ }{\\}
Consider a lsc function $f\colon\R^n\to\overline{\R}$ and a set $M\subset\R^n$, so that $M$ is a ${\bf C}^2$ manifold around $\bar{x}$ and $f$ is ${\bf C}^2$-smooth on $M$ around $\bar{x}$. Consider a subgradient $\bar{v}\in\hat{\partial} f(\bar{x})$. Then the following are equivalent.
\begin{enumerate}
\item $M$ is an identifiable manifold at $\bar{x}$ for $\bar{v}$. 
\item  We have 
$$[\partial f]=[\partial (f+\delta_M)] ~\textrm{ locally around } (\bar{x}, f(\bar{x}),\bar{v}).$$
\item
\begin{itemize}
\item $f$ is partly smooth with respect to $M$ at $\bar{x}$ for $\bar{v}$. 
\item the strong inclusion $\bar{v}\in \ri \hat{\partial} f(\bar{x})$ holds. 
\end{itemize}
\end{enumerate}
\end{prop}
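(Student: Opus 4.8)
The plan is to deduce the entire equivalence from its purely geometric counterpart, Proposition~\ref{prop:eqv2}, by passing to the epigraph. Set $Q:=\epi f$ and $\widetilde{M}:=\gph f|_M=\{(x,f(x)):x\in M\}$. Since $f$ is lsc, $Q$ is closed; since $M$ is a ${\bf C}^2$ manifold around $\bar x$ and $f|_M$ is ${\bf C}^2$ (so it extends to a ${\bf C}^2$ function $g$ near $\bar x$), the set $\widetilde M$ is a ${\bf C}^2$ manifold around $(\bar x,f(\bar x))$, cut out locally by the defining equations of $M$ together with $r-g(x)=0$. Finally $\bar v\in\hat\partial f(\bar x)$ means precisely $(\bar v,-1)\in\hat N_Q(\bar x,f(\bar x))$. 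Thus the hypotheses of Proposition~\ref{prop:eqv2} hold for $Q$, $\widetilde M$, the point $(\bar x,f(\bar x))$ and the normal $(\bar v,-1)$, and it remains to match each of the three conditions in the present statement with conditions (1), (2), (3) there.

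Conditions (1) and (2) transfer almost immediately. For (1), Proposition~\ref{prop:coherence} says $M$ is identifiable (relative to $f$) at $\bar x$ for $\bar v$ exactly when $\widetilde M$ is identifiable (relative to $Q$) at $(\bar x,f(\bar x))$ for $(\bar v,-1)$; since the manifold hypotheses are standing, this is the equivalence of the two ``identifiable manifold'' statements. For (2), the key observation is that a point $(x,r)\in Q$ carries a normal $(w,s)$ with $s=-1$ only when $r=f(x)$, and then $(w,-1)\in N_Q(x,f(x))$ iff $w\in\partial f(x)$; likewise $(w,-1)\in N_{\widetilde M}(x,f(x))$ iff $w\in\nabla g(x)+N_M(x)=\partial(f+\delta_M)(x)$. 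Hence the ``$s=-1$'' slices of $\gph N_Q$ and $\gph N_{\widetilde M}$ are exactly reparametrizations of the subjets $[\partial f]$ and $[\partial(f+\delta_M)]$. Working in a neighborhood of $(\bar v,-1)$ the vertical coordinate stays near $-1$, bounded away from $0$, so the cone structure in the normal variable lets us scale every nearby normal back to this slice; consequently $\gph N_Q=\gph N_{\widetilde M}$ locally around $((\bar x,f(\bar x)),(\bar v,-1))$ is equivalent to $[\partial f]=[\partial(f+\delta_M)]$ locally around $(\bar x,f(\bar x),\bar v)$.

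It remains to identify condition (3). Prox-regularity of $f$ at $\bar x$ for $\bar v$ is by definition prox-regularity of $Q$ at $(\bar x,f(\bar x))$ for $(\bar v,-1)$, and the ${\bf C}^2$-smoothness of $f|_M$ is a standing hypothesis, so these clauses match outright. For sharpness, I would first use Proposition~\ref{prop:help} (giving $\hat\partial f(\bar x)\subset\nabla g(\bar x)+N_M(\bar x)$, as $\hat N_M=N_M$ for a manifold) to check that every $(w,s)\in\spann\hat N_Q(\bar x,f(\bar x))$ satisfies $w+s\nabla g(\bar x)\in N_M(\bar x)$, i.e.\ lies in $N_{\widetilde M}(\bar x,f(\bar x))$; a dimension count then shows $\spann\hat N_Q=N_{\widetilde M}$ exactly when $\para\hat\partial f(\bar x)=N_M(\bar x)$. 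For the continuity clause, the $s=-1$ slice correspondence identifies the map $x\mapsto V\cap\partial f(x)$ on $M$ with $y\mapsto V'\cap N_Q(y)$ on $\widetilde M$; here continuity of $f|_M$ supplies the convergence $f(x_i)\to f(\bar x)$ of the vertical coordinate, so inner-semicontinuity transfers in both directions. Finally, viewing $\hat N_Q(\bar x,f(\bar x))$ as the cone generated by the base $\hat\partial f(\bar x)\times\{-1\}$, the standard relation between a convex base and the relative interior of the cone it spans gives $(\bar v,-1)\in\ri\hat N_Q(\bar x,f(\bar x))$ iff $\bar v\in\ri\hat\partial f(\bar x)$. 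Assembling these, condition (3) here matches condition (3) of Proposition~\ref{prop:eqv2}.

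I expect the main obstacle to be precisely the sharpness and relative-interior bookkeeping in the last paragraph: one must track carefully how the $n$-dimensional convex set $\hat\partial f(\bar x)$, sitting at height $-1$, generates the cone $\hat N_{\epi f}(\bar x,f(\bar x))$ and how the normal space of $\widetilde M$ splits as $N_M(\bar x)$ plus one vertical direction. Proposition~\ref{prop:help} together with the dimension count is what makes the span identification rigorous, and the cone-over-base picture is what forces the relative interiors to correspond; everything else is a routine transcription through Proposition~\ref{prop:coherence}.
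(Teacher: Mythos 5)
Your proposal is correct and coincides with the paper's intended argument: the paper omits the proof of this proposition precisely because it is the routine epigraphical transcription, via Proposition~\ref{prop:coherence}, of the geometric equivalence in Proposition~\ref{prop:eqv2}, which is exactly the reduction you carry out with $Q=\epi f$ and $\widetilde{M}=\gph f\big|_M$. Your treatment of the delicate points --- the $s=-1$ slice-and-rescaling identification of $\gph N_{\mathsmaller{\epi f}}$ with the subjet $[\partial f]$, the span computation for sharpness via Proposition~\ref{prop:help} with the dimension count, and the cone-over-base correspondence giving $(\bar{v},-1)\in\ri \hat{N}_{\mathsmaller{\epi f}}(\bar{x},f(\bar{x}))$ iff $\bar{v}\in\ri\hat{\partial}f(\bar{x})$ --- correctly supplies the details the paper leaves implicit.
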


Given a function $f\colon\R^n\to\overline{\R}$ and a parameter $\lambda>0$, the {\em proximal mapping} $P_{\lambda}f\colon\R^n\to\overline{\R}$ is defined by
$$P_{\lambda}f(x):=\argmin\big\{f(y)+\frac{1}{2\lambda}|y-x|^2 \big\}.$$ 
The results of Section~\ref{sec:geo}, concerning projections, may be extended to the functional setting with the proximal mapping taking the place of the metric projection. However for the sake of brevity, we do not pursue this further.

\begin{acknowledgement}
The authors thank Alex D. Ioffe for the insightful conversations related to the current work.
\end{acknowledgement}


\bibliographystyle{plain}
\small
\parsep 0pt
\bibliography{dim_graph}

\begin{thebibliography}{10}

\bibitem{Al-Khayyal-Kyparisis91}
F.~Al-Khayyal and J.~Kyparisis.
\newblock Finite convergence of algorithms for nonlinear programs and
  variational inequalities.
\newblock {\em J. Optim. Theory Appl.}, 70(2):319--332, 1991.

\bibitem{Bon_Shap}
J.F. Bonnans and A.~Shapiro.
\newblock {\em Perturbation Analysis of Optimization Problems}.
\newblock Springer, New York, 2000.

\bibitem{Borwein-Zhu}
J.~M. Borwein and Q.~J. Zhu.
\newblock {\em Techniques of Variational Analysis}.
\newblock Springer Verlag, New York, 2005.

\bibitem{Burke90}
J.~V. Burke.
\newblock On the identification of active constraints. {II}.\ {T}he nonconvex
  case.
\newblock {\em SIAM J. Numer. Anal.}, 27(4):1081--1103, 1990.

\bibitem{Burke-More88}
J.~V. Burke and J.~J. Mor{\'e}.
\newblock On the identification of active constraints.
\newblock {\em SIAM J. Numer. Anal.}, 25(5):1197--1211, 1988.

\bibitem{Calamai-More87}
P.~H. Calamai and J.~J. Mor{\'e}.
\newblock Projected gradient methods for linearly constrained problems.
\newblock {\em Math. Programming}, 39(1):93--116, 1987.

\bibitem{CLSW}
F.~H. Clarke, Yu. Ledyaev, R.~I. Stern, and P.~R. Wolenski.
\newblock {\em Nonsmooth Analysis and Control Theory}.
\newblock Texts in Math. 178, Springer, New York, 1998.

\bibitem{Coste-semi}
M.~Coste.
\newblock {\em An {I}ntroduction to {S}emialgebraic {G}eometry}.
\newblock RAAG Notes, 78 pages, Institut de Recherche Math\'{e}matiques de
  Rennes, October 2002.

\bibitem{spec_id}
D.~Drusvyatskiy and A.~S. Lewis.
\newblock Spectral lifts of identifiable sets.
\newblock {\em in preparation}, 2012.

\bibitem{Dunn87}
J.~C. Dunn.
\newblock On the convergence of projected gradient processes to singular
  critical points.
\newblock {\em J. Optim. Theory Appl.}, 55(2):203--216, 1987.

\bibitem{fac_pang}
F.~Facchinei and J.~Pang.
\newblock {\em Finite dimensional variational inequalities and complementarity
  problems}.
\newblock Springer Series in Operations Research, Springer-Verlag, New York,
  2003.

\bibitem{federer}
H.~Federer.
\newblock Curvature measures.
\newblock {\em Transactions of the American Mathematical Society}, 93(3):pp.
  418--491, 1959.

\bibitem{Ferris91}
M.~C. Ferris.
\newblock Finite termination of the proximal point algorithm.
\newblock {\em Math. Programming}, 50(3, (Ser. A)):359--366, 1991.

\bibitem{Flam92}
S.~D. Fl{\aa}m.
\newblock On finite convergence and constraint identification of subgradient
  projection methods.
\newblock {\em Mathematical Programming}, 57:427--437, 1992.

\bibitem{HS06}
W.~Hare and C.~Sagastiz{\'a}bal.
\newblock Benchmark of some nonsmooth optimization solvers for computing
  nonconvex proximal points.
\newblock {\em Pac. J. Optim.}, 2(3):545--573, 2006.

\bibitem{HS09}
W.~Hare and C.~Sagastiz{\'a}bal.
\newblock Computing proximal points of nonconvex functions.
\newblock {\em Math. Program.}, 116(1-2, Ser. B):221--258, 2009.

\bibitem{Hare}
W.~L. Hare and A.~S. Lewis.
\newblock Identifying active constraints via partial smoothness and
  prox-regularity.
\newblock {\em J. Convex Anal.}, 11(2):251--266, 2004.

\bibitem{Lee}
J.~M. Lee.
\newblock {\em Introduction to Smooth Manifolds}.
\newblock Springer, New York, 2003.

\bibitem{Lewis-active}
A.~S. Lewis.
\newblock Active sets, nonsmoothness, and sensitivity.
\newblock {\em SIAM J. on Optimization}, 13:702--725, August 2002.

\bibitem{MC03}
R.~Mifflin and C.~Sagastiz{\'a}bal.
\newblock Primal-dual gradient structured functions: second-order results;
  links to epi-derivatives and partly smooth functions.
\newblock {\em SIAM J. Optim.}, 13(4):1174--1194 (electronic), 2003.

\bibitem{MC04}
R.~Mifflin and C.~Sagastiz{\'a}bal.
\newblock {$\cal V\cal U$}-smoothness and proximal point results for some
  nonconvex functions.
\newblock {\em Optim. Methods Softw.}, 19(5):463--478, 2004.

\bibitem{MC05}
R.~Mifflin and C.~Sagastiz{\'a}bal.
\newblock A {$\cal{VU}$}-algorithm for convex minimization.
\newblock {\em Math. Program.}, 104(2-3, Ser. B):583--608, 2005.

\bibitem{Mord_1}
B.~S. Mordukhovich.
\newblock {\em Variational Analysis and Generalized Differentiation I: Basic
  Theory}.
\newblock Grundlehren der mathematischen Wissenschaften, Vol 330, Springer,
  Berlin, 2005.

\bibitem{Mord_2}
B.~S. Mordukhovich.
\newblock {\em Variational Analysis and Generalized Differentiation II:
  Applications}.
\newblock Grundlehren der mathematischen Wissenschaften, Vol 331, Springer,
  Berlin, 2005.

\bibitem{amen}
R.~A. Poliquin and R.~T. Rockafellar.
\newblock Amenable functions in optimization.
\newblock In {\em Nonsmooth optimization: methods and applications ({E}rice,
  1991)}, pages 338--353. Gordon and Breach, Montreux, 1992.

\bibitem{prox_reg}
R.~A. Poliquin and R.~T. Rockafellar.
\newblock Prox-regular functions in variational analysis.
\newblock {\em Trans. Amer. Math. Soc.}, 348:1805--1838, 1996.

\bibitem{theo}
R.~A. Poliquin, R.~T. Rockafellar, and L.~Thibault.
\newblock Local differentiability of distance functions.
\newblock {\em Transactions of the American Mathematical Society}, 352(11):pp.
  5231--5249, 2000.

\bibitem{piecequad}
S.~M. Robinson.
\newblock Some continuity properties of polyhedral multifunctions.
\newblock In {\em Mathematical Programming at Oberwolfach}, volume~14 of {\em
  Mathematical Programming Studies}, pages 206--214. Springer Berlin
  Heidelberg, 1981.
\newblock 10.1007/BFb0120929.

\bibitem{imp}
R.~T. Rockafellar and A.~L. Dontchev.
\newblock {\em Implicit functions and solution mappings}.
\newblock Monographs in Mathematics, Springer-Verlag, 2009.

\bibitem{VA}
R.~T. Rockafellar and R.~J-B. Wets.
\newblock {\em Variational {A}nalysis}.
\newblock Grundlehren der mathematischen Wissenschaften, Vol 317, Springer,
  Berlin, 1998.

\bibitem{DM}
L.~van~den Dries and C.~Miller.
\newblock Geometric categories and o-minimal structures.
\newblock {\em Duke Mathematical Journal}, 84:497--540, 1996.

\bibitem{Wright}
S.~J. Wright.
\newblock Identifiable surfaces in constrained optimization.
\newblock {\em SIAM J. Control Optim.}, 31:1063--1079, July 1993.

\end{thebibliography}

%
%

\end{document}